%
%
\RequirePackage{snapshot}

%
%
%
\newif\ifsubsections

%
%
%
\documentclass[]{pcmi}

\usepackage[sc]{mathpazo}          
\usepackage{eulervm}               
\usepackage[scaled=0.86]{berasans} 
\usepackage[scaled=1]{inconsolata} 
\usepackage[T1]{fontenc}

%
%
\usepackage[%
	protrusion=true,
	expansion=false,
	auto=false
	]{microtype}

%
%
%
%
\usepackage{xcolor}
\usepackage{graphicx}
\graphicspath{{./figures/}}

%
%
%
\ifdraft
	\definecolor{linkred}{rgb}{0.7,0.2,0.2}
	\definecolor{linkblue}{rgb}{0,0.2,0.6}
\else
	\definecolor{linkred}{rgb}{0.0,0.0,0.0}
	\definecolor{linkblue}{rgb}{0,0.0,0.0}
\fi

%
%



\usepackage{amssymb,amsfonts,amsthm}
\usepackage[makeroom]{cancel}
\usepackage{mathtools}
\usepackage{pifont}
\usepackage{mathrsfs}
\usepackage{slashed,mathabx} 
\usepackage[bbgreekl]{mathbbol}
\usepackage{tikz-cd}
\usepackage{anyfontsize}

\usepackage{enumitem}
\usepackage[all]{xy}
\usepackage{comment}

%
%
%
\PassOptionsToPackage{hyphens}{url} 
\usepackage[
    setpagesize=false,
    pagebackref,
	pdfpagelabels=false,
    pdfstartview={FitH 1000},
    bookmarksnumbered=false,
    linktoc=all,
    colorlinks=true,
    anchorcolor=black,
    menucolor=black,
    runcolor=black,
    filecolor=black,
    linkcolor=linkblue,
	citecolor=linkblue,
	urlcolor=linkred,
]{hyperref}
\usepackage[backrefs,msc-links,nobysame]{amsrefs}

%
%
%
%
%
\customizeamsrefs 

%
%

%
%
%
%
%
%
%
%


\theoremstyle{plain}
\newtheorem{Proposition}[equation]{Proposition}
\newtheorem{Lemma}[equation]{Lemma}
\newtheorem{Corollary}[equation]{Corollary}
\newtheorem{Theorem}[equation]{Theorem}
\newtheorem{Conjecture}[equation]{Conjecture}

\theoremstyle{definition}
\newtheorem{Definition}[equation]{Definition}

\newtheorem{Example}[equation]{Example}

\theoremstyle{remark}
\newtheorem{Remark}[equation]{Remark}

%
%
%
%



\usepackage{macros}
\setcounter{tocdepth}{2}
%
%

\begin{document}

%
%
%
%
%
%

\title[Cutoff for biased random transpositions]{Cutoff for the Biased Random Transposition Shuffle} 

%
%
\author{Evita Nestoridi, Alan Yan}
\date{} 
\address{}
\email{}

\keywords{}
\begin{abstract}
    In this paper, we study the biased random transposition shuffle, a natural generalization of the classical random transposition shuffle studied by Diaconis and Shahshahani. We diagonalize the transition matrix of the shuffle and use these eigenvalues to prove that the shuffle exhibits total variation cutoff at time $t_N = \frac{1}{2b} N \log N$ with window $N$. We also prove that the limiting distribution of the number of fixed cards near the cutoff time is Poisson. 
\end{abstract}

%
%
\maketitle

\tableofcontents
%
%


\section{Introduction} 
Studying card shuffling with the help of representation theory was initially introduced by Diaconis and Shahshahani for the study of the random transpositions shuffle. Consider a deck of $N$ distinct cards. Pick two cards uniformly at random with repetition and swap them. Diaconis and Shahshahani proved that it takes $\frac{1}{2} N \log N$ repetitions until the deck is shuffled sufficiently well. This model is now famously known as the \emph{random transpositions shuffle}, and to this day, it is considered the best approximation of the Markov chain that genes in DNA sequences follow~\cite{BeDu}. Following Diaconis and Shahshahani, there has been a variety of other card shuffles that have been studied with the use of representation theory (see~\cites{Flatto1985, Hough2016, Dieker2018, BernsteinNestoridi2019, BateConnorMatheau-Raven2021,NestoridiPeng2021, Bidigare-Hanlon-Rockmore-1999, BrownDiaconis1998}).

In this paper, we consider a generalization of random transpositions, where we partition the cards into two sets and pick cards with probabilities depending on which half of the deck they belong. More precisely, consider a deck of $N=2n$ distinct cards and denote $[N]:= \{1, \ldots, N\}$. We partition the cards into two sets $[N] = A \sqcup B$ where $|A| = |B| = n$. Fix numbers $0 < b \leq a$ satisfying $a + b = 2$. Let $\mu_{a, b}$ be the probability measure on $[N]$ given by 
\[
    \mu_{a, b}(x) = \begin{cases}
        \frac{a}{N} & \text{ if } x \in A \\
        \frac{b}{N} & \text{ if } x \in B. 
    \end{cases}
\]
We pick two cards uniformly at random with repetition according to $\mu_{a, b}$ and swap them. By identifying all of the configurations of our deck with elements in $\fS_N$, we can think of the shuffle as a random walk on $\fS_N$. 
We refer to it as the \textbf{biased random transpositions shuffle}, or the $\bRT$ shuffle for short, to connect it back to the classical case of random transpositions considered in~\cite{Diaconis1981}. Indeed, in the case $a=b=1$, we recover the original random transposition shuffle.  

For $x \in \fS_N$, let $P^t(x, \bullet)$ be the distribution of our deck configuration after $t$ shuffles. We prove that for the case $|A|=|B|$, this Markov chain exhibits total variation distance cutoff at $\frac{1}{2b} N \log N$ by diagonalizing the transition matrix $P=(P(x, y))_{x,y \in \mathfrak{S}_N}$ and using this spectral information to produce lower and upper bounds for the mixing time. We are ready to state our first result.

\begin{Theorem} \label{thm:main-theorem-1}
    Let $|A| = |B| = n$. Let $c > 0$ be a positive real number. For sufficiently large $N$ and some universal constant $C > 0$, we have
     \begin{align*}
        d_{\TV} \left ( P^{*\frac{N}{2b}(\log N + c)}(\id, \bullet), U \right ) & \leq C \cdot e^{-c} 
        \textup{ and } \\
        d_{\TV} \left ( P^{*\frac{N}{2b}(\log N - c)}(\id, \bullet), U \right ) & \geq 1 - e^{- \frac{1}{2} \left ( \sqrt{1 + \frac{1}{2} e^c} - 1 \right )^2} + o(1),
    \end{align*}
    where $U$ is the uniform measure on $\mathfrak{S}_N$.
\end{Theorem}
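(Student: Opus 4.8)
The plan is to adapt the Fourier-analytic method of Diaconis and Shahshahani. The driving measure of the $\bRT$ shuffle is
\[
\nu=\Bigl(\textstyle\sum_{x}\mu_{a,b}(x)^2\Bigr)\delta_{\id}+\sum_{i<j}2\mu_{a,b}(i)\mu_{a,b}(j)\,\delta_{(ij)},
\]
which is not a class function on $\fS_N$ but is invariant under conjugation by $H:=\fS_A\times\fS_B$; so the first step is to diagonalize $P$, equivalently each Fourier matrix $\widehat\nu(\lambda)=\sum_{g}\nu(g)\lambda(g)$ for $\lambda\vdash N$. Writing $z$, $z_A$, $z_B$ for the sums of all transpositions of $\fS_N$, inside $A$, and inside $B$ respectively, one has, after grouping transpositions by type, $\widehat\nu(\lambda)=\bigl(\textstyle\sum_x\mu_{a,b}(x)^2\bigr)I+N^{-2}\lambda\bigl(ab\,z+a(a-b)z_A-b(a-b)z_B\bigr)$; since $z$ is central in the group algebra of $\fS_N$ while $z_A$, $z_B$ are central in those of $\fS_A$, $\fS_B$, the matrix $\widehat\nu(\lambda)$ acts as a scalar on each $\sigma\boxtimes\tau$-isotypic block of $\lambda|_{H}$ (with $\sigma,\tau\vdash n$), with eigenvalue
\[
\beta_{\lambda,\sigma,\tau}=\frac{a^2+b^2}{2N}+\frac{ab(N-1)}{N}R(\lambda)+\frac{(a-b)(N-2)}{4N}\bigl(aR(\sigma)-bR(\tau)\bigr),\qquad R(\pi):=\frac{\chi_\pi((12))}{\dim\pi},
\]
which occurs as an eigenvalue of $P$ with multiplicity $\dim\lambda\cdot[\lambda|_{H}:\sigma\boxtimes\tau]\cdot\dim\sigma\dim\tau$; for $a=b=1$ this recovers the classical eigenvalues.

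For the upper bound I would feed this into the Diaconis--Shahshahani $L^{2}$ bound
\[
4\,d_{\TV}\bigl(P^{*t}(\id,\bullet),U\bigr)^{2}\le\sum_{\lambda\neq(N)}\dim\lambda\sum_{\sigma,\tau}[\lambda|_{H}:\sigma\boxtimes\tau]\dim\sigma\dim\tau\;\beta_{\lambda,\sigma,\tau}^{2t}
\]
and estimate the right-hand side at $t=\tfrac{N}{2b}(\log N+c)$. Using $\sum_{\sigma,\tau}[\lambda|_{H}:\sigma\boxtimes\tau]\dim\sigma\dim\tau=\dim\lambda$, this is at most $\sum_{\lambda\neq(N)}(\dim\lambda)^{2}\max_{\sigma\boxtimes\tau\le\lambda|_{H}}\beta_{\lambda,\sigma,\tau}^{2t}$. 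The whole contribution is carried by $\lambda=(N-1,1)$: here $\lambda|_{H}$ consists of $\mathrm{std}\boxtimes\mathrm{triv}$, $\mathrm{triv}\boxtimes\mathrm{std}$ and $\mathrm{triv}\boxtimes\mathrm{triv}$, for which $\beta=1-\tfrac{2a}{N}+O(N^{-2})$, $1-\tfrac{2b}{N}+O(N^{-2})$ and $1-\tfrac{2ab}{N}+O(N^{-2})$ respectively; the middle piece, with multiplicity $\sim N^{2}/2$, contributes $\sim\tfrac12 e^{-2c}$, while the other two decay faster because $a\ge b$ and $ab\ge b$. The main obstacle will be showing that everything else is $o(e^{-2c})$. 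This requires (i) a uniform bound on $\max_{\sigma\boxtimes\tau\le\lambda|_{H}}\bigl(aR(\sigma)-bR(\tau)\bigr)$ which degrades as $\lambda$ moves away from $(N)$ --- the branching rule prevents $\sigma,\tau$ from being extreme while $\lambda$ is near-trivial; (ii) the classical character-ratio and dimension estimates (Diaconis--Shahshahani, Roichman) to dispose of the bulk of the partitions; and (iii) a separate hands-on treatment of the few families with few rows or few columns, e.g.\ $(N-2,2)$, $(N-2,1,1)$, $(2,1^{N-2})$, $(1^{N})$, which contribute only $O(e^{-4c})$. Carrying (i)--(iii) through at the right scale, with the emerging constants uniform in $c$ and $N$, is the technical heart.

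For the lower bound I would use the number of fixed cards $W$ as a distinguishing statistic. At $t=\tfrac{N}{2b}(\log N-c)$ a given card of $B$ is never selected by the chain with probability $\approx(1-b/N)^{2t}\approx e^{c}/N$, while the analogous sum over the cards of $A$ is $o(1)$ when $b<a$; hence the number of untouched $B$-cards has mean $\to\tfrac12 e^{c}$, and adding the $O(1)$ contribution of the already-mixed cards one shows that $W$ converges in distribution under $P^{*t}(\id,\bullet)$ to $\mathrm{Poisson}(\mu_{c})$ with $\mu_{c}\ge 1+\tfrac12 e^{c}$ (with equality for $b<a$; for $a=b=1$ one recovers the classical value $\mu_{c}=1+e^{c}$), whereas under $U$ it is asymptotically $\mathrm{Poisson}(1)$. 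Since total variation does not increase under pushforward,
\[
d_{\TV}\bigl(P^{*t}(\id,\bullet),U\bigr)\ge d_{\TV}\bigl(\mathcal{L}_{P^{*t}}(W),\mathcal{L}_{U}(W)\bigr)\ge 1-e^{-\frac12\bigl(\sqrt{1+\frac12 e^{c}}-1\bigr)^{2}}+o(1),
\]
the last step being the squared-Hellinger lower bound on total variation together with $\mathrm{BC}(\mathrm{Poisson}(\alpha),\mathrm{Poisson}(\beta))=e^{-\frac12(\sqrt{\alpha}-\sqrt{\beta})^{2}}$, applied at $\alpha=1+\tfrac12 e^{c}\le\mu_{c}$ and $\beta=1$ and using monotonicity of the bound in $\alpha$. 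The delicate point is the Poisson limit: one computes the falling-factorial moments $\mathbf{E}[(W)_{k}]$ --- by inclusion--exclusion and the eigenvalue expansion, the probabilities that prescribed $k$-subsets of cards are simultaneously fixed --- and shows they converge to $\mu_c^{k}$, which forces one to separate the never-touched $B$-cards from those that were touched but returned to place, and to control the weak dependence among fixed cards (a Chen--Stein coupling is an alternative route).
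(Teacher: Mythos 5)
Your diagonalization is correct and is the same as the paper's Theorem~\ref{thm:main-theorem-2}: after converting character ratios into diagonal indices, your $\beta_{\lambda,\sigma,\tau}$ coincides with $\Eig^\lambda_{\mu,\nu}$ and your multiplicities with $f_\lambda c^\lambda_{\mu,\nu}f_\mu f_\nu$ (your intermediate display for $\widehat\nu(\lambda)$ is missing factors of $2$, but the final eigenvalue formula is right). Your lower bound is also the paper's route: number of fixed cards, Poisson limit with parameter $1+\tfrac12 e^c$ when $b<a$, data processing, then the Hellinger/Bhattacharyya bound; the paper computes the moments representation-theoretically (traces of $\cA^K$ on $V^{\otimes p}$, using Kostka numbers and the constraint $\lambda\lhd\mu+\nu$ to isolate the dominant blocks), whereas you propose falling-factorial moments or Chen--Stein, which is a reasonable variant. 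Your identification of the dominant upper-bound term, $\lambda=(N-1,1)$ with the block $\mathrm{triv}\boxtimes\mathrm{std}$, eigenvalue $1-\tfrac{2b}{N}+O(N^{-2})$ and multiplicity $\sim N^2/2$, is likewise correct.

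The genuine gap is in your step (ii) for the upper bound. ``Classical character-ratio and dimension estimates'' do not dispose of the bulk here, and your step (i) supplies a constraint on $(\sigma,\tau)$ only when $\lambda$ is near $(N)$. What one needs, for \emph{every} $\lambda$, is control of $\max_{(\mu,\nu)\in\LR^\lambda}|\Eig^\lambda_{\mu,\nu}|$; the paper gets the inequality $\lambda\lhd\mu+\nu$ (and its conjugate version) for all Littlewood--Richardson triples from the hive model, and even with it the bound is not a one-shot affair. The problematic regime is $n\le\lambda_1\le(2-a^{-1})n$ (and its conjugate): there $(f_\lambda)^2$ is still as large as $\exp(\Theta(n\log n))$, while the best uniform bound on the maximal eigenvalue over that whole strip is roughly $\tfrac a2+\tfrac{ab}{2}\bigl(\tfrac{\lambda_1}{n}-1\bigr)^2$ taken at the right endpoint $\lambda_1=(2-a^{-1})n$; for $a=2-b$ with $b$ small this exceeds $e^{-b/2}$, so $(f_\lambda)^2\max|\Eig^\lambda_{\mu,\nu}|^{2t}$ at $t=\tfrac{N}{2b}\log N$ is not even $o(1)$, i.e.\ the na\"ive ``dimension times worst eigenvalue'' bound fails precisely where the walk is slowest. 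Bounds on $R(\sigma)$, $R(\tau)$ alone cannot repair this because the $\Diag(\lambda)$ term (weight $ab$) and the cross term $\langle\mu,\nu\rangle$ dominate. The paper closes the argument by cutting this strip (and an analogous strip inside the red zone) into sub-intervals generated by iterating auxiliary maps ($\phi_4$ in Lemma~\ref{lem:Red-II-plus-each-partition-is-small}, $\mathscr{T}_B$ in Lemma~\ref{lem:australia}), so that on each piece the dimension is estimated at the left endpoint and the eigenvalue at the right endpoint, with a fixed-point argument showing the iteration terminates; some such bootstrapping (or a substitute idea) is missing from your plan. A smaller omission: negative eigenvalues coming from conjugate shapes are handled in the paper through the identity $\Eig^{\lambda^*}_{\mu^*,\nu^*}=\tfrac{a^2+b^2}{2n}-\Eig^\lambda_{\mu,\nu}$ (Lemma~\ref{lem:new-zealand}) and Lemma~\ref{lem:OMEGA-is-INVOLUTIVE}, not only through the few explicit column-like shapes you list.
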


Theorem \ref{thm:main-theorem-1} describes the occurrence of the cutoff phenomenon.
A family of Markov chains on $\mathfrak{S}_N$ is said to have cutoff at $t_N$ with window $w_N=o(t_N)$ if
$$\lim_{c \rightarrow \infty} \lim_{N \rightarrow \infty} d(t_N-cw_N)= 1 \quad\mbox{ and }\quad \lim_{c \rightarrow \infty} \lim_{N \rightarrow \infty} d(t_N+cw_N)= 0.$$ 
The occurrence of cutoff is a central question in Markov chain mixing, see more in \cite{Salez}. The first instances of cutoff were in card shuffling in works by Aldous, Diaconis and Shahshahani \cites{Aldous, Diaconis1981}, and since then cutoff has been studied in terms of many other card shuffles \cites{Bayer, Bese, BernsteinNestoridi2019, Hough2016, Lacoin}. 

Biased transpositions was also studied in \cite{Bernstein2017}, where the authors focused on separation distance. The proof relied on analyzing the behavior of a stopping time, that unfortunately turned out not to be a strong stationary time as explained in Section 5.3 of \cite{Graham}. In this paper, we follow a spectral approach to prove the behavior that was predicted in \cite{Bernstein2017}.

The upper bound of Theorem \ref{thm:main-theorem-1} is proven by an $\ell_2$ bound, which makes use of the eigenvalues of the transition matrix $P$. Our second theorem discusses the spectrum of $P$. The statement relies on a few statistics on partitions.

For a positive integer $N$, a \textbf{partition} $\lambda=(\lambda_1, \lambda_2, \ldots)$ of size $N$ is a \emph{weakly decreasing} sequence of non-negative integers which sum to $N$. We write $\lambda \vdash N$ for a partition of $N$. We can represent partitions diagrammatically with their Young diagrams. Let $\lambda^*$ be the partition corresponding to the transpose diagram of $\lambda$.
Let the \textbf{diagonal index} of a partition $\lambda$ be
\[
    \Diag(\lambda) \eqdef \sum_{i \geq 0} \binom{\lambda_i}{2} - \sum_{i \geq 0} \binom{\lambda_i^*}{2}. 
\]
Then, the eigenvalue spectrum of $P$ is given by the following Theorem. All definitions on partitions are given in Section \ref{sec:prelim}. 
\begin{Theorem}\label{thm:main-theorem-3}
    The transition matrix $P$ has eigenvalues 
    $$\frac{a^2 |A| + b^2 |B|}{2N} + \frac{2(a^2 - ab)}{N^2} \Diag(\mu) + \frac{2(b^2 -ab)}{N^2} \Diag(\nu) + \frac{2ab}{N^2} \Diag(\lambda), $$ 
    with multiplicities $f_\lambda f_\mu f_\nu c^\lambda_{\mu, \nu}$ for all $\lambda \vdash N$, $\mu \vdash |A|$, and $\nu \vdash |B|$ partitions, where $f_\lambda $ is the number of Standard Young Tableaux of shape $\lambda$ and  $c^\lambda_{\mu, \nu}$ is the Littlewood Richardson coefficient.
\end{Theorem}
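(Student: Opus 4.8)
The plan is to combine the elementary spectral theory of random walks on finite groups with the branching rule for the Young subgroup $\fS_A \times \fS_B$ of $\fS_N$. Write $Q$ for the one-step distribution of the $\bRT$ shuffle on $\fS_N$: choosing $x,y$ independently from $\mu_{a,b}$ and applying $(x\,y)$ gives $Q(\id) = \sum_{x \in [N]}\mu_{a,b}(x)^2$, $Q\big((i\,j)\big) = 2\mu_{a,b}(i)\mu_{a,b}(j)$ for $i \ne j$, and $Q(\sigma)=0$ otherwise, so that $P(\sigma,\pi) = Q(\sigma^{-1}\pi)$. Since $Q$ is supported on involutions it is symmetric, and the decomposition of the regular representation gives: the multiset of eigenvalues of $P$ is the multiset union, over $\lambda \vdash N$, of the eigenvalues of the operator $\widehat{Q}(\lambda) \eqdef \sum_{\sigma \in \fS_N} Q(\sigma)\,\rho_\lambda(\sigma)$ acting on the irreducible module $V_\lambda$, with each eigenvalue of $\widehat{Q}(\lambda)$ repeated $\dim V_\lambda = f_\lambda$ times. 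So the task reduces to diagonalizing each $\widehat{Q}(\lambda)$.

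First I would express $\widehat{Q}(\lambda)$ through class sums. Partitioning the transpositions $(i\,j)$ according to whether $\{i,j\}\subseteq A$, $\{i,j\}\subseteq B$, or $i,j$ lie in different blocks, and letting $C_A = \sum_{\{i,j\}\subseteq A}(i\,j)$, $C_B = \sum_{\{i,j\}\subseteq B}(i\,j)$, and $C = \sum_{\{i,j\}\subseteq[N]}(i\,j)$ denote the transposition class sums of $\fS_A$, $\fS_B$, and $\fS_N$ respectively, viewed inside $\mathbb{C}[\fS_N]$, one gets, using $C = C_A + C_B + (\text{cross transpositions})$ to eliminate the cross term,
\[
    \widehat{Q}(\lambda) \;=\; \Big(\textstyle\sum_x \mu_{a,b}(x)^2\Big)\mathrm{id}_{V_\lambda} \;+\; \tfrac{2(a^2-ab)}{N^2}\,\rho_\lambda(C_A) \;+\; \tfrac{2(b^2-ab)}{N^2}\,\rho_\lambda(C_B) \;+\; \tfrac{2ab}{N^2}\,\rho_\lambda(C).
\]
The elements $C_A,C_B,C$ pairwise commute in $\mathbb{C}[\fS_N]$: $C$ is central, and $C_A$ commutes with $C_B$ since $\fS_A$ and $\fS_B$ commute elementwise. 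Hence $\widehat{Q}(\lambda)$ lies in the commutative subalgebra generated by $\rho_\lambda(C_A),\rho_\lambda(C_B),\rho_\lambda(C)$ and is simultaneously diagonalizable with them; it remains to compute their joint eigenvalues on $V_\lambda$.

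These are governed by the restriction of $V_\lambda$ to $\fS_A\times\fS_B$. By the Littlewood--Richardson rule (there being exactly two blocks), $\mathrm{Res}^{\fS_N}_{\fS_A\times\fS_B}V_\lambda \cong \bigoplus_{\mu \vdash |A|,\;\nu\vdash|B|} (V_\mu \otimes V_\nu)^{\oplus c^\lambda_{\mu,\nu}}$ as $\fS_A\times\fS_B$-modules. On the $(\mu,\nu)$-isotypic component: the central element $C$ of $\mathbb{C}[\fS_N]$ acts by the scalar $\Diag(\lambda)$, because the transposition class sum of $\fS_m$ acts on an irreducible module $V_\kappa$, $\kappa\vdash m$, by the content sum $\sum_i\binom{\kappa_i}{2}-\sum_i\binom{\kappa_i^*}{2} = \Diag(\kappa)$ (a standard fact from the Jucys--Murphy calculus); similarly $C_A$, being central in $\mathbb{C}[\fS_A]$ and acting only on the $V_\mu$ tensor factor, acts by $\Diag(\mu)$, and $C_B$ acts by $\Diag(\nu)$. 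Therefore $\widehat{Q}(\lambda)$ acts on this component --- which has dimension $f_\mu f_\nu\, c^\lambda_{\mu,\nu}$ --- by the scalar in the statement, and feeding this back into the regular-representation decomposition shows it is an eigenvalue of $P$ with multiplicity $f_\lambda\cdot f_\mu f_\nu\, c^\lambda_{\mu,\nu}$, multiplicities of numerically equal eigenvalues arising from different triples being added. As a consistency check, $\sum_{\lambda,\mu,\nu} f_\lambda f_\mu f_\nu c^\lambda_{\mu,\nu} = \sum_\lambda f_\lambda^2 = N! = |\fS_N|$.

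I expect the delicate point --- more a bookkeeping matter than a genuine obstacle --- to be getting the multiplicities right: the eigenspace of $\widehat{Q}(\lambda)$ attached to the pair $(\mu,\nu)$ is the full $(\mu,\nu)$-isotypic subspace of $\mathrm{Res}^{\fS_N}_{\fS_A\times\fS_B}V_\lambda$, of dimension $f_\mu f_\nu c^\lambda_{\mu,\nu}$ rather than $c^\lambda_{\mu,\nu}$, and then the extra factor $f_\lambda$ from the left-regular action is applied on top. The two external inputs I would invoke explicitly are the regular-representation spectral formula for random walks on groups and the content-sum eigenvalue of the transposition class sum; both are classical (see Diaconis--Shahshahani, and the Jucys--Murphy / Okounkov--Vershik theory). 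Everything else is the bookkeeping above.
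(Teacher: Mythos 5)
Your proposal is correct and follows essentially the same route as the paper: decompose the regular representation, restrict each Specht module to $\fS_A\times\fS_B$ via the Littlewood--Richardson rule, let the transposition class sums act by the content sums $\Diag(\lambda)$, $\Diag(\mu)$, $\Diag(\nu)$ (Schur's lemma / Jucys--Murphy), and count multiplicities as $f_\lambda f_\mu f_\nu c^\lambda_{\mu,\nu}$, with the only packaging difference being that the paper phrases the reduction through left multiplication by the group algebra element $\cA$ and symmetry of $P$ rather than the Fourier-transform spectral formula you quote. One small remark: your identity coefficient $\sum_x \mu_{a,b}(x)^2 = \frac{a^2|A|+b^2|B|}{N^2}$ is the correct constant, and the $\frac{a^2|A|+b^2|B|}{2N}$ in the stated theorem is a typo in the paper (it is inconsistent with the specialization $\frac{a^2+b^2}{4n}$ used later), so this is not a defect of your argument.
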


The lower bound of Theorem \ref{thm:main-theorem-1} is proven by studying the number of fixed cards at time $t$. Let $\Fix_c$ be the number of fixed points after shuffling our deck of cards using the biased random transposition shuffle $\frac{1}{2b} N (\log N - c)$ times. Let $\Fix$ be the number of fixed points of a uniformly picked permutation. The random variable $\Fix_c$ has the following limiting distribution. 

\begin{Theorem}\label{thm:main-theorem-4'}
    Suppose that $b < 1$. Then, 
    \begin{equation}\label{eqn:fix-converges-poisson-yes}
        \Fix_c \xrightarrow{dist} \Poiss \left (1 + \frac{1}{2}e^c \right ), \quad \text{ as } N \to \infty. 
    \end{equation}
    Also,
    \begin{equation}\label{eqn:tv-converges-yes}
        d_{\TV} \left ( \Fix_c , \Fix \right ) \longrightarrow d_{\TV} \left ( \Poiss(1), \Poiss \left (1 + \frac{1}{2}e^c \right ) \right ), \quad \text{ as } N \to \infty. 
    \end{equation}
\end{Theorem}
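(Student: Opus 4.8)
The plan is to use the method of factorial moments. As the Poisson law is moment-determined and $\mathbb{E}[(Z)_r]=\mu^r$ for $Z\sim\Poiss(\mu)$ (where $(x)_r=x(x-1)\cdots(x-r+1)$), the convergence~\eqref{eqn:fix-converges-poisson-yes} follows once we prove
\[
  \mathbb{E}\left[(\Fix_c)_r\right]\;\longrightarrow\;\left(1+\tfrac12 e^c\right)^{r}\qquad\text{for every fixed }r\ge 1 .
\]
Granting this, \eqref{eqn:tv-converges-yes} is soft: the same method applied to a uniform permutation gives the classical identity $\mathbb{E}[(\Fix)_r]=1$ for all $r\le N$, hence $\Fix\xrightarrow{dist}\Poiss(1)$; since $\Fix_c$ and $\Fix$ are $\mathbb{Z}_{\ge0}$-valued with pointwise-convergent mass functions whose limits are genuine probability mass functions, Scheffé's lemma upgrades this to total-variation convergence of each to its Poisson limit, and two applications of the triangle inequality for $d_{\TV}$ give $d_{\TV}(\Fix_c,\Fix)\to d_{\TV}(\Poiss(1),\Poiss(1+\tfrac12 e^c))$.

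To compute the factorial moments I pass to characters. The map $\sigma\mapsto(\Fix\sigma)_r$ counts the ordered $r$-tuples of distinct elements of $[N]$ all of whose entries are fixed by $\sigma$, so it is the character of the permutation representation $\mathbb{C}[\Omega_r]\cong\mathrm{Ind}_{\mathfrak{S}_{N-r}}^{\mathfrak{S}_N}\mathbf 1$ on the set $\Omega_r$ of such tuples. A standard computation with the branching rule gives, for $N$ large,
\[
  (\Fix\sigma)_r=\sum_{\rho:\,|\rho|\le r}\binom{r}{|\rho|}\,f_\rho\,\chi^{(N-|\rho|,\rho)}(\sigma),
\]
$f_\rho$ being the number of standard Young tableaux of shape $\rho$. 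With $t=t_N=\frac1{2b}N(\log N-c)$ and $\widehat P(\lambda)$ the Fourier transform of the one-step distribution at the irreducible $S^\lambda$ (an $f_\lambda\times f_\lambda$ matrix whose eigenvalues are the numbers $\beta_{\lambda,\mu,\nu}$ of Theorem~\ref{thm:main-theorem-3} with multiplicities $f_\mu f_\nu c^\lambda_{\mu,\nu}$, summing to $f_\lambda$), we have $\mathbb{E}[\chi^\lambda(\sigma_t)]=\mathrm{tr}\bigl(\widehat P(\lambda)^t\bigr)=\sum_{\mu,\nu}f_\mu f_\nu c^\lambda_{\mu,\nu}\,\beta_{\lambda,\mu,\nu}^{\,t}$, hence
\[
  \mathbb{E}\left[(\Fix_c)_r\right]=\sum_{\rho:\,|\rho|\le r}\binom{r}{|\rho|}f_\rho\!\!\sum_{\mu\vdash|A|,\ \nu\vdash|B|}\!\! f_\mu f_\nu\,c^{(N-|\rho|,\rho)}_{\mu,\nu}\;\beta_{(N-|\rho|,\rho),\,\mu,\,\nu}^{\,t}.
\]

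Now let $N\to\infty$ in each (finite) inner sum. Fix $\rho\vdash s$ and put $\lambda=(N-s,\rho)$. The inequality $\lambda_1\le\mu_1+\nu_1$ forces $\mu=(n-u,\tilde\mu)$, $\nu=(n-v,\tilde\nu)$ with $\tilde\mu\vdash u$, $\tilde\nu\vdash v$ and $u+v\le s$, so only boundedly many $(\mu,\nu)$ occur and we may take limits termwise. From $\Diag((N-s,\rho))=\binom{N-s}{2}+O(1)$ (and the analogous estimates for $\mu,\nu$, plus $\beta_{(N),(|A|),(|B|)}=1$), the formula of Theorem~\ref{thm:main-theorem-3} yields after a short computation
\[
  1-\beta_{\lambda,\mu,\nu}=\frac1N\bigl[\,2abs+(a-b)(au-bv)\,\bigr]+O(N^{-2}),
\]
so, as $t\asymp N\log N$, $\beta_{\lambda,\mu,\nu}^{\,t}=N^{-E}e^{cE}(1+o(1))$ with $E=\tfrac1{2b}\bigl[2abs+(a-b)(au-bv)\bigr]$. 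Since $f_\mu f_\nu\asymp n^{u+v}$ and $c^\lambda_{\mu,\nu}=O(1)$, the $(\mu,\nu)$-term has order $N^{(u+v)-E}$; using $a+b=2$ one computes $(u+v)-E=\tfrac{2-a^2}{b}\,u+a(v-s)$, which — because $b<1$, i.e.\ $a>1$ — is $\le 0$, with equality precisely at $u=0,v=s$. This is the one place $b<1$ enters: the terms with $u\ge1$, the ``$A$-side'' eigenvalues, decay strictly faster than their multiplicities grow and disappear in the limit. For the surviving terms $\mu=(n)$, $\nu=(n-s,\tilde\nu)$, Pieri's rule gives $c^{(N-s,\rho)}_{(n),(n-s,\tilde\nu)}=\mathbf 1[\tilde\nu=\rho]$ for $N$ large, $f_{(n-s,\rho)}=\tfrac{f_\rho}{s!}n^{s}(1+o(1))$, and $E=s$, whence the inner sum converges to $\tfrac{f_\rho}{s!}(e^c/2)^{s}$. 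Summing over $\rho$ and using $\sum_{\rho\vdash s}f_\rho^{\,2}=s!$,
\[
  \mathbb{E}\left[(\Fix_c)_r\right]\;\longrightarrow\;\sum_{s=0}^{r}\binom{r}{s}\Bigl(\tfrac{e^c}{2}\Bigr)^{s}\frac{1}{s!}\sum_{\rho\vdash s}f_\rho^{\,2}=\sum_{s=0}^{r}\binom{r}{s}\Bigl(\tfrac{e^c}{2}\Bigr)^{s}=\Bigl(1+\tfrac{e^c}{2}\Bigr)^{r}.
\]

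The main obstacle is this last asymptotic analysis: isolating, among all eigenvalue/multiplicity pairs of Theorem~\ref{thm:main-theorem-3}, the triples $(\lambda,\mu,\nu)=((N-s,\rho),(n),(n-s,\rho))$ that survive; controlling the error in the eigenvalue expansion after raising to the power $t\asymp N\log N$, so that the $(1+o(1))$ is genuinely harmless; and evaluating the relevant Littlewood–Richardson coefficients (for the non-surviving ones a crude bound suffices, for the surviving ones Pieri's rule is exact). This ``only $\mu=(n)$ survives'' dichotomy is the spectral echo of the probabilistic picture: near time $t_N$ the only cards still placed non-uniformly are the $B$-cards never used in any transposition, whose number tends to $\Poiss(\tfrac12 e^c)$, while the expected number of untouched $A$-cards is $\asymp N^{1-a/b}\to0$ exactly because $b<1$. (One could instead argue entirely probabilistically, conditioning on the set of untouched cards and using the exact expression for the probability that a given set of cards is never touched; the cost is then to prove that the touched cards are asymptotically uniformly permuted.)
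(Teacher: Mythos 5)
Your proposal is correct and takes essentially the same route as the paper: moments of $\Fix_c$ computed as traces of powers of the transition operator on representations built from the permutation module, the expansion $1-\Eig^\lambda_{\mu,\nu}=\frac{1}{N}\bigl[2abs+(a-b)(au-bv)\bigr]+O(N^{-2})$, the exponent comparison (which uses $u+v\le s$ together with $a>b$, since $2-a^2$ may be positive) showing that only $\mu=(n)$, $\nu=(n-s,\rho)$ survive, Pieri's rule for the surviving Littlewood--Richardson coefficients, and $\sum_{\rho\vdash s}f_\rho^{2}=s!$, with the Scheff\'e/triangle-inequality step giving the total-variation statement. The only difference is cosmetic: you use factorial moments and $\Ind_{\fS_{N-r}}^{\fS_N}\mathbf{1}$, which bypasses the Stirling numbers that the paper carries through its decomposition of $V^{\otimes p}$ and only resums at the very end.
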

Note that Equation~\ref{eqn:tv-converges-yes} follows immediately from Equation~\ref{eqn:fix-converges-poisson-yes} by definition of convergence in distribution. Based on Theorem \ref{thm:main-theorem-4'}, we formulate the following conjecture concerning the limit profile of the biased transpositions card shuffle. 

\begin{Conjecture}[Limit profile for biased random transpositions] \label{conjecture:limit-profile}
    Let $c \in \RR$ and $b < 1$. Then we have 
    \[
        d_{\TV} \left ( P^{\frac{1}{2b}N(\log N - c)}, U \right ) \to d_{\TV} \left ( \Poiss \left (1 + \frac{e^c}{2} \right ), \Poiss(1) \right ), \quad \text{as } N \to \infty.
    \]
\end{Conjecture}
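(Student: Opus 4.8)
\noindent\textbf{Plan for Conjecture~\ref{conjecture:limit-profile}.}
The plan is to establish the conjecture by adapting Teyssier's character-theoretic computation of limit profiles (developed for the unbiased random transposition shuffle) to the biased setting, using Theorem~\ref{thm:main-theorem-3} and Theorem~\ref{thm:main-theorem-4'} as the two inputs. First observe that the lower-bound half of the conjecture is essentially free: writing $t_N \eqdef \frac{1}{2b}N(\log N - c)$, the data-processing inequality gives $d_{\TV}\big(P^{*t_N}(\id,\bullet),U\big) \ge d_{\TV}(\Fix_c,\Fix)$, and the right-hand side converges to $d_{\TV}\big(\Poiss(1+e^{c}/2),\Poiss(1)\big)$ by~\eqref{eqn:tv-converges-yes}. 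So the entire content of the conjecture is the matching \emph{upper} bound, i.e. the assertion that the number of fixed points is an asymptotically sufficient statistic for distinguishing $P^{*t_N}(\id,\bullet)$ from $U$.

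To prove this, set $g_t \eqdef dP^{*t}(\id,\bullet)/dU$, so $d_{\TV}(P^{*t},U) = \tfrac12\,\mathbb{E}_U|g_t - 1|$. Since $\mu_{a,b}$ depends only on block membership, the driving measure on $\fS_N$ is invariant under conjugation by the Young subgroup $\fS_A\times\fS_B$; hence $P$, viewed as an operator on $L^2(\fS_N)$, is block-diagonalized by the decomposition indexed by triples $(\lambda,\mu,\nu)$ with $\lambda\vdash N$, $\mu\vdash|A|$, $\nu\vdash|B|$, acting on the block of dimension $f_\lambda f_\mu f_\nu c^\lambda_{\mu,\nu}$ by the scalar $\beta_{\lambda,\mu,\nu}$ of Theorem~\ref{thm:main-theorem-3}. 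The first step is to make the corresponding spectral projectors explicit, writing each as an explicit element of $\mathbb{C}[\fS_N]^{\fS_A\times\fS_B}$ and hence as a combination of $\fS_N$-characters induced from $\mathrm{Ind}_{\fS_A\times\fS_B}^{\fS_N}(\mu\otimes\nu)$ (this is where the Littlewood–Richardson coefficients $c^\lambda_{\mu,\nu}$ re-enter), so that $g_t(\sigma)-1$ becomes a sum over non-trivial triples of $\beta_{\lambda,\mu,\nu}^{\,t}$ times a character quantity depending on $\sigma$ only through how $\sigma$ interleaves the blocks $A$ and $B$.

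Next I would isolate the slowly-decaying modes. Since $b<1\le a$ and $a+b=2$, a short computation with $\Diag$ and Pieri's rule identifies the largest non-trivial eigenvalue as that of the triple $\big((N-1,1),\,(|A|),\,(|B|-1,1)\big)$ — the standard representation of the light block $\fS_B$, carried by $(N-1,1)$ on $\fS_N$ and the trivial representation of $\fS_A$ — for which $\beta_{\lambda,\mu,\nu} = 1 - \tfrac{2b}{N} + O(N^{-2})$, exactly the rate producing cutoff at $\tfrac1{2b}N\log N$; modes perturbing $\mu$, or perturbing only $\lambda$, decay strictly faster (at rates $\tfrac{2a}{N}$ and $\tfrac{2ab}{N}$, both $>\tfrac{2b}{N}$ when $b<1$), reflecting that the $A$-cards mix sooner. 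More generally, the triples contributing at time $t_N$ to leading order are those with $\mu=(|A|)$ and with $\nu$ and $\lambda$ obtained from the one-row shapes by prepending the same partition $\nu'\vdash k$ below the first row, $k$ bounded; for these $c^\lambda_{\mu,\nu}=1$, one has $\beta_{\lambda,\mu,\nu}^{\,t_N}\to(e^{c}/N)^{k}$ up to explicit constants, and the attached character quantity is an elementary symmetric function in the fixed-point indicators. Summing over this family I expect everything to resum, precisely as in the unbiased case, into $g_{t_N}(\sigma) = 1 + h_N(\Fix(\sigma)) + o(1)$ in $L^1(U)$, where $h_N$ is the log-likelihood ratio of the reweighting attached to $\Poiss(1+e^{c}/2)$ relative to $\Poiss(1)$; the constant $\tfrac12=|B|/N$ appears as the ratio of block sizes in these character evaluations (there are $|B|$ light cards, each a fixed point with probability $\sim e^{c}/N$ at time $t_N$, giving the extra Poisson parameter $|B|\cdot e^{c}/N = e^{c}/2$ over the ambient $\Poiss(1)$). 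Feeding this into $d_{\TV}(P^{*t_N},U)=\tfrac12\,\mathbb{E}_U|g_{t_N}-1|$ and invoking Theorem~\ref{thm:main-theorem-4'} for the limiting laws of $\Fix_c$ and $\Fix$ yields the claimed profile.

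The main obstacle is twofold, and is exactly what keeps this out of reach for now. First, one must control the spectral projectors sharply despite $\mu_{a,b}$ failing to be a class function on $\fS_N$: the $\fS_A\times\fS_B$-induced structure and the near-rectangular Littlewood–Richardson combinatorics must be tracked through every estimate, and one must verify the \emph{resummation identity} — that the $c^\lambda_{\mu,\nu}$ for these shapes together with the two-parameter family of $\Diag$-terms reproduce exactly the Poisson log-likelihood ratio with parameter $1+e^{c}/2$ (and in particular that the residual dependence on $\Fix_A$ genuinely vanishes because the $A$-block is far past its own cutoff at time $t_N$). Second, and more serious, one must show the contribution of all the remaining ``large'' triples is negligible in $L^1(U)$ — not merely in $L^2(U)$, where the bound behind Theorem~\ref{thm:main-theorem-1} already suffices — uniformly in $c$; this needs a robust tail estimate on $\sum \beta_{\lambda,\mu,\nu}^{2t_N}\,f_\lambda f_\mu f_\nu c^\lambda_{\mu,\nu}$ restricted to triples of bounded defect, analogous to Teyssier's for random transpositions but complicated by the biased weights and the extra partitions $\mu,\nu$.
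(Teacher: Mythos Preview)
This statement is a \emph{conjecture} in the paper, not a theorem: the paper does not prove it. What the paper does establish is exactly the lower-bound half you identify as ``essentially free'' --- Theorem~\ref{thm:main-theorem-5} shows $d_{\TV}(P^{*t_N},U)\ge d_{\TV}(\Poiss(1+e^c/2),\Poiss(1))+o(1)$ via the same data-processing argument you sketch, using Theorem~\ref{thm:main-theorem-4} (your Theorem~\ref{thm:main-theorem-4'}). The matching upper bound is left open.

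Your plan for the upper bound is a sensible adaptation of Teyssier's method, and your identification of the dominant triples $\big((N-k,T),(n),(n-k,T)\big)$ agrees with the analysis in the proof of Theorem~\ref{thm:main-theorem-4} (see Lemma~\ref{lem:computing-exponent}). But since the paper contains no proof to compare against, there is nothing further to say about alignment. You correctly flag the two genuine obstacles --- the resummation identity in the non-central setting, and the $L^1$ (rather than $L^2$) control of the tail --- and these are precisely why the statement remains a conjecture. Your proposal is a plan, not a proof, and you acknowledge as much; the paper is in the same position.
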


The limit profile is a more precise statement than cutoff, and we know the limit profile for only a few famous Markov chains (see~\cites{LP, Bayer}). In~\cite{Teyssier2019}, Teyssier proved that the limit profile for random transpositions (the case $a=b=1$) is $d_{\TV} \left ( \Poiss \left (1 + e^c \right ), \Poiss(1) \right )$. The same result holds for $k$--cycles with $k=o(n)$~\cite{NT} and star transpositions~\cite{star}. Note that Conjecture~\ref{conjecture:limit-profile} describes a different limiting behavior for $a \neq b$, which we discuss in more detail in Section~\ref{sec:lower-bound}

\subsection{Structure of paper}

In Section~\ref{sec:prelim}, we introduce the relevant mathematical background. This includes the combinatorial terminology related to partitions and tableaux. We also discuss the necessary representation theory for finite groups and the symmetric group. We end the preliminary section discussing the hive model, which we use to analyze Littlewood-Richardson coefficients, and define some distances on probability measure (e.g. total variation distance and Hellinger distance). In Section~\ref{sec:proof-of-theorem-2}, we prove Theorem~\ref{thm:main-theorem-3}. This section depends heavily on the representation theoretic preliminaries in Section~\ref{sec:prelim}. In Section~\ref{sec:proof-of-upper-bound} we prove the upper bound of Theorem~\ref{thm:main-theorem-1}. The proof splits into three different zones, with different types of analysis in each zone. At times, the analysis is technical. We move some of the technicalities and calculations to the appendix. In Section~\ref{sec:lower-bound} we prove the lower bound of Theorem~\ref{thm:main-theorem-1} by proving Theorem~\ref{thm:main-theorem-4'}. 

\section{Preliminaries}\label{sec:prelim}

\subsection{Combinatorial Terminology}

In this section, we review and set up the relevant combinatorial definitions and results for the proof of Theorem~\ref{thm:main-theorem-1}. The main combinatorial object will be (integer) partitions. We will define partitions, the dominance order on partitions, various types of tableaux, and some statistics on partitions. Most of the material written in this section can be found in any book on the combinatorics of the symmetric group (see~\cites{Sagan2001, Macdonald2015, Fulton1996}).

\subsubsection{Partitions}

For a positive integer $N$, a \textbf{partition} $\mu$ of size $N$ is a \emph{weakly decreasing} sequence of non-negative integers which sum to $N$. A partition corresponding to the sequence $\lambda_1, \lambda_2, \ldots$ is notated either as $(\lambda_1, \lambda_2, \ldots)$ or $1^{\mu_1} 2^{\mu_2} 3^{\mu_3} \ldots$ where $\mu_k$ is the number of $k$'s in the sequence. The notation $\lambda \vdash N$ means that $\lambda$ is a partition of $N$. 

\begin{Remark}
    Since any partition of finite size necessarily contains an infinite tail of zeroes, we often notate partitions with a \emph{finite} weakly decreasing sequence of non-negative integers. We do this with the understanding that the sequence continues with an infinite tail of zeroes. For example, the partitions $(3, 2, 2, 1)$ and $(3, 2, 2, 1, 0)$ both represent the same partition $(3, 2, 2, 1, 0, 0, \ldots)$.
\end{Remark}

\subsubsection{Diagrams}

We can represent partitions diagrammatically with their Young diagrams. The \textbf{Young diagram} of a partition $\lambda = (\lambda_1, \lambda_2, \ldots)$ is a left-justified array of boxes such that the first row has $\lambda_1$ boxes, the second row has $\lambda_2$ boxes, and so on. For example, see the left picture of Figure~\ref{fig:conjugative-partitions} for the Young diagram of $\lambda = (3, 1)$.

Let $\lambda = (\lambda_1, \lambda_2, \ldots)$ be a partition. The \textbf{conjugate partition}, denoted $\lambda^*$, is the partition $\lambda^* = (\lambda_1^*, \lambda_2^*, \ldots)$ where $\lambda_k^* = \max_{i \geq 0} \{ \lambda_i \geq k \}$. Diagrammatically, the Young diagram of $\lambda^*$ is the transpose of the Young diagram of $\lambda$. See the right picture of Figure~\ref{fig:conjugative-partitions} for the Young diagram of $\lambda^*$ when $\lambda = (3, 1)$.

\begin{figure}
\centering
\begin{tikzpicture}[scale = 0.8]
    \draw (0, 0) rectangle + (1, -1);
    \draw (1, 0) rectangle + (1, -1);
    \draw (2, 0) rectangle + (1, -1);
    \draw (0, -1) rectangle + (1, -1);

    \draw (7, 0) rectangle + (1, -1);
    \draw (8, 0) rectangle + (1, -1);
    \draw (7, -1) rectangle + (1, -1);
    \draw (7, -2) rectangle + (1, -1);
\end{tikzpicture}

\caption{On the left, we have the Young diagram for $\lambda = (3, 1)$. On the right, we have the Young diagram for $\lambda^* = (2,1, 1)$.}\label{fig:conjugative-partitions}
\end{figure}
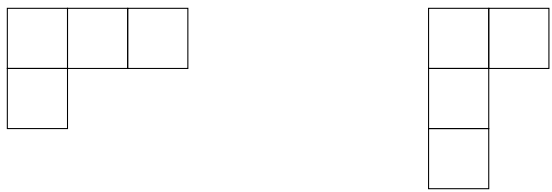

When the Young diagram of a partition $\lambda$ contains the Young diagram of another partition $\mu$, we write $\lambda \supset \mu$. In this case, the \textbf{skew Young diagram} of shape $\lambda / \mu$ is the Young diagram of $\lambda$ with the Young diagram of $\mu$ removed. The right picture in Figure~\ref{fig:3221-diagram} without the filling is an example of a skew Young diagram with $\lambda = (4, 3, 2)$ and $\mu = (2, 1)$.

For $N \geq 0$, we let $\YY_N$ denote the set of partitions of size $N$. We let $\YY = \bigcup_{N \geq 0} \YY_N$ be the set of all partitions.

\subsubsection{Tableaux}

We define various types of tableaux on Young diagrams and skew Young diagrams. Let $\lambda, \mu \vdash N$ be partitions of $N$. A \textbf{filling} of $\lambda$ with \textbf{content} $\mu$ is a way of labeling the boxes in the Young diagram of $\lambda$ (exactly one number in each box) with $\mu_1$ $1$'s, $\mu_2$ $2$'s, and so on. Figure~\ref{fig:3221-diagram} offers two examples of fillings of different shapes and different contents. In a similar way, we define fillings on Young diagrams of skew shape. 

Given a partitions $\lambda, \mu \vdash N$, a \textbf{semistandard Young tableaux} (SSYT) of shape $\lambda$ and content $\mu$ is a filling of $\lambda$ with content $\mu$ such that rows are \emph{weakly} increasing from left to right and columns are \emph{strictly} increasing from top to bottom. In a similar way, we define semistandard Young tableaux on skew shapes. 

Given a partition $\lambda \vdash N$, a \textbf{standard Young tableaux} (SYT) of shape $\lambda$ is a semistandard Young tableaux of $\lambda$ with content $(1, 1, \ldots, 1)$. In other words, it is a way to fill in the Young diagram of $\lambda$ with the numbers $[N]$ (each number used exactly once) such that the rows and columns are strictly increasing. Figure~\ref{fig:3221-diagram} shows a standard Young tableaux of shape $(3, 2, 2, 1)$.

\begin{figure}
\centering
\begin{tikzpicture}[scale=0.8]
    \foreach \y [count=\i] in {3, 2, 2, 1} {
        \foreach \x in {1,...,\y} {
            \draw (\x,-\i) rectangle ++(1,-1);
        }
    }
    \node at (1.5,-1.5) {1};
    \node at (2.5,-1.5) {2};
    \node at (3.5,-1.5) {3};
    \node at (1.5,-2.5) {4};
    \node at (2.5,-2.5) {5};
    \node at (1.5,-3.5) {6};
    \node at (2.5,-3.5) {7};
    \node at (1.5,-4.5) {8};

    \draw (6, -4) rectangle + (1, 1);
    \draw (7, -4) rectangle + (1, 1);
    \draw (7, -3) rectangle + (1, 1);
    \draw (8, -3) rectangle + (1, 1);
    \draw (8, -2) rectangle + (1, 1);
    \draw (9, -2) rectangle + (1, 1);

    \node at (6.5, -3.5) {2};
    \node at (7.5, -3.5) {3}; 
    \node at (7.5, -2.5) {1};
    \node at (8.5, -2.5) {2};
    \node at (8.5, -1.5) {1};
    \node at (9.5, -1.5) {1};

\end{tikzpicture} 
\caption{On the left, we have a \emph{standard Young tableaux} on a \emph{Young diagram} of shape $(3, 2, 2, 1)$. On the right, we have a \emph{Littlewood-Richardson tableaux} of \emph{content} $(3, 2, 1)$ on a skew-Young diagram of shape $(4, 3, 2)/(2, 1)$.}\label{fig:3221-diagram}
\end{figure}

Let $\lambda / \mu$ be a Young diagram of skew shape of size $\ell$. We say that a semistandard tableaux of shape $\lambda / \mu$ is a \textbf{Littlewood Richardson tableaux} if the concatenation of the \emph{reversed rows} from top to bottom satisfies the property that in every prefix the number of instances of $i$ is at least the number of instances of $i+1$ for all $i$. In other words, the content of any prefix of the word is a partition. 

For an example, the concatenation of the reversed rows of the right diagram in Figure~\ref{fig:3221-diagram} is $112132$. This satisfies the aforementioned property. Thus, the right picture in Figure~\ref{fig:3221-diagram} is a Littlewood-Richardson tableaux. 

\subsubsection{Dominance order}

Let $\lambda, \mu \in \YY$ be any partitions (not necessarily of the same size). We write $\lambda \lhd \mu$ if and only if  $\sum_{i = 1}^k \lambda_i \leq \sum_{i = 1}^k \mu_i$ for all $k \geq 1$. When $\lambda \lhd \mu$ we say that $\mu$ \textbf{dominates} $\lambda$. The partial order $\lhd$ is called the \textbf{dominance} order. 

\subsubsection{Statistics}~\label{sec:statistics}

We now define some statistics on partitions and discuss some of their properties. The statistics will enumerate partitions and certain number of fillings on (skew) Young tableaux. 

\subsubsection{Partition function}

Let $p(n)$ be the \textbf{partition function}. This counts the number of partitions of $n$. In~\cite{Hardy2000}, Hardy and Ramanujan proved the following asymptotic estimate for the partition function. 

\begin{Theorem}[Hardy-Ramanujan]\label{thm:partition-function-bound}
    \phantom{h}
    \[
        p(n) \sim \frac{1}{4n\sqrt{3}} \exp \left\{ \pi \sqrt{\frac{2n}{3}} \right\}.
    \]
\end{Theorem}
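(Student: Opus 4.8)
The plan is to prove the asymptotic by applying the saddle-point method to the partition generating function, that is, the ``major arc'' version of the Hardy--Ramanujan circle method. Recall Euler's identity
\[
    F(q) := \sum_{n\ge 0} p(n)\, q^n = \prod_{k\ge 1}\frac{1}{1-q^k},
\]
which converges for $|q| < 1$, so Cauchy's integral formula gives $p(n) = \frac{1}{2\pi i}\oint_{|q| = r} F(q)\, q^{-n-1}\, dq$ for every $0 < r < 1$. Writing $q = e^{-t}e^{i\theta}$ with $t > 0$, this becomes
\[
    p(n) = \frac{e^{tn}}{2\pi}\int_{-\pi}^{\pi} F\!\left(e^{-t+i\theta}\right) e^{-in\theta}\, d\theta .
\]
I would choose $t = t_n$ shrinking to $0$ at the appropriate rate, argue that the integral concentrates in a small arc around $\theta = 0$, and evaluate that arc by a Gaussian approximation.

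The essential analytic input is the behaviour of $F$ near $q = 1$. Taking logarithms and expanding $-\log(1-x)$,
\[
    \log F(e^{-t}) = \sum_{m \ge 1}\frac{1}{m\,(e^{mt}-1)},
\]
and from the modular transformation law $\eta(-1/\tau) = \sqrt{-i\tau}\,\eta(\tau)$ for the Dedekind eta function (or, equivalently, a Mellin-transform/Euler--Maclaurin analysis of the sum above) one gets
\[
    \log F(e^{-t}) = \frac{\pi^2}{6t} + \tfrac12\log\frac{t}{2\pi} + O(t), \qquad t \to 0^{+}.
\]
For $\theta$ small one has more generally $\log F(e^{-t+i\theta}) = \frac{\pi^2}{6(t-i\theta)} + O\!\left(\log\tfrac1t\right)$, while for $|\theta|$ not small on the scale of $t$ one needs the ``minor arc'' bound that $\bigl|F(e^{-t+i\theta})\bigr|$ is exponentially smaller than $F(e^{-t})$: writing $\Re\log F(e^{-t+i\theta}) = \sum_{m\ge1}\Re\frac{1}{m(e^{m(t-i\theta)}-1)}$ and comparing term by term with the $\theta = 0$ series, one shows a definite proportion of the main term $\pi^2/(6t)$ is lost as soon as $m\theta$ stays away from integer multiples of $2\pi$, which controls $|\theta| \ge t^{1+\delta}$ for a suitable $\delta > 0$.

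Granting these estimates, take $t_n$ to be the saddle point of $\phi(t) := tn + \frac{\pi^2}{6t}$ (more precisely the root of $n + \frac{1}{2t} - \frac{\pi^2}{6t^2} = 0$), so that $t_n = \pi/\sqrt{6n}\,(1+o(1))$ and $\phi(t_n) = \pi\sqrt{2n/3} + o(1)$. On the major arc $|\theta| \le t_n^{1+\delta}$ one Taylor-expands $\frac{\pi^2}{6(t_n - i\theta)} = \frac{\pi^2}{6t_n} + \frac{i\pi^2}{6t_n^2}\theta - \frac{\pi^2}{6t_n^3}\theta^2 + \cdots$; the linear term cancels against $e^{-in\theta}$ by the saddle equation, leaving a Gaussian integral
\[
    \int e^{-\frac{\pi^2}{6t_n^3}\theta^2}\, d\theta = \sqrt{\frac{6t_n^3}{\pi}}\,(1+o(1)),
\]
and the slowly-varying prefactor contributes $\sqrt{t_n/2\pi}\,(1+o(1))$. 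Collecting everything,
\[
    p(n) \sim \frac{e^{\phi(t_n)}}{2\pi}\sqrt{\frac{t_n}{2\pi}}\sqrt{\frac{6t_n^3}{\pi}} = \frac{\sqrt3\, t_n^{2}}{2\pi^2}\, e^{\pi\sqrt{2n/3}} = \frac{1}{4n\sqrt3}\exp\!\left\{\pi\sqrt{\tfrac{2n}{3}}\right\},
\]
using $t_n^2 \sim \pi^2/(6n)$.

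I expect the main obstacle to be the minor-arc estimate: making rigorous, uniformly in $\theta$, the claim that $\Re\log F$ drops by a constant factor of its peak value once $\theta$ leaves the major arc. This is the delicate heart of the circle method. One clean way to sidestep it is to invoke Ingham's Tauberian theorem: since $p(n)$ is non-decreasing and $\sum_n p(n)e^{-nt} = F(e^{-t}) \sim \sqrt{t/2\pi}\,\exp(\pi^2/6t)$ as $t \to 0^+$, Ingham's theorem immediately yields $p(n) \sim \frac{1}{4n\sqrt3}\exp(\pi\sqrt{2n/3})$, reducing the whole problem to the single estimate on $F(e^{-t})$ above at the price of citing the Tauberian theorem as a black box.
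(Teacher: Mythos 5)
The paper does not prove this statement at all: it is quoted as the classical Hardy--Ramanujan asymptotic with a citation to \cite{Hardy2000}, so any proof you give is necessarily a different route from the paper's (which is ``cite the literature''). Your outline is the standard saddle-point/circle-method argument, and the bookkeeping is right: the eta-transformation input $\log F(e^{-t})=\frac{\pi^2}{6t}+\frac12\log\frac{t}{2\pi}+O(t)$ is correct, the saddle $t_n\sim\pi/\sqrt{6n}$ gives $\phi(t_n)=\pi\sqrt{2n/3}+o(1)$, and the prefactors $\frac{1}{2\pi}\sqrt{t_n/2\pi}\cdot\sqrt{6t_n^3/\pi}=\frac{\sqrt3\,t_n^2}{2\pi^2}$ do assemble to $\frac{1}{4n\sqrt3}$. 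As a complete proof, though, the version you sketch has the gap you yourself flag: the minor-arc estimate ($|F(e^{-t+i\theta})|$ losing a fixed proportion of $\pi^2/(6t)$ for $t^{1+\delta}\le|\theta|\le\pi$) is only gestured at, and this is the genuinely delicate step; one also needs $\delta\in(1/3,1/2)$ so that the major arc both contains the Gaussian scale $\theta\asymp t^{3/2}$ and keeps the cubic term $\theta^3/t^4$ negligible, and the linear-term cancellation should be run at the unperturbed saddle $\pi^2/(6t^2)=n$ (with the corrected saddle a harmless $i\theta/(2t_n)$ remains). Your fallback via Ingham's Tauberian theorem is a legitimate way to close all of this: $p(n)$ is nondecreasing and $F(e^{-t})\sim\sqrt{t/2\pi}\,e^{\pi^2/(6t)}$, and Ingham's theorem then returns exactly the stated constant and power of $n$, at the price of a black box of comparable depth to the theorem itself. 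Finally, note that for everything in this paper only the crude bound $p(n)=\exp(O(\sqrt n))$ is used (e.g.\ in Lemma~\ref{lem:essential-tool} and Lemma~\ref{lem:bound-over-Wi-Wj}), and that bound has a short elementary proof, so the full asymptotic is never actually needed here.
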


\subsubsection{Number of SYT and Kostka Numbers}

For any partition $\lambda$, we define $f_\lambda$ to be the number of standard Young tableaux of shape $\lambda$. To compute $f_\lambda$, one can use the celebrated \emph{Hook-length formula}. We will not need this formula and we direct the interested reader to~\cites{Frame1954, Meliot2017}. 

For any partitions $\lambda, \mu \vdash N$, we define the \textbf{Kostka number} $K_{\lambda, \mu}$ to be the number of semistandard Young tableaux of shape $\lambda$ and content $\mu$. We will need the following enumerative result only in Section~\ref{sec:lower-bound} when we prove the lower bound of Theorem~\ref{thm:main-theorem-1}. 

\begin{Lemma}\label{lem:kostka number example computation}
    Let $j \geq 0$ be a fixed non-negative integer, and $T \vdash j$ a fixed partition. For sufficiently large $N$, let $\lambda = (N-j, T)$ be a partition of $N$. For sufficiently large $N$, we have that 
    \begin{equation}\label{eqn:kostka-in-this-specific-case}
        K_{\lambda, (N-t, 1^t)} = \binom{t}{j} f_T. 
    \end{equation}
    In particular, when $t < j$ we have $K_{\lambda, (N-t, 1^t)} = 0$. 
\end{Lemma}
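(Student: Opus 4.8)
The plan is to compute the Kostka number $K_{\lambda,(N-t,1^t)}$ by directly enumerating semistandard Young tableaux of shape $\lambda=(N-j,T)$ with content $(N-t,1^t)$, exploiting the fact that for large $N$ the content has one gigantic part of size $N-t$ and $t$ parts equal to $1$. In such an SSYT, the symbol $1$ appears $N-t$ times, each other symbol $2,3,\ldots,t+1$ appears exactly once, and the columns must be strictly increasing. Since $\lambda$ has only a bounded number of rows below the first (namely the rows of $T$, all of bounded length), and the first row has length $N-j$, the bulk of the $1$'s are forced into the first row: strict column-increase means no column can contain more than one $1$, so the $1$'s occupy an initial segment of each row, and in particular fill most of row $1$. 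I would make this precise: the entries $1$ must form a horizontal strip (one per column) whose shape is a partition $\rho\subseteq\lambda$ with $|\rho|=N-t$, and the complement $\lambda/\rho$ — which has size $t$ — must be fillable by the single-use symbols $2,\ldots,t+1$ strictly increasing down columns and weakly increasing along rows, i.e. it must be a standard-type filling. Because each of $2,\ldots,t+1$ is used exactly once, the filling of $\lambda/\rho$ is a standard Young tableau of the skew shape $\lambda/\rho$ (rows weakly increasing with distinct entries forces strictly increasing rows).

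Next I would pin down which skew shapes $\lambda/\rho$ can occur. Since $\rho\subseteq\lambda=(N-j,T)$ and $|\lambda/\rho|=t$, and $N$ is large, $\rho$ must agree with $\lambda$ in all rows but the first up to removing a few boxes; more carefully, the removed boxes all lie in a bounded-size region near the right end. Writing $\rho=(N-j-s,\;T')$ isn't quite general enough, so instead: the skew shape $\lambda/\rho$ consists of $t$ boxes, it is itself (a translate of) an ordinary skew diagram, and for the counting it is cleanest to observe that removing a horizontal strip of size $N-t$ from $\lambda$ leaves a skew shape that fits inside a bounded-size box. Here is the key structural point: $T$ has size $j$, so at most $j$ of the removed $1$'s can come from rows $2,3,\ldots$; hence at least $N-t-j$ of the $1$'s lie in row $1$, forcing row $1$ of $\rho$ to have length between $N-t-j$ and $N-j$. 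Thus $\lambda/\rho$ is contained in the union of the last $j$ columns of row $1$ together with all of $T$ — a region of $2j$ boxes independent of $N$.

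Now the count. For each valid $\rho$, the number of SSYT completions is $f_{\lambda/\rho}$, the number of SYT of the skew shape. So $K_{\lambda,(N-t,1^t)}=\sum_{\rho} f_{\lambda/\rho}$ over horizontal strips $\rho\subseteq\lambda$ with $|\rho|=N-t$. I would now recognize this sum combinatorially: it counts pairs (horizontal strip of $1$'s, SYT of the rest), which is exactly the same as counting SSYT of shape $\lambda$, content $(N-t,1,\ldots,1)$ — but the cleaner route is to use the branching/Pieri structure. Removing a horizontal strip of size $N-t$ from $\lambda=(N-j,T)$: all but $j$ of those boxes must come from row $1$, and we are choosing which $j$-box sub-skew-shape of $(N-j,T)/(\text{row 1 truncated})$ remains. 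Equivalently, $\lambda/\rho$ ranges over all skew shapes obtained by keeping $T$ (size $j$) plus possibly some rightmost boxes of row $1$, with total size $t$; the number of ways to complete to an SYT, summed, is $\binom{t}{j}f_T$. The combinatorial identity to invoke is Pieri's rule applied to $s_{(N-t)}\cdot s_{(1^{t-j})}\cdot(\text{stuff})$, or more directly: an SSYT of shape $\lambda$ with content $(N-t,1^t)$ is equivalent to a saturated chain in Young's lattice $\emptyset\subset(N-t)\subset\cdots\subset\lambda$ adding one box at steps $2,\ldots,t+1$; since the first step adds a full row $(N-t)$ and subsequent single-box additions are constrained, the count is $\binom{t}{j}f_T$ — the $\binom{t}{j}$ coming from choosing at which $j$ of the $t$ single-box steps we add a box to row $1$ (past column $N-t$) versus into $T$, and $f_T$ from the ways to build $T$. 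When $t<j$ there is no room (we'd need to add at least $j$ boxes to build $T$ but only have $t$ single-box steps, and $T$ genuinely requires $j$ of them if it's nontrivial in rows $\geq 2$), giving $0$.

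**Main obstacle.** The delicate step is the combinatorial identity $\sum_\rho f_{\lambda/\rho}=\binom{t}{j}f_T$: I must correctly account for the interaction between the $1$'s that spill into rows $2,3,\ldots$ of $T$ and the single-use symbols, making sure the column-strictness constraints are exactly captured and that double-counting is avoided. The cleanest rigorous path is probably to set up the bijection with saturated chains in Young's lattice (the standard interpretation of Kostka numbers as counting such chains with prescribed step-sizes) and then carefully enumerate the chains: the first step is forced to be $(N-t)$, and the remaining $t$ unit steps must build up to $(N-j,T)$ from $(N-t)$, which requires adding $t-j$ boxes to the end of row $1$ and building $T$ with $j$ boxes; the number of interleavings is $\binom{t}{j}$ and the number of ways to build $T$ standardly is $f_T$. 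Handling the edge case $t<j$ then falls out because $|\lambda|-|(N-t)| = t$ unit steps cannot produce the $j$ boxes of $T$ plus possibly more.
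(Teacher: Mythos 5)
Your proposal is correct and ultimately coincides with the paper's argument: all the $1$'s are forced into the first row (since the cells containing $1$ form a partition shape that is also a horizontal strip, hence a single row $(N-t)$, so none can lie below row $1$ --- your intermediate worry about $1$'s spilling into rows $2,3,\ldots$ is unnecessary), and for large $N$ the remaining $t$ distinct entries split independently into $t-j$ end-of-row-one cells and the $j$ cells of $T$, giving $\binom{t}{j}f_T$, with the $t<j$ case vanishing for the same reason as in the paper. Your saturated-chain formulation in Young's lattice is just a repackaging of this same count, not a genuinely different route.
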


\begin{proof}
    Suppose that $t \geq j$, the left hand side counts the number of semistandard Young tableaux on $\lambda$ with content $(N-t, 1^t)$. In each such SSYT, we are forced to place all $N-t$ ones in the first $N-t$ entries of the first row of $\lambda$. Our remaining labels consist of $t$ distinct numbers. The remaining boxes are $t-j$ boxes in the first row and $j$ boxes below the first row in the shape of $T$. For sufficiently large $N$, the $t-j$ boxes in the first row is strictly to the right of the $j$ boxes below. In this case, the first row and the rows below the first row are independent of each other with respect to the strictly increasing column condition. Thus, we can pick any $t-j$ numbers from are $t$ distinct numbers to be placed in increasing order in the first row. For the rest of the boxes, we can arrange them as any SYT of shape $T$. Thus, Equation~\ref{eqn:kostka-in-this-specific-case} holds when $t \geq j$. 

    When $t < j$, we want to prove that $K_{\lambda, (N-t, 1^t)} = 0$. But this is true because any SSYT on $\lambda$ will be forced to have $N-t$ ones in the first row. But the first row of $\lambda$ is of size $N-j < N-t$. Thus, there are no such SSYT. We have proved in that Equation~\ref{eqn:kostka-in-this-specific-case} holds in all cases. 
\end{proof}

\subsubsection{Littlewood-Richardson Coefficients}

For any triple of partitions $\lambda, \mu, \nu \in \YY$, let $c^\lambda_{\mu, \nu}$ be the number of Littlewood-Richardson tableaux of shape $\lambda / \mu$ and content $\nu$. The numbers $c^\lambda_{\mu, \nu}$ are called \textbf{Littlewood-Richardson coefficients}, or LR coefficients for short. 

We will be particularly interested in the combinatorial conditions when $c^\lambda_{\mu, \nu} > 0$ as this determines when the terms which show up in our upper bound for Theorem~\ref{thm:main-theorem-1}. When this happens, we say $(\lambda, \mu, \nu)$ is a \textbf{Littlewood-Richardson triple}. We discuss this more when we discuss hive models in Section~\ref{sec:hive-model}. 

\begin{Definition}
    Let $\lambda \vdash N$ be a partition and let $[N] = A \sqcup B$ be the two sets in the biased random transposition shuffle. We define $\LR^\lambda$ to be the set of tuples of partitions $(\mu, \nu)$ with $|\mu| \vdash |A|$ and $\nu \vdash |B|$ which form a Littlewood-Richardson triple with $\lambda$. For $\mu, \nu \in \YY$, we define $\LR_{\mu, \nu}$ to be the set of partitions $\lambda$ which form a Littlewood-Richardson triple with $(\mu, \nu)$. Explicitly, these sets are given by
    \begin{align*}
        \LR^\lambda & \eqdef \{ (\mu, \nu) \in \YY_{|A|}\times \YY_{|B|} : c^\lambda_{\mu, \nu} > 0 \}, \\
        \LR_{\mu, \nu} & \eqdef \{\lambda \in \YY : c^\lambda_{\mu, \nu} > 0 \}.
    \end{align*}
\end{Definition}

\subsubsection{Relations between statistics}

The interplay between $f_\lambda$ and $c^\lambda_{\mu, \nu}$ comes from the representation theory of the symmetric group and general linear group. We present several identities which come exactly from the representation theory of these groups. 

\begin{Proposition}\label{prop:sym-rep-theory-identities} \phantom{h}
    \begin{enumerate}
        \item Let $N \geq 1$ be a positive integer. Then 
        \[
            \sum_{\lambda \vdash N} f_\lambda^2 = N!
        \]
        \item Let $\lambda \vdash N$ and let $m \leq N$. Then 
        \[
            f_\lambda = \sum_{\substack{\mu \vdash m, \\ \nu \vdash N-m}} c^\lambda_{\mu, \nu} f_\mu f_\nu. 
        \]
        \item Let $\mu, \nu \in \YY$ be partitions. Then 
        \[
            \binom{|\mu| + |\nu|}{|\mu|}f_\mu f_\nu = \sum_{\lambda \in \YY} c^\lambda_{\mu, \nu} f_\lambda. 
        \]        
        \item Let $\lambda, \mu, \nu$ be partitions. Then $c^\lambda_{\mu, \nu} = c^{\lambda^*}_{\mu^*, \nu^*}$. 
    \end{enumerate}
\end{Proposition}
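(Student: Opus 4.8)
The plan is to derive all four identities from the representation theory of $\mathfrak{S}_N$, using the fact that the combinatorially-defined Littlewood--Richardson coefficients are exactly the multiplicities appearing in induction and restriction between $\mathfrak{S}_N$ and its Young subgroups. Throughout I would write $V_\lambda$ for the irreducible $\mathbb{C}\mathfrak{S}_N$-module indexed by $\lambda \vdash N$, so that $\dim V_\lambda = f_\lambda$. Part (1) is then immediate from the decomposition of the regular representation $\mathbb{C}\mathfrak{S}_N \cong \bigoplus_{\lambda \vdash N} V_\lambda^{\oplus f_\lambda}$ by comparing dimensions; alternatively one can invoke the RSK correspondence as a bijection between $\mathfrak{S}_N$ and pairs of standard Young tableaux of a common shape.

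For parts (2) and (3) the key input is the representation-theoretic Littlewood--Richardson rule: for $\mu \vdash m$ and $\nu \vdash N-m$,
\[
    \mathrm{Ind}_{\mathfrak{S}_m \times \mathfrak{S}_{N-m}}^{\mathfrak{S}_N}(V_\mu \otimes V_\nu) \;\cong\; \bigoplus_{\lambda \vdash N} V_\lambda^{\oplus c^\lambda_{\mu,\nu}},
\]
where $V_\mu \otimes V_\nu$ denotes the outer tensor product. Part (3) follows by computing dimensions on both sides: the left-hand side has dimension $[\mathfrak{S}_N : \mathfrak{S}_m \times \mathfrak{S}_{N-m}]\, f_\mu f_\nu = \binom{N}{m} f_\mu f_\nu$, which is the claim after setting $N = |\mu| + |\nu|$ and $m = |\mu|$. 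For part (2) I would apply Frobenius reciprocity: the multiplicity of $V_\mu \otimes V_\nu$ in $\mathrm{Res}^{\mathfrak{S}_N}_{\mathfrak{S}_m \times \mathfrak{S}_{N-m}} V_\lambda$ equals $c^\lambda_{\mu,\nu}$, so that $\mathrm{Res}^{\mathfrak{S}_N}_{\mathfrak{S}_m \times \mathfrak{S}_{N-m}} V_\lambda \cong \bigoplus_{\mu \vdash m,\ \nu \vdash N-m} (V_\mu \otimes V_\nu)^{\oplus c^\lambda_{\mu,\nu}}$, and equating dimensions gives the identity.

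For part (4) I would use that tensoring a Specht module by the sign character transposes its partition, $V_{\lambda^*} \cong V_\lambda \otimes \mathrm{sgn}$, together with the fact that $\mathrm{sgn}$ restricts to $\mathrm{sgn} \otimes \mathrm{sgn}$ on $\mathfrak{S}_m \times \mathfrak{S}_{N-m}$, so induction commutes with the sign twist; twisting the displayed decomposition of $\mathrm{Ind}\big((V_\mu \otimes V_\nu) \otimes \mathrm{sgn}\big) = \mathrm{Ind}(V_{\mu^*} \otimes V_{\nu^*})$ by $\mathrm{sgn}$ and reading off the multiplicity of $V_{\lambda^*}$ on both sides yields $c^{\lambda^*}_{\mu^*,\nu^*} = c^\lambda_{\mu,\nu}$ (equivalently, this is the image of $s_\mu s_\nu = \sum_\lambda c^\lambda_{\mu,\nu} s_\lambda$ under the involution $\omega$ on symmetric functions sending $s_\lambda \mapsto s_{\lambda^*}$). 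The only substantive ingredient is the Littlewood--Richardson rule itself — the identification of the count of Littlewood--Richardson tableaux with the induction multiplicities — which I would cite rather than reprove. I do not expect a real obstacle; the point requiring care is simply checking that the combinatorial normalization of $c^\lambda_{\mu,\nu}$ used here matches the representation-theoretic one, and that in part (2) the sum over all pairs $(\mu,\nu)$ with $|\mu|=m$ accounts for the full restriction.
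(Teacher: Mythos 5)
Your proof is correct, and for parts (1)--(3) it is essentially the paper's own argument: the paper likewise obtains (1) by taking dimensions in the decomposition of the regular representation, (2) by taking dimensions in the restriction form of the Littlewood--Richardson rule, and (3) by taking dimensions in the induction form. The only structural difference in (2) is that you quote only the induced-module statement and recover the restriction multiplicities via Frobenius reciprocity (together with the standard fact that irreducibles of $\fS_m \times \fS_{N-m}$ are outer tensor products $S^\mu \boxtimes S^\nu$), whereas the paper simply cites the restriction statement directly (Theorem~\ref{thm:LR-rule}(1)); both are fine and carry the same content. For (4) you genuinely diverge: the paper cites Fulton, while you give a self-contained argument via $S^{\lambda^*} \cong S^\lambda \otimes \mathrm{sgn}$ and the projection formula $\Ind_H^G\bigl(W \otimes \Res^G_H \chi\bigr) \cong \bigl(\Ind_H^G W\bigr) \otimes \chi$ applied with $\chi = \mathrm{sgn}$, equivalently the involution $\omega$ with $\omega(s_\lambda) = s_{\lambda^*}$. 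That argument is correct and arguably preferable, since it makes the proposition self-contained modulo the representation-theoretic Littlewood--Richardson rule, which is exactly the input the paper also takes as given; your concluding caveat about matching the combinatorial and representation-theoretic normalizations of $c^\lambda_{\mu,\nu}$ is precisely what the citation to Zelevinsky in Theorem~\ref{thm:LR-rule} settles, so no gap remains.
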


\begin{proof}
    Part (1) comes from taking the dimensions of Proposition~\ref{prop:rep-theory-facts-sym}(4). Part (2) comes from taking dimensions of Theorem~\ref{thm:LR-rule}(1). Part (3) comes from taking dimensions of Theorem~\ref{thm:LR-rule}(2). Part (4) comes from Corollary 2, Section 5.2 of~\cite{Fulton1996}.
\end{proof}

We will frequently use the following bound on $f_\lambda$ in our analysis of the cutoff time. 

\begin{Lemma}\label{lem:f-lambda-bound}
    \phantom{h}
    \begin{enumerate}
        \item Let $\lambda \vdash 2n$ be a partition satisfying $\lambda_1 \geq dn$ and $\lambda_1^* \geq d^* n$ for two positive constants $d, d^* \in [0, 2)$. Then we have the bound 
        \[
            (f_\lambda)^2 \leq \exp \left ((2 - d - d^*) n \log n + O(n) \right ) 
        \]
        where the $O(n)$ error is independent of $d, d^*$. 
        \item Let $\lambda \vdash 2n$ have first row of length $\lambda_1 = 2n - j$. Then 
        \[
            (f_\lambda)^2 \leq \frac{1}{j!} \exp (2j \log (2n)). 
        \]
    \end{enumerate}
\end{Lemma}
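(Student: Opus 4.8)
The plan is to bound $f_\lambda$ from above by a count of fillings that is easy to estimate, and then convert that count into the stated exponential bounds. I would start from the trivial inequality $f_\lambda \le \binom{2n}{\lambda_1}\,f_{\hat\lambda}$ where $\hat\lambda = (\lambda_2,\lambda_3,\dots)$ is $\lambda$ with its first row removed: any SYT of shape $\lambda$ is determined by choosing which $\lambda_1$ of the entries $[2n]$ go in the first row (they must then appear in increasing order there) together with a standard filling of the remaining skew shape, and the number of the latter is at most $f_{\hat\lambda}$, the number of SYT on a straight shape of the same size. Iterating, or more directly using the crude bound $f_{\hat\lambda} \le \bigl(|\hat\lambda|\bigr)!^{1/2} \le (2n-\lambda_1)!^{1/2}$ together with $f_{\hat\lambda}\le (2n-\lambda_1)!$, gives me workable estimates; which crude bound I use will differ between parts (1) and (2).

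For part (2), with $\lambda_1 = 2n-j$, I would write $f_\lambda \le \binom{2n}{j} f_{\hat\lambda}$ and then bound $f_{\hat\lambda} \le \sqrt{j!}$ using Proposition \ref{prop:sym-rep-theory-identities}(1) applied to partitions of size $j = |\hat\lambda|$ (since $f_{\hat\lambda}^2 \le \sum_{\rho \vdash j} f_\rho^2 = j!$). Hence $(f_\lambda)^2 \le \binom{2n}{j}^2 j! \le \frac{(2n)^{2j}}{(j!)^2}\, j! = \frac{1}{j!}(2n)^{2j} = \frac{1}{j!}\exp(2j\log(2n))$, which is exactly the claimed bound. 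This part is essentially a one-line computation once the binomial reduction is in place.

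For part (1), the constraints $\lambda_1 \ge dn$ and $\lambda_1^* \ge d^*n$ say the diagram contains a hook of arm $\ge dn$ and leg $\ge d^*n$, so the number of boxes outside the first row and first column is at most $2n - dn - d^*n + 1 = (2-d-d^*)n + 1$. I would bound $f_\lambda$ by first choosing the first-row entries, then within the remaining cells choosing the first-column entries, and finally filling the rest: this gives $f_\lambda \le \binom{2n}{\lambda_1}\binom{2n-\lambda_1}{\lambda_1^*-1}\, k!$ where $k = 2n - \lambda_1 - \lambda_1^* + 1 \le (2-d-d^*)n+1$ is the number of remaining cells, and $k! \le k^k$. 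The binomial factors are each at most $2^{2n} = \exp(O(n))$, and $k^k \le \exp\!\bigl(((2-d-d^*)n+1)\log((2-d-d^*)n+1)\bigr) \le \exp\!\bigl((2-d-d^*)n\log n + O(n)\bigr)$, where crucially $\log((2-d-d^*)n) \le \log(2n) = \log n + O(1)$ uniformly in $d, d^*$, so the error is independent of $d, d^*$ as required. Squaring gives the stated estimate.

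The main obstacle is keeping the error term in part (1) genuinely uniform in $d$ and $d^*$: one must avoid any step that produces a factor like $\log(1/(2-d-d^*))$ (which blows up as $d + d^* \to 2$), and the bound $k^k$ with $k$ possibly zero or one needs the harmless convention $0^0 = 1$ and the observation that the $O(n)$ slack absorbs the low-order terms. Beyond that, everything reduces to elementary binomial and Stirling-type estimates.
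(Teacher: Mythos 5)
Your part (2) is correct and essentially self-contained: the reduction $f_\lambda \le \binom{2n}{j} f_{\hat\lambda}$ followed by $f_{\hat\lambda}^2 \le \sum_{\rho \vdash j} f_\rho^2 = j!$ gives exactly $\frac{1}{j!}\exp(2j\log(2n))$; the paper simply cites Diaconis--Shahshahani for this, so your argument is a fine (and arguably preferable) replacement.

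Part (1), however, has a genuine gap: a factor of $2$ in the leading exponent. You bound the fillings of the $k = 2n-\lambda_1-\lambda_1^*+1$ residual cells by $k!$ (all arrangements), which yields $f_\lambda \le \exp\bigl((2-d-d^*)n\log n + O(n)\bigr)$, and squaring then gives only $(f_\lambda)^2 \le \exp\bigl(2(2-d-d^*)n\log n + O(n)\bigr)$ --- not the claimed bound on $(f_\lambda)^2$. The statement genuinely needs the stronger constant: downstream the paper uses, e.g., $\max_{\lambda \in \Zone_R^{III}} (f_\lambda)^2 \le \exp(0.8\, n\log n + O(n))$ to conclude $0.8 + \tfrac{2}{b}\log(\cdots) < 0$, and with $1.6$ in place of $0.8$ those sign checks fail. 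The fix is the same trick you already used in part (2): the residual cells (those outside the first row and first column) form a straight Young diagram $\mu \vdash k$, and the restriction of an SYT to them is an SYT of shape $\mu$, so the number of such fillings is $f_\mu \le \sqrt{k!}$ by Proposition~\ref{prop:sym-rep-theory-identities}(1), not $k!$. With that replacement one gets $f_\lambda \le \exp(O(n))\sqrt{k!}$, hence $(f_\lambda)^2 \le \exp(O(n))\, k! \le \exp\bigl((2-d-d^*)n\log n + O(n)\bigr)$, uniformly in $d, d^*$; this is exactly the paper's argument (it picks the hook entries first, but that difference is immaterial, contributing only $\exp(O(n))$).
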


\begin{proof}
    We will bound $f_\lambda$ by over-counting the number of standard Young tableaux. The first row and first column of a Young diagram of shape $\lambda$ have a total of $\lambda_1 + \lambda_1^* - 1$ squares. We first pick $\lambda_1 + \lambda_1^* - 1$ numbers from $[2n]$ to occupy these squares. To arrange them in these squares, it is enough to pick the numbers which go in the first row. After doing so, the order of the numbers in the first row, the content of the numbers in the first column, and the order of the numbers in the first column will all be determined. There is at most $\exp(O(n))$ ways to do this. 
    
    The number of ways to fill the remaining squares with our remaining numbers is bounded above by the number of SYT of shape $\mu$ where $\mu \vdash 2n - \lambda_1 - \lambda_1^* + 1$ is the Young diagram obtained after deleting the first row and first column from the Young diagram of $\lambda$. From Proposition~\ref{prop:sym-rep-theory-identities}(1) this number is at most $\sqrt{(2n - \lambda_1 - \lambda_1^* + 1)!}$. 

    Putting this all together, we get the bound
    \begin{align*}
        (f_\lambda)^2 & \leq (2n - \lambda_1 - \lambda_1^*)! \cdot \exp (O(n)) \\
        & \leq ((2-d-d^*)n)! \cdot \exp(O(n)) \\
        & \leq n^{(2-d-d^*)n} \cdot \exp(O(n)) \\
        & = \exp ((2-d-d^*)n \log n + O(n) ). 
    \end{align*}
    This completes the proof of part (1). For part (2), see~\cite{Diaconis1981}*{Corollary 2}.
\end{proof}

\subsubsection{Diagonal index} We define the notion of the \emph{diagonal index} of a partition. This terminology was introduced in~\cite{Dieker2018} to study the spectrum of the random-to-random shuffle. 

\begin{Definition}
    Let $\lambda$ be a partition of any size. We define the \textbf{diagonal index} to be
    \[
        \Diag(\lambda) \eqdef \sum_{i \geq 0} \binom{\lambda_i}{2} - \sum_{i \geq 0} \binom{\lambda_i^*}{2} = \sum_{(i, j) \in \lambda} (j-i). 
    \]
\end{Definition} 

\begin{Remark}
    In representation theory, the diagonal index is referred to as the \emph{content} of the partition. When we refer to the content of a cell in a Young diagram, we are referring to the summand $j-i$ where $(i, j)$ are the coordinates of that cell. By definition, the diagonal index of $\lambda$ is the sum of the contents of all of the cells in the Young diagram of shape $\lambda$. 
\end{Remark}

\begin{Definition}
    Let $\lambda = (\lambda_1, \lambda_2, \ldots)$ and $\mu = (\mu_1, \mu_2, \ldots)$ be partitions of any size. We define the \textbf{inner product} of $\lambda$ and $\mu$, denoted $\langle \lambda, \mu \rangle$, to be their inner product as elements of $\ell_2(\NN)$. Explicitly, we have 
    \[
        \langle \lambda , \mu \rangle \eqdef \sum_{i \geq 1} \lambda_i \mu_i. 
    \]
\end{Definition}

We now prove a few useful lemmas about the diagonal index. We will apply these results frequently in our analysis of the cutoff time. 

\begin{Lemma}\label{lem:oliver}
    Let $\lambda, \mu, \nu$ be partitions of $N$ such that $\lambda \rhd \mu$. Then we have:
    \begin{enumerate}
        \item $\langle \lambda, \nu \rangle \geq \langle \mu, \nu \rangle$. 
        \item $\Diag(\lambda) \geq \Diag(\mu)$. 
        \item $\Diag(\lambda^*) = - \Diag(\lambda)$.
        \item $\Diag(\lambda) \leq (\lambda_1 - 1) \frac{N}{2}$. 
    \end{enumerate}
\end{Lemma}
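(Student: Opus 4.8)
The plan is to handle the four parts in an order that lets each build on the previous one, starting from the definition of dominance. For part (1), I would use the standard "Abel summation" trick for dominance order: writing $\langle \lambda, \nu\rangle - \langle \mu, \nu\rangle = \sum_{i\geq 1}(\lambda_i - \mu_i)\nu_i$ and summing by parts, one gets $\sum_{k\geq 1}\bigl(\sum_{i=1}^k(\lambda_i-\mu_i)\bigr)(\nu_k - \nu_{k+1})$. Since $\lambda \rhd \mu$ means every partial sum $\sum_{i=1}^k(\lambda_i-\mu_i)$ is $\geq 0$, and since $\nu$ is weakly decreasing so $\nu_k - \nu_{k+1}\geq 0$, every term is nonnegative. (One must check the boundary terms vanish, which they do because both $\lambda,\mu$ have size $N$ so the full sum $\sum_i(\lambda_i-\mu_i)=0$, and $\nu$ has finite support.)

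For part (2), the cleanest route is to reduce to part (1). Using the identity $\Diag(\lambda) = \sum_{(i,j)\in\lambda}(j-i)$, I would separate the "$j$" contribution from the "$-i$" contribution. The sum $\sum_{(i,j)\in\lambda} j = \sum_i \binom{\lambda_i+1}{2} = \frac12\bigl(\langle\lambda,\lambda\rangle + N\bigr)$... actually it is cleaner to note $\sum_{(i,j)\in\lambda}(j-i) = \sum_i\binom{\lambda_i}{2} - \sum_i\binom{\lambda_i^*}{2}$, and observe $\sum_i\binom{\lambda_i}{2}$ is, up to the additive constant $-\tfrac N2$, equal to $\tfrac12\langle\lambda,\lambda\rangle - \tfrac N2$, hmm — the truly robust approach is this: it is classical that $\lambda \rhd \mu$ if and only if $\mu$ is obtained from $\lambda$ by a sequence of elementary moves, each moving one box from a higher row to a lower row (a "down-move" in the dominance lattice, valid when it keeps the shape a partition). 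Each such elementary move strictly decreases $\Diag$, because moving a box from position $(i,j)$ to $(i',j')$ with $i' > i$ changes the content from $j-i$ to $j'-i'$; since the box leaves row $i$ at column $\lambda_i$ and enters row $i'$ at column $\lambda_{i'}+1 \leq \lambda_i$, the content strictly decreases. Summing over the chain gives $\Diag(\lambda)\geq\Diag(\mu)$. I expect this — establishing monotonicity of $\Diag$ along the dominance order — to be the main obstacle, mostly in pinning down the elementary-move description cleanly; an alternative is to cite that $\Diag(\lambda)=\sum_i(i-1)\lambda_i$ reversed, i.e. $\Diag(\lambda) = \binom{N}{2} \text{-ish} - \langle \lambda^*, (0,1,2,\dots)\rangle$, and reduce directly to part (1) applied with a linear weight vector.

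For part (3), I would argue directly from the definition: transposing the diagram swaps the roles of $\lambda_i$ and $\lambda_i^*$, so $\Diag(\lambda^*) = \sum_i\binom{\lambda_i^*}{2} - \sum_i\binom{(\lambda^*)^*_i}{2} = \sum_i\binom{\lambda_i^*}{2} - \sum_i\binom{\lambda_i}{2} = -\Diag(\lambda)$, using $(\lambda^*)^* = \lambda$. Equivalently, the content of cell $(i,j)$ in $\lambda$ is $j-i$, which becomes $i-j$ in $\lambda^*$. For part (4), I would bound $\Diag(\lambda) = \sum_{(i,j)\in\lambda}(j-i) \leq \sum_{(i,j)\in\lambda}(j-1) = \sum_i\binom{\lambda_i}{2}$; since each row has length at most $\lambda_1$, $\binom{\lambda_i}{2}\leq \frac{\lambda_1-1}{2}\lambda_i$ — indeed $\binom{m}{2} = \frac{m-1}{2}m \leq \frac{\lambda_1-1}{2}m$ for $m\leq\lambda_1$ — and summing over $i$ gives $\sum_i\binom{\lambda_i}{2}\leq\frac{\lambda_1-1}{2}\sum_i\lambda_i = (\lambda_1-1)\frac{N}{2}$, as claimed. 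None of parts (1), (3), (4) should present real difficulty; the only place requiring care is the dominance-monotonicity in part (2).
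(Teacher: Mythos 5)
Your proposal is correct, but it is organized differently from the paper's treatment. The paper only writes out part (1), reducing to a covering relation of the dominance order (two rows changed by $\pm 1$) and computing the difference as $\nu_{r_1}-\nu_{r_2}\geq 0$; parts (2)--(4) are simply cited from Lemma 2.3.4 of the one-sided transposition reference. You instead prove everything directly: (1) by summation by parts, using that the partial sums $\sum_{i\le k}(\lambda_i-\mu_i)$ are nonnegative and $\nu$ is weakly decreasing (with the boundary term vanishing since $|\lambda|=|\mu|$ and $\nu$ has finite support); (2) by descending from $\lambda$ to $\mu$ through single-box moves, each of which strictly lowers the content of the moved box (new column $\le$ old column, new row $>$ old row); (3) from $(\lambda^*)^*=\lambda$ and the $\binom{\cdot}{2}$ formula; (4) from $j-i\le j-1$ and $\binom{\lambda_i}{2}\le \frac{\lambda_1-1}{2}\lambda_i$. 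All four arguments are sound. The one point you should nail down rather than hedge is the input to (2): the fact that $\lambda\rhd\mu$ implies $\mu$ is reachable from $\lambda$ by a chain of one-box "down" moves through partitions is classical (Brylawski's description of the dominance lattice; also an exercise in Macdonald, Ch.~I), and it is the same structural fact the paper invokes when it reduces part (1) to covering relations, so a citation suffices — your vague alternative via a linear weight vector is unnecessary once this is in place. What your route buys is self-containedness (no external lemma for (2)--(4)) and, in (1), an argument that avoids covering relations altogether; what the paper's route buys is brevity.
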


\begin{proof}
    For parts (2), (3), and (4), see Lemma 2.3.4 in \cite{matheauraven2020random}. For (1), it suffices to prove the inequality when $\lambda \rhd \mu$ is a covering relation. In the covering relation case, $\lambda$ and $\mu$ agree on all rows except for two, say with indices $r_1 < r_2$, and we have
    \begin{align*}
        \lambda_{r_1} & = \mu_{r_1} + 1 \\
        \lambda_{r_2} & = \mu_{r_2} - 1. 
    \end{align*}
    Then, if we look at the difference of $\langle \lambda, \nu \rangle$ and $\langle \mu, \nu \rangle$, we have 
    \[
        \langle \lambda, \nu \rangle - \langle \mu, \nu \rangle = (\lambda_{r_1} - \mu_{r_1}) \nu_{r_1} + (\lambda_{r_2} - \mu_{r_2}) \nu_{r_2} = v_{r_1} - v_{r_2} \geq 0. 
    \]
\end{proof}

\begin{Lemma}\label{lem:diag-of-addition-can-be-expanded}
    Let $\lambda, \mu$ be partitions. Then we have 
    \[
        \Diag(\lambda + \mu) = \Diag(\lambda) + \Diag(\mu) + \langle \lambda, \mu \rangle.
    \]
\end{Lemma}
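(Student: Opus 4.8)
The plan is to argue directly from the ``content'' description of the diagonal index recorded in its definition, namely $\Diag(\lambda) = \sum_{(i,j) \in \lambda}(j-i)$, where the sum runs over all cells $(i,j)$ (row $i$, column $j$) of the Young diagram of $\lambda$. The key structural observation is that the Young diagram of $\lambda + \mu$ decomposes rowwise in a transparent way: in each row $i$ it consists of the $\lambda_i$ cells $(i,1),\dots,(i,\lambda_i)$ of $\lambda$, followed by $\mu_i$ additional cells $(i,\lambda_i+1),\dots,(i,\lambda_i+\mu_i)$.

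First I would split the content sum according to this decomposition:
\[
  \Diag(\lambda + \mu) \;=\; \sum_{(i,j) \in \lambda}(j - i) \;+\; \sum_{i \ge 1}\,\sum_{j=1}^{\mu_i} \bigl((\lambda_i + j) - i\bigr).
\]
The first sum is exactly $\Diag(\lambda)$. In the second sum I would split the summand $(\lambda_i+j)-i$ as $(j-i) + \lambda_i$: summing the terms $(j-i)$ over $1 \le j \le \mu_i$ and over all $i$ reproduces the content sum of the diagram of $\mu$, that is $\Diag(\mu)$, while summing the constant terms $\lambda_i$ gives $\sum_{i\ge 1}\lambda_i\mu_i = \langle \lambda,\mu\rangle$. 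Adding the three pieces gives the identity.

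An equivalent route, for anyone preferring to stay with the binomial definition $\Diag(\lambda) = \sum_i \binom{\lambda_i}{2} - \sum_i \binom{\lambda_i^*}{2}$, is to apply the elementary identity $\binom{a+b}{2} = \binom{a}{2} + \binom{b}{2} + ab$ rowwise to $(\lambda+\mu)_i = \lambda_i+\mu_i$, and to use the classical fact that $(\lambda+\mu)^* = \lambda^* \cup \mu^*$, where $\alpha \cup \beta$ denotes the partition whose parts are the multiset union of the parts of $\alpha$ and of $\beta$ (this follows from $(\alpha \cup \beta)^* = \alpha^* + \beta^*$, which is immediate by counting parts $\ge k$). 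Since $\sum_i \binom{\cdot}{2}$ over a multiset union of part-sizes is additive, the conjugate contributions of $\lambda$ and $\mu$ add with no cross term, whereas the non-conjugate contributions add plus the cross term $\sum_i \lambda_i\mu_i = \langle\lambda,\mu\rangle$; subtracting gives the result.

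There is no genuine obstacle here. The only point that demands a moment's care is the bookkeeping in the first approach: one must keep track of the fact that the $j$-th appended cell in row $i$ lies in column $\lambda_i + j$ (not column $j$), so that its content is $(\lambda_i + j) - i$ and not $j - i$; this is precisely what produces the cross term $\langle \lambda,\mu\rangle$. In the alternative approach the corresponding small point is the verification (or citation) of the conjugation identity $(\lambda+\mu)^* = \lambda^* \cup \mu^*$.
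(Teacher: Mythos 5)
Your first argument is exactly the paper's proof: both split the content sum $\sum_{(i,j)}(j-i)$ over the cells of $\lambda$ and the skew part $(\lambda+\mu)/\lambda$, observing that the appended cell in row $i$, position $j$, has content $(\lambda_i+j)-i$, which yields $\Diag(\mu)+\langle\lambda,\mu\rangle$. The alternative route via $\binom{a+b}{2}=\binom{a}{2}+\binom{b}{2}+ab$ and $(\lambda+\mu)^*=\lambda^*\cup\mu^*$ is also correct, but the main line of reasoning coincides with the paper's.
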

\begin{proof}
    To prove this, we will use the formula for the diagonal sum given in terms of the \emph{content} of each box: 
    \[
        \Diag(\lambda) = \sum_{(i, j) \in \lambda} (j-i). 
    \]
    Since the Young diagram of $\lambda + \mu$ is the Young diagram of $\lambda$ with the rows of $\mu$ added to the rows of $\lambda$, the Young diagram of $\lambda + \mu$ still contains the boxes of the Young diagram of $\lambda$. Moreover, these boxes still give the same contribution in $\Diag(\lambda + \mu)$ as in $\Diag(\lambda)$. In other words, we have 
    \[
        \Diag(\lambda + \mu) - \Diag(\lambda) = \sum_{(i, j) \in (\lambda + \mu)/\lambda} (j-i). 
    \]
    The rest of the boxes are in $\mu$ but shifted by some amount horizontally depending on the row of the box. Explicitly, if a box of $\mu$ is in the $k$th row, then its location in $\lambda + \mu$ is $\lambda_k$ boxes to the right. The row index stays the same but the column index increases by $\lambda_k$. Thus, we have 
    \begin{align*}
        \sum_{(i, j) \in (\lambda + \mu) / \lambda} (j-i) & = \sum_{k \geq 1} \sum_{(i, j) \in \mu_{(k)}} (j-i) + \lambda_k \\
        & = \Diag(\mu) + \langle \lambda, \mu \rangle. 
    \end{align*}
    This completes the proof.
\end{proof}

\begin{Lemma}\label{lem:diagonal-index-asymptotics}
    Let $j \geq 0$ be a positive integer. Let $\lambda \eqdef \lambda^{(N)} \vdash N$ be a sequence of partitions such that $\lambda_1 = N - j$. Then, 
    \[
        \Diag (\lambda) = \frac{1}{2}N^2 - \frac{(2j+1)}{2} N + O_j(1).
    \]
\end{Lemma}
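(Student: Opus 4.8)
The plan is to compute $\Diag(\lambda)$ directly from the formula
\[
    \Diag(\lambda) = \sum_{i \geq 1} \binom{\lambda_i}{2} - \sum_{i \geq 1} \binom{\lambda_i^*}{2},
\]
splitting off the dominant contribution of the long first row and bounding the rest. First I would isolate the first row: since $\lambda_1 = N - j$, its contribution to the first sum is $\binom{N-j}{2} = \frac{1}{2}(N-j)(N-j-1) = \frac{1}{2}N^2 - \frac{(2j+1)}{2}N + O_j(1)$. So it remains to show that the remaining terms, namely $\sum_{i \geq 2} \binom{\lambda_i}{2} - \sum_{i \geq 1} \binom{\lambda_i^*}{2}$, contribute only $O_j(1)$.

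The key observation is that $\lambda_1 = N - j$ forces the partition $\lambda$ to be ``almost'' the single row $(N)$: the boxes lying outside the first row number exactly $|\lambda| - \lambda_1 = j$, a constant. Hence for $i \geq 2$ we have $\lambda_i \leq j$, so $\sum_{i \geq 2}\binom{\lambda_i}{2} \leq \sum_{i\geq 2}\binom{j}{2} \cdot \mathbf{1}[\lambda_i \neq 0] \leq j\binom{j}{2} = O_j(1)$ (there are at most $j$ nonzero rows past the first). For the conjugate, note $\lambda_1^* = N - j$ as well would be false in general, but more simply: all columns of $\lambda$ have height $\lambda_k^* \leq 1 + j$ (the one box in the first row plus at most $j$ boxes below), and only the first $\lambda_1 = N-j$ columns are nonempty, but the columns of height exactly $1$ contribute $\binom{1}{2} = 0$. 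The columns of height $\geq 2$ are precisely those indexed $k \leq \lambda_2$, and $\lambda_2 \leq j$, so there are at most $j$ such columns, each contributing $\binom{\lambda_k^*}{2} \leq \binom{j+1}{2}$. Therefore $\sum_{i\geq 1}\binom{\lambda_i^*}{2} \leq j \binom{j+1}{2} = O_j(1)$. Combining, $\Diag(\lambda) = \binom{N-j}{2} + O_j(1) = \frac{1}{2}N^2 - \frac{(2j+1)}{2}N + O_j(1)$, as claimed.

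I do not anticipate a serious obstacle here; the only mild care needed is to make the $O_j(1)$ bookkeeping clean, i.e.\ to state explicitly that every error term depends only on $j$ and not on the particular sequence of partitions $\lambda^{(N)}$ (which is legitimate since $j$ alone bounds the number and sizes of all rows past the first and columns of height past one). Alternatively, one could use the content formula $\Diag(\lambda) = \sum_{(i,j) \in \lambda}(j-i)$ together with Lemma~\ref{lem:oliver}(4), writing $\lambda = (N-j) + \tilde\lambda$ where $\tilde\lambda \vdash j$ sits in rows $2,3,\dots$, and applying Lemma~\ref{lem:diag-of-addition-can-be-expanded}; but the direct binomial computation above is cleaner and self-contained.
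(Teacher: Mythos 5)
Your proof is correct and follows essentially the same route as the paper, which simply asserts that only the first row contributes at non-constant order and writes $\Diag(\lambda) = \binom{N-j}{2} + O(1)$; your version supplies the bookkeeping (at most $j$ rows past the first, each of length at most $j$; at most $j$ columns of height $\geq 2$, each of height at most $j+1$) that the paper leaves implicit.
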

\begin{proof}
    The only contribution of non-constant order to the diagonal index comes from the first row. Hence, we have 
    \[
        \Diag(\lambda) = \binom{N-j}{2} + O(1) = \frac{1}{2}N^2 - \frac{(2j+1)}{2} N + O_j(1). 
    \]
\end{proof}

\begin{Lemma}\label{lem:generic-diagonal-sum-bound}
    Let $\lambda \vdash 2n$ be a partition. Then there is some universal constant $C > 0$ such that the following hold:
    \begin{enumerate}
        \item We have 
        \[
            \Diag(\lambda) \leq n \lambda_1 - \frac{\lambda_1 \lambda_1^*}{2} - \frac{(\lambda_1^*)^2}{2} + C n
        \] 

        \item We have  
        \[
            \Diag(\lambda) \geq -n \lambda_1^* + \frac{\lambda_1 \lambda_1^*}{2} + \frac{\lambda_1^2}{2} - Cn. 
        \] 
    \end{enumerate}
\end{Lemma}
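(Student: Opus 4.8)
The plan is to use the box-content formula $\Diag(\lambda) = \sum_{(i,j)\in\lambda}(j-i)$ together with the row-sum/column-sum decomposition of $\Diag$, isolating the contribution of the first row and first column (which carry the quadratic terms) from the remaining cells (which, by Lemma~\ref{lem:oliver}(4) applied to the complementary partition, contribute only $O(n)$). For part (1), I would split $\Diag(\lambda)=\sum_{i\ge 1}\binom{\lambda_i}{2}-\sum_{k\ge 1}\binom{\lambda_k^*}{2}$. The positive sum is bounded above by $\binom{\lambda_1}{2}+\sum_{i\ge 2}\binom{\lambda_i}{2}\le \binom{\lambda_1}{2}+\frac{\lambda_1}{2}\sum_{i\ge 2}\lambda_i = \binom{\lambda_1}{2}+\frac{\lambda_1}{2}(2n-\lambda_1)$, using that each $\lambda_i\le\lambda_1$ so $\binom{\lambda_i}{2}\le\frac{\lambda_i(\lambda_1-1)}{2}\le\frac{\lambda_i\lambda_1}{2}$. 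For the negative sum I want a lower bound; using $\binom{\lambda_k^*}{2}\ge\frac{(\lambda_k^*)^2}{2}-\frac{\lambda_k^*}{2}$ and keeping the $k=1$ term plus the bound $\sum_{k\ge 1}\lambda_k^*=2n$ gives $\sum_{k\ge 1}\binom{\lambda_k^*}{2}\ge \frac{(\lambda_1^*)^2}{2}+\frac{\lambda_1^*}{2}\sum_{k\ge 2}\lambda_k^* - n \ge \frac{(\lambda_1^*)^2}{2} + \text{(nonneg)} - n$; here one has to be slightly careful, since $\sum_{k\ge 2}\lambda_k^*$ need not be controlled by $\lambda_1^*$.

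Instead, the cleaner route is the box-content formula directly. Write $\Diag(\lambda)=\sum_{\text{first row}}(j-1)+\sum_{\text{first column, }i\ge 2}(1-i)+\sum_{\text{interior}}(j-i)$, where "interior" means cells with $i,j\ge 2$. The first-row sum is $\sum_{j=1}^{\lambda_1}(j-1)=\binom{\lambda_1}{2}$. The first-column-below-the-corner sum is $\sum_{i=2}^{\lambda_1^*}(1-i) = -\binom{\lambda_1^*}{2}$ (here I should double check the indexing: cells $(2,1),\dots,(\lambda_1^*,1)$ contribute $-1,-2,\dots,-(\lambda_1^*-1)$, summing to $-\binom{\lambda_1^*}{2}$). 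The interior is a translate by $(1,1)$ of the Young diagram of $\mu$, where $\mu$ is $\lambda$ with its first row and first column removed, so $\sum_{\text{interior}}(j-i)=\sum_{(i,j)\in\mu}(j-i)=\Diag(\mu)$ and $|\mu|=2n-\lambda_1-\lambda_1^*+1$. By Lemma~\ref{lem:oliver}(4), $|\Diag(\mu)|\le (\mu_1-1)\frac{|\mu|}{2}\le \frac{|\mu|^2}{4}\le n^2$ — wait, that is not $O(n)$. So I must be more careful: actually $\mu_1=\lambda_2-1$ could be large, so $\Diag(\mu)$ is genuinely quadratic in general, and the claimed bound's quadratic terms $-\frac{\lambda_1\lambda_1^*}{2}-\frac{(\lambda_1^*)^2}{2}$ must come from the other pieces. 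Reconciling: $\binom{\lambda_1}{2}+\Diag(\mu)$ should be bounded by $n\lambda_1-\frac{\lambda_1\lambda_1^*}{2}-\frac{(\lambda_1^*)^2}{2}+Cn+\binom{\lambda_1^*}{2}$, i.e.\ $\Diag(\mu)\le n\lambda_1-\binom{\lambda_1}{2}-\frac{\lambda_1\lambda_1^*}{2}-\frac{(\lambda_1^*)^2}{2}+\binom{\lambda_1^*}{2}+Cn$. Since $|\mu|=2n-\lambda_1-\lambda_1^*+O(1)$ and $\mu$ has all columns of height $\le \lambda_1^*-1$, I would bound $\Diag(\mu)=\sum_i\binom{\mu_i}{2}-\sum_k\binom{\mu_k^*}{2}\le \sum_i\binom{\mu_i}{2}$, and then use $\binom{\mu_i}{2}\le\frac{\mu_i(\mu_1)}{2}$ together with $\sum_i\mu_i=|\mu|$ and $\mu_1\le\lambda_1$; so $\Diag(\mu)\le\frac{\lambda_1}{2}(2n-\lambda_1-\lambda_1^*)+O(n)$, and assembling $\binom{\lambda_1}{2}+\frac{\lambda_1}{2}(2n-\lambda_1-\lambda_1^*)=\frac{\lambda_1^2}{2}-\frac{\lambda_1}{2}+n\lambda_1-\frac{\lambda_1^2}{2}-\frac{\lambda_1\lambda_1^*}{2}=n\lambda_1-\frac{\lambda_1\lambda_1^*}{2}+O(n)$, which still lacks the $-\frac{(\lambda_1^*)^2}{2}$. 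That extra term can only come from the negative part $-\sum_k\binom{\mu_k^*}{2}$ or from being more careful with the first column, so I would instead keep $-\binom{\lambda_1^*}{2}\approx -\frac{(\lambda_1^*)^2}{2}+O(n)$ from the first-column contribution and drop $-\sum_k\binom{\mu_k^*}{2}\le 0$; this accounts for exactly the missing $-\frac{(\lambda_1^*)^2}{2}$ and yields part (1).

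For part (2), I would apply part (1) to $\lambda^*$ in place of $\lambda$: since $(\lambda^*)_1=\lambda_1^*$ and $(\lambda^*)_1^*=\lambda_1$, part (1) gives $\Diag(\lambda^*)\le n\lambda_1^*-\frac{\lambda_1^*\lambda_1}{2}-\frac{\lambda_1^2}{2}+Cn$, and then Lemma~\ref{lem:oliver}(3) ($\Diag(\lambda^*)=-\Diag(\lambda)$) immediately gives $\Diag(\lambda)\ge -n\lambda_1^*+\frac{\lambda_1\lambda_1^*}{2}+\frac{\lambda_1^2}{2}-Cn$, as claimed.

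The main obstacle is getting the bookkeeping exactly right in part (1): one must carefully decide which pieces of $\Diag(\lambda)$ supply the quadratic terms $-\frac{\lambda_1\lambda_1^*}{2}$ (the interaction between a long first row and the rest) and $-\frac{(\lambda_1^*)^2}{2}$ (the first column), and verify that everything else is uniformly $O(n)$ — in particular that the discarded term $-\sum_{k}\binom{\mu_k^*}{2}$ is nonpositive (so dropping it is legal for an upper bound) and that $\mu_1-1=\lambda_2-1\le\lambda_1-1$ so the crude bound $\binom{\mu_i}{2}\le\frac{\mu_i\lambda_1}{2}$ holds. Once the decomposition is pinned down, both inequalities are short; part (2) is then purely formal via conjugation.
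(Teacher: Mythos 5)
Your final argument is correct, but it proves part (1) by a genuinely different route than the paper. You decompose the diagonal index along the first hook: $\Diag(\lambda)=\binom{\lambda_1}{2}-\binom{\lambda_1^*}{2}+\Diag(\mu)$, where $\mu$ is $\lambda$ with its first row and first column deleted (the content sum is invariant under the diagonal shift, and $|\mu|=2n-\lambda_1-\lambda_1^*+1$), then bound $\Diag(\mu)\le\sum_i\binom{\mu_i}{2}\le\frac{\lambda_1}{2}|\mu|$ using $\mu_1\le\lambda_1$ and nonnegativity of the column sum; assembling gives exactly $n\lambda_1-\frac{\lambda_1\lambda_1^*}{2}-\frac{(\lambda_1^*)^2}{2}+O(n)$, in fact with error at most $\frac{\lambda_1^*}{2}\le n$. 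The paper instead fixes $\lambda_1$ and $\lambda_1^*$, constructs the extremal diagram $\mathbf{W}_\lambda$ (a full first column with as many rows of length $\lambda_1$ as possible packed at the top), argues that $\mathbf{W}_\lambda$ dominates $\lambda$, and then bounds $\Diag(\mathbf{W}_\lambda)$ using the monotonicity of $\Diag$ in dominance order (Lemma~\ref{lem:oliver}(2)). Your computation is more elementary and self-contained: it avoids the dominance claim for $\mathbf{W}_\lambda$, which the paper only sketches, at the cost of a small bookkeeping argument with the hook decomposition; the paper's route buys a reusable extremal picture consistent with its other dominance arguments. For part (2) you do exactly what the paper does: apply part (1) to $\lambda^*$ and use $\Diag(\lambda^*)=-\Diag(\lambda)$ from Lemma~\ref{lem:oliver}(3). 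In a final write-up you should excise the two abandoned attempts (the row/column-sum manipulation in your first paragraph and the misapplication of Lemma~\ref{lem:oliver}(4) to $\mu$, whose $\Diag$ is indeed genuinely quadratic in general); only the hook-decomposition argument is needed, and as you note the dropped term $-\sum_k\binom{\mu_k^*}{2}$ is nonpositive, so discarding it is legitimate for the upper bound.
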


\begin{proof}
    In order to give an upper bound to the diagonal index $\Diag(\lambda)$, we find a partition which dominates any partition with the same first row and first column as $\lambda$. Then, we can use Lemma~\ref{lem:oliver}(2) to get an upper bound.

    To construct this Young diagram, it is easiest to consider its Young diagram. The diagram will be the one with first row $\lambda_1$, first column $\lambda_1^*$, and with the maximum possible number of rows of length $\lambda_1$. To make this construction more formal, we first start with a diagram with one column of length $\lambda_1^*$. We will then start filling the rows from top to bottom with $(\lambda_1 - 1)$ more boxes in each row. In particular, each row that we add $\lambda_1 - 1$ boxes to will have length $\lambda_1$. Suppose that we can only complete at most the first $k$ rows to rows of length $\lambda_1$. The number $k$ is characterized by the maximum number satisfying $k(\lambda_1 - 1) \leq 2n - \lambda_1^*$. In other words, when we write 
    \[
        2n - \lambda_1^* = (\lambda_1 - 1) k + r, \quad 0 \leq r < \lambda_1 - 1,
    \]
    the numbers $k$ and $r$ are exactly the quotient and remainder when dividing $2n-\lambda_1^*$ by $\lambda_1-1$. We then fill in the next row with the remaining $r$ boxes. Call this Young diagram $\mathbf{W}_\lambda$.\footnote{Note that even though the subscript $\lambda$ is a partition, the definition of $\mathbf{W}_\lambda$ only depends on $\lambda_1$ and $\lambda_1^*$.}
    
    Given any partition $\mu$ with first row $\lambda_1$ and first column $\lambda_1^*$, it is not difficult to prove that we can always transform it into $\mathbf{W}_\lambda$ by moving outside corners (not lying in the first column) up to complete rows to length $\lambda_1$ from top to bottom. This proves that $\mathbf{W}_\lambda$ dominates $\lambda$.
    
    We now bound the diagonal index $\Diag(\mathbf{W}_\lambda)$. From the construction, the Young diagram of $\mathbf{W}_\lambda$ consists of $k$ rows of length $\lambda_1$, possibly an extra row of length $r+1$, and exactly $\lambda_1^*$ boxes in the first column. There are no other possible rows. Thus, we can bound the diagonal index by 
    \begin{align*}
        \Diag(\mathbf{W}_\lambda) & \leq k\binom{\lambda_1}{2} + \binom{r+1}{2} - \binom{\lambda_1^*}{2} \\
        & \leq \frac{\lambda_1}{2} (2n-\lambda_1^* - r) + \frac{r(r+1)}{2} - \frac{\left ( \lambda_1^* \right )^2}{2} + O(n) \\
        & \leq n \lambda_1 - \frac{\lambda_1 \lambda_1^*}{2} - \frac{\left (\lambda_1^* \right )^2}{2} + Cn 
    \end{align*}
    for some $C > 0$. Since $\Diag(\lambda) \leq \Diag(\mathbf{W}_\lambda)$, this completes the proof of part (1). Part (2) follows immediately from part (1). Indeed, we have that 
    \[
        \Diag(\lambda) = - \Diag(\lambda^*) \geq - \left \{ n \lambda_1^* - \frac{\lambda_1 \lambda_1^*}{2} - \frac{\lambda_1^2}{2} \right \} - Cn. 
    \]
    This suffices for the proof.
\end{proof}

\subsection{Spectrum of \texorpdfstring{$\bRT$}{bRT}}

One of the main ingredients of our proof of Theorem~\ref{thm:main-theorem-1} is an explicit diagonalization of the transition matrix of the $\bRT$ shuffle. We first define the eigenvalues which appear in the spectrum. In Theorem~\ref{thm:main-theorem-2}, we also give the multiplicities for each eigenvalue. 

\begin{Definition}
    Let $\lambda, \mu, \nu$ be partitions. Let $\Eig^\lambda_{\mu, \nu}$ be equal to 
    \[
        \frac{a^2 |A| + b^2 |B|}{2N} + \frac{2(a^2 - ab)}{N^2} \Diag(\mu) + \frac{2(b^2 -ab)}{N^2} \Diag(\nu) + \frac{2ab}{N^2} \Diag(\lambda). 
    \]
\end{Definition}

\begin{Theorem}\label{thm:main-theorem-2}
    The transition matrix of $\bRT$ is diagonalizable with eigenvalues $\Eig^\lambda_{\mu, \nu}$ for all $\lambda \vdash N$, $\mu \vdash |A|$, $\nu \vdash |B|$ with multiplicities $c^\lambda_{\mu, \nu} f_\lambda f_\mu f_\nu$.
\end{Theorem}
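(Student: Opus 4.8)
The plan is to realize the transition matrix $P$ as convolution by the one-step measure $Q$ of the shuffle on the group algebra $\mathbb{C}[\fS_N]$, and then to diagonalize it via the Wedderburn decomposition of $\mathbb{C}[\fS_N]$ together with the branching rule for $\fS_N\downarrow\fS_A\times\fS_B$. Note that when $a\neq b$ the measure $Q$ is \emph{not} a class function (the weight of an $A$--$A$ transposition differs from that of a $B$--$B$ one), so the classical Diaconis--Shahshahani Fourier argument does not apply verbatim; what rescues the computation is that $Q$ is invariant under conjugation by the Young subgroup $\fS_A\times\fS_B$.

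First I would record $Q$ explicitly. Picking $x,y$ independently from $\mu_{a,b}$ and swapping cards $x$ and $y$ produces a measure on $\fS_N$ supported on the identity and the transpositions: $Q(\id)=\sum_x\mu_{a,b}(x)^2$, and for $x\neq y$, $Q\bigl((x\,y)\bigr)=2\,\mu_{a,b}(x)\mu_{a,b}(y)$, which equals $\tfrac{2a^2}{N^2}$, $\tfrac{2ab}{N^2}$, or $\tfrac{2b^2}{N^2}$ according as $\{x,y\}\subseteq A$, $\{x,y\}$ meets both parts, or $\{x,y\}\subseteq B$. Let $T=\sum_\sigma Q(\sigma)\,\sigma\in\mathbb{C}[\fS_N]$ and set $\Sigma_{AA}=\sum_{i<j\in A}(i\,j)$, $\Sigma_{BB}=\sum_{i<j\in B}(i\,j)$, and $\Sigma=\sum_{i<j}(i\,j)$. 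Since the $A$--$B$ transpositions are exactly $\Sigma-\Sigma_{AA}-\Sigma_{BB}$, a one-line computation gives
\[
    T \;=\; Q(\id)\,\id \;+\; \frac{2}{N^2}\Bigl((a^2-ab)\,\Sigma_{AA}+(b^2-ab)\,\Sigma_{BB}+ab\,\Sigma\Bigr).
\]

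Next I would invoke the standard dictionary between finite-group walks and group algebras. Under the Wedderburn isomorphism $\mathbb{C}[\fS_N]\cong\bigoplus_{\lambda\vdash N}\mathrm{End}(S^\lambda)$, the operator $P$ (convolution by $Q$; note $Q$ is symmetric, being supported on involutions) is conjugate to the operator on each block $\mathrm{End}(S^\lambda)$ given by right multiplication by $\rho_\lambda(T)$. Hence $P$ is diagonalizable exactly when every $\rho_\lambda(T)$ is, and the spectrum of $P$ is the multiset union over $\lambda\vdash N$ of the spectrum of $\rho_\lambda(T)$ on $S^\lambda$, with each such eigenvalue appearing an extra $f_\lambda=\dim S^\lambda$ times. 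It then remains to diagonalize $\rho_\lambda(T)$ on $S^\lambda$. The elements $\Sigma_{AA},\Sigma_{BB},\Sigma$ pairwise commute: $\Sigma_{AA}\in Z(\mathbb{C}[\fS_A])$, $\Sigma_{BB}\in Z(\mathbb{C}[\fS_B])$ with $\fS_A,\fS_B$ commuting elementwise, and $\Sigma\in Z(\mathbb{C}[\fS_N])$. By the Littlewood--Richardson branching rule (Theorem~\ref{thm:LR-rule}), $S^\lambda\big|_{\fS_A\times\fS_B}\cong\bigoplus_{\mu\vdash|A|,\,\nu\vdash|B|}(S^\mu\boxtimes S^\nu)^{\oplus c^\lambda_{\mu,\nu}}$, and on the $(\mu,\nu)$-isotypic summand $\Sigma_{AA}$ acts by the scalar $\Diag(\mu)$, $\Sigma_{BB}$ by $\Diag(\nu)$, and $\Sigma$ by $\Diag(\lambda)$ — using the classical fact that the sum of all transpositions of $\fS_m$ acts on $S^\tau$ by the scalar $\sum_{(i,j)\in\tau}(j-i)=\Diag(\tau)$. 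Therefore $\rho_\lambda(T)$ is scalar on each $(\mu,\nu)$-isotypic summand of $S^\lambda$, with that scalar equal to $\Eig^\lambda_{\mu,\nu}$; in particular $\rho_\lambda(T)$, hence $P$, is diagonalizable, the eigenvalue $\Eig^\lambda_{\mu,\nu}$ occurs in $S^\lambda$ with multiplicity $c^\lambda_{\mu,\nu}f_\mu f_\nu$, and therefore in $P$ with multiplicity $f_\lambda\,c^\lambda_{\mu,\nu}f_\mu f_\nu$. Finally, Proposition~\ref{prop:sym-rep-theory-identities}(2) and (1) give $\sum_\lambda f_\lambda\sum_{\mu,\nu}c^\lambda_{\mu,\nu}f_\mu f_\nu=\sum_\lambda f_\lambda^2=N!$, so the listed eigenvalues, with multiplicity, exhaust the spectrum of $P$.

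The step I expect to be the main obstacle is the bookkeeping around the Wedderburn reduction: correctly matching the convolution operator to right multiplication by $\rho_\lambda(T)$ on each block and tracking the extra multiplicity factor $f_\lambda$, together with the verification that $\Sigma_{AA},\Sigma_{BB},\Sigma$ are simultaneously diagonalized precisely by the Littlewood--Richardson decomposition and act there by the asserted content scalars. A minor caveat worth flagging: $\Diag$ is not injective on partitions of a fixed size, so distinct pairs $(\mu,\nu)$ can yield the same value $\Eig^\lambda_{\mu,\nu}$ — this is harmless, as the corresponding multiplicities simply add. Everything else, in particular the explicit form of $Q$ and the algebraic identity for $T$, is a routine computation.
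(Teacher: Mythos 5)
Your proposal is correct and follows essentially the same route as the paper: write the one-step measure as a group algebra element built from $\cT_A$, $\cT_B$, and the full transposition sum, decompose the regular representation into Specht modules, restrict each $S^\lambda$ to $\fS_A\times\fS_B$ via the Littlewood--Richardson rule, and let the central elements act by the content scalars $\Diag(\mu),\Diag(\nu),\Diag(\lambda)$ (Schur's lemma), yielding the eigenvalues $\Eig^\lambda_{\mu,\nu}$ with multiplicity $f_\lambda f_\mu f_\nu c^\lambda_{\mu,\nu}$. The only cosmetic difference is bookkeeping: you phrase the reduction through the Wedderburn blocks $\End(S^\lambda)$, whereas the paper identifies the matrix of left multiplication by $\cA$ with $P^{\mathsf T}$ and then invokes the symmetry of $P$ — both handle the same point that convolution by the (involution-supported, hence symmetric) measure has the stated spectrum.
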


The proof of Theorem~\ref{thm:main-theorem-2} relies on the representation theory of the symmetric group. We can recast $\bRT$ in the language of representation theory. In this perspective, the shuffle becomes left multiplication by some element in the group algebra. Applying \emph{Schur's lemma} in a clever way will allow us to diagonalize $\bRT$. In the next section, we briefly review the necessary background from the representation theory of the symmetric group. 

\subsection{Representation theory of \texorpdfstring{$\fS_N$}{SN}}

In this section, we briefly review the representation theory of the symmetric group. For a more comprehensive treatment, we direct the reader to~\cites{Fulton1996, Fulton2004, Serre1977}. Recall that the symmetric group $\fS_N$ is the group of permutations on $N$ letters.  Although we are primarly interested in the group $\fS_N$, many of the following concepts are defined for a general finite group. 

\subsubsection{The group algebra} Let $G$ be a finite group. The group algebra $\CC[G]$ is the complex vector space of complex-valued functions on $G$ equipped with the \textbf{convolution product} $*$: for $f_1, f_2 : G \to \CC$, we define
\[
    (f_1 * f_2) (x) = \sum_{\substack{g_1, g_2 \in G \\ g_1g_2 = x}} f_1(g_1) f_2(g_2).  
\]
Equivalently, we can view elements of $\CC[G]$ as formal linear combinations of elements in $G$.  By viewing $g \in G$ as the indicator function $1_g(\bullet) : G \to \CC$, we can write any function $f : G \to \CC$ as a linear combination of elements in $G$ in the following way: 
\[
    f = \sum_{g \in G} f(g) \cdot 1_g = \sum_{g \in G} f(g) \cdot g. 
\]
Under the identification of functions as formal linear combinations of group elements, the convolution operation on formal linear combinations of group elements translates to the natural way one would think to multiply these formal sums:
\[
    \left ( \sum_{g \in G} c_g \cdot g \right ) \cdot \left ( \sum_{h \in G} d_h \cdot h \right ) = \sum_{g, h \in G} c_g d_h \cdot (gh) = \sum_{g \in G} \left ( \sum_{xy = g} c_x d_y \right ) \cdot g.
\]
To summarize, we can view the group algebra in the following equivalent ways:
\[
    \CC[G] \eqdef \{ f : G \to \CC \} = \left \{ \sum_{g \in G} c_g \cdot g \, \mid \, c_g \in \CC \text{ for all } g \in G \right \},
\]
equipped with the convolution product. 

\subsubsection{Representations}

A \textbf{representation} of $G$ is a $\CC[G]$-module $V$. Equivalently, a representation is an algebra homomorphism $\rho : \CC[G] \to \End_\CC(V)$ or a group homomorphism $\rho : G \to \GL(V)$. We say a vector subspace $W \subset V$ of a representation is a \textbf{subrepresentation} or \textbf{$G$-invariant} or \textbf{$\CC[G]$-invariant} if $\CC[G]W \subseteq W$. When $W$ is a $G$-invariant subspace, we can view $W$ as a representation of $G$. We say that a representation $V$ is \textbf{irreducible} if and only if the only subrepresentations are $0$ and $V$. 

\begin{Example}\label{example:regular-representation}
    \phantom{h}
    \begin{enumerate}
        \item The group algebra $\CC[G]$ is a $\CC[G]$-module where $\CC[G]$ acts on $\CC[G]$ by left multiplication. This is called the \textbf{regular representation} of $G$. This is the representation-theoretic model for our card shuffle. 
        
        \item The symmetric group $\fS_N$ acts on $\CC^N$ by permuting the standard basis. This gives $\CC^N$ the structure of a $\fS_N$-representation called the \textbf{permutation module} $M_N$. The permutation module will play a vital role in Section~\ref{sec:lower-bound}. 

        \item The permutation module is not irreducible. The \textbf{standard representation} is a non-trivial subrepresentation of the permutation module given by the subspace
        \[
            \{(x_1, \ldots, x_n) \in \CC^n :  x_1 + \ldots + x_n = 0 \}. 
        \]
    \end{enumerate}
\end{Example}

\subsubsection{Direct sums, tensor products, and direct products} Let $\rho_1 : G \to \GL(V_1)$ and $\rho_2 : G \to \GL(V_2)$ be group representations. Then the direct sum $V_1 \oplus V_2$ has a natural group representation structure via $(\rho_1 \oplus \rho_2) (g) \eqdef \rho_1(g) \oplus \rho_2(g)$. The representation $V_1 \oplus V_2$ is called the \textbf{direct sum} of $V_1$ and $V_2$. 

Let $G$ and $H$ be two groups and let $\rho : G \to \GL(V)$ and $\psi : H \to \GL(W)$ be two group representations. Then the tensor product $V \otimes_\CC W$ has a natural $G \times H$ representation structure given by $(\rho \boxtimes \psi) (g \otimes h) \eqdef \rho(g) \otimes \rho(h)$. We denote the resulting representation as $V \boxtimes W$ and call it the \textbf{direct product} of the two representations.

Let $V, W$ be two $\CC[G]$-modules. We define the \textbf{tensor product} of $V$ and $W$, denoted $V \otimes W$, to be the representation $V \otimes_\CC W$ with the module structure given by the diagonal action $g(v \otimes w) \eqdef gv \otimes gw$. 

\subsubsection{Restriction and induction}

We introduce restricted representations and induced representations. The reader does not need to know the definition of induced representations to read the rest of this paper. The only fact we use related to induced representations is Equation~\ref{eqn:dimensions-induct-restrict}. 

Let $G$ be a finite group and $H \subseteq G$ be a subgroup. Let $V$ be a $G$-module. The \textbf{restriction} of $V$ to $H$, denoted $\Res^G_H V$, is $V$ viewed as a $H$-module. Let $W$ be a $H$-module. The \textbf{induced representation} of $W$ to $G$, denoted $\Ind_H^G W$ is the left $\CC[G]$-module $\CC[G] \otimes_{\CC[H]} W$ where we view the left tensor factor $\CC[G]$ as a right $\CC[H]$-module. It is not hard to see that
\begin{equation}\label{eqn:dimensions-induct-restrict}
    \dim \Res_H^G V = \dim V, \quad \dim \Ind_H^G W = [G : H] \dim W. 
\end{equation}

\subsubsection{Specht Modules}

We now specialize to $G = \fS_N$. In this setting, the irreducible representations are in a natural 1:1 correspondence with partitions of $N$. For every $\lambda \vdash N$, there is an irreducible representation $S^\lambda$ called the \textbf{Specht module} of weight $\lambda$. We will not give an explicit description of these representations, and instead only list the properties that we need.

\begin{Proposition}[Properties of Specht modules]\label{prop:rep-theory-facts-sym}
    \phantom{h}
    \begin{enumerate}
        \item The representations $\{S^\lambda\}_{\lambda \vdash N}$ are pairwise non-isomorphic and exhaust all isomorphism classes of irreducible $\fS_N$-representations. 
        
        \item For all $\lambda \vdash N$, $\dim S^\lambda = f_\lambda$. 
        
        \item For distinct $i, j \in [N]$, let $t_{ij}$ be the transposition swapping $i$ and $j$. Let $t = \sum_{1 \leq i < j \leq N} t_{ij}$. Then left multiplication by $t$ on $S^\lambda$ is given by scalar multiplication by $\Diag(\lambda)$. 
        
        \item Let $\CC[\fS_N]$ be the regular representation. Then
        \[
            \CC[\fS_N] \simeq \bigoplus_{\lambda \vdash N} (S^\lambda)^{\oplus f_\lambda}.
        \]
    \end{enumerate}
\end{Proposition}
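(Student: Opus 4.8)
The plan is to treat the four parts in the standard order of the representation theory of $\fS_N$: construct the Specht modules concretely, deduce irreducibility, completeness and the dimension count (parts (1)--(2), which also feed into (4)), then isolate part (3), the only statement requiring a computation genuinely special to transpositions. First I would set up the Specht modules. For $\lambda \vdash N$, let $M^\lambda$ be the Young permutation module: the $\fS_N$-module with basis the tabloids $\{T\}$ of shape $\lambda$, with $\fS_N$ permuting the boxes. For a Young tableau $T$ of shape $\lambda$, let $C_T$ be its column stabilizer and define the polytabloid $e_T \eqdef \sum_{\sigma \in C_T} \operatorname{sgn}(\sigma)\,\sigma\{T\}$; set $S^\lambda$ to be the $\fS_N$-submodule of $M^\lambda$ spanned by all the $e_T$. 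Equip $M^\lambda$ with the $\fS_N$-invariant inner product in which the tabloids are orthonormal; over $\CC$ this form is positive definite.

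For part (1) the tool is \emph{James's Submodule Theorem}: every $\fS_N$-submodule $U \subseteq M^\lambda$ satisfies either $S^\lambda \subseteq U$ or $U \subseteq (S^\lambda)^\perp$. Taking $U$ a nonzero submodule of $S^\lambda$, the second alternative would give $U \subseteq S^\lambda \cap (S^\lambda)^\perp = 0$ by positive-definiteness, so $U = S^\lambda$ and $S^\lambda$ is irreducible. For non-isomorphism and completeness I would use the companion fact that a nonzero $\fS_N$-homomorphism $S^\lambda \to M^\mu$ forces $\mu \lhd \lambda$ in the dominance order; since $S^\mu$ embeds in $M^\mu$, an isomorphism $S^\lambda \cong S^\mu$ would give both $\mu \lhd \lambda$ and $\lambda \lhd \mu$, hence $\lambda = \mu$. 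As the number of partitions of $N$ equals the number of conjugacy classes of $\fS_N$, which equals the number of irreducible $\fS_N$-representations over $\CC$, the pairwise distinct family $\{S^\lambda\}_{\lambda \vdash N}$ exhausts all of them. Part (2) then follows from the standard basis theorem for Specht modules: ordering tableaux by the dominance (last-letter) order, one checks that $\{e_T : T \text{ a standard tableau of shape } \lambda\}$ is linearly independent and spans $S^\lambda$, so $\dim S^\lambda = f_\lambda$.

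Part (3) is the crux. The element $t = \sum_{1 \le i < j \le N} t_{ij}$ is the formal sum over the whole conjugacy class of transpositions, hence central in $\CC[\fS_N]$; by part (1) and Schur's lemma it acts on $S^\lambda$ by a scalar $c_\lambda$. To identify $c_\lambda$ I would introduce the \emph{Jucys--Murphy elements} $X_k \eqdef \sum_{1 \le i < k} t_{ik}$, so that $X_1 + \cdots + X_N = t$. The key input — the one non-formal step in the whole proposition — is that $S^\lambda$ carries a basis $\{v_T\}$ indexed by the standard tableaux $T$ of shape $\lambda$ (the Young/Gelfand--Tsetlin basis built from the branching rule $\Res^{\fS_N}_{\fS_{N-1}} S^\lambda \cong \bigoplus_\mu S^\mu$) on which every $X_k$ acts diagonally, with $X_k v_T = (j-i)\,v_T$ where $(i,j)$ is the cell of $T$ containing the entry $k$. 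Summing over $k$ gives $t\,v_T = \big(\sum_{(i,j)\in\lambda}(j-i)\big) v_T = \Diag(\lambda)\,v_T$, which is independent of $T$ — confirming $t$ is scalar and that $c_\lambda = \Diag(\lambda)$. (Alternatively one can evaluate $c_\lambda = \binom{N}{2}\chi^\lambda(t_{12})/f_\lambda$ by taking traces and invoking Frobenius's formula for the value of $\chi^\lambda$ on a transposition, but the Jucys--Murphy route has the advantage of simultaneously reproving that $t$ acts as a scalar.)

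Finally, part (4) is the decomposition of the regular representation, valid for any finite group $G$: Maschke's theorem makes $\CC[G]$ semisimple, and for an irreducible $V$ the multiplicity of $V$ in $\CC[G]$ equals $\dim_\CC \operatorname{Hom}_{\CC[G]}(V,\CC[G]) = \dim_\CC V$, since a $\CC[G]$-module map out of $\CC[G]$ is determined freely by the image of the identity. Applying this to $G = \fS_N$ and substituting the classification from part (1) together with $\dim S^\lambda = f_\lambda$ from part (2) gives $\CC[\fS_N] \cong \bigoplus_{\lambda \vdash N} (S^\lambda)^{\oplus f_\lambda}$. The main obstacle is thus entirely in part (3): establishing the diagonal action of the Jucys--Murphy elements on the Young basis — equivalently, a careful treatment of Young's seminormal form and the branching rule — is the only genuinely substantive ingredient, while parts (1), (2) and (4) are bookkeeping with the Submodule Theorem, the dominance order and Maschke's theorem.
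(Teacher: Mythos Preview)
Your proposal is correct and in fact goes well beyond what the paper does: the paper's own proof of this proposition is simply a list of citations --- Fulton's \emph{Young Tableaux} for (1) and (2), an exercise in Etingof's lecture notes for (3), and Fulton--Harris Proposition~3.29 together with (1) and (2) for (4). Your outline via James's Submodule Theorem, the dominance-order argument for non-isomorphism, the standard polytabloid basis, and the Jucys--Murphy elements acting diagonally on the Young/Gelfand--Tsetlin basis is the standard route and is sound; the alternative for (3) via $c_\lambda = \binom{N}{2}\chi^\lambda(t_{12})/f_\lambda$ and Frobenius's character formula is also fine and is closer in spirit to what Etingof's exercise asks for.

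One small slip in part (4): you write $\dim \operatorname{Hom}_{\CC[G]}(V,\CC[G])$ but your justification (``a $\CC[G]$-module map out of $\CC[G]$ is determined freely by the image of the identity'') actually computes $\dim \operatorname{Hom}_{\CC[G]}(\CC[G],V)$. Both equal $\dim V$ by semisimplicity, so the conclusion is unaffected, but you should swap the order of the arguments to match the reasoning you give.
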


\begin{proof}
    For (1) and (2), we refer the reader to~\cite{Fulton1996}. For (3), see~\cite{Etingof2024}*{Exercise 27.9}. Finally (4) follows from (1), (2), and~\cite{Fulton2004}*{Proposition 3.29}.
\end{proof}

\subsubsection{Littlewood-Richardson Rule}

The \emph{Littlewood-Richardson rule} says that the LR coefficients defined in Section~\ref{sec:statistics} are the structure constants for products of Schur functions. This can be reinterpreted in the following equivalent form. See~\cite{Zelevinsky1981}*{Proposition 1}. 

\begin{Theorem}[The Littlewood-Richardson Rule]\label{thm:LR-rule}
    \phantom{1}
    \begin{enumerate}
        \item Let $\lambda \vdash N$ be a partition and let $r_1, r_2 \geq 0$ be positive integers satisfying $r_1 + r_2 = N$. Then, we have 
        \[
            \Res^{\fS_N}_{\fS_{r_1}\times \fS_{r_2}} S^\lambda = \bigoplus_{\substack{\mu \vdash r_1 \\ \nu \vdash r_2}} \left ( S^\mu \boxtimes S^\nu \right )^{c^\lambda_{\mu, \nu}}.
        \]

        \item Let $\mu \vdash r_1$ and $\nu \vdash r_2$ be partitions. Then 
        \[
            \Ind^{\fS_{r_1 + r_2}}_{\fS_{r_1} \times \fS_{r_2}} \left ( S^\mu \boxtimes S^\nu \right ) = \bigoplus_{\lambda \vdash r_1 + r_2} \left ( S^\lambda \right )^{\oplus c^\lambda_{\mu, \nu}}.
        \]
    \end{enumerate}
\end{Theorem}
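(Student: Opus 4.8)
The plan is to route both parts through the Frobenius characteristic map, reducing everything to the combinatorial Littlewood--Richardson identity for products of Schur functions, and to pass between the two parts by Frobenius reciprocity. Recall the Frobenius characteristic map $\mathrm{ch}\colon \bigoplus_{N\ge 0} R(\fS_N) \to \Lambda$, an isometric ring isomorphism from the direct sum of the representation rings of the $\fS_N$ onto the ring of symmetric functions $\Lambda$, which sends the class of the Specht module $S^\lambda$ to the Schur function $s_\lambda$ and intertwines the \emph{induction product} $V \cdot W \eqdef \Ind^{\fS_{m+n}}_{\fS_m \times \fS_n}(V \boxtimes W)$ with ordinary multiplication of symmetric functions. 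This is classical --- it follows from the definition of $\mathrm{ch}$ on power sums together with the Cauchy identity --- so I would simply recall it with a reference.

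The combinatorial heart of the argument is the identity $s_\mu s_\nu = \sum_{\lambda} c^\lambda_{\mu,\nu}\, s_\lambda$, with $c^\lambda_{\mu,\nu}$ the number of Littlewood--Richardson tableaux of shape $\lambda/\mu$ and content $\nu$ in the sense of Section~\ref{sec:statistics}. Since the Schur functions are orthonormal for the Hall inner product, the $s_\lambda$-coefficient of $s_\mu s_\nu$ equals $\langle s_\mu s_\nu, s_\lambda\rangle$, and adjointness of multiplication-by-$s_\mu$ with the skewing operator rewrites this as $\langle s_\nu, s_{\lambda/\mu}\rangle$, i.e.\ as the $s_\nu$-coefficient in the Schur expansion of the skew Schur function $s_{\lambda/\mu}$. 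So the claim reduces to: among the semistandard fillings of the skew shape $\lambda/\mu$ of content $\nu$, exactly $c^\lambda_{\mu,\nu}$ have lattice reading word --- which is Schützenberger's jeu-de-taquin theorem (equivalently, one can extract this from RSK). An alternative route to the symmetric-function identity is to first prove the Pieri rules for $s_\mu h_k$ and $s_\mu e_k$, feed them into the Jacobi--Trudi determinant for $s_\nu$, and run a sign-reversing involution on the resulting signed collection of fillings.

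With these two ingredients in hand, part \emph{(2)} is immediate: applying $\mathrm{ch}$ to $\Ind^{\fS_{r_1+r_2}}_{\fS_{r_1}\times\fS_{r_2}}(S^\mu\boxtimes S^\nu) = S^\mu\cdot S^\nu$ yields $s_\mu s_\nu = \sum_\lambda c^\lambda_{\mu,\nu}\, s_\lambda = \mathrm{ch}\big(\bigoplus_\lambda (S^\lambda)^{\oplus c^\lambda_{\mu,\nu}}\big)$, and injectivity of $\mathrm{ch}$ together with the irreducibility and pairwise non-isomorphism of the $S^\lambda$ forces the decomposition. Part \emph{(1)} then follows by Frobenius reciprocity: every irreducible representation of $\fS_{r_1}\times\fS_{r_2}$ has the form $S^\mu\boxtimes S^\nu$ with $\mu\vdash r_1$ and $\nu\vdash r_2$, and its multiplicity in $\Res^{\fS_N}_{\fS_{r_1}\times\fS_{r_2}}S^\lambda$ equals the multiplicity of $S^\lambda$ in $\Ind^{\fS_N}_{\fS_{r_1}\times\fS_{r_2}}(S^\mu\boxtimes S^\nu)$, which is $c^\lambda_{\mu,\nu}$ by \emph{(2)}; summing these multiplicities over all $(\mu,\nu)$ gives \emph{(1)}.

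The main obstacle is the combinatorial identity in the second paragraph --- simultaneously controlling the semistandard and lattice-word conditions is exactly where all the real work lies, and it is here that one must invoke jeu de taquin, RSK, or a carefully chosen sign-reversing involution. Everything else is purely formal once the characteristic map and Frobenius reciprocity are available; indeed, if one is willing to quote the symmetric-function form of the rule from the literature, the proof of Theorem~\ref{thm:LR-rule} collapses to the two short deductions of the third paragraph.
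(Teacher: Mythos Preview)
Your proof sketch is correct and follows the standard route via the Frobenius characteristic map, the symmetric-function form of the Littlewood--Richardson rule, and Frobenius reciprocity. The paper, however, does not prove Theorem~\ref{thm:LR-rule} at all: it simply states the result and cites \cite{Zelevinsky1981}*{Proposition 1}, treating both parts as known background. So there is nothing to compare at the level of argument --- you have supplied a genuine proof outline where the paper offers only a reference. Your identification of the combinatorial Schur-function identity as the real content, with the representation-theoretic deductions being formal consequences, is exactly right.
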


\subsection{Algebraic interpretation of \texorpdfstring{$P^{t}(x, \bullet)$}{the transition matrix}}

Recall that $P$ is the transition matrix of the biased random transposition shuffle where $P(x, y)$ is the probability of going from $x$ to $y$ after one shuffle. The entries of the transition matrix $P(x, y)$ are given by

\begin{equation}\label{eqn:explicit-trans-matrix}
    P(x, y) = P(e, yx^{-1}) = 
    \begin{cases}
        \frac{2a^2}{N^2} & \text{ if } yx^{-1} \in \Flip(A) \\
        \frac{2b^2}{N^2} & \text{ if } yx^{-1} \in \Flip(B) \\
        \frac{2ab}{N^2} & \text{ if } yx^{-1} \in \Flip(A, B) \\
        \frac{a^2 |A| + b^2 |B|}{N^2} & \text{ if } yx^{-1} = \id.
    \end{cases}
\end{equation}

For any $t \geq 0$ and $x \in \fS_N$, the measure $P^{t} (x, \bullet)$ can be viewed as a function $P^{t} (x, \bullet) : \fS_N \to [0, 1]$ given $y \mapsto P^t (x, y)$. In other words, $P^t(x, \bullet)$ can be viewed as an element of the group algebra $\CC[G]$ with $\CC[G]$-module structure given by the regular representation (see Example~\ref{example:regular-representation}). In the following, we represent $P^t(x, \cdot)$ as a formal sum of group elements. 

\subsubsection{}

For any subset $S \subset [N]$, let $\Flip(S)$ be the set of transpositions involving elements of $S$. For disjoint sets $S_1, S_2 \subseteq [N]$, let $\Flip(S_1, S_2)$ denote the set of transpositions which swaps a card from $S_1$ with a card from $S_2$. For any disjoint subsets $S, T\subset [N]$, we define $\cT_S, \cT_{S, T} \in \CC[\fS_N]$ to be the elements defined by
\[
    \cT_S \eqdef \sum_{g \in \Flip(S)} g, \quad \cT_{S, T} \eqdef \sum_{\tau \in \Flip(S, T)} \tau. 
\]
For disjoint $S, T \subseteq [n]$, we have $\cT_{S, T} = \cT_{S \cup T} - \cT_S - \cT_T$. In this notation, we define the group algebra element $\cA \in \CC[\fS_N]$ given by 
\begin{align*}
    \mathscr{A} & \eqdef \left ( \frac{a^2 |A| + b^2 |B|}{N^2} \right ) \cdot \id + \frac{2a^2}{N^2} \cT_A + \frac{2b^2}{N^2} \cT_B + \frac{2ab}{N^2} \cdot \cT_{A, B} \\
    & = \left ( \frac{a^2 |A| + b^2 |B|}{N^2} \right ) \cdot \id + \frac{2(a^2 - ab)}{N^2} \cT_A + \frac{2(b^2 - ab)}{N^2} \cT_B + \frac{2ab}{N^2} \cT_{A \cup B}. 
\end{align*}
is the formal sum representation of $P(e, \cdot) : \fS_N \to \CC$.

\subsubsection{}

In the next section, we define the hive model. This is a combinatorial model for Littlewood-Richardson coefficients. The hive model will provide ways to bound $\Eig^\lambda_{\mu, \nu}$ using the fact that $c^\lambda_{\mu, \nu} > 0$. More specifically, it will give us some inequalities associated with Littlewood-Richardson triples.

In this paper, we only use hives in the proof of Theorem~\ref{thm:main-theorem-1}. The reader interested only in the proofs of Theorem~\ref{thm:main-theorem-2} and Theorem~\ref{thm:main-theorem-3} may immediately skip to Section~\ref{sec:proof-of-theorem-2} without any difficulty.

\subsection{The Hive Model}\label{sec:hive-model}

In~\cite{Knutson1999}, Knutson and Tao introduce honeycombs and hives as combinatorial, convex-geometric models for LR coefficients, and use them to prove \emph{the saturation conjecture}. In their paper, honeycombs take center-stage. In~\cite{Buch1998}, Buch uses hive models as the main object to give a streamlined version of their proof. Our treatment of hives comes from~\cite{Buch1998}. 

For $n \geq 0$, let $H_n$ be the \textbf{Hive triangle} graph. This is the planar graph given by a triangular array of side length $n$. In particular, each side of the triangle has $n+1$ vertices and there are $n^2$ small triangles.  For example, see Figure~\ref{fig:hive-triangle} for $H_6$. 

\begin{figure}
    \begin{tikzpicture}[scale=0.3]
        \draw (0,0) -- (2,0) -- (1,{sqrt(3)}) -- cycle;
        \draw (2,0) -- (4,0) -- (3,{sqrt(3)}) -- cycle;
        \draw (4,0) -- (6,0) -- (5,{sqrt(3)}) -- cycle;
        \draw (6,0) -- (8,0) -- (7,{sqrt(3)}) -- cycle;
        \draw (8,0) -- (10,0) -- (9, {sqrt(3)}) -- cycle;
        \draw (10,0) -- (12,0) -- (11, {sqrt(3)}) -- cycle;

        \draw (1, {sqrt(3)}) -- (3, {sqrt(3)}) -- (2, {2*sqrt(3)}) -- cycle;
        \draw (3, {sqrt(3)}) -- (5, {sqrt(3)}) -- (4, {2*sqrt(3)}) -- cycle;
        \draw(5, {sqrt(3)}) -- (7, {sqrt(3)}) -- (6, {2*sqrt(3)}) -- cycle;
        \draw(7, {sqrt(3)}) -- (9, {sqrt(3)}) -- (8, {2*sqrt(3)}) -- cycle; 
        \draw (9, {sqrt(3)}) -- (11, {sqrt(3)}) -- (10, {2 * sqrt(3)}) -- cycle;

        \draw (2, {2*sqrt(3)}) -- (4, {2*sqrt(3)}) -- (3, {3*sqrt(3)}) -- cycle;
        \draw (4, {2*sqrt(3)}) -- (6, {2*sqrt(3)}) -- (5, {3*sqrt(3)}) -- cycle;
        \draw (6, {2*sqrt(3)}) -- (8, {2*sqrt(3)}) -- (7, {3*sqrt(3)}) -- cycle;
        \draw (8, {2*sqrt(3)}) -- (10, {2*sqrt(3)}) -- (9, {3*sqrt(3)}) -- cycle;

        \draw (3, {3*sqrt(3)}) -- (5,{3*sqrt(3)}) -- (4,{4*sqrt(3)}) -- cycle;
        \draw (5, {3*sqrt(3)}) -- (7,{3*sqrt(3)}) -- (6,{4*sqrt(3)}) -- cycle;
        \draw (7, {3*sqrt(3)}) -- (9,{3*sqrt(3)}) -- (8,{4*sqrt(3)}) -- cycle;

        \draw (4, {4*sqrt(3)}) -- (6,{4*sqrt(3)}) -- (5,{5*sqrt(3)}) -- cycle;
        \draw (6, {4*sqrt(3)}) -- (8,{4*sqrt(3)}) -- (7,{5*sqrt(3)}) -- cycle;

        \draw (5,{5*sqrt(3)}) -- (7, {5*sqrt(3)}) -- (6, {6*sqrt(3)}) -- cycle;

        \foreach \x in {0,2,...,12}
        \fill (\x,0) circle (6pt);
        \foreach \x in {1,3,...,11}
        \fill (\x,{sqrt(3)}) circle (6pt);
        \foreach \x in {2,4,...,10}
        \fill (\x,{2*sqrt(3)}) circle (6pt);
        \foreach \x in {3,5,...,9}
        \fill (\x,{3*sqrt(3)}) circle (6pt);
        \foreach \x in {4,6,8}
        \fill (\x,{4*sqrt(3)}) circle (6pt);
        \foreach \x in {5,7}
        \fill (\x,{5*sqrt(3)}) circle (6pt);
        \fill (6,{6*sqrt(3)}) circle (6pt);
    \end{tikzpicture}
    \caption{The Hive Triangle $H_6$}\label{fig:hive-triangle}
\end{figure}
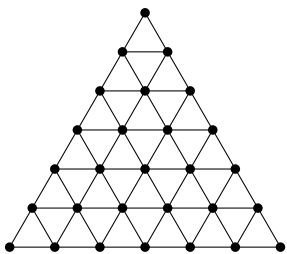

A \emph{hive} is a way of labeling the vertices of the Hive triangle with real numbers such that the numbers satisfy some collection of \emph{rhombus inequalities}. A \textbf{rhombus} of the hive triangle is just a pair of small triangles which share an edge. Note that there are three types of rhombi depending on their orientations. The three orientations are shown in Figure~\ref{fig:three-differenti-rhombi}. A \textbf{hive parallelogram} of $H_n$ is a parallelogram with size lengths parallel to the edges of $H_n$ which is made up of triangles. For example, see Figure~\ref{fig:hive-parallelogram}. 

\begin{Lemma}\label{lem:hive-parallelogram-can-be-split}
    Any hive parallelogram can be partitioned into rhombi. 
\end{Lemma}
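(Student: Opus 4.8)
The plan is to induct on the size (e.g. the area, or equivalently the number of small triangles) of the hive parallelogram $R$. The base case is a parallelogram consisting of exactly two small triangles, which is by definition a single rhombus, so there is nothing to prove. For the inductive step, I would peel off one "strip" of rhombi from the parallelogram and apply the inductive hypothesis to what remains.

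\medskip

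Concretely: a hive parallelogram has two pairs of parallel sides, each side parallel to one of the three edge-directions of $H_n$ and made up of unit segments; say one pair has length $p$ and the other has length $q$ (with $p,q\ge 1$, and the case $p=q=1$ handled by the base case). First I would observe that the outermost row of small triangles along one side of $R$ — the $2p$ small triangles lying between the boundary edge of length $p$ and the next parallel lattice line inside $R$ — naturally groups into $p$ rhombi, each rhombus being an up-triangle together with the down-triangle sharing an edge with it along that interior line. (Here one checks that these pairs really are rhombi of one of the three admissible orientations shown in Figure~\ref{fig:three-differenti-rhombi}, which is immediate from the geometry of the triangular grid.) Removing this strip of $p$ rhombi from $R$ leaves a hive parallelogram $R'$ with side lengths $p$ and $q-1$ (or it is empty if $q=1$, in which case the single strip already tiles $R$). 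By the inductive hypothesis $R'$ can be partitioned into rhombi, and adjoining our $p$ rhombi from the peeled strip gives a partition of $R$ into rhombi. This completes the induction.

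\medskip

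The only point requiring genuine care — and the step I expect to be the main obstacle to writing cleanly — is the bookkeeping that the "outermost strip" along a chosen side really does decompose into honest rhombi of the admissible types and that the complement is again a hive parallelogram of the grid (rather than some more general hexagonal region). This is essentially a picture-proof: one fixes coordinates on $H_n$ so that the parallelogram's sides lie along two of the three grid directions, writes down which up-triangles and down-triangles appear, and pairs them off. I would include a small figure illustrating the peeling of one strip, since the verification is entirely visual and a figure makes the parity/orientation check transparent; no nontrivial computation is involved beyond that.
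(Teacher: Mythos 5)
Your argument is correct and produces the same tiling that the paper obtains directly in one step: cutting along the lattice lines of $H_n$ parallel to the two sides of the parallelogram partitions it into unit rhombi, and your induction simply peels off this grid one strip at a time. One small slip in the description: within a strip the paired up- and down-triangles share the diagonal edge in the third lattice direction, not an edge lying on the interior line parallel to the boundary side, but since any two small triangles sharing an edge form an admissible rhombus this does not affect the construction.
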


\begin{proof}
    The sides of a hive parallelogram will be parallel to two sides of $H_n$. When we divide the parallelogram into pieces cutting along the lines in $H_n$ parallel to these two sides, this gives a partition into rhombi.  
\end{proof}

We say that a labeling of the vertices of the Hive triangle satisfies the \textbf{rhombus inequality} associated to a rhombus $R$ if we have $x_1 + x_2 \geq y_1 + y_2$ where $x_1, x_2$ are the labels of the obtuse vertices of $R$ and $y_1, y_2$ are the labels of the acute vertices of $R$. 

\begin{Definition}
    A \textbf{hive} of size $n$ is a labeling of the vertices in $H_n$ with real numbers which satisfies all rhombus inequalities. A hive is \textbf{integral} if the labeling consists of integers. 
\end{Definition}

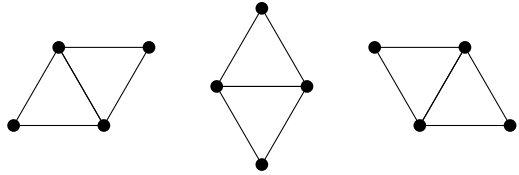
\begin{figure}
\begin{tikzpicture}[scale = 0.6]
    \draw (0, 0) -- (2, 0) -- (1, {sqrt(3)}) -- cycle;
    \draw (1, {sqrt(3)}) -- (2, 0) -- (3, {sqrt(3)}) -- cycle;
    \fill (0, 0) circle (4 pt);
    \fill (2, 0) circle (4 pt);
    \fill (3, {sqrt(3)}) circle (4 pt);
    \fill (1, {sqrt(3)}) circle (4 pt);

    \draw (9, 0) -- (11, 0) -- (10, {sqrt(3)}) -- cycle;
    \draw (9, 0) -- (10, {sqrt(3)}) -- (8, {sqrt(3)}) -- cycle;
    \fill (9, 0) circle (4 pt);
    \fill (11,0) circle (4 pt);
    \fill (10, {sqrt(3)}) circle (4 pt);
    \fill (8, {sqrt(3)}) circle (4 pt);

    \draw (4.5, {sqrt(3)/2}) -- (6.5, {sqrt(3)/2}) -- (5.5, {3*sqrt(3)/2}) -- cycle;
    \draw (4.5, {sqrt(3)/2}) -- (6.5, {sqrt(3)/2}) -- (5.5, {-sqrt(3)/2}) -- cycle;

    \fill (4.5, {sqrt(3)/2}) circle (4 pt);
    \fill (6.5, {sqrt(3)/2}) circle (4 pt);
    \fill (5.5, {3*sqrt(3)/2}) circle (4 pt);
    \fill (5.5, {-sqrt(3)/2}) circle (4 pt);
\end{tikzpicture}
\caption{The three different types of rhombi}\label{fig:three-differenti-rhombi}
\end{figure}

We can generalize the notion of rhombus inequalities to hive parallelograms. Given a hive parallelogram, we have a \textbf{parallelogram inequality} defined in the same way: let $x_1, x_2$ be the numbers on the obtuse angles and let $y_1, y_2$ be the numbers on the acute angles. The parallelogram inequality is $x_1 + x_2 \geq y_1 + y_2$. 

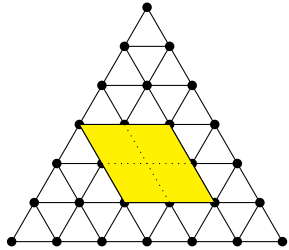
\begin{figure}
    \begin{tikzpicture}[scale=0.3]
        \draw (0,0) -- (2,0) -- (1,{sqrt(3)}) -- cycle;
        \draw (2,0) -- (4,0) -- (3,{sqrt(3)}) -- cycle;
        \draw (4,0) -- (6,0) -- (5,{sqrt(3)}) -- cycle;
        \draw (6,0) -- (8,0) -- (7,{sqrt(3)}) -- cycle;
        \draw (8,0) -- (10,0) -- (9, {sqrt(3)}) -- cycle;
        \draw (10,0) -- (12,0) -- (11, {sqrt(3)}) -- cycle;

        \draw (1, {sqrt(3)}) -- (3, {sqrt(3)}) -- (2, {2*sqrt(3)}) -- cycle;
        \draw (3, {sqrt(3)}) -- (5, {sqrt(3)}) -- (4, {2*sqrt(3)}) -- cycle;
        \draw(5, {sqrt(3)}) -- (7, {sqrt(3)}) -- (6, {2*sqrt(3)}) -- cycle;
        \draw(7, {sqrt(3)}) -- (9, {sqrt(3)}) -- (8, {2*sqrt(3)}) -- cycle; 
        \draw (9, {sqrt(3)}) -- (11, {sqrt(3)}) -- (10, {2 * sqrt(3)}) -- cycle;

        \draw (2, {2*sqrt(3)}) -- (4, {2*sqrt(3)}) -- (3, {3*sqrt(3)}) -- cycle;
        \draw (4, {2*sqrt(3)}) -- (6, {2*sqrt(3)}) -- (5, {3*sqrt(3)}) -- cycle;
        \draw (6, {2*sqrt(3)}) -- (8, {2*sqrt(3)}) -- (7, {3*sqrt(3)}) -- cycle;
        \draw (8, {2*sqrt(3)}) -- (10, {2*sqrt(3)}) -- (9, {3*sqrt(3)}) -- cycle;

        \draw (3, {3*sqrt(3)}) -- (5,{3*sqrt(3)}) -- (4,{4*sqrt(3)}) -- cycle;
        \draw (5, {3*sqrt(3)}) -- (7,{3*sqrt(3)}) -- (6,{4*sqrt(3)}) -- cycle;
        \draw (7, {3*sqrt(3)}) -- (9,{3*sqrt(3)}) -- (8,{4*sqrt(3)}) -- cycle;

        \draw (4, {4*sqrt(3)}) -- (6,{4*sqrt(3)}) -- (5,{5*sqrt(3)}) -- cycle;
        \draw (6, {4*sqrt(3)}) -- (8,{4*sqrt(3)}) -- (7,{5*sqrt(3)}) -- cycle;

        \draw (5,{5*sqrt(3)}) -- (7, {5*sqrt(3)}) -- (6, {6*sqrt(3)}) -- cycle;

        \foreach \x in {0,2,...,12}
        \fill (\x,0) circle (6pt);
        \foreach \x in {1,3,...,11}
        \fill (\x,{sqrt(3)}) circle (6pt);
        \foreach \x in {2,4,...,10}
        \fill (\x,{2*sqrt(3)}) circle (6pt);
        \foreach \x in {3,5,...,9}
        \fill (\x,{3*sqrt(3)}) circle (6pt);
        \foreach \x in {4,6,8}
        \fill (\x,{4*sqrt(3)}) circle (6pt);
        \foreach \x in {5,7}
        \fill (\x,{5*sqrt(3)}) circle (6pt);
        \fill (6,{6*sqrt(3)}) circle (6pt);

        \draw[fill = yellow] (5, {sqrt(3)}) -- (9, {sqrt(3)}) -- (7, {3*sqrt(3)}) -- (3, {3*sqrt(3)}) -- cycle;

        \draw[style = dotted] (4, {2*sqrt(3)}) -- (8, {2*sqrt(3)});
        \draw[style = dotted] (5, {3*sqrt(3)}) -- (7, {sqrt(3)});
    \end{tikzpicture}
    \caption{The yellow parallelogram is a hive parallelogram on $H_5$. The dotted lines indicate a splitting into hive rhombi.}\label{fig:hive-parallelogram}
\end{figure}

\begin{Lemma}\label{lem:hives-parallelogram-inequalities}
    Hives satisfy all parallelogram inequalities. 
\end{Lemma}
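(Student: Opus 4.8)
The plan is to derive every parallelogram inequality from the defining rhombus inequalities, using an additivity argument together with an induction on the size of the parallelogram. Throughout, write $h(X)$ for the label assigned by the hive to a vertex $X$ of $H_n$.

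The first step is an \emph{additivity lemma}: if a hive parallelogram $P$ is cut by a single grid line of $H_n$ into two hive parallelograms $P_1$ and $P_2$ that meet along a common side, and if $P_1$ and $P_2$ each satisfy their parallelogram inequality, then so does $P$. To see this, list the corners of $P$ cyclically as $A,B,C,D$ with $A,C$ the obtuse ($120^\circ$) corners, and let the dividing segment run from a point $E$ on one side of $P$ to the point $F$ on the opposite side (parallel to the remaining pair of sides); after possibly interchanging the two side-directions of $P$ we may take $P_1 = ABFE$ and $P_2 = EFCD$. Since opposite angles of a parallelogram are equal, and the angle of $P_i$ at a corner it inherits from $P$ equals the corresponding angle of $P$, the obtuse corners of $P_1$ are $A$ and $F$ and those of $P_2$ are $C$ and $E$. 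The parallelogram inequalities for $P_1$ and $P_2$ are therefore
\[
    h(A) + h(F) \ge h(B) + h(E), \qquad h(C) + h(E) \ge h(D) + h(F),
\]
and adding them cancels $h(E)+h(F)$, leaving $h(A)+h(C) \ge h(B)+h(D)$, the parallelogram inequality for $P$.

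The second step is an induction on the number $pq$ of unit rhombi of $P$, where $p,q$ are the two side-lengths of $P$ measured in unit edges. When $pq=1$, $P$ is a single hive rhombus, and its parallelogram inequality is literally one of the rhombus inequalities, which holds because the labeling is a hive. When $pq>1$ we have $\max(p,q)\ge 2$, so one of the grid lines of $H_n$ parallel to a side of $P$ cuts $P$ into two hive parallelograms each with strictly fewer unit rhombi (this is exactly the type of cut used in Lemma~\ref{lem:hive-parallelogram-can-be-split}); by the inductive hypothesis both satisfy their parallelogram inequalities, and the additivity lemma promotes this to $P$.

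The only point that takes any care is the bookkeeping inside the additivity lemma — correctly identifying which corners of the two pieces are obtuse after a cut — but this is elementary plane geometry, and once it is recorded the induction is immediate. (As an alternative, one could tile $P$ into unit rhombi all at once via Lemma~\ref{lem:hive-parallelogram-can-be-split}, add all of the corresponding rhombus inequalities, and check that every interior or boundary vertex of the tiling occurs equally often as an obtuse vertex and as an acute vertex, so that the sum telescopes down to the four corners of $P$; this is the same bookkeeping carried out globally rather than two rhombi at a time.)
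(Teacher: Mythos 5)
Your proof is correct and rests on the same idea as the paper's: reduce the parallelogram inequality to the rhombus inequalities by cutting along grid lines and adding, so that the labels at the cut vertices cancel. The paper does this in one step (decompose into unit rhombi via Lemma~\ref{lem:hive-parallelogram-can-be-split} and sum all rhombus inequalities, asserting the cancellation), while your inductive two-piece version, with the explicit identification of the obtuse corners of $ABFE$ and $EFCD$, just makes that cancellation bookkeeping explicit; both arguments are essentially identical.
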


\begin{proof}
    From Lemma~\ref{lem:hive-parallelogram-can-be-split}, we can split any hive parallelogram into hive rhombi. Rewriting the rhombus inequalities to 
    \[
        x_1 + x_2 - y_1 - y_2 \geq 0
    \]
    and adding them, we see that after all the cancellations we end up with the desired parallelogram inequality. 
\end{proof}

Hives provide a combinatorial way to interpret the Littlewood-Richardson coefficients. Indeed, the following theorem shows that the Littlewood-Richardson coefficients enumerate integral hives with some boundary condition. For a proof, see \cite{Buch1998}*{Theorem 1}. 

\begin{Theorem}\label{thm:hives}
    Let $\lambda, \mu, \nu$ be partitions with $|\lambda| = |\mu| + |\nu|$ and let $n$ be large enough so that $\lambda$ has at most $n$ positive parts. We view $\lambda, \mu, \nu$ as weakly decreasing non-negative integer vectors in $\RR^n$. Then $c^\lambda_{\mu, \nu}$ is the number of integral hives of size $n$ with the following boundary condition:
    \begin{itemize}
        \item The vertices along the right side of the triangle from the top vertex to the rightmost vertex is given labels:
        \[
            0, \mu_1, \mu_1 + \mu_2, \ldots, \mu_1 + \ldots + \mu_n
        \]
        in this order. 

        \item The vertices along the left side of the triangle from the top vertex to the leftmost vertex is given labels:
        \[
            0, \lambda_1, \lambda_1 + \lambda_2, \ldots, \lambda_1 + \ldots + \lambda_n 
        \]
        in this order. 

        \item The vertices along the bottom side of the triangle from the rightmost vertex to the leftmost vertex is given labels:
        \[
            |\mu|, |\mu| + \nu_1, |\mu| + \nu_1 + \nu_2, \ldots, |\mu| + \nu_1 + \ldots + \nu_n
        \]
        in this order. 
    \end{itemize}
\end{Theorem}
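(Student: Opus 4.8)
The plan is to prove the statement by exhibiting an explicit bijection between the integral hives of size $n$ satisfying the prescribed boundary condition and the Littlewood--Richardson tableaux of shape $\lambda/\mu$ and content $\nu$. Since $c^\lambda_{\mu,\nu}$ was \emph{defined} (Section~\ref{sec:statistics}) to be the number of the latter, such a bijection immediately yields the theorem; this is the route taken in \cite{Buch1998}.

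First I would repackage a Littlewood--Richardson tableau as a chain of partitions. Given an LR tableau $T$ of shape $\lambda/\mu$ and content $\nu$, and any $n$ at least as large as the number of parts of $\lambda$, set $\rho^{(k)} \eqdef \mu \cup \{\,\text{boxes of } T \text{ with entry} \le k\,\}$ for $0 \le k \le n$. Then $\mu = \rho^{(0)} \subseteq \rho^{(1)} \subseteq \cdots \subseteq \rho^{(n)} = \lambda$ (with $\rho^{(k)} = \lambda$ once $k$ exceeds the number of parts of $\nu$, which accounts harmlessly for $n$ being larger than necessary). Semistandardness of $T$ is equivalent to each skew shape $\rho^{(k)}/\rho^{(k-1)}$ being a horizontal strip, i.e.\ to the interlacing relations $\rho^{(k)}_{i+1} \le \rho^{(k-1)}_i \le \rho^{(k)}_i$; the content of $T$ records $|\rho^{(k)}| - |\rho^{(k-1)}| = \nu_k$; and the lattice (``partition-prefix'') condition on the reversed-rows reading word of $T$ translates, after telescoping over rows, into a third family of inequalities relating three consecutive members $\rho^{(k-1)}, \rho^{(k)}, \rho^{(k+1)}$ of the chain. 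Conversely, every chain of partitions subject to these three families of conditions arises from a unique such $T$.

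Next I would transport this data onto the hive triangle $H_n$. Along the family of lattice lines parallel to one fixed side of $H_n$, indexed by $k = 0, \ldots, n$, one labels the vertices of the $k$-th line by the partial sums $0, \rho^{(k)}_1, \rho^{(k)}_1 + \rho^{(k)}_2, \ldots$ of $\rho^{(k)}$, shifted by a constant (depending on $k$ and $\nu$) chosen so that the endpoints of each line fall on the two sides of $H_n$ it meets. By construction one reads off exactly the three boundary sequences of the theorem: partial sums of $\mu = \rho^{(0)}$ along the right side, partial sums of $\lambda = \rho^{(n)}$ along the left side, and $|\mu|, |\mu| + \nu_1, |\mu| + \nu_1 + \nu_2, \ldots$ along the bottom. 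It then remains to verify that the three orientation classes of rhombus inequalities correspond, under this dictionary, respectively to: each $\rho^{(k)}$ being weakly decreasing; the interlacing of $\rho^{(k-1)}$ with $\rho^{(k)}$; and the telescoped lattice condition. The inverse map takes the consecutive differences of labels along each line, and the rhombus inequalities force those differences to assemble into a valid LR chain — so the two finite sets are in bijection.

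The step I expect to be the main obstacle is matching the Littlewood--Richardson lattice condition with its corresponding family of rhombus inequalities. The lattice condition is most naturally phrased in terms of reading words (``in every prefix, the number of $i$'s is at least the number of $(i+1)$'s''), while a rhombus inequality is a linear inequality among four vertex labels, i.e.\ among partial sums of $\rho^{(k-1)}, \rho^{(k)}, \rho^{(k+1)}$; turning the former cleanly into the latter is the real bookkeeping. The other two families (weak decrease, interlacing), the verification of the three boundary strings, and the padding argument when $n$ exceeds the number of parts of $\lambda$ are comparatively routine. An alternative would be to invoke the Knutson--Tao honeycomb model together with the hive--honeycomb duality — a honeycomb being the locus of non-smoothness of the piecewise-linear interpolation of a hive — but the direct bijection above keeps everything within the combinatorics already developed here.
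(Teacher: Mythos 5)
You should first note that the paper does not prove Theorem~\ref{thm:hives} at all: it is quoted from \cite{Buch1998}*{Theorem 1}, and the proof there (in Fulton's appendix) is essentially the bijection you outline, namely encoding an LR tableau of shape $\lambda/\mu$ and content $\nu$ as the chain $\mu=\rho^{(0)}\subseteq\rho^{(1)}\subseteq\cdots\subseteq\rho^{(n)}=\lambda$ of horizontal strips and reading the hive labels off as partial sums. So your strategy coincides with the cited source; the question is whether your sketch of that bijection is correct and complete.

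As written, the transport step fails, and repairing it needs an idea you do not mention. The $k$-th lattice line parallel to the right side of $H_n$ has only $n+1-k$ vertices, so it cannot carry the initial partial sums $0,\ \rho^{(k)}_1,\ \rho^{(k)}_1+\rho^{(k)}_2,\ldots$ shifted by a constant $c_k$: forcing the two endpoints onto the boundary would require $c_k=\lambda_1+\cdots+\lambda_k$ and $c_k+\rho^{(k)}_1+\cdots+\rho^{(k)}_{n-k}=|\mu|+\nu_1+\cdots+\nu_k$, i.e.\ $\lambda_1+\cdots+\lambda_k=\rho^{(k)}_{n-k+1}+\cdots+\rho^{(k)}_{n}$, which already fails in the paper's own example $\lambda=(4,3,2)$, $\mu=(3,2,1)$, $\nu=(2,1)$, $n=3$, $k=1$ (the left-hand side is $4$, the right-hand side is $\rho^{(1)}_3\in\{1,2\}$). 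The correct dictionary uses the \emph{tail} of the partial-sum sequence with no additive shift: writing $h_{i,j}$ for the vertex reached from the apex by $i$ steps along the left side and $j$ steps along the right side, one sets $h_{i,j}=\rho^{(i)}_1+\cdots+\rho^{(i)}_{i+j}$; in Figure~\ref{fig:LR-hive-example} the interior labels $6$ and $7$ are exactly $\rho^{(1)}_1+\rho^{(1)}_2$ for the two tableaux. Matching the left boundary $h_{i,0}=\lambda_1+\cdots+\lambda_i$ then requires the fact that in an LR tableau every entry larger than $k$ lies strictly below row $k$, so $\rho^{(i)}$ agrees with $\lambda$ in its first $i$ rows; this is a consequence of the lattice condition, so part of that condition is already consumed in the boundary matching and not only in your ``third family'' of rhombus inequalities. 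Combined with the fact that you defer the lattice-word versus rhombus-inequality equivalence (which, in both directions of the bijection, is the real content of the theorem), your proposal is the right plan in outline, but the one piece of the dictionary you did specify is incorrect as stated and the decisive verification is left undone; both are carried out in \cite{Buch1998}, which is what the paper relies on.
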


\begin{Example}
    For an example of Theorem~\ref{thm:hives}, consider the partitions $\lambda = (4, 3, 2)$, $\mu = (3, 2, 1)$, and $\nu = (2, 1)$. Then $c^\lambda_{\mu, \nu} = 2$ corresponding to two Littlewood-Richardson tableaux of shape $\lambda / \mu$ and content $\nu$. There are two integral hives of size $3$ satisfying the conditions of Theorem~\ref{thm:hives}. These are shown in Figure~\ref{fig:LR-hive-example}. In the figure, the black labels are the ones dictated by the boundary condition. The red labels are mutable. 
\end{Example}

\begin{figure}
    \begin{tikzpicture}[scale=0.6]
        \draw (-2,0) -- (-8,0) -- (-5,{3*sqrt(3)}) -- cycle;
        \draw (-7, {sqrt(3)}) -- (-3, {sqrt(3)});
        \draw (-6, {2*sqrt(3)}) -- (-4, {2*sqrt(3)});
        \draw (-6, 0) -- (-4, {2*sqrt(3)});
        \draw (-4, 0) -- (-6, {2*sqrt(3)});
        \draw (-6, 0) -- (-7, {sqrt(3)});
        \draw (-4, 0) -- (-3, {sqrt(3)});
        
        \draw (2, 0) -- (8, 0) -- (5, {3 * sqrt(3)}) -- cycle;
        \draw (7, {sqrt(3)}) -- (3, {sqrt(3)});
        \draw (6, {2*sqrt(3)}) -- (4, {2*sqrt(3)});
        \draw (6, 0) -- (4, {2*sqrt(3)});
        \draw (4, 0) -- (6, {2*sqrt(3)});
        \draw (6, 0) -- (7, {sqrt(3)});
        \draw (4, 0) -- (3, {sqrt(3)});

        \fill (-5,{3*sqrt(3)}) circle (3pt);
        \node[yshift=6] at (-5, {3*sqrt(3)}) {0};
        \fill (5,{3*sqrt(3)}) circle (3pt);
        \node[yshift=6] at (5, {3*sqrt(3)}) {0};

        \fill (-6, {2*sqrt(3)}) circle (3pt);
        \node[xshift=-5, yshift=5] at (-6, {2*sqrt(3)}) {4};
        \fill (6, {2*sqrt(3)}) circle (3pt);
        \node[xshift=5, yshift=5] at (6, {2*sqrt(3)}) {3};

        \fill (-4,{2*sqrt(3)}) circle (3pt);
        \node[xshift=5,yshift=5] at (-4,{2*sqrt(3)}) {3};
        \fill (4,{2*sqrt(3)}) circle (3pt);
        \node[xshift=-5,yshift=5] at (4,{2*sqrt(3)}) {4};

        \fill (-7,{sqrt(3)}) circle (3pt);
        \node[xshift=-5,yshift=5] at (-7,{sqrt(3)}) {7};
        \fill (7,{sqrt(3)}) circle (3pt);
        \node[xshift=5,yshift=5] at (7,{sqrt(3)}) {5};

        \fill (-3,{sqrt(3)}) circle (3pt);
        \node[xshift=5,yshift=5] at (-3,{sqrt(3)}) {5};
        \fill (3,{sqrt(3)}) circle (3pt);
        \node[xshift=-5,yshift=5] at (3,{sqrt(3)}) {7};

        \fill (-8,0) circle (3pt);
        \node[xshift=-5,yshift=-5] at (-8,0) {9};
        \fill (8,0) circle (3pt);
        \node[xshift=5,yshift=-5] at (8,0) {6};

        \fill (-6,0) circle (3pt);
        \node[yshift=-6] at (-6,0) {9};
        \fill (6,0) circle (3pt);
        \node[yshift=-6] at (6,0) {8};

        \fill (-4,0) circle (3pt);
        \node[yshift=-6] at (-4,0) {8};
        \fill (4,0) circle (3pt);
        \node[yshift=-6] at (4,0) {9};

        \fill (-2,0) circle (3pt);
        \node[xshift=5,yshift=-5] at (-2,0) {6};
        \fill (2,0) circle (3pt);
        \node[xshift=-5,yshift=-5] at (2,0) {9};

        \fill (-5,{sqrt(3)}) circle (3pt);
        \node[yshift=7, color=red] at (-5,{sqrt(3)}) {6};
        \fill (5,{sqrt(3)}) circle (3pt);
        \node[yshift=7, color=red] at (5,{sqrt(3)}) {7};
        
    \end{tikzpicture}
    \caption{$(\lambda, \mu, \nu)$-hives for $\lambda = (4, 3, 2)$, $\mu = (3, 2, 1)$, and $\nu = (2, 1)$.}\label{fig:LR-hive-example}
\end{figure}
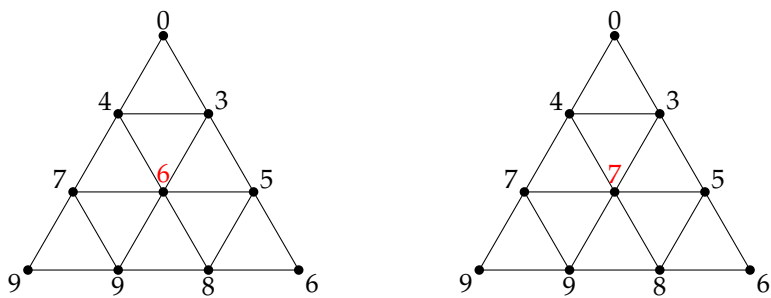

If an integral hive satisfies the conditions in Theorem~\ref{thm:hives} we call it a $(\lambda, \mu, \nu)$-hive. The theorem then states that $c^\lambda_{\mu, \nu}$ is the number of $(\lambda, \mu, \nu)$-hives. We use hives to prove the following result about dominance order. 

\begin{Proposition}\label{prop:LR-lattice-dominance-global-maximum}
    If $c^\lambda_{\mu, \nu} > 0$ then $\lambda \lhd \mu + \nu$. 
\end{Proposition}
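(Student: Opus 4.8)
The plan is to read the inequality off an integral hive, using Theorem~\ref{thm:hives} and the parallelogram inequalities of Lemma~\ref{lem:hives-parallelogram-inequalities}. Since $c^\lambda_{\mu,\nu}>0$ we have $|\lambda|=|\mu|+|\nu|$ (otherwise $c^\lambda_{\mu,\nu}=0$ by definition), so for all sufficiently large $n$ Theorem~\ref{thm:hives} guarantees the existence of a $(\lambda,\mu,\nu)$-hive on the hive triangle $H_n$; fix such an $n$, large enough that $\lambda$, $\mu$ and $\nu$ all have at most $n$ positive parts, and fix such a hive $h$. The statement $\lambda\lhd\mu+\nu$ is the family of inequalities $\sum_{i=1}^{k}\lambda_i\le\sum_{i=1}^{k}(\mu_i+\nu_i)$ for $k\ge 1$; for $k>n$ both sides equal $|\lambda|=|\mu|+|\nu|$, so it suffices to treat $1\le k\le n$.

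Fix such a $k$ and let $\Pi_k\subseteq H_n$ be the quadrilateral with vertices $v_L$ (on the left edge, at distance $k$ from the top vertex), $v_R$ (on the right edge, at distance $k$ from the top vertex), $r$ (the rightmost vertex of $H_n$), and $v_B$ (on the bottom edge, at distance $k$ from $r$). By the boundary conditions of Theorem~\ref{thm:hives},
\[
    h(v_L)=\sum_{i=1}^{k}\lambda_i,\qquad h(v_R)=\sum_{i=1}^{k}\mu_i,\qquad h(r)=|\mu|,\qquad h(v_B)=|\mu|+\sum_{i=1}^{k}\nu_i .
\]
The side $v_Rr$ of $\Pi_k$ runs along the right edge of $H_n$, the side $rv_B$ runs along the bottom edge, $v_Lv_R$ is parallel to the bottom edge, and $v_Bv_L$ is parallel to the right edge; hence $\Pi_k$ is indeed a hive parallelogram (collapsing to a segment when $k=n$, in which case the inequality below is trivial). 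Furthermore, at $r$ the two sides of $\Pi_k$ coincide with the two edges of $H_n$ meeting there, so the interior angle of $\Pi_k$ at $r$ is the $60^{\circ}$ angle of the triangle; therefore $r$ and the opposite vertex $v_L$ are the acute vertices of $\Pi_k$, while $v_R$ and $v_B$ are the obtuse ones.

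Applying the parallelogram inequality of Lemma~\ref{lem:hives-parallelogram-inequalities} to $\Pi_k$ gives $h(v_R)+h(v_B)\ge h(v_L)+h(r)$, i.e.
\[
    \sum_{i=1}^{k}\mu_i+\Bigl(|\mu|+\sum_{i=1}^{k}\nu_i\Bigr)\ \ge\ \sum_{i=1}^{k}\lambda_i+|\mu| ;
\]
cancelling $|\mu|$ gives $\sum_{i=1}^{k}\lambda_i\le\sum_{i=1}^{k}(\mu_i+\nu_i)$, which is exactly what we wanted.

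I do not expect any genuinely hard step here. The one place requiring care is the geometry in the second paragraph: verifying that $\Pi_k$ really sits inside $H_n$ and is tiled by the unit triangles, and correctly identifying which two of its vertices are obtuse. Introducing coordinates on $H_n$ turns this into a short, mechanical check, after which the Proposition is immediate from the parallelogram inequality.
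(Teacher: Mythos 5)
Your proof is correct and is essentially the paper's own argument: both produce a $(\lambda,\mu,\nu)$-hive from $c^\lambda_{\mu,\nu}>0$ via Theorem~\ref{thm:hives} and apply the parallelogram inequality of Lemma~\ref{lem:hives-parallelogram-inequalities} to the hive parallelogram with vertex labels $|\mu|$, $|\mu|+\nu_1+\cdots+\nu_k$, $\lambda_1+\cdots+\lambda_k$, $\mu_1+\cdots+\mu_k$, with the same identification of the obtuse vertices. Your write-up is in fact more careful than the paper's (checking the geometry, the degenerate case $k=n$, and the reduction to $k\le n$), but the underlying idea is identical.
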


\begin{proof}
    Since $c^\lambda_{\mu, \nu}$, there is at least one $(\lambda, \mu, \nu)$-hive. From the parallelogram inequality corresponding to the hive parallelogram with vertices labeled 
    \[
        |\mu|, |\mu| + \nu_1 + \ldots + \nu_k, \lambda_1 + \ldots + \lambda_k, \mu_1 + \ldots + \mu_k. 
    \]
    The obtuse vertices correspond to $|\mu| + \nu_1 + \ldots + \nu_k$ and $\mu_1 + \ldots + \mu_k$. From Lemma~\ref{lem:hives-parallelogram-inequalities}, we get 
    \[
        \sum_{j = 1}^k (\mu_j + \nu_j) \geq \sum_{j} \lambda_j. 
    \]
    This suffices for the proof. 
\end{proof}

In other words, the poset $\LR_{\mu, \nu}$ with partial order given by the dominance order has a global maximum given by $\mu + \nu$ (one can check $c^{\mu + \nu}_{\mu, \nu} = 1$). We will apply Proposition~\ref{prop:LR-lattice-dominance-global-maximum} to prove an asymptotic upper-bound for LR-coefficients in the case where $\lambda \vdash 2n$, $\mu \vdash n$, and $\nu \vdash n$ and $\lambda_1 = 2n - j$ for some constant $j$. This result will also be used in Section~\ref{sec:lower-bound} to prove the lower bound of Theorem~\ref{thm:main-theorem-1}. 

\begin{Lemma}\label{lem:asymptotic-bound-on-LR-coefficient}
    Let $j$ be a non-negative integer. Let $\lambda \vdash N$, $\mu \vdash n$, and $\nu \vdash n$ be partitions with $N = 2n$, and $\lambda_1 = N-j$.
    \begin{enumerate}
        \item If $c^\lambda_{\mu, \nu} > 0$, then for sufficiently large $N$ there exist non-negative integers $i_1, i_2$ such that $\mu_1 = n-i_1$, $\nu_1 = n-i_2$, and $i_1 + i_2 \leq j$. 
        \item There is some constant $C_j$ depending on $j$ such that $c^\lambda_{\mu, \nu} \leq C_j$ for sufficiently large $N$. 
    \end{enumerate}
\end{Lemma}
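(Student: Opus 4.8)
The plan is to obtain part (1) as an immediate consequence of Proposition~\ref{prop:LR-lattice-dominance-global-maximum}, and to obtain part (2) from the observation that when the first row $\lambda_1 = N-j$ is very long, a Littlewood--Richardson tableau of shape $\lambda/\mu$ is almost completely rigid, leaving only $O_j(1)$ freely choosable entries.

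For part (1): if $c^\lambda_{\mu,\nu}>0$ then $\lambda \lhd \mu+\nu$ by Proposition~\ref{prop:LR-lattice-dominance-global-maximum}, so in particular $\lambda_1 \le (\mu+\nu)_1 = \mu_1+\nu_1$. Since $\mu,\nu\vdash n$ we have $\mu_1,\nu_1 \le n$, so setting $i_1 \eqdef n-\mu_1 \ge 0$ and $i_2 \eqdef n-\nu_1 \ge 0$ gives $2n-j = \lambda_1 \le \mu_1+\nu_1 = 2n-(i_1+i_2)$, i.e.\ $i_1+i_2 \le j$. (No largeness of $N$ is really needed here; it is inherited from the blanket hypothesis.)

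For part (2): we may assume $c^\lambda_{\mu,\nu}>0$, so $\mu\subseteq\lambda$, and take $i_1,i_2$ as in part (1). When $N>2j$ we have $\mu_1 = n-i_1 \le n < 2n-j = \lambda_1$, so the skew diagram $\lambda/\mu$ has nonempty first row, of length $\lambda_1-\mu_1 = n-j+i_1$. In any LR tableau $T$ of shape $\lambda/\mu$ and content $\nu$, the reversed-row reading word has the lattice-word property, so its first letter --- the rightmost entry of the first row of $T$ --- must equal $1$; since rows are weakly increasing, the entire first row of $T$ is filled with $1$'s. Because $|\lambda/\mu| = |\nu| = n$, the boxes of $\lambda/\mu$ lying below the first row form a skew shape $S$ with $|S| = n-(n-j+i_1) = j-i_1 \le j$. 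Moreover every entry of $T$ is at most $j+1$: the number of entries $\ge 2$ equals $\sum_{k\ge 2}\nu_k = n-\nu_1 = i_2 \le j$, and since $\nu$ is a partition the values $\ge 2$ that occur form an initial segment $\{2,\dots,m\}$ with $m-1\le i_2$, whence $m\le j+1$. Therefore $T$ is determined by the restriction of its filling to the at most $j$ boxes of $S$, with each such box carrying a value in $\{1,\dots,j+1\}$, and there are at most $(j+1)^j$ such restrictions. Hence $c^\lambda_{\mu,\nu} \le (j+1)^j \eqdef C_j$, which is the desired constant.

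I expect the only real content to be the rigidity step: showing that the long first row of $\lambda/\mu$ is forced to be all $1$'s and hence absorbs all but at most $j$ of the $n$ boxes; after that the bound is pure counting, using the content constraint to keep the entries bounded. An alternative would be to run the same reasoning through the hive model (Theorem~\ref{thm:hives}): in a $(\lambda,\mu,\nu)$-hive of size $n$ with $\lambda_1 = 2n-j$, the rhombus and parallelogram inequalities pin the interior labels to within $O_j(1)$ of values forced by the boundary, again leaving only $O_j(1)$ degrees of freedom; but the tableau argument above is more direct and self-contained, so that is the route I would take.
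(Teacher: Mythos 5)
Your proof is correct and takes essentially the same route as the paper: part (1) is the same dominance argument via Proposition~\ref{prop:LR-lattice-dominance-global-maximum}, and part (2) rests on the same observation that the long first row of $\lambda/\mu$ is forced to consist entirely of $1$'s, leaving at most $j$ boxes whose entries are bounded in terms of $j$. If anything, your justification of the all-$1$'s first row directly from the lattice-word condition on Littlewood--Richardson tableaux is a bit more careful than the paper's corresponding step (which asserts the claim for arbitrary semistandard tableaux of that shape and content), and you additionally make the constant explicit as $C_j = (j+1)^j$.
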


\begin{proof}
    From Proposition~\ref{prop:LR-lattice-dominance-global-maximum}, we know that $\mu + \nu$ dominates $\lambda$. In particular, $\mu_1 + \nu_1 \geq \lambda_1$. Letting $i_1 = n - \mu_1$ and $i_2 = n - \nu_1$, we get $i_1 + i_2 \leq j$. This completes the proof of part (1).

    For part (2), we can suppose that $c^\lambda_{\mu, \nu} > 0$. For sufficiently large $N$, we have $\lambda = (N-j, T)$, $\mu = (n-i_1, T_1)$, and $\nu = (n-i_2, T_2)$ where $T \vdash j$, $T_1 \vdash i_1$, and $T_2 \vdash i_2$ are partitions. The number $c^\lambda_{\mu, \nu}$ counts the number of \emph{Littlewood-Richardson tableaux} of shape $\lambda / \mu$ and content $\nu$. This is bounded above by the number of \emph{semistandard Young tableaux} of shape $\lambda / \mu$ and content $\nu$. In any such tableaux, the first row of $\lambda / \mu$ must be filled with $1$'s. Thus, $c^\lambda_{\mu, \nu}$ is bounded above by the number of semistandard Young Tableaux of shape $T / T_1$ and content $(j - (i_1 + i_2), T_2)$. But this is clearly bounded above by a constant depending only on $j$. 
\end{proof}

\subsection{Distances between probability measures}\label{sec:distances-p-measures}

For our purposes, we will need two notions of distance between probability measures. All of our probability measures will be on the symmetric group, but the distances can be defined between any pair of discrete probability measures. We first define the \emph{total variation distance} which is used in the statement of Theorem~\ref{thm:main-theorem-1}. 

\begin{Definition}
    Let $X$ be a finite set and let $\mu, \nu$ be two probability measures on $X$. We define the \textbf{total variation distance} between $\mu$ and $\nu$ to be the quantity 
    \[
        d_{\TV}(\mu, \nu) \eqdef \frac{1}{2} \sum_{x \in X} |\mu(x) - \nu(x)| = \sup_{A \subseteq X} |\mu(A) - \nu(A)|. 
    \]
\end{Definition}

In the proof of the lower bound of Theorem~\ref{thm:main-theorem-1} in Section~\ref{sec:lower-bound}, we will the use the notion of the \emph{Hellinger distance}.

\begin{Definition}
    Let $X$ be a finite set and let $\mu, \nu$ be two probability measures on $X$. We define the (squared) \textbf{Hellinger distance} between $\mu$ and $\nu$ to be the quantity
    \[
        H^2(\mu, \nu) \eqdef \frac{1}{2} \sum_{x \in \ZZ} \left ( \sqrt{\mu(x)} - \sqrt{\nu(x)} \right )^2 = 1 - \sum_{x \in \ZZ} \sqrt{\mu(x)\nu(x)}.
    \]
\end{Definition}

From~\cite{LeCam2000}*{page 44}, the total variation distance is always bounded below by the (squared) Hellinger distance. That is, for all distributions $p, q$ we have  
\begin{equation}\label{eqn:HELL>=TV}
    H^2(p, q) \leq d_{\TV}(p, q).
\end{equation}
Since $H^2(\Poiss(1), \Poiss(1+x)) = 1 - e^{- \frac{1}{2}(\sqrt{1+x} - 1)^2}$, Equation~\ref{eqn:HELL>=TV} gives the inequality 
\begin{equation}\label{eqn:poissTV>=LOWERBOUND}
    d_{\TV} (\Poiss(1), \Poiss(1+x)) \geq 1 - e^{- \frac{1}{2}(\sqrt{1+x} - 1)^2}.
\end{equation}

\section{Proof of Theorem~\ref{thm:main-theorem-2}}\label{sec:proof-of-theorem-2}

In this section, we complete the proof of Theorem~\ref{thm:main-theorem-2}. We will accomplish this in three steps. First, we diagonalize linear operator on $\CC[\fS_N]$ given by left multiplication by $\cA$. The spectrum of this operator is exactly the same as the spectrum in Theorem~\ref{thm:main-theorem-2}. Second, we prove that the matrix (with respect to the standard basis of $\CC[\fS_N]$) of $\cA$ is the transpose of $P$. Third, we will complete the proof of Theorem~\ref{thm:main-theorem-2} by using the fact that $P$ is symmetric. 

\begin{Remark}
    Since we work with group representations, we often identify elements of the group algebra with their corresponding endomorphism on underlying vector space of the representation. In our setting, we work with the regular representation $\CC[\fS_N]$, and we can view every $x \in \CC[\fS_N]$ as an element of $\End_\CC(\CC[\fS_N])$. Thus, when we refer to the eigenvalues, trace, etc. of $x \in \CC[fS_N]$, we mean the eigenvalues, trace, etc. of the corresponding endomorphism. In particular, we use this language starting in the following theorem.  
\end{Remark}

\begin{Theorem}
    The group algebra element $\cA$ has eigenvalue spectrum $\Eig^\lambda_{\mu, \nu}$ with multiplicities $f_\lambda f_\mu f_\nu c^\lambda_{\mu, \nu}$ for all $\lambda \vdash N$, $\mu \vdash |A|$, and $\nu \vdash |B|$. 
\end{Theorem}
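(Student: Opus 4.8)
The plan is to diagonalize left-multiplication by $\cA$ on $\CC[\fS_N]$ by decomposing the regular representation into a direct sum of induced modules, one for each pair $(\mu,\nu)$ with $\mu\vdash|A|$ and $\nu\vdash|B|$, and then applying Schur's lemma on each irreducible summand. The key structural observation is that $\cA$ is a linear combination of four group algebra elements: $\id$, $\cT_A$, $\cT_B$, and $\cT_{A\cup B}$. Here $\cT_{A\cup B}$ is the full sum of all transpositions in $\fS_N$, which by Proposition~\ref{prop:rep-theory-facts-sym}(3) acts on each Specht module $S^\lambda$ as the scalar $\Diag(\lambda)$. Similarly, $\cT_A$ is the full sum of all transpositions in the subgroup $\fS_A\cong\fS_{|A|}$ (viewed inside $\fS_N$ as permutations fixing $B$ pointwise), and $\cT_B$ the analogous element for $\fS_B$. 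So the strategy is to find a decomposition of $\CC[\fS_N]$ into subspaces on which all four of these operators act as scalars simultaneously.

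First I would invoke the branching/Littlewood-Richardson picture. Since $\CC[\fS_N]\simeq\bigoplus_{\lambda\vdash N}(S^\lambda)^{\oplus f_\lambda}$ by Proposition~\ref{prop:rep-theory-facts-sym}(4), and since as a module for the Young subgroup $\fS_{|A|}\times\fS_{|B|}$ we have by Theorem~\ref{thm:LR-rule}(1) that $\Res^{\fS_N}_{\fS_{|A|}\times\fS_{|B|}}S^\lambda=\bigoplus_{\mu\vdash|A|,\,\nu\vdash|B|}(S^\mu\boxtimes S^\nu)^{\oplus c^\lambda_{\mu,\nu}}$, we can refine the decomposition of the regular representation so that each isotypic piece is indexed by a triple $(\lambda,\mu,\nu)$ and is isomorphic to a direct sum of $f_\lambda\cdot c^\lambda_{\mu,\nu}$ copies of $S^\mu\boxtimes S^\nu$ — and each such copy, being an irreducible $\fS_{|A|}\times\fS_{|B|}$-module, has total dimension $f_\mu f_\nu$. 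Counting gives multiplicity $f_\lambda f_\mu f_\nu c^\lambda_{\mu,\nu}$ as claimed; one checks $\sum f_\lambda f_\mu f_\nu c^\lambda_{\mu,\nu} = N!$ via Proposition~\ref{prop:sym-rep-theory-identities}(1)–(2) as a consistency check. On a copy of $S^\mu\boxtimes S^\nu$ sitting inside the $S^\lambda$-isotypic component: $\cT_A$ acts by $\Diag(\mu)$ (it lies in $\CC[\fS_A]$, so it acts on the $S^\mu$ tensor factor via Proposition~\ref{prop:rep-theory-facts-sym}(3) applied to $\fS_A$), $\cT_B$ acts by $\Diag(\nu)$, and $\cT_{A\cup B}$ acts by $\Diag(\lambda)$ since this whole subspace lies in the $S^\lambda$-isotypic component of $\CC[\fS_N]$ as an $\fS_N$-module. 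Substituting into the second displayed form of $\cA$,
\[
    \cA = \frac{a^2|A|+b^2|B|}{N^2}\,\id + \frac{2(a^2-ab)}{N^2}\,\cT_A + \frac{2(b^2-ab)}{N^2}\,\cT_B + \frac{2ab}{N^2}\,\cT_{A\cup B},
\]
the scalar on this subspace is exactly $\Eig^\lambda_{\mu,\nu}$.

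The main subtlety — and the step I'd be most careful about — is justifying that the operators $\cT_A$ and $\cT_{A\cup B}$ really do act as the advertised scalars on a common eigenspace decomposition, since $\CC[\fS_N]$ is not irreducible and these elements are central only in their respective subalgebras, not in $\CC[\fS_N]$. The clean way is to work one irreducible $\fS_N$-summand $S^\lambda$ at a time: on $S^\lambda$, $\cT_{A\cup B}$ is already the scalar $\Diag(\lambda)$, so it remains to simultaneously diagonalize the commuting operators $\cT_A$ (central in $\CC[\fS_A]$) and $\cT_B$ (central in $\CC[\fS_B]$, which commutes with $\cT_A$ since $\fS_A$ and $\fS_B$ commute) acting on $\Res^{\fS_N}_{\fS_A\times\fS_B}S^\lambda$. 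By Schur's lemma applied to the $\fS_A\times\fS_B$-irreducible constituents $S^\mu\boxtimes S^\nu$, these act by the scalars $\Diag(\mu)$ and $\Diag(\nu)$ on each such constituent, and the constituent $S^\mu\boxtimes S^\nu$ occurs with multiplicity $c^\lambda_{\mu,\nu}$ by Theorem~\ref{thm:LR-rule}(1). Assembling across all $\lambda$ with the multiplicity $f_\lambda$ from Proposition~\ref{prop:rep-theory-facts-sym}(4), and noting that the four operators are simultaneously diagonalized in this basis, yields that $\cA$ is diagonalizable with the stated eigenvalues and multiplicities. The one remaining bookkeeping point is to confirm the coefficients of $\cT_A,\cT_B,\cT_{A\cup B}$ in $\cA$ by unwinding $\cT_{A,B}=\cT_{A\cup B}-\cT_A-\cT_B$ in the first displayed expression for $\cA$ — a routine substitution.
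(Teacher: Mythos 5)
Your proposal is correct and follows essentially the same route as the paper: decompose the regular representation into Specht modules, note that $\cT_{A\cup B}$ acts on $S^\lambda$ by $\Diag(\lambda)$, restrict to $\fS_A\times\fS_B$ via the Littlewood--Richardson rule, apply Schur's lemma so that $\cT_A$ and $\cT_B$ act by $\Diag(\mu)$ and $\Diag(\nu)$ on each copy of $S^\mu\boxtimes S^\nu$, and count $f_\lambda c^\lambda_{\mu,\nu}$ copies of dimension $f_\mu f_\nu$ each. No gaps; the argument matches the paper's proof in structure and substance.
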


\begin{proof}
    From Proposition~\ref{prop:rep-theory-facts-sym}(4), we can decompose $\CC[\fS_N]$ into irreducible subrepresentations
    \[
        \CC[\fS_N] = \bigoplus_{\lambda \vdash N} (S^\lambda)^{\oplus f_\lambda}. 
    \]
    Since subrepresentations are invariant under the $\CC[\fS_N]$ action, if we want to find the specturm of $\mathscr{A}$ on $\CC[\fS_N]$, it suffices to find the spectrum on each individual Specht module $S^\lambda$. 

    From Proposition~\ref{prop:rep-theory-facts-sym}, $\cT_{A \cup B}$ acts on $S^\lambda$ by as scalar multiplication by $\Diag(\lambda)$. It remains to diagonalize the action of  
    \[
        \mathscr{U} \eqdef \frac{2(a^2-ab)}{N^2} \cT_A + \frac{2(b^2-ab)}{N^2} \cT_B
    \]
    on $S^\lambda$. Let $\fS_A \times \fS_B \subset \fS_N$ denote the subgroup of permutations which permutes the cards of $A$ among each other, and the cards in $B$ among each other. Then, we can view $\cU$ as a group algebra element in $\CC[\fS_A \times \fS_B]$. Theorem~\ref{thm:LR-rule} implies that the Specht module $S^\lambda$, as a $\fS_A \times \fS_B$ representation, can be decomposed into irreducible $\fS_A \times \fS_B$ subrepresentations as 
    \[
        \Res^{\fS_N}_{\fS_A \times \fS_B} S^\lambda = \bigoplus_{\substack{\mu \vdash |A| \\ \nu \vdash |B|}} \left (S^\mu \boxtimes S^\nu \right )^{\oplus c^\lambda_{\mu, \nu}}.
    \]
    Each subrepresentation is invariant under action of $\CC[\fS_A \times \fS_B]$, in particular, the action of $\cU$. It remains to diagonalize the action of $\cU$ on each $S^\mu \boxtimes S^\nu$. But this is easy to do since $\cU$ is a central element in $\CC[\fS_A \times \fS_B]$ and thus acts as a scalar multiple on each $S^\mu \boxtimes S^\nu$ by Schur's lemma. To find the explicit scalar multiple, let $u \boxtimes v \in S^\mu \boxtimes S^\nu$ be an arbitrary simple tensor. Then, 
    \begin{align*}
        \cU (u \boxtimes v) & = \frac{2(a^2 - ab)}{N^2} ( \cT_A u \boxtimes v) + \frac{2(b^2 - ab)}{N^2} (u \boxtimes \cT_B v) \\
        & = \left ( \frac{2(a^2-ab)}{N^2} \Diag(\mu) + \frac{2(b^2-ab)}{N^2} \Diag(\nu) \right ) (u \boxtimes v).
    \end{align*}
    So, $\cA$ acts by scalar multiplication on each factor of $S^\mu \boxtimes S^\nu$ with scalar multiple $\Eig^\lambda_{\mu, \nu}$. There are $f^\lambda c^\lambda_{\mu, \nu}$ copies of $S^\mu \boxtimes S^\nu$. Since $\dim (S^\mu \boxtimes S^\nu) = f_\mu f_\nu$, this implies that the mutliplicity of $\Eig^\lambda_{\mu, \nu}$ is exactly $f_\lambda f_\mu f_\nu c^\lambda_{\mu, \nu}$.
\end{proof}

\begin{Lemma}\label{lem:transpose-is-leftmult}
    The matrix with respect to the standard basis $\{g : g \in \fS_N \} \subset \CC[\fS_N]$ of $\cA$  is the transpose of $P$.  
\end{Lemma}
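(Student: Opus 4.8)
The plan is to compute the matrix entries of left multiplication by $\cA$ directly and compare them with the entries of $P$. Recall that $\cA = \sum_{s \in \fS_N} P(e, s) \cdot s$ as an element of $\CC[\fS_N]$, and that the standard basis of the regular representation is $\{g : g \in \fS_N\}$ with $\cA$ acting by left multiplication. So first I would write $\cA \cdot g = \sum_{s} P(e, s) \, (sg)$, and then re-index the sum by setting $h = sg$, i.e. $s = h g^{-1}$, to obtain $\cA \cdot g = \sum_{h} P(e, h g^{-1}) \, h$. This expresses $\cA \cdot g$ in the standard basis, so the $(h, g)$ entry of the matrix of $\cA$ is $P(e, h g^{-1})$.

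Next I would invoke the identity $P(x, y) = P(e, y x^{-1})$ from Equation~\ref{eqn:explicit-trans-matrix}, which gives $P(e, h g^{-1}) = P(g, h)$. Hence the $(h, g)$ entry of the matrix of $\cA$ equals $P(g, h)$, which is exactly the $(h, g)$ entry of the transpose $P^{T}$. This is the whole content of the lemma. I should be careful about one convention: the matrix of a linear operator $T$ in a basis $\{e_i\}$ has $(i,j)$ entry equal to the coefficient of $e_i$ in $T e_j$; with that convention the computation above gives precisely the matrix whose $(h,g)$ entry is $P(g,h)$, i.e. $P^{T}$. It is worth also noting explicitly why $\cA$ really is the formal-sum representation of $P(e, \bullet)$: this was established in the paragraph defining $\cA$, using the decomposition $\cT_{S_1, S_2} = \cT_{S_1 \cup S_2} - \cT_{S_1} - \cT_{S_2}$ together with Equation~\ref{eqn:explicit-trans-matrix}, so I can cite that directly rather than re-deriving it.

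There is essentially no hard obstacle here; the only thing to get right is the bookkeeping of left versus right multiplication and the transpose convention, since a sign error there would give $P$ rather than $P^{T}$. The re-indexing $h = sg$ is the key move: left multiplication shuffles the values, which on the level of the group corresponds to $s \mapsto sg$, and it is this that produces $yx^{-1}$ (rather than $x^{-1}y$) in the transition probabilities. I would present the two-line computation, cite Equation~\ref{eqn:explicit-trans-matrix} for the relation $P(x,y) = P(e, yx^{-1})$, and conclude. As a remark, combined with the fact that $P(e, \bullet)$ is supported on involutions — so $P$ is symmetric and $P = P^{T}$ — this lemma will immediately let us transfer the spectral computation for $\cA$ to $P$ itself in the proof of Theorem~\ref{thm:main-theorem-2}.
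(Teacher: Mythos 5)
Your proof is correct and is essentially identical to the paper's: both compute $\cA g = \sum_{h} P(e, hg^{-1})\, h = \sum_h P(g,h)\, h$ via the reindexing $h = sg$ and the relation $P(x,y)=P(e,yx^{-1})$, concluding that the column of the matrix of $\cA$ indexed by $g$ is the row of $P$ indexed by $g$, i.e. the matrix is $P^{T}$. Your extra remarks on the transpose convention and on $P$ being symmetric match what the paper does afterwards in the proof of Theorem~\ref{thm:main-theorem-2}.
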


\begin{proof}
    For $g \in \fS_N$, we have 
    \begin{align*}
        \mathscr{A} g & = \sum_{x \in \fS_N} P(e, x) \cdot xg = \sum_{x \in \fS_N} P(e, xg^{-1}) x = \sum_{x \in \fS_N} P(g, x) x.
    \end{align*}
    This means that the column indexed by $g$ of the matrix of $\cA$ is the row indexed by $g$ of $P$. This suffices for the proof.
\end{proof}

\begin{proof}[Proof of Theorem~\ref{thm:main-theorem-2}]
    From Equation~\ref{eqn:explicit-trans-matrix}, the matrix $P$ is symmetric. Thus, it has the same eigenvalues as its transpose. Lemma~\ref{lem:transpose-is-leftmult} and Theorem~\ref{thm:main-theorem-3} completes the proof. 
\end{proof}

\section{Upper Bound}\label{sec:proof-of-upper-bound}

In this section we prove Theorem~\ref{thm:main-theorem-1}. The proof initially follows the same structure of the proof in~\cite{Diaconis1981} in the case of random transpositions, but immediately complications arise and new ideas are needed.

In~\cite{Diaconis1981}, Diaconis and Shahshahani upper bound the total variation distance by a sum indexed by non-trivial partitions. To bound this sum, they split up the space of partitions into zones and then bound the sum over each zone using different arguments. In our case, we similarly upper bound the total variation distance by a sum indexed by partitions (see Section~\ref{sec:l2-bound}) and then we divide this space into several zones to bound each zone separately. However, we will have several more zones and need to be more careful in our analysis of each zone. The partitioning of our space into zones also has an additional \emph{dynamical} component where some of our zones are determined by the dynamical behavior of certain functions that we define later. The main terms in both $\mathsf{RT}$ and $\mathsf{bRT}$ remain largely the same: the main terms correspond to the partitions with long first row or long first column.

In Section~\ref{sec:l2-bound}, we provide an $\ell_2$-bound on the total variation in the form of a sum over partitions. In Section~\ref{sec:useful-estimate}, we provide a useful estimate for bounding this sum. In Section~\ref{sec:zones}, we describe the zones. Approximately, we have three main zones: \emph{red, blue, and yellow}. In Section~\ref{sec:red-zone}, we bound the sum over the red zone. In Section~\ref{sec:blue-zone}, we bound the sum over the blue zone. Finally, in Section~\ref{sec:yellow-zone}, we bound the sum over the yellow zone. The contribution from the red and blue zone will turn out to be very small (on the order of $\exp(-n \log n)$). The main terms will lie in the yellow zone. 

Recall, in the setting of Theorem~\ref{thm:main-theorem-1} we specialize the biased random transposition shuffle to the case $|A| = |B| = n$ and $N = 2n$. In this specialization, our eigenvalue becomes
\begin{equation}\label{eqn:eigenvalue-when-AandB-same}
    \Eig^\lambda_{\mu, \nu} \eqdef \frac{a^2 + b^2}{4n} + \frac{a^2 - ab}{2n^2} \Diag(\mu) + \frac{b^2 -ab}{2n^2} \Diag(\nu) + \frac{ab}{2n^2} \Diag(\lambda). 
\end{equation}

For the rest of this paper, we will define $t_N \eqdef \frac{1}{2b} N \log N$.

\subsection{\texorpdfstring{$\ell_2$}{l2} bound on total variation}\label{sec:l2-bound}

The $\bRT$ shuffle is reversible and transitive. Since we know the eigenvalues of the transition matrix, we can bound the total variation distance using a general $\ell_2$-bound for reversible, transitive Markov chains.  

\begin{Lemma}\label{lem:l2-bound}
    Let $|A| = |B| = n$ and $N = 2n$. Let $U$ be the uniform distribution on $\fS_N$. Then, any time $t \geq 0$, we have the bound 
    \begin{equation}\label{eqn:l2-bound}
        d_{\TV} (P^t(x, \bullet), U(\bullet))^2 \leq \frac{1}{4} \sum_{\substack{\lambda \in \YY_{2n} \\ \lambda \neq (2n)}} \sum_{(\mu, \nu) \in \LR^\lambda} c^\lambda_{\mu, \nu} f_\lambda f_\mu f_\nu \cdot |\Eig^\lambda_{\mu, \nu}|^{2t}.
    \end{equation}
\end{Lemma}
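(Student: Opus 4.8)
The plan is to invoke the standard $\ell_2$ (Fourier / Plancherel) bound for mixing times, specialized to the case at hand. First I would recall the general upper bound: for a reversible, transitive Markov chain on a finite group $G$ with transition matrix $P$, one has the estimate
\[
    4 \, d_{\TV}(P^t(x, \bullet), U)^2 \leq \|P^t(x, \bullet) - U\|_2^2 = \sum_{\rho \neq \mathrm{triv}} d_\rho \, \|\widehat{P}(\rho)^t\|_{\mathrm{HS}}^2,
\]
where the sum runs over non-trivial irreducible representations $\rho$, $d_\rho = \dim \rho$, and $\widehat{P}(\rho)$ is the Fourier transform of the step distribution. This is the classical Diaconis--Shahshahani upper bound lemma; the Cauchy--Schwarz step gives the factor of $4$ together with the passage from $\ell_1$ to $\ell_2$, and transitivity removes the dependence on the starting point $x$. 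The key point is that, since $\bRT$ is a walk driven by left multiplication by $\mathscr{A}$ and $\mathscr{A}$ acts on the regular representation, the relevant operator is $\mathscr{A}$ itself acting on $\CC[\fS_N]$, whose spectrum was computed in Theorem~\ref{thm:main-theorem-2}.

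Next I would translate the abstract Fourier bound into the concrete sum over partitions. By Proposition~\ref{prop:rep-theory-facts-sym}(4), the regular representation decomposes as $\CC[\fS_N] \simeq \bigoplus_{\lambda \vdash N}(S^\lambda)^{\oplus f_\lambda}$, and on each Specht module $S^\lambda$ we further restrict to $\fS_A \times \fS_B$ and decompose into $(S^\mu \boxtimes S^\nu)^{\oplus c^\lambda_{\mu,\nu}}$ by the Littlewood--Richardson rule (Theorem~\ref{thm:LR-rule}(1)). On the summand indexed by $(\lambda, \mu, \nu)$ the operator $\mathscr{A}$ acts as the scalar $\Eig^\lambda_{\mu,\nu}$, exactly as established in the proof of Theorem~\ref{thm:main-theorem-2}. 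Hence the contribution to $\|P^t - U\|_2^2$ of this summand is (its total dimension) $\times |\Eig^\lambda_{\mu,\nu}|^{2t}$, and the total dimension of all copies of $S^\mu \boxtimes S^\nu$ inside the regular representation is $f_\lambda \cdot c^\lambda_{\mu,\nu} \cdot \dim(S^\mu \boxtimes S^\nu) = f_\lambda f_\mu f_\nu c^\lambda_{\mu,\nu}$. The trivial representation corresponds to $\lambda = (2n)$ (eigenvalue $1$), and this is precisely the term we must exclude — it is the piece that gets subtracted off when we pass from $P^t$ to $P^t - U$. Summing over all $\lambda \neq (2n)$ and all $(\mu,\nu) \in \LR^\lambda$ gives exactly the right-hand side of~\eqref{eqn:l2-bound}.

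The main technical obstacle is making the first inequality fully rigorous in the non-abelian setting: one must be careful that the $\ell_2$-norm of $P^t(x,\bullet) - U$ really is a sum of $|\Eig^\lambda_{\mu,\nu}|^{2t}$ weighted by the correct multiplicities, rather than involving off-diagonal terms or a different power. Because $P$ is symmetric (Equation~\ref{eqn:explicit-trans-matrix}), hence self-adjoint, $\mathscr{A}$ is diagonalizable with real eigenvalues and an orthogonal eigenbasis of $\CC[\fS_N]$; writing $P^t(e,\bullet) - U$ in this eigenbasis and using Parseval shows $\|P^t(e,\bullet)-U\|_2^2 = \sum (\text{multiplicity}) \cdot |\Eig|^{2t}$ over all \emph{non-trivial} eigenvalue-directions, where the $U$ term exactly cancels the trivial component. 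Transitivity of the walk then gives $d_{\TV}(P^t(x,\bullet),U) = d_{\TV}(P^t(e,\bullet),U)$ for every $x$, so the bound holds uniformly. I would organize the write-up as: (i) reduce to $x = e$ by transitivity; (ii) Cauchy--Schwarz to get $4\,d_{\TV}^2 \leq \|P^t(e,\bullet)-U\|_2^2$; (iii) expand the $\ell_2$-norm in the orthogonal eigenbasis of the self-adjoint operator $\mathscr{A}$, identifying the multiplicities via the decomposition in the proof of Theorem~\ref{thm:main-theorem-2} and noting the trivial eigenspace cancels against $U$; (iv) collect terms into the double sum over $\lambda \neq (2n)$ and $(\mu,\nu) \in \LR^\lambda$.
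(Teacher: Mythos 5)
Your proposal is correct and takes essentially the same route as the paper: the paper's proof is a one-line citation of the $\ell_2$ bound for reversible transitive chains (\cite{LPW-MCMT}*{Lemma 12.18(ii)}) combined with the spectral data of Theorem~\ref{thm:main-theorem-2}, and your Diaconis--Shahshahani/Plancherel derivation is just an in-line proof of that cited lemma in the group setting, with the same identification of eigenvalues and multiplicities and the same exclusion of the trivial representation $\lambda=(2n)$.
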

\begin{proof}
    This is an application of~\cite{LPW-MCMT}*{Lemma 12.18(ii)} along with Theorem~\ref{thm:main-theorem-2}.
\end{proof}

\subsection{Estimate for \texorpdfstring{$\Omega_\lambda(t)$}{Omega}}\label{sec:useful-estimate}

Note that the sum of the right hand side of Equation~\ref{eqn:l2-bound} have summands indexed by $\lambda$. We consolidate these terms to create an \emph{error term} associated with the partition $\lambda$. 

\begin{Definition}
    For $\lambda \vdash 2n$ and $\lambda \neq (2n)$. For any $t$, we define the \textbf{error term} associated with $\lambda$ to be the quantity
    \begin{equation}\label{eqn:main-bounding-term}
        \Omega_\lambda (t) \eqdef \sum_{(\mu, \nu) \in \LR^\lambda} c^\lambda_{\mu, \nu} f_\lambda f_\mu f_\nu \cdot |\Eig^\lambda_{\mu, \nu} |^{2t}. 
    \end{equation}
\end{Definition}

Note that the right hand side of Equation~\ref{eqn:l2-bound} is a sum of $\Omega_\lambda(t)$ indexed by $\lambda \in \mathbb{Y}_{2n}$. We will want to split the sum into different subsets $Z \subseteq \mathbb{Y}_{2n}$. If we want to bound the sum over a specific subset $Z$, then the following lemma gives a way to circumvent working with Littlewood-Richardson coefficients. 

\begin{Lemma}\label{lem:essential-tool}
    Let $Z \subset \YY_{2n}$ be some subset of partitions of size $2n$. Then for all $t \geq 0$ we have the bound 
    \begin{align*}
        \sum_{\lambda \in Z} \Omega_\lambda(t) & \leq \sum_{\lambda \in Z} (f_\lambda)^2 \max_{(\mu, \nu) \in \LR^\lambda} |\Eig^\lambda_{\mu, \nu}|^{2t} \\
        & \leq \max_{\lambda \in Z} (f_\lambda)^2 \max_{\substack{\lambda \in Z \\ (\mu, \nu) \in \LR^\lambda}} |\Eig^\lambda_{\mu, \nu}|^{2t} \cdot \exp(O(\sqrt{n})).
    \end{align*}
\end{Lemma}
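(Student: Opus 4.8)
The plan is to handle the two inequalities in sequence. For the first inequality, I would start from the definition \eqref{eqn:main-bounding-term} of $\Omega_\lambda(t)$ and bound each summand's eigenvalue factor by its maximum over the index set $\LR^\lambda$:
\[
    \Omega_\lambda(t) = \sum_{(\mu,\nu) \in \LR^\lambda} c^\lambda_{\mu,\nu} f_\lambda f_\mu f_\nu \cdot |\Eig^\lambda_{\mu,\nu}|^{2t} \leq \left( \max_{(\mu,\nu) \in \LR^\lambda} |\Eig^\lambda_{\mu,\nu}|^{2t} \right) \sum_{(\mu,\nu) \in \LR^\lambda} c^\lambda_{\mu,\nu} f_\lambda f_\mu f_\nu.
\]
The key observation is that the remaining sum $\sum_{(\mu,\nu)} c^\lambda_{\mu,\nu} f_\lambda f_\mu f_\nu$ is exactly $f_\lambda$ times $\sum_{(\mu,\nu)} c^\lambda_{\mu,\nu} f_\mu f_\nu$, and by Proposition~\ref{prop:sym-rep-theory-identities}(2) (with $m = |A| = n$, $N - m = |B| = n$) this inner sum equals $f_\lambda$. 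Hence the sum collapses to $(f_\lambda)^2$, giving $\Omega_\lambda(t) \leq (f_\lambda)^2 \max_{(\mu,\nu) \in \LR^\lambda} |\Eig^\lambda_{\mu,\nu}|^{2t}$. Summing over $\lambda \in Z$ gives the first inequality. (One should note the sum over $(\mu,\nu) \in \LR^\lambda$ is precisely the sum over $\mu \vdash n$, $\nu \vdash n$ with $c^\lambda_{\mu,\nu} > 0$, so the terms with $c^\lambda_{\mu,\nu} = 0$ contribute nothing and the identity applies verbatim.)

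For the second inequality, I would pull the maxima out of the sum over $\lambda \in Z$:
\[
    \sum_{\lambda \in Z} (f_\lambda)^2 \max_{(\mu,\nu) \in \LR^\lambda} |\Eig^\lambda_{\mu,\nu}|^{2t} \leq \left( \max_{\lambda \in Z} (f_\lambda)^2 \right) \left( \max_{\substack{\lambda \in Z \\ (\mu,\nu) \in \LR^\lambda}} |\Eig^\lambda_{\mu,\nu}|^{2t} \right) |Z|,
\]
and then bound the cardinality $|Z| \leq |\YY_{2n}| = p(2n)$. By the Hardy--Ramanujan estimate (Theorem~\ref{thm:partition-function-bound}), $p(2n) = \exp\bigl(\pi\sqrt{4n/3} + O(\log n)\bigr) = \exp(O(\sqrt{n}))$, which absorbs into the claimed error factor. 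This completes the proof.

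I do not anticipate a serious obstacle here; the only point requiring a little care is confirming that the hypothesis $m \leq N$ in Proposition~\ref{prop:sym-rep-theory-identities}(2) is met (it is, since $m = n \leq 2n$) and that the identity is being applied to the correct split $2n = n + n$ matching the $\bRT$ setup with $|A| = |B| = n$. Everything else is a rearrangement of sums and a direct appeal to the partition-counting asymptotics already recorded in the preliminaries.
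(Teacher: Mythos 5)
Your argument is correct and is essentially identical to the paper's proof: replace each $|\Eig^\lambda_{\mu,\nu}|^{2t}$ by its maximum over $\LR^\lambda$ and collapse $\sum c^\lambda_{\mu,\nu} f_\mu f_\nu = f_\lambda$ via Proposition~\ref{prop:sym-rep-theory-identities}(2), then bound the number of terms by $p(2n)=\exp(O(\sqrt{n}))$ using Theorem~\ref{thm:partition-function-bound}. No gaps.
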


\begin{proof}
    Recall that 
    \begin{align*}
        \sum_{\lambda \in Z} \Omega_\lambda (t) & = \sum_{\lambda \in Z}  \sum_{(\mu, \nu) \in \LR^\lambda} c^\lambda_{\mu, \nu} f_\lambda f_\mu f_\nu \cdot |\Eig^\lambda_{\mu, \nu} |^{2t}
    \end{align*}

    After replacing $|\Eig^\lambda_{\mu, \nu}|$ by its maximum over $(\mu, \nu) \in \LR^\lambda$, the first inequality follows from Proposition~\ref{prop:sym-rep-theory-identities}(2). The second inequality follows from the first by replacing summand by its maximum and using Theorem~\ref{thm:partition-function-bound} since there are at most $p(2n)$ terms. 
\end{proof}

\subsection{Zones}\label{sec:zones}

In this section, we describe the zones. Pictorially, these zones can be represented on the positive quadrant where the $x$-axis represents the value of $\lambda_1$ and the $y$-axis represents the value of $\lambda_1^*$. For a picture of all of the zones, see Figure~\ref{fig:zones-skeleton}. 

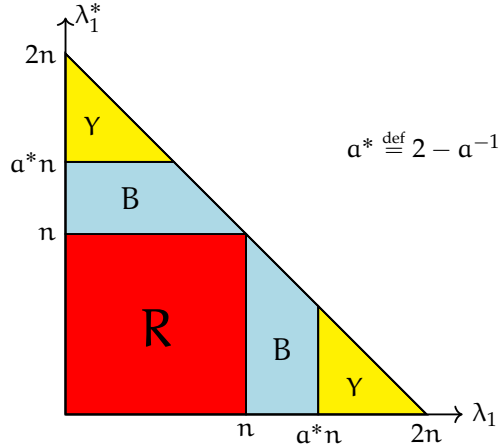
\begin{figure}
    \begin{tikzpicture}[scale = 1.2]
        \definecolor{lightblue}{rgb}{0.68, 0.85, 0.90}

        \draw[->, line width=0.25mm] (0,0) -- (4.4,0) node[right] {$\lambda_1$};
        \draw[->, line width=0.25mm] (0,0) -- (0,4.4) node[right] {$\lambda_1^*$};

        \draw[fill = red, opacity=0.5] (0, 0) -- (2, 0) -- (2, 2) -- (0, 2) -- cycle;
        \draw[line width=0.25mm] (0, 0) -- (2, 0) -- (2, 2) -- (0, 2) -- cycle;

        \draw[fill = lightblue] (0, 2) -- (2, 2) -- (1.2, 2.8) -- (0, 2.8) -- cycle;
        \draw[line width=0.25mm] (0, 2) -- (2, 2) -- (1.2, 2.8) -- (0, 2.8) -- cycle;

        \draw[fill = lightblue] (2, 0) -- (2, 2) -- (2.8,1.2) -- (2.8,0) -- cycle;
        \draw[line width=0.25mm] (2, 0) -- (2, 2) -- (2.8,1.2) -- (2.8,0) -- cycle;

        \draw[fill = yellow, opacity=0.5] (2.8, 0) -- (2.8, 1.2) -- (4, 0) -- cycle;
        \draw[line width=0.25mm] (2.8, 0) -- (2.8, 1.2) -- (4, 0) -- cycle;

        \draw[fill = yellow, opacity=0.5] (0, 2.8) -- (1.2, 2.8) -- (0, 4) -- cycle;
        \draw[line width=0.25mm] (0, 2.8) -- (1.2, 2.8) -- (0, 4) -- cycle;

        \draw[line width=0.25mm] (0,0) -- (0,4) -- (4,0) -- cycle;

        \draw (0,0) -- (2,0) node[below] {$n$} -- (2.8,0) node[below] {$a^*n$} -- (4,0) node[below] {$2n$};
        \draw (0,0) -- (0,2) node[left] {$n$} -- (0,2.8) node[left] {$a^*n$} -- (0,4) node[left] {$2n$};

        \draw (3, 3) node[right] {$a^* \eqdef 2-a^{-1}$};

        \draw (1, 0.65) node[above] {\Huge $R$};
        \draw (2.4, 0.5) node[above] {\Large $B$};
        \draw (0.5, 2.4) node[right] {\Large $B$};
        \draw (3.2, 0.1) node[above] {$Y$};
        \draw (0.1, 3.2) node[right] {$Y$};
    \end{tikzpicture}
    \caption{Cartesian plane with $x$-axis $\lambda_1$ and $y$-axis $\lambda_1^*$}\label{fig:zones-skeleton}
\end{figure}

Each point $(x,y)$ in the plane represents the sum of $\Omega_\lambda (t)$ where the sum ranges over all $\lambda \vdash 2n$ with more than one row with first row $x$ and first column $y$. Looking at Figure~\ref{fig:zones-skeleton}, we have drawn approximately three major zones. These are denoted by $\Zone_{R}$, $\Zone_{B}$, $\Zone_{Y}$ ("R" stands for \textbf{R}ed, "B" stands for \textbf{B}lue, and "Y" stands for \textbf{Y}ellow). We define the zones as follows: 
\begin{Definition}
    Let $a^* \eqdef 2-a^{-1}$. We define the \textbf{Red}, \textbf{Blue}, and \textbf{Yellow} zones explicitly by 
    \begin{align*}
        \Zone_R & \eqdef \{\lambda : \lambda_1, \lambda_1^* \leq n \}, \\
        \Zone_B & \eqdef \{ \lambda : n \leq \lambda_1 \leq a^*n, \lambda_1^* \leq n\} \cup \{\lambda : n \leq \lambda_1^* \leq a^*n, \lambda_1 \leq n \}, \\
        \Zone_Y & \eqdef \{\lambda : 2n-1 \geq \lambda_1 \geq a^* n \} \cup \{ \lambda : \lambda_1^* \geq a^*n \}. 
    \end{align*}
\end{Definition}

\begin{Remark}
    These \emph{are not exactly} the zones we will use, but they are a good approximation. We will use the Red zone as it is defined. For the Blue zone, we will extend a bit to the right to create the \emph{modified Blue zone}. For the Yellow zone, we will retract the left side a bit to the right to create the \emph{modified Yellow zone}. 
\end{Remark}

\begin{Remark}[The Window]
    We want to prove that $t_N$ is the cutoff time with window $N$. To prove the upper bound, we want to show that 
    \[
        \sum_{\lambda \in \Zone_R} \Omega_\lambda (t_N + cN) + \sum_{\lambda \in \Zone_B} \Omega_\lambda (t_N + cN) + \sum_{\lambda \in \Zone_Y} \Omega_\lambda (t_N + cN)
    \]
    is small. However, since our main terms lie in the yellow zones, the sums over the Red zone and Blue zones are small even before the window. So, in our proof, we will prove that the larger sum 
    \[
        \sum_{\lambda \in \Zone_R} \Omega_\lambda (t_N) + \sum_{\lambda \in \Zone_B} \Omega_\lambda (t_N) + \sum_{\lambda \in \Zone_Y} \Omega_\lambda (t_N + cN)
    \]
    is small where the sums of the Red zone and Blue zones end up being on the order of $\exp (-C N \log N)$. 
\end{Remark}

\subsection{The Red Zone}\label{sec:red-zone}

We want to prove that $\sum_{\lambda \in \Zone_R} \Omega_\lambda \left (t_N \right )$ is small as $N \to \infty$. To bound this sum, we will further partition the zone $\Zone_R$ into smaller subzones. We will then bound these subzones separately. 

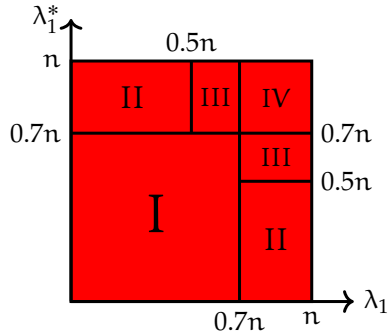
\begin{figure}
    \begin{tikzpicture}[scale = 0.8]
        \draw[fill = red, opacity = 0.4] (0, 0) -- (4, 0) -- (4, 4) -- (0, 4) -- cycle;
        \draw[line width=0.4mm] (0,0)--(4,0)--(4,4)--(0,4)--cycle;

        \draw[line width=0.4mm] (2.8, 0) -- (2.8, 4);
        \draw[line width=0.4mm] (0, 2.8) -- (4, 2.8);
        \draw[line width=0.4mm] (2.8, 2) -- (4, 2);
        \draw[line width=0.4mm] (2, 2.8) -- (2, 4);

        \draw[->, line width=0.4mm] (0,0) -- (2.8,0) node[below] {$0.7n$} -- (4,0) node[below] {$n$} -- (4.7,0) node[right] {$\lambda_1$};
        \draw[->, line width=0.4mm] (0,0) -- (0,2.8) node[left] {$0.7n$} -- (0,4) node[left] {$n$} -- (0,4.7) node[left] {$\lambda_1^*$};

        \draw (2,4) node[above] {$0.5n$};
        \draw (1.4, 1) node[above] {\Huge $I$};

        \draw (3.4, 0.7) node[above] {\Large $II$};
        \draw (0.65, 3.4) node[right] {\Large $II$};

        \draw (3.4, 2.1) node[above] {$III$};
        \draw (1.98, 3.4) node[right] {$III$};

        \draw (3.8, 3.39) node[left] {$IV$};

        \draw (4,0) -- (4,2) node[right] {$0.5n$} -- (4,2.8) node[right] {$0.7n$};
    \end{tikzpicture}
    \caption{Subzones of $\Zone_R$}\label{fig:subzones-of-redzone}
\end{figure}

The subzones are drawn in Figure~\ref{fig:subzones-of-redzone} labeled with roman numerals $I$, $II$, $III$, and $IV$. Explicitly, these subzones are 
\begin{align*}
    \Zone_R^{I} & \eqdef \{\lambda_1, \lambda_1^* \leq 0.7n \}, \\
    \Zone_R^{II} & \eqdef \{0.7n \leq \lambda_1 \leq n, \lambda_1^* \leq 0.5n\} \cup \{0.7n \leq \lambda_1^* \leq n, \lambda_1 \leq 0.5n \}, \\
    \Zone_R^{III} & \eqdef \{0.7n \leq \lambda_1 \leq n, 0.5n \leq \lambda_1^* \leq 0.7n\} \cup \{0.7 \leq \lambda_1^* \leq n, 0.5n \leq \lambda_1 \leq 0.7n \}, \\
    \Zone_R^{IV} & \eqdef \{0.7n \leq \lambda_1, \lambda_1^* \leq n \}. 
\end{align*}

For the analysis of $\Zone_R^{III}$ and $\Zone_R^{IV}$ we will the estimate for $\Eig^\lambda_{\mu, \nu}$ given in Proposition~\ref{prop:BOUND-on-EIG-between-0.5-and-1}. We define our first auxiliary function. 

\begin{Definition}\label{defn:q-function}
    We define a function $\mathbf{Q}_R : \RR^2 \to \RR$ defined by 
    \[
        \mathbf{Q}_R(x, y) \eqdef \frac{a^2 - b^2}{4} + \frac{a^2 - ab}{2} (x^2 - x) + \frac{ab - b^2}{2} (y^2 - y) + \frac{ab}{2} \left( x - \frac{xy}{2} - \frac{y^2}{2} \right).
    \]
\end{Definition}

\begin{Proposition}\label{prop:BOUND-on-EIG-between-0.5-and-1}
    Let $\lambda \vdash 2n$ with $0.5n \leq \lambda_1, \lambda_1^* \leq n$. Then, we have 
    \[
        \max_{(\mu, \nu) \in \LR^\lambda} |\Eig^\lambda_{\mu, \nu}| \leq \max \left\{ \mathbf{Q}_R \left( \frac{\lambda_1}{n}, \frac{\lambda_1^*}{n} \right) , \mathbf{Q}_R \left( \frac{\lambda_1^*}{n}, \frac{\lambda_1}{n} \right) \right\} + O \left( \frac{1}{n} \right)
    \]
    where the $O(1/n)$ error depends only on $a, b, n$. 
\end{Proposition}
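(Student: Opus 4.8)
The plan is to first establish the \emph{one-sided} bound
\[
    \Eig^\lambda_{\mu, \nu} \;\le\; \mathbf{Q}_R\bigl( \tfrac{\lambda_1}{n}, \tfrac{\lambda_1^*}{n} \bigr) + O\bigl( \tfrac{1}{n} \bigr)
\]
for every Littlewood--Richardson triple $(\lambda, \mu, \nu)$ with $\lambda$ as in the statement, and then to promote this to the two-sided bound on $|\Eig^\lambda_{\mu,\nu}|$. The promotion step is short: by Lemma~\ref{lem:oliver}(3) and Proposition~\ref{prop:sym-rep-theory-identities}(4) one gets $(\mu^*, \nu^*) \in \LR^{\lambda^*}$ together with
\[
    \Eig^{\lambda^*}_{\mu^*, \nu^*} \;=\; \frac{a^2+b^2}{2n} - \Eig^\lambda_{\mu, \nu},
\]
and $\lambda^*$ satisfies the same hypothesis with $\lambda_1$ and $\lambda_1^*$ interchanged. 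Hence the one-sided bound applied to $(\lambda^*, \mu^*, \nu^*)$ gives $-\Eig^\lambda_{\mu,\nu} \le \mathbf{Q}_R(\lambda_1^*/n, \lambda_1/n) + O(1/n)$, and combining the two estimates yields the claim.

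To prove the one-sided bound I would handle the three non-constant summands of $\Eig^\lambda_{\mu,\nu}$ (Equation~\ref{eqn:eigenvalue-when-AandB-same}) one at a time, absorbing the constant $\frac{a^2+b^2}{4n}$ into the error. The $\Diag(\lambda)$ summand is dealt with directly by Lemma~\ref{lem:generic-diagonal-sum-bound}(1), which gives
\[
    \frac{ab}{2n^2}\Diag(\lambda) \;\le\; \frac{ab}{2}\Bigl( \frac{\lambda_1}{n} - \frac{\lambda_1\lambda_1^*}{2n^2} - \frac{(\lambda_1^*)^2}{2n^2} \Bigr) + O\bigl(\tfrac1n\bigr),
\]
precisely the last term of $\mathbf{Q}_R(\lambda_1/n, \lambda_1^*/n)$. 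For the $\Diag(\mu)$ summand, since $a^2-ab = a(a-b)\ge 0$ we need an upper bound for $\Diag(\mu)$, and the point is that Lemma~\ref{lem:oliver}(4) is too weak here; instead I would use the elementary estimate
\[
    \Diag(\mu) \;=\; \frac12\Bigl( \sum_i \mu_i^2 - \sum_j (\mu_j^*)^2 \Bigr) \;\le\; \frac12 \sum_i \mu_i^2 \;\le\; \frac12\bigl( \mu_1^2 + (n-\mu_1)^2 \bigr),
\]
using $\sum_{i\ge 2}\mu_i^2 \le \bigl(\sum_{i\ge 2}\mu_i\bigr)^2 = (n-\mu_1)^2$ (and, in the regime $\mu_1 \le n/2$, the complementary bound $\sum_i \mu_i^2 \le \mu_1 n$). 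Since $c^\lambda_{\mu,\nu}>0$ forces $\mu \subseteq \lambda$ we have $\mu_1 \le \lambda_1$; combining this with the hypothesis $\lambda_1 \ge n/2$ and a one-line case split on the sign of $\mu_1 - n/2$ shows $\Diag(\mu) \le \frac12\bigl(\lambda_1^2 + (n-\lambda_1)^2\bigr)$, hence
\[
    \frac{a^2-ab}{2n^2}\Diag(\mu) \;\le\; \frac{a^2-ab}{4} + \frac{a^2-ab}{2}\Bigl( \bigl(\tfrac{\lambda_1}{n}\bigr)^2 - \tfrac{\lambda_1}{n} \Bigr).
\]
The $\Diag(\nu)$ summand is symmetric: $b^2-ab = b(b-a)\le 0$, so writing $\Diag(\nu) = -\Diag(\nu^*)$ and using $\nu \subseteq \lambda$ (so $(\nu^*)_1 = \nu_1^* \le \lambda_1^*$) together with $\lambda_1^* \ge n/2$, the same estimate applied to $\nu^* \vdash n$ gives
\[
    \frac{b^2-ab}{2n^2}\Diag(\nu) \;\le\; \frac{ab-b^2}{4} + \frac{ab-b^2}{2}\Bigl( \bigl(\tfrac{\lambda_1^*}{n}\bigr)^2 - \tfrac{\lambda_1^*}{n} \Bigr).
\]

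Adding the three bounds and using $\frac{a^2-ab}{4} + \frac{ab-b^2}{4} = \frac{a^2-b^2}{4}$ reproduces $\mathbf{Q}_R(\lambda_1/n, \lambda_1^*/n)$ exactly, with all slack absorbed into an $O(1/n)$ term whose implied constant depends only on $a,b$ (it comes from $\frac{a^2+b^2}{4n}$ and the universal constant of Lemma~\ref{lem:generic-diagonal-sum-bound}), which completes the one-sided bound and hence the proposition. The step I expect to carry the real content is the bound on $\Diag(\mu)$ and $\Diag(\nu)$: one has to recognize that a partition of $n$ with first part $\mu_1$ is forced to spill its remaining $n - \mu_1$ cells into lower rows, contributing at most $\binom{n-\mu_1}{2}$, and it is exactly this quadratic correction that produces the $x^2-x$ and $y^2-y$ terms of $\mathbf{Q}_R$ and makes the estimate strong enough; the hypothesis $\lambda_1, \lambda_1^* \ge n/2$ is precisely what is needed so that the maximum of the right-hand side over $\mu_1 \le \lambda_1$ (resp.\ $\nu_1^* \le \lambda_1^*$) lands on this branch rather than on the linear one.
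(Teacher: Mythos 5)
Your proof is correct and follows essentially the same route as the paper's: the key estimates are identical (Lemma~\ref{lem:generic-diagonal-sum-bound}(1) for $\Diag(\lambda)$, and the quadratic bound $\Diag(\mu)\le \lambda_1^2-n\lambda_1+\tfrac{n^2}{2}$ for $\Diag(\mu)$ and $-\Diag(\nu)$, which the paper obtains from the dominance $\mu\lhd(\lambda_1,n-\lambda_1)$ together with Lemma~\ref{lem:oliver}(2) rather than from your direct sum-of-squares estimate with its case split on $\mu_1\lessgtr n/2$ — the two are equivalent in strength). Your only organizational departure is deducing the lower bound on $\Eig^\lambda_{\mu,\nu}$ from the upper bound applied to $(\lambda^*,\mu^*,\nu^*)$ via the identity $\Eig^{\lambda}_{\mu,\nu}+\Eig^{\lambda^*}_{\mu^*,\nu^*}=\frac{a^2+b^2}{2n}$ (the paper's Lemma~\ref{lem:new-zealand}), whereas the paper simply redoes the three diagonal-index bounds in the opposite direction; both yield the same conclusion.
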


\begin{proof}
    Let $\lambda \vdash 2n$ be a partition with $0.5 n \leq \lambda_1, \lambda_1^* \leq n$. Let $(\mu, \nu) \in \LR^\lambda$. Then Lemma~\ref{lem:generic-diagonal-sum-bound} and Lemma~\ref{lem:oliver} give us the bounds 
    \begin{align*}
        \Diag(\lambda) & \leq n \lambda_1 - \frac{\lambda_1 \lambda_1^*}{2} - \frac{(\lambda_1^*)^2}{2} + O(n) \\ 
        \Diag (\mu) & \leq \Diag(\lambda_1, n- \lambda_1) \leq \lambda_1^2 - n \lambda_1 + \frac{n^2}{2} \\ 
        \Diag(\nu) & \geq - \Diag(\lambda_1^*, n - \lambda_1^*) \geq - \left\{ (\lambda_1^*)^2 - n \lambda_1^* + \frac{n^2}{2} \right\}.
    \end{align*}
    This gives the upper bound 
    \[
        \Eig^\lambda_{\mu, \nu} \leq \mathbf{Q}_R \left( \frac{\lambda_1}{n}, \frac{\lambda_1^*}{n} \right) + O\left( \frac{1}{n} \right).
    \]
    We can repeat the argument for the lower bound. Indeed, Lemma~\ref{lem:essential-tool} and Lemma~\ref{lem:oliver} give us the bounds 
    \begin{align*}
        \Diag(\lambda) & \geq - \left\{  n \lambda_1^* - \frac{\lambda_1 \lambda_1^*}{2} - \frac{\lambda_1^2}{2} \right\} + O(n) \\
        \Diag(\mu) & \geq - \Diag(\lambda_1^*, n - \lambda_1^*) \geq - \left\{ (\lambda_1^*)^2 - n \lambda_1^* + \frac{n^2}{2} \right\} \\
        \Diag(\nu) & \leq \Diag(\lambda_1, n- \lambda_1) \leq \lambda_1^2 - n \lambda_1 + \frac{n^2}{2}. 
    \end{align*}
    This gives the lower bound 
    \[
        \Eig^\lambda_{\mu, \nu} \geq -\mathbf{Q}_R \left( \frac{\lambda_1^*}{n}, \frac{\lambda_1}{n} \right) + O \left ( \frac{1}{n} \right ). 
    \]
    This completes the proof. 
\end{proof}

\subsubsection{The Red Zone I}

In this section, we consider the sum of $\Omega_\lambda(t)$ over $\lambda \in \Zone_R^I$. For this sub-zone, we only need to use the bounds already found in~\cite{Diaconis1981}. In this paper, Diaconis and Shahshahani bound the following quantity: 

\begin{Definition}
    For $\mu \vdash n$, we define 
    \[
        D_\mu \eqdef \frac{1}{n} + \frac{2}{n^2} \Diag(\mu). 
    \]
\end{Definition}

The eigenvalues for random transpositions are exactly $D_\mu$ for $\mu \vdash n$ with multiplicity $f_\mu^2$. Recall from Proposition~\ref{prop:LR-lattice-dominance-global-maximum} we proved that when $c^\lambda_{\mu, \nu} > 0$ we have $\lambda \lhd \mu + \nu$. We can use this to bound $\Eig^\lambda_{\mu, \nu}$ based on $\mu$ and $\nu$. We write the bound in terms of the $D_\mu$. 

\begin{Proposition}
    Let $c^\lambda_{\mu, \nu} > 0$. Then we have 
    \[
        \frac{a^2}{4} D_\mu + \frac{b^2}{4} D_\nu - \frac{ab}{2} \frac{\langle \mu^*, \nu^* \rangle }{n^2} \leq \Eig^\lambda_{\mu, \nu} \leq \frac{a^2}{4} D_\mu + \frac{b^2}{4} D_\nu + \frac{ab}{2} \frac{\langle \mu, \nu \rangle}{n^2}. 
    \]
\end{Proposition}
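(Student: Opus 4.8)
The plan is to reduce the claimed two-sided bound on $\Eig^\lambda_{\mu,\nu}$ to a two-sided bound on the single quantity $\Diag(\lambda) - \Diag(\mu) - \Diag(\nu)$, and then to control the latter using the dominance relation $\lambda \lhd \mu+\nu$ coming from $c^\lambda_{\mu,\nu} > 0$ (Proposition~\ref{prop:LR-lattice-dominance-global-maximum}), together with the additivity formula for $\Diag$ (Lemma~\ref{lem:diag-of-addition-can-be-expanded}) and the monotonicity and conjugation properties of $\Diag$ (Lemma~\ref{lem:oliver}(2),(3)).

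First I would unwind the definitions. Using $D_\mu = \frac1n + \frac{2}{n^2}\Diag(\mu)$ and the analogous expression for $D_\nu$, one has
\[
    \frac{a^2}{4}D_\mu + \frac{b^2}{4}D_\nu = \frac{a^2+b^2}{4n} + \frac{a^2}{2n^2}\Diag(\mu) + \frac{b^2}{2n^2}\Diag(\nu).
\]
Subtracting this from the formula for $\Eig^\lambda_{\mu,\nu}$ in Equation~\ref{eqn:eigenvalue-when-AandB-same}, the $\frac{a^2+b^2}{4n}$ terms cancel and the coefficients of $\Diag(\mu)$ and $\Diag(\nu)$ each collapse to $-\frac{ab}{2n^2}$, leaving
\[
    \Eig^\lambda_{\mu,\nu} = \frac{a^2}{4}D_\mu + \frac{b^2}{4}D_\nu + \frac{ab}{2n^2}\bigl(\Diag(\lambda) - \Diag(\mu) - \Diag(\nu)\bigr).
\]
Since $0 < b \le a$ we have $ab > 0$, so the proposition is equivalent to the pair of inequalities
\[
    -\langle \mu^*, \nu^*\rangle \;\le\; \Diag(\lambda) - \Diag(\mu) - \Diag(\nu) \;\le\; \langle \mu, \nu\rangle .
\]

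For the upper bound: since $c^\lambda_{\mu,\nu} > 0$, Proposition~\ref{prop:LR-lattice-dominance-global-maximum} gives $\lambda \lhd \mu+\nu$, hence Lemma~\ref{lem:oliver}(2) yields $\Diag(\lambda) \le \Diag(\mu+\nu)$, and Lemma~\ref{lem:diag-of-addition-can-be-expanded} expands $\Diag(\mu+\nu) = \Diag(\mu) + \Diag(\nu) + \langle \mu,\nu\rangle$; rearranging gives the right-hand inequality. For the lower bound I would pass to conjugates: by Proposition~\ref{prop:sym-rep-theory-identities}(4) we have $c^{\lambda^*}_{\mu^*,\nu^*} = c^\lambda_{\mu,\nu} > 0$, so the same argument applied to the triple $(\lambda^*,\mu^*,\nu^*)$ gives $\Diag(\lambda^*) \le \Diag(\mu^*) + \Diag(\nu^*) + \langle \mu^*,\nu^*\rangle$. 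Now Lemma~\ref{lem:oliver}(3) replaces each $\Diag$ of a conjugate by the negative of the original, and multiplying through by $-1$ yields $\Diag(\lambda) - \Diag(\mu) - \Diag(\nu) \ge -\langle \mu^*,\nu^*\rangle$, the left-hand inequality.

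There is no substantive obstacle here: the only care required is the algebraic bookkeeping in the reduction step and keeping track of the signs introduced when dualizing. (The computation is identical in the general case where $|A|$ and $|B|$ need not be equal, replacing $n$ by $|A|$ and $|B|$ in the appropriate places.)
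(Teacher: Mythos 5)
Your proposal is correct and follows essentially the same route as the paper: reduce to bounding $\Diag(\lambda)-\Diag(\mu)-\Diag(\nu)$, use $c^\lambda_{\mu,\nu}>0$ together with $c^{\lambda^*}_{\mu^*,\nu^*}=c^\lambda_{\mu,\nu}$ to get $\lambda \lhd \mu+\nu$ and $\lambda^* \lhd \mu^*+\nu^*$, and then apply the monotonicity, conjugation, and additivity properties of $\Diag$. Your write-up simply makes the algebraic cancellation leading to $\Eig^\lambda_{\mu,\nu}=\frac{a^2}{4}D_\mu+\frac{b^2}{4}D_\nu+\frac{ab}{2n^2}(\Diag(\lambda)-\Diag(\mu)-\Diag(\nu))$ explicit, which the paper leaves implicit.
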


\begin{proof}
    From Proposition~\ref{prop:sym-rep-theory-identities}(4), we have $c^{\lambda^*}_{\mu^*, \nu^*} = c^\lambda_{\mu, \nu} > 0$. From this fact and Proposition~\ref{prop:LR-lattice-dominance-global-maximum}, we have $\lambda \lhd \mu + \nu$ and $\lambda^* \lhd \mu^* + \nu^*$. From Lemma~\ref{lem:oliver}, we get the inequalities
    \[
        -\Diag(\mu^* + \nu^*) \leq \Diag(\lambda) \leq \Diag(\mu + \nu).
    \]
    By applying these bounds to Equation~\ref{eqn:eigenvalue-when-AandB-same} and using the expansion in Lemma~\ref{lem:diag-of-addition-can-be-expanded}, we get the desired bound. 
\end{proof}

As a corollary, we obtain the following bound. 

\begin{Corollary}\label{cor:Eig-bound-in-terms-of-RT}
    Let $c^\lambda_{\mu, \nu} > 0$. Then we have 
    \[
        |\Eig^\lambda_{\mu, \nu}| \leq \frac{a^2}{4} |D_\mu| + \frac{b^2}{4} |D_\nu| + \frac{ab}{2} \max \left \{ \frac{\langle \mu, \nu \rangle}{n^2}, \frac{\langle \mu^*, \nu^* \rangle}{n^2} \right \}
    \]
\end{Corollary} 

In our current subzone, Diaconis and Shahshahani give bounds for the random transposition eigenvalues $D_\mu$ and $D_\nu$ in the proof of~\cite{Diaconis1981}*{Theorem 1}. We summarize their results in the following lemma. 

\begin{Lemma}\label{lem:diaconis-inequalities}
    For $\mu \vdash n$, the following bounds hold:
    \begin{enumerate}
        \item If $\mu_1, \mu_1^* \leq \frac{n}{3}$ then 
        \[
            |D_\mu| \leq \frac{1}{3}, \quad (f_\mu)^2 \leq \exp (n \log n + O(n)).
        \]

        \item If $\frac{n}{3} < \mu_1 \leq \frac{n}{2}$, $\mu_1^* \leq \frac{n}{2}$ or $\mu_1 \leq \frac{n}{2}$, $\frac{n}{3} < \mu_1^* \leq \frac{n}{2}$ then 
        \[
            |D_\mu| \leq \frac{1}{2}, \quad (f_\mu)^2 \leq \exp \left ( \frac{2}{3}n \log n + O(n) \right )
        \]

        \item If $0.5n < \mu_1 \leq 0.7n$ or $0.5n < \mu_1^* \leq 0.7n$ then 
        \[
            |D_\mu| \leq 0.58 + O \left ( \frac{1}{n} \right ), \quad (f_\mu)^2 \leq \exp \left ( \frac{1}{2} n\log n + O(n) \right ). 
        \]
    \end{enumerate}
\end{Lemma}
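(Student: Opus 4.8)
The plan is to re-derive all six estimates within our framework, so that Lemma~\ref{lem:diaconis-inequalities} becomes a consequence of the diagonal-index lemmas rather than a black box from \cite{Diaconis1981}. The common starting point is a crude two-sided bound on $D_\mu$. Since $D_\mu = \frac1n + \frac{2}{n^2}\Diag(\mu)$, Lemma~\ref{lem:oliver}(4) gives $\Diag(\mu) \le (\mu_1-1)\frac n2$, while applying Lemma~\ref{lem:oliver}(3),(4) to $\mu^*$ gives $\Diag(\mu) = -\Diag(\mu^*) \ge -(\mu_1^*-1)\frac n2$; hence
\[
    \frac{2-\mu_1^*}{n} \;\le\; D_\mu \;\le\; \frac{\mu_1}{n}.
\]
Substituting $\mu_1,\mu_1^*\le n/3$ yields $|D_\mu|\le \tfrac13$ (the $D_\mu$ half of part~(1)), and substituting $\mu_1,\mu_1^*\le n/2$ yields $|D_\mu|\le\tfrac12$ (the $D_\mu$ half of part~(2), in both its sub-cases).

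For the dimension bounds I would reuse, now for $\mu\vdash n$, the over-counting argument from the proof of Lemma~\ref{lem:f-lambda-bound}(1): specifying the entries of the first row and first column of a standard tableau of shape $\mu$ costs $\exp(O(n))$, and the fillings of the remaining $n-\mu_1-\mu_1^*+1$ boxes number at most $\sqrt{(n-\mu_1-\mu_1^*+1)!}$ by Proposition~\ref{prop:sym-rep-theory-identities}(1), so that
\[
    (f_\mu)^2 \;\le\; \exp\!\big((n-\mu_1-\mu_1^*)\log n + O(n)\big).
\]
In regime~(1) one has $n-\mu_1-\mu_1^*\le n$ (or one simply uses $\sum_{\mu\vdash n}f_\mu^2=n!$), giving $\exp(n\log n+O(n))$; in regime~(2) one of $\mu_1,\mu_1^*$ exceeds $n/3$, so $n-\mu_1-\mu_1^*<2n/3$, giving $\exp(\tfrac23 n\log n+O(n))$; in regime~(3) one of $\mu_1,\mu_1^*$ exceeds $n/2$, so $n-\mu_1-\mu_1^*<n/2$, giving $\exp(\tfrac12 n\log n+O(n))$.

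It remains to prove the sharper estimate $|D_\mu|\le 0.58+O(1/n)$ of part~(3). Because $f_{\mu^*}=f_\mu$ and $D_{\mu^*}=\frac2n-D_\mu$, it suffices to treat the case $0.5n<\mu_1\le 0.7n$. Splitting off the first row in the content formula $\Diag(\mu)=\sum_{(i,j)\in\mu}(j-i)$ gives
\[
    \Diag(\mu) = \binom{\mu_1}{2} + \Diag(\rho) - (n-\mu_1), \qquad \rho \eqdef (\mu_2,\mu_3,\ldots)\vdash n-\mu_1 .
\]
Since $\rho_1,\rho_1^*\le|\rho|=n-\mu_1$, Lemma~\ref{lem:oliver}(3),(4) give $|\Diag(\rho)|\le\frac{(n-\mu_1)^2}{2}$, whence, writing $x\eqdef\mu_1/n\in(0.5,0.7]$,
\[
    2x-1+O(1/n) \;\le\; D_\mu \;\le\; 2x^2-2x+1+O(1/n).
\]
The left-hand side is $>-O(1/n)$, and the right-hand side is a function increasing on $[\tfrac12,\infty)$ whose value at $x=0.7$ is exactly $0.58$, so $|D_\mu|\le 0.58+O(1/n)$.

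The only nonroutine ingredient is this final scalar optimization over $(0.5,0.7]$; everything else is bookkeeping with the diagonal-index lemmas of Section~\ref{sec:statistics} and the over-counting bound for $f_\mu$, so I expect the write-up to be short.
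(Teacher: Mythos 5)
Your proof is correct, but it takes a different route from the paper: the paper does not prove Lemma~\ref{lem:diaconis-inequalities} at all, it simply imports the bounds from the proof of Theorem~1 of \cite{Diaconis1981} (where they are phrased in terms of character ratios) and restates them in terms of $D_\mu$ and $f_\mu$. You instead re-derive all six estimates internally: the crude two-sided bound $\frac{2-\mu_1^*}{n}\le D_\mu\le\frac{\mu_1}{n}$ from Lemma~\ref{lem:oliver}(3),(4) correctly yields parts (1) and (2) for $|D_\mu|$; the adaptation of the over-counting argument of Lemma~\ref{lem:f-lambda-bound}(1) to $\mu\vdash n$ gives $(f_\mu)^2\le\exp((n-\mu_1-\mu_1^*)\log n+O(n))$, which covers all three dimension bounds since in regimes (2) and (3) one of $\mu_1,\mu_1^*$ exceeds $n/3$ resp.\ $n/2$; and your first-row decomposition $\Diag(\mu)=\binom{\mu_1}{2}+\Diag(\rho)-(n-\mu_1)$ with $|\Diag(\rho)|\le\frac{(n-\mu_1)^2}{2}$ gives $2x-1+O(1/n)\le D_\mu\le x^2+(1-x)^2+O(1/n)$ for $x=\mu_1/n\in(0.5,0.7]$, whose maximum $0.58$ at $x=0.7$ is exactly the constant claimed (and is sharp, attained near $\mu=(0.7n,0.3n)$); the reduction of the $\mu_1^*$-case via $D_{\mu^*}=\frac{2}{n}-D_\mu$ and $f_{\mu^*}=f_\mu$ is also fine. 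What the paper's citation buys is brevity; what your derivation buys is a self-contained statement in the paper's own notation (no translation from character-ratio language) and a verification that the constants $1/3$, $1/2$, $0.58$ and the exponents $1$, $2/3$, $1/2$ really do follow from the diagonal-index lemmas already proved in Section~\ref{sec:prelim}, at the cost of roughly a page of routine bookkeeping.
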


For this current subzone, our goal is to bound the sum 
\begin{equation}\label{eqn:DS-subzone-red}
    \sum_{\lambda_1, \lambda_1^* \leq 0.7n} \sum_{(\mu, \nu) \in \LR^\lambda} c^\lambda_{\mu, \nu} f_\lambda f_\mu f_\nu \cdot | \Eig^\lambda_{\mu, \nu} |^{2t}. 
\end{equation}
Since we are summing over $(\mu, \nu) \in \LR^\lambda$, we must have $\mu \subseteq \lambda$ and $\nu \subseteq \lambda$. This implies that $\mu_1, \mu_1^*, \nu_1, \nu_1^*$ are \emph{all} at most $0.7n$. Swapping the order of summation, this implies that our sum is bounded above by 
\begin{align*}
   (\ref{eqn:DS-subzone-red}) & \leq \sum_{\substack{\mu_1, \mu_1^* \leq 0.7n \\ \nu_1, \nu_1^* \leq 0.7n}} f_\mu f_\nu \sum_{\lambda \in \LR_{\mu, \nu}}  f_\lambda c^\lambda_{\mu, \nu} |\Eig^\lambda_{\mu, \nu}|^{2t_n} \\ 
    & \leq \sum_{\substack{\mu_1, \mu_1^* \leq 0.7n \\ \nu_1, \nu_1^* \leq 0.7n}} f_\mu f_\nu \max_{\lambda \in \LR_{\mu, \nu}} |\Eig^\lambda_{\mu, \nu}|^{2t_n} \sum_{\lambda \in \LR_{\mu, \nu}} f_\lambda c^\lambda_{\mu, \nu}\\ 
    & = \sum_{\substack{\mu_1, \mu_1^* \leq 0.7n \\ \nu_1, \nu_1^* \leq 0.7n}} \binom{2n}{n} (f_\mu f_\nu)^2 \max_{\lambda \in \LR_{\mu, \nu}} |\Eig^\lambda_{\mu, \nu}|^{2t_n} \\
    & = \exp ( O(n) ) \sum_{\substack{\mu_1, \mu_1^* \leq 0.7n \\ \nu_1, \nu_1^* \leq 0.7n}} (f_\mu f_\nu)^2 \max_{\lambda \in \LR_{\mu, \nu}} |\Eig^\lambda_{\mu, \nu}|^{2t_n}
\end{align*}
where we used Proposition~\ref{prop:sym-rep-theory-identities} in the third equality. To summarize, we have proven the following lemma.

\begin{Lemma}\label{lem:bound-on-error-Zone-Red-I}
    For all times $t$, we have 
    \begin{equation}\label{eqn:zoneRI-intermediate-bound}
        \sum_{\lambda \in \Zone_R^I} \Omega_\lambda(t) \leq \exp ( O(n) ) \sum_{\substack{\mu_1, \mu_1^* \leq 0.7n \\ \nu_1, \nu_1^* \leq 0.7n}} (f_\mu f_\nu)^2 \max_{\lambda \in \LR_{\mu, \nu}} |\Eig^\lambda_{\mu, \nu}|^{2t}.
    \end{equation}
\end{Lemma}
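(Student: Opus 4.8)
The plan is to unwind the definition of $\Omega_\lambda(t)$ and reorganize the resulting triple sum so that the Littlewood--Richardson coefficients disappear via Proposition~\ref{prop:sym-rep-theory-identities}(3). Concretely, by \eqref{eqn:main-bounding-term} we have $\sum_{\lambda \in \Zone_R^I} \Omega_\lambda(t) = \sum_{\lambda_1, \lambda_1^* \leq 0.7n} \sum_{(\mu,\nu) \in \LR^\lambda} c^\lambda_{\mu,\nu} f_\lambda f_\mu f_\nu \, |\Eig^\lambda_{\mu,\nu}|^{2t}$, and the whole point is to interchange the order of summation so that $\lambda$ is summed out last against the identity in Proposition~\ref{prop:sym-rep-theory-identities}(3).

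First I would record the containment observation that pins down the index set after the interchange. If $(\mu,\nu) \in \LR^\lambda$ then $c^\lambda_{\mu,\nu} > 0$; since $c^\lambda_{\mu,\nu}$ counts Littlewood--Richardson tableaux of skew shape $\lambda/\mu$, this forces $\mu \subseteq \lambda$, and the symmetry $c^\lambda_{\mu,\nu} = c^\lambda_{\nu,\mu}$ (from Theorem~\ref{thm:LR-rule}(2), since $S^\mu \boxtimes S^\nu$ and $S^\nu \boxtimes S^\mu$ induce to isomorphic $\fS_N$-modules) gives $\nu \subseteq \lambda$ too; passing to conjugate diagrams also yields $\mu^* \subseteq \lambda^*$ and $\nu^* \subseteq \lambda^*$. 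Hence for $\lambda \in \Zone_R^I$, i.e. $\lambda_1, \lambda_1^* \leq 0.7n$, all four of $\mu_1, \mu_1^*, \nu_1, \nu_1^*$ are $\leq 0.7n$. So the pairs $(\mu,\nu)$ that actually occur form a subset of $\{(\mu,\nu) \in \YY_n \times \YY_n : \mu_1,\mu_1^*,\nu_1,\nu_1^* \leq 0.7n\}$, and for each such pair $\lambda$ ranges over (a subset of) $\LR_{\mu,\nu}$.

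Next I would carry out the interchange: replacing $|\Eig^\lambda_{\mu,\nu}|^{2t}$ by $\max_{\lambda \in \LR_{\mu,\nu}} |\Eig^\lambda_{\mu,\nu}|^{2t}$ pulls the eigenvalue factor out of the inner sum, leaving $\sum_{\lambda} c^\lambda_{\mu,\nu} f_\lambda = \binom{2n}{n} f_\mu f_\nu$ by Proposition~\ref{prop:sym-rep-theory-identities}(3) (here $|\mu| = |\nu| = n$). Substituting and using $f_\mu \cdot \binom{2n}{n} f_\mu f_\nu \cdot f_\nu = \binom{2n}{n}(f_\mu f_\nu)^2$ gives $\sum_{\lambda \in \Zone_R^I}\Omega_\lambda(t) \leq \binom{2n}{n}\sum_{\mu,\nu}(f_\mu f_\nu)^2 \max_{\lambda \in \LR_{\mu,\nu}}|\Eig^\lambda_{\mu,\nu}|^{2t}$; since $\binom{2n}{n} \leq 4^n = \exp(O(n))$, this is exactly \eqref{eqn:zoneRI-intermediate-bound}.

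The argument is essentially bookkeeping with identities already in place, so there is no real obstacle; the only point deserving a word of care is the containment/symmetry step used to identify the new summation range, and the trivial but worth-stating fact that none of the manipulations involve $t$, which is why the lemma holds for all times $t$.
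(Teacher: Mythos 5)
Your proposal is correct and follows essentially the same route as the paper: note that $c^\lambda_{\mu,\nu}>0$ forces $\mu,\nu\subseteq\lambda$ (hence $\mu_1,\mu_1^*,\nu_1,\nu_1^*\leq 0.7n$), swap the order of summation, pull out $\max_{\lambda\in\LR_{\mu,\nu}}|\Eig^\lambda_{\mu,\nu}|^{2t}$, and evaluate $\sum_\lambda c^\lambda_{\mu,\nu}f_\lambda=\binom{2n}{n}f_\mu f_\nu$ via Proposition~\ref{prop:sym-rep-theory-identities}(3), with $\binom{2n}{n}=\exp(O(n))$. The only difference is that you spell out the justification of $\nu\subseteq\lambda$ via the symmetry $c^\lambda_{\mu,\nu}=c^\lambda_{\nu,\mu}$, which the paper leaves implicit.
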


To bound the right hand side of Equation~\ref{eqn:zoneRI-intermediate-bound}, we split up the sum into different parts and use Lemma~\ref{lem:diaconis-inequalities} to bound each part. Define the following subsets of $\YY_n$: 
\begin{align*}
    W_1 & \eqdef \left \{\mu_1, \mu_1^* \leq \frac{n}{3} \right \}, \\
    W_2 & \eqdef \left \{ \frac{n}{3} < \mu_1 \leq \frac{n}{2} \text{ and } \mu_1^* \leq \frac{n}{2} \right \} \cup \left \{ \frac{n}{3} < \mu_1^* \leq \frac{n}{2} \text{ and } \mu_1 \leq \frac{n}{2} \right \}, \\
    W_3 & \eqdef \left \{ \frac{n}{2} < \mu_1 \leq 0.7n \right \} \cup \left \{ \frac{n}{2} < \mu_1^* \leq 0.7n \right \}.
\end{align*}
Together, these subsets partition $\{\mu \vdash n : \mu_1, \mu_1^* \leq 0.7n\}$. To complete the bound for $\Zone_R^I$ we will show that for $(i, j) \in [3] \times [3]$, the following sums are small:
\[
    \sum_{(\mu, \nu) \in W_i \times W_j} (f_\mu f_\nu)^2 \max_{\lambda \in \LR_{\mu, \nu}} |\Eig^\lambda_{\mu, \nu}|^{2t_N}. 
\]

To present the subsequent analysis in more uniform way, we define the following quantity for every choice $(i, j) \in [3] \times [3]$:
\begin{Definition}
    For $(i, j) \in [3] \times [3]$, we define the two quantities
    \begin{align*}
        \Lambda_{i, j} & \eqdef \frac{a^2}{4} \max_{\mu \in W_i} |D_\mu| + \frac{b^2}{4} \max_{\nu \in W_j} |D_\nu| + \frac{ab}{2} \max_{(\mu, \nu) \in W_i \times W_j} \frac{\langle \mu, \nu \rangle}{n^2}, \\
        \mathcal{E}_{i, j}(t) & \eqdef \max_{\mu \in W_i} (f_\mu)^2 \cdot \max_{\nu \in W_j} (f_\nu)^2 \cdot \Lambda_{ij}^{2t}.
    \end{align*}
    for all $t \geq 0$. 
\end{Definition} 

\begin{Lemma}\label{lem:bound-over-Wi-Wj}
    For $(i, j) \in [3] \times [3]$, we have 
    \[
        \sum_{(\mu, \nu) \in W_i \times W_j} (f_\mu f_\nu)^2 \max_{\lambda \in \LR_{\mu, \nu}} |\Eig^\lambda_{\mu, \nu}|^{2t} \leq \mathcal{E}_{i, j}(t) \cdot \exp(O(\sqrt{n})).
    \]
\end{Lemma}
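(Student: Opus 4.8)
The plan is to feed the already-established eigenvalue bound of Corollary~\ref{cor:Eig-bound-in-terms-of-RT} into the sum, collapse everything that depends on the individual partitions down to the zone-wide maxima, and then factor the residual sum of squares over $\mu$ and $\nu$ separately. First I would fix a pair $(\mu,\nu)\in W_i\times W_j$ and a partition $\lambda\in\LR_{\mu,\nu}$, so that $c^\lambda_{\mu,\nu}>0$ and Corollary~\ref{cor:Eig-bound-in-terms-of-RT} applies:
\[
    |\Eig^\lambda_{\mu,\nu}| \le \frac{a^2}{4}|D_\mu| + \frac{b^2}{4}|D_\nu| + \frac{ab}{2}\max\Bigl\{\frac{\langle\mu,\nu\rangle}{n^2},\,\frac{\langle\mu^*,\nu^*\rangle}{n^2}\Bigr\}.
\]
Since the defining inequalities of each $W_i$ are symmetric in $\mu_1$ and $\mu_1^*$, every $W_i$ is closed under conjugation; hence $(\mu^*,\nu^*)\in W_i\times W_j$ and $\langle\mu^*,\nu^*\rangle\le\max_{(\mu',\nu')\in W_i\times W_j}\langle\mu',\nu'\rangle$, exactly the same bound as for $\langle\mu,\nu\rangle$. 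Bounding $|D_\mu|\le\max_{\mu'\in W_i}|D_{\mu'}|$ and $|D_\nu|\le\max_{\nu'\in W_j}|D_{\nu'}|$ as well, we get $|\Eig^\lambda_{\mu,\nu}|\le\Lambda_{i,j}$ for every such triple, and since $\Lambda_{i,j}\ge 0$ and $t\ge 0$ this gives $\max_{\lambda\in\LR_{\mu,\nu}}|\Eig^\lambda_{\mu,\nu}|^{2t}\le\Lambda_{i,j}^{2t}$.

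Next I would pull the constant $\Lambda_{i,j}^{2t}$ out of the sum and factor the remaining sum over $\mu$ and $\nu$:
\[
    \sum_{(\mu,\nu)\in W_i\times W_j}(f_\mu f_\nu)^2\max_{\lambda\in\LR_{\mu,\nu}}|\Eig^\lambda_{\mu,\nu}|^{2t} \le \Lambda_{i,j}^{2t}\Bigl(\sum_{\mu\in W_i}f_\mu^2\Bigr)\Bigl(\sum_{\nu\in W_j}f_\nu^2\Bigr).
\]
Each of the two factors is controlled by replacing every summand by the maximum: $\sum_{\mu\in W_i}f_\mu^2\le p(n)\max_{\mu\in W_i}(f_\mu)^2$, and the Hardy--Ramanujan estimate (Theorem~\ref{thm:partition-function-bound}) gives $p(n)=\exp(O(\sqrt n))$. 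Multiplying the two factors produces the prefactor $\max_{\mu\in W_i}(f_\mu)^2\max_{\nu\in W_j}(f_\nu)^2$ together with an $\exp(O(\sqrt n))$ error, which combined with $\Lambda_{i,j}^{2t}$ is precisely $\mathcal{E}_{i,j}(t)\exp(O(\sqrt n))$.

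There is essentially no real obstacle in this lemma; it is a bookkeeping consolidation of Corollary~\ref{cor:Eig-bound-in-terms-of-RT}, the factorization $\sum(f_\mu f_\nu)^2=(\sum f_\mu^2)(\sum f_\nu^2)$, and the subexponential growth of $p(n)$. The only subtlety worth flagging is the conjugation-symmetry of each $W_i$, which is exactly what lets us absorb the $\langle\mu^*,\nu^*\rangle$ branch of the maximum into the $\langle\mu,\nu\rangle$ term appearing in the definition of $\Lambda_{i,j}$. (One could instead bound $\sum_{\mu\in W_i}f_\mu^2\le\sum_{\mu\vdash n}f_\mu^2=n!$ via Proposition~\ref{prop:sym-rep-theory-identities}(1), but that is far too lossy; routing through $p(n)$ keeps the dominant factor $\max(f_\mu)^2$ visible, which is what the later case analysis over $(i,j)\in[3]\times[3]$ requires.)
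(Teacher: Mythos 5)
Your proposal is correct and follows essentially the same route as the paper: apply Corollary~\ref{cor:Eig-bound-in-terms-of-RT} together with the conjugation-closure of the $W_i$ to get the uniform bound $|\Eig^\lambda_{\mu,\nu}|\le\Lambda_{i,j}$ on $\LR_{\mu,\nu}$, then pull out $\Lambda_{i,j}^{2t}$ and control $\sum_{(\mu,\nu)\in W_i\times W_j}(f_\mu f_\nu)^2$ by $\max_{\mu\in W_i}(f_\mu)^2\max_{\nu\in W_j}(f_\nu)^2\,p(n)^2$ with $p(n)=\exp(O(\sqrt n))$ from Theorem~\ref{thm:partition-function-bound}. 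The only cosmetic difference is that you make the conjugation-symmetry step and the factorization of the double sum explicit, which the paper leaves implicit.
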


\begin{proof}
    Let $\mu \in W_i$ and $\nu \in W_j$. From Corollary~\ref{cor:Eig-bound-in-terms-of-RT} and the fact that the $W_1, W_2$, and $W_3$ are closed under conjugation, we get the bound 
    \[
        |\Eig^\lambda_{\mu, \nu}| \leq \frac{a^2}{4} \max_{\mu \in W_i} |D_\mu| + \frac{b^2}{4} \max_{\nu \in W_j} |D_\nu| + \frac{ab}{2} \max_{\mu \in W_i, \nu \in W_j} \left \{ \frac{\langle \mu, \nu \rangle}{n^2} \right \} = \Lambda_{i,j}.
    \]
    Thus, we have 
    \begin{align*}
        \sum_{(\mu, \nu) \in W_i \times W_j} (f_\mu f_\nu)^2 \max_{\lambda \in \LR_{\mu, \nu}} |\Eig^\lambda_{\mu, \nu}|^{2t} & \leq \Lambda_{ij}^{2t} \sum_{(\mu, \nu) \in W_i \times W_j} (f_\mu f_\nu)^2 \\
        & \leq \max_{\mu \in W_i} (f_\mu)^2 \cdot \max_{\nu \in W_j} (f_\nu)^2 \cdot \Lambda_{ij}^{2t} \cdot p(n)^2 
    \end{align*}
    Since $p(n) = \exp(O(\sqrt{n}))$ from Theorem~\ref{thm:partition-function-bound}, we get the desired bound. 
\end{proof}

In the following Corollary, we rewrite Lemma~\ref{lem:diaconis-inequalities} in a form that appears more applicable to upper bound $\Lambda_{i,j}$. 

\begin{Corollary}\label{cor:restated-diaconis-inequalities}
    For sufficiently large $n$, we have the inequalities
    \begin{align*}
        \max_{\mu \in W_1} |D_\mu| & \leq \frac{1}{3}, \quad \quad \quad \quad \quad \quad \quad \max_{\mu \in W_1} (f_\mu)^2 \leq \exp (n \log n + O(n)) \\ 
        \max_{\mu \in W_2} |D_\mu| & \leq \frac{1}{2} \quad \quad \quad \quad \quad \quad \quad \max_{\mu \in W_2} (f_\mu)^2 \leq  \exp \left ( \frac{2}{3} n \log n + O(n) \right ) \\
        \max_{\mu \in W_3} |D_\mu| & \leq 0.58 + O \left ( \frac{1}{n} \right ), \quad \max_{\mu \in W_3} (f_\mu)^2 \leq \exp \left ( \frac{1}{2} n \log n + O(n) \right )
    \end{align*} 
\end{Corollary}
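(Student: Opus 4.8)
The plan is to read the three pairs of inequalities off directly from Lemma~\ref{lem:diaconis-inequalities}. The subsets $W_1, W_2, W_3 \subset \YY_n$ were defined precisely so that $\mu \in W_i$ forces $\mu$ to satisfy the hypothesis of part $(i)$ of Lemma~\ref{lem:diaconis-inequalities}: for $W_1$ this is $\mu_1, \mu_1^* \leq n/3$; for $W_2$ it is the disjunction $\{ n/3 < \mu_1 \leq n/2,\ \mu_1^* \leq n/2\} \cup \{n/3 < \mu_1^* \leq n/2,\ \mu_1 \leq n/2\}$; and for $W_3$ it is $\{ n/2 < \mu_1 \leq 0.7n\} \cup \{n/2 < \mu_1^* \leq 0.7n\}$. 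In each case the membership condition of $W_i$ is \emph{verbatim} the hypothesis appearing in Lemma~\ref{lem:diaconis-inequalities}$(i)$, so no extra case analysis is needed.

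Concretely, I would fix $i \in \{1,2,3\}$ and an arbitrary $\mu \in W_i$, apply Lemma~\ref{lem:diaconis-inequalities}$(i)$ to obtain the bounds on $|D_\mu|$ and on $(f_\mu)^2$, and then take the supremum over $\mu \in W_i$. The one point worth a sentence is that the error terms are \emph{uniform} over the relevant family of partitions: the $O(n)$ in the $(f_\mu)^2$ estimates and the $O(1/n)$ in the $W_3$ estimate for $|D_\mu|$ come from Lemma~\ref{lem:diaconis-inequalities} (equivalently, from the overcounting argument behind Lemma~\ref{lem:f-lambda-bound}), and depend only on the ranges in which $\mu_1, \mu_1^*$ lie, not on the individual $\mu$; hence the supremum over $W_i$ obeys the same bound.

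If one prefers a self-contained derivation of the $(f_\mu)^2$ bounds, I would argue as in the proof of Lemma~\ref{lem:f-lambda-bound}: overcounting standard Young tableaux by first filling the first row and first column gives $(f_\mu)^2 \leq (n - \mu_1 - \mu_1^* + 1)!\cdot \exp(O(n))$. For $\mu \in W_1$ this is at most $n!\cdot\exp(O(n)) = \exp(n\log n + O(n))$, and indeed the trivial bound $\sum_{\mu \vdash n}(f_\mu)^2 = n!$ from Proposition~\ref{prop:sym-rep-theory-identities}(1) already suffices here. For $\mu \in W_2$ at least one of $\mu_1, \mu_1^*$ exceeds $n/3$, so $n - \mu_1 - \mu_1^* + 1 \leq \tfrac{2}{3}n$ and $(f_\mu)^2 \leq (\tfrac{2}{3}n)!\cdot\exp(O(n)) = \exp(\tfrac{2}{3}n\log n + O(n))$. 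For $\mu \in W_3$ at least one of $\mu_1, \mu_1^*$ exceeds $n/2$, so $n - \mu_1 - \mu_1^* + 1 \leq \tfrac12 n$ and $(f_\mu)^2 \leq (\tfrac12 n)!\cdot\exp(O(n)) = \exp(\tfrac12 n\log n + O(n))$. The bounds on $|D_\mu|$, being the more delicate part, I would simply quote from Lemma~\ref{lem:diaconis-inequalities}.

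There is no genuine obstacle here: the corollary is a bookkeeping reformulation whose only content is that the partition of $\{\mu : \mu_1, \mu_1^* \leq 0.7n\}$ into $W_1, W_2, W_3$ was chosen to line up exactly with the three regimes of Lemma~\ref{lem:diaconis-inequalities}. The one thing to be careful about — and the reason the restatement is worth making — is keeping the error terms uniform, so that after passing to suprema over $\mu \in W_i$ the resulting estimates for $\Lambda_{i,j}$ and $\mathcal{E}_{i,j}(t)$ in Lemma~\ref{lem:bound-over-Wi-Wj} carry clean exponents; this uniformity is already present in Lemma~\ref{lem:diaconis-inequalities}.
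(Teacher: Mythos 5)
Your proposal is correct and matches the paper's (implicit) argument: the corollary is stated without proof precisely because the sets $W_1, W_2, W_3$ were defined to coincide verbatim with the three hypotheses of Lemma~\ref{lem:diaconis-inequalities}, so one just applies that lemma to each $\mu \in W_i$ and takes maxima, with the uniformity of the $O(n)$ and $O(1/n)$ errors being the only point to note. Your optional self-contained derivation of the $(f_\mu)^2$ bounds via the overcounting argument of Lemma~\ref{lem:f-lambda-bound} is also sound, though not needed.
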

In the next Lemma, we bound the value of $\langle \mu, \nu \rangle$ depending on the sets $W_1, W_2$ and $W_3$ containing $\mu$ and $\nu$. 
\begin{Lemma}\label{lem:bounds-on-inner-product}
    \phantom{}
    \begin{enumerate}
        \item Let $\mu \in W_1$ and $\nu \vdash n$ be any partition. Then $\langle \mu, \nu \rangle \leq \frac{n^2}{3}$. 
        \item Let $\mu \in W_2$ and $\nu \vdash n$ be any partition. Then $\langle \mu, \nu \rangle \leq \frac{n^2}{2}$. 
        \item Let $\mu, \nu \in W_3$, then $\langle \mu, \nu \rangle \leq 0.58 n^2$. 
    \end{enumerate}
\end{Lemma}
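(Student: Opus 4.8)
The plan is to prove each part by a completely elementary estimate on $\langle\mu,\nu\rangle=\sum_{i\ge1}\mu_i\nu_i$, using only that a partition $\mu\vdash n$ has all parts bounded by $\mu_1$ and has $\sum_i\mu_i=n$.

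Parts (1) and (2) are immediate. If $\mu\in W_1$ then $\mu_1\le n/3$, so every part satisfies $\mu_i\le\mu_1\le n/3$, whence $\langle\mu,\nu\rangle=\sum_i\mu_i\nu_i\le\frac{n}{3}\sum_i\nu_i=\frac{n^2}{3}$. Inspecting the definition of $W_2$, every $\mu\in W_2$ has $\mu_1\le n/2$ (this holds in both halves of the union), so the identical argument gives $\langle\mu,\nu\rangle\le\frac{n}{2}\cdot n=\frac{n^2}{2}$.

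Part (3) needs slightly more care, since the crude bound $\mu_i\le0.7n$ only gives $0.7n^2$. Here I would isolate the first coordinate. Since $\mu,\nu\in W_3$ we have $\mu_1,\nu_1\le0.7n$, and, using $\mu_i\le\mu_2$ for $i\ge2$,
\[
  \langle\mu,\nu\rangle=\mu_1\nu_1+\sum_{i\ge2}\mu_i\nu_i\le\mu_1\nu_1+\mu_2\sum_{i\ge2}\nu_i=\mu_1\nu_1+\mu_2\,(n-\nu_1).
\]
Now bound $\mu_2\le\min(\mu_1,\,n-\mu_1)$ — the first inequality because $\mu$ is weakly decreasing, the second because $\mu_1+\mu_2\le n$ — so that, writing $x\eqdef\mu_1/n$ and $y\eqdef\nu_1/n$ with $x,y\in[0,0.7]$,
\[
  \frac{\langle\mu,\nu\rangle}{n^2}\le xy+\min(x,1-x)\,(1-y).
\]
It then remains to maximize the right-hand side over $[0,0.7]^2$. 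If $x\le\tfrac12$ the expression equals $xy+x(1-y)=x\le\tfrac12$. If $x>\tfrac12$ it equals $(2x-1)y+(1-x)$, which is increasing in $y$ (as $2x-1>0$) and afterwards increasing in $x$, so on $(\tfrac12,0.7]\times[0,0.7]$ its supremum is attained at $x=y=0.7$ and equals $(0.4)(0.7)+0.3=0.58$. Hence $\langle\mu,\nu\rangle\le0.58\,n^2$; the estimate is essentially sharp, the extremal case being $\mu=\nu=(0.7n,0.3n)$.

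There is no genuine obstacle in this lemma; the only point requiring attention is in part (3), where one must keep the $i=1$ term separate and exploit $\mu_1+\mu_2\le n$, rather than bounding every part by $0.7n$ — otherwise the constant degrades from $0.58$ to $0.7$.
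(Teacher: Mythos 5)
Your proof is correct, and for part (3) it takes a genuinely more elementary route than the paper. The paper handles all three parts through the dominance-order monotonicity of the inner product (Lemma~\ref{lem:oliver}(1)): for (1) and (2) it bounds $\langle\mu,\nu\rangle\le\langle\mu,(n)\rangle=n\mu_1$, and for (3) it replaces \emph{both} $\mu$ and $\nu$ by the dominating partition $(0.7n,0.3n)$, giving $\langle\mu,\nu\rangle\le\langle(0.7n,0.3n),(0.7n,0.3n)\rangle=0.58n^2$ in one stroke. Your parts (1)--(2) are the same estimate obtained without invoking dominance, and your part (3) trades the dominance lemma for the direct split $\langle\mu,\nu\rangle\le\mu_1\nu_1+\mu_2(n-\nu_1)$ with $\mu_2\le\min(\mu_1,n-\mu_1)$, followed by an explicit maximization over $[0,0.7]^2$; the computation is right and recovers the same sharp constant $0.58$ at $x=y=0.7$. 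What the paper's approach buys is brevity and reuse of machinery already needed elsewhere (the same Lemma~\ref{lem:oliver}(1) drives many eigenvalue bounds); what yours buys is self-containedness and a transparent identification of the extremal shape $(0.7n,0.3n)$. One small point, shared with the paper's own proof: the assertion $\mu_1\le 0.7n$ for $\mu\in W_3$ is immediate only in the half of $W_3$ where $\tfrac n2<\mu_1\le 0.7n$; in the conjugate half, where $\tfrac n2<\mu_1^*\le 0.7n$, it follows from $\mu_1\le n-\mu_1^*+1<\tfrac n2+1\le 0.7n$ for $n$ large, and it is worth saying this in one line since $W_3$ is not closed under the crude bound you quote.
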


\begin{proof}
    We can use Lemma~\ref{lem:oliver}(1) to conclude that for any $\mu, \nu \vdash n$, we have the inequality 
    \[
        \langle \mu, \nu \rangle \leq \langle \mu, (n) \rangle = n \mu_1.  
    \]
    Parts (1) and (2) immediately follow from this fact. In part (1), we have $\mu_1 \leq \frac{n}{3}$. In part (2), we have $\mu_1 \leq \frac{n}{2}$. For Part (3), we can apply Lemma~\ref{lem:oliver}(1) to get  
    \[
        \langle \mu, \nu \rangle \leq (0.7n, 0.3n), (0.7n, 0.3n) \rangle = 0.58n^2. 
    \]
    This suffices for the proof of the lemma. 
\end{proof}

The following result follows immediately from Corollary~\ref{cor:restated-diaconis-inequalities} and Lemma~\ref{lem:bounds-on-inner-product}. 

\begin{Corollary}\label{cor:bounds-on-Lambda-ij}
    \phantom{}
    \begin{enumerate}
        \item We have the inequalities
        \begin{align*}
            \Lambda_{1, 1} & \leq \frac{1}{3}, \quad \Lambda_{2, 2} \leq \frac{1}{2}, \quad \Lambda_{3, 3} \leq 0.58 + O \left ( \frac{1}{n} \right )
        \end{align*}
        \item We have the inequalities 
        \begin{align*}
            \max \{ \Lambda_{1, 2}, \Lambda_{2, 1} \} & \leq \frac{a^2}{8} + \frac{b^2}{12} + \frac{ab}{6}, \\
            \max \{ \Lambda_{1, 3}, \Lambda_{3, 1} \} & \leq \frac{0.58 a^2}{4} + \frac{b^2}{12} + \frac{ab}{6} \\ 
            \max \{ \Lambda_{2, 3}, \Lambda_{3, 2} \} & \leq \frac{0.58 a^2}{4} + \frac{b^2}{8} + \frac{ab}{4}.
        \end{align*}
    \end{enumerate}
\end{Corollary}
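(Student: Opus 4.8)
This is a pure substitution argument: plug the bounds of Corollary~\ref{cor:restated-diaconis-inequalities} (for the $|D_\mu|$ terms) and Lemma~\ref{lem:bounds-on-inner-product} (for the inner-product terms) into the definition of $\Lambda_{i,j}$, then exploit the two algebraic facts $a+b=2$ and $b\le a$. No deep idea is needed; the only things requiring a little care are the bookkeeping of which inner-product bound applies and, in part (2), deciding which of $\Lambda_{i,j}$, $\Lambda_{j,i}$ is the larger.

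\textbf{Part (1).} Here $W_i=W_j$, so each $\Lambda_{i,i}$ involves $\max_{\mu\in W_i}|D_\mu|$ and $\max_{\mu,\nu\in W_i}\langle\mu,\nu\rangle/n^2$. By Corollary~\ref{cor:restated-diaconis-inequalities} and Lemma~\ref{lem:bounds-on-inner-product}, both of these quantities are bounded by the \emph{same} constant $\kappa_i$, namely $\kappa_1=\tfrac13$, $\kappa_2=\tfrac12$, and $\kappa_3=0.58+O(1/n)$ (the $W_3$ inner-product bound being exactly Lemma~\ref{lem:bounds-on-inner-product}(3), which applies since all $\mu\in W_3$ have $\mu_1,\mu_1^*\le 0.7n$). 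Therefore
\[
\Lambda_{i,i}\le \kappa_i\Bigl(\tfrac{a^2}{4}+\tfrac{b^2}{4}+\tfrac{ab}{2}\Bigr)=\frac{\kappa_i}{4}(a+b)^2=\kappa_i,
\]
using $a+b=2$. This is exactly the claim, the $O(1/n)$ being carried along in the $i=3$ case.

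\textbf{Part (2).} I would compute each off-diagonal $\Lambda_{i,j}$ directly. The point to watch is that Lemma~\ref{lem:bounds-on-inner-product}(1)--(2) bound $\langle\mu,\nu\rangle$ under the hypothesis that the argument named there lies in $W_1$ (resp.\ $W_2$); since $\langle\cdot,\cdot\rangle$ is symmetric, the bound $n^2/3$ (resp.\ $n^2/2$) applies whenever \emph{either} of the two partitions lies in $W_1$ (resp.\ in $W_2$ with the other not in $W_1$). Thus for $\{i,j\}=\{1,2\}$ and $\{1,3\}$ we use $\langle\mu,\nu\rangle/n^2\le\tfrac13$, and for $\{2,3\}$ we use $\le\tfrac12$. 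Substituting the $|D|$-bounds then gives, for instance, $\Lambda_{2,1}\le\tfrac{a^2}{8}+\tfrac{b^2}{12}+\tfrac{ab}{6}$ and $\Lambda_{1,2}\le\tfrac{a^2}{12}+\tfrac{b^2}{8}+\tfrac{ab}{6}$, and analogously for the pairs $(1,3)$ and $(2,3)$. Finally, swapping $i\leftrightarrow j$ interchanges the coefficients of $a^2$ and $b^2$ while leaving the $ab$-term and the inner-product bound fixed, so the difference $\Lambda_{j,i}-\Lambda_{i,j}$ is a nonnegative multiple of $a^2-b^2\ge 0$ (here is where $b\le a$ enters). Hence in each pair the maximum is attained by the member whose $a^2$-coefficient is the larger one, which is precisely the expression on the right-hand side of the statement. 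The whole argument is routine arithmetic; the only genuine subtleties are the symmetry trick for the inner-product bounds and keeping track that the $O(1/n)$ errors always appear multiplied by bounded constants.
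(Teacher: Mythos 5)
Your proposal is correct and follows exactly the route the paper intends: the paper states this corollary as an immediate consequence of Corollary~\ref{cor:restated-diaconis-inequalities} and Lemma~\ref{lem:bounds-on-inner-product}, and your argument is precisely that substitution, together with the identity $\frac{(a+b)^2}{4}=1$ in part (1), the symmetry of $\langle\cdot,\cdot\rangle$ for the inner-product bounds, and $a\ge b$ to identify which of $\Lambda_{i,j},\Lambda_{j,i}$ has the larger upper bound in part (2). The only cosmetic point is that the comparison should be phrased as a comparison of the two upper bounds (whose difference is a nonnegative multiple of $a^2-b^2$) rather than of $\Lambda_{i,j}$ and $\Lambda_{j,i}$ themselves, and that the $O(1/n)$ from the $W_3$ estimate is implicitly carried along, as the paper itself does later.
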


In Lemma~\ref{lem:the-parts-of-Zone-Red-I-are-small}, we show that for every choice of $i, j \in [3]$ there is some constant $K_{i, j}$ such that $\mathcal{E}_{i, j}(t_N)$ is at most on the order of $\exp (K_{ij} n \log n)$ for $K_{ij} < 0$. In the following definition, we describe exactly what these constants $K_{i, j}$ are. 

\begin{Definition}
    For $(i, j) \in [3] \times [3]$, we define the following constants: 
    \begin{align*}
        K_{11} & \eqdef 2 + \frac{2}{b} \log \left ( \frac{1}{3} \right ), \\ 
        K_{12} = K_{21} & \eqdef \frac{5}{3} + \frac{2}{b} \log \left ( \frac{a^2}{8} + \frac{b^2}{12} + \frac{ab}{6} \right ), \\ 
        K_{13} = K_{31} & \eqdef \frac{3}{2} + \frac{2}{b} \log \left ( \frac{0.58a^2}{4} + \frac{b^2}{12} + \frac{ab}{6} \right ), \\
        K_{22} & \eqdef \frac{4}{3} + \frac{2}{b} \log \left ( \frac{1}{2} \right ), \\
        K_{23} = K_{32} & \eqdef \frac{7}{6} + \frac{2}{b} \log \left ( \frac{0.58a^2}{4} + \frac{b^2}{8} + \frac{ab}{4} \right ), \\ 
        K_{33} & \eqdef 1 + \frac{2}{b} \log 0.58.
    \end{align*}
\end{Definition}

\begin{Lemma}\label{lem:all-negative}
    For $i, j \in [3]$ and $b \in (0, 1]$, $K_{ij} < 0$. 
\end{Lemma}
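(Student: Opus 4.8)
The plan is to eliminate $a$ using $a = 2-b$ — legitimate because $a+b=2$ together with $0<b\le a$ forces $b\in(0,1]$ and $a=2-b\in[1,2)$ — so that each $K_{ij}$ becomes a function of the single variable $b$ on $(0,1]$. Every one of these constants has the shape $K_{ij}(b) = c_{ij} + \tfrac{2}{b}\log L_{ij}(b)$, where $c_{ij}>0$ is the leading rational term and $L_{ij}(b)>0$ is the argument of the logarithm (with $a$ replaced by $2-b$). Since $b/2>0$, the inequality $K_{ij}(b)<0$ is equivalent to $\phi_{ij}(b)<0$, where $\phi_{ij}(b)\eqdef \tfrac{b}{2}K_{ij}(b) = \tfrac{c_{ij}}{2}\,b + \log L_{ij}(b)$; passing to $\phi_{ij}$ is convenient because it has no $1/b$ and, as we will see, is monotone. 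By the symmetry $K_{ij}=K_{ji}$ it suffices to treat the six pairs $(1,1),(2,2),(3,3),(1,2),(1,3),(2,3)$.

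For the three diagonal constants, $L_{ij}$ is the constant $1/3$, $1/2$, or $0.58$, all in $(0,1)$, so $\phi_{ij}(b)=\tfrac{c_{ij}}{2}b+\log L_{ij}$ is affine with positive slope. Hence $\phi_{ij}(b)\le\phi_{ij}(1)$ on $(0,1]$, and the claim reduces to the three numerical inequalities $1+\log\tfrac13<0$, $\tfrac23+\log\tfrac12<0$, and $\tfrac12+\log 0.58<0$, each verified by a direct estimate of the logarithm.

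For the three off-diagonal constants I would first expand $L_{ij}(b)$, after substituting $a=2-b$, as an explicit quadratic in $b$ (for instance $L_{12}(b)=\tfrac12-\tfrac{b}{6}+\tfrac{b^2}{24}$). In each case the leading coefficient is positive and $L_{ij}'(b)<0$ throughout $[0,1]$, so $L_{ij}$ is positive and strictly decreasing on $[0,1]$, confined to the explicit subinterval $[L_{ij}(1),L_{ij}(0)]\subset(0,1)$. Then $\phi_{ij}'(b)=\tfrac{c_{ij}}{2}+\tfrac{L_{ij}'(b)}{L_{ij}(b)}$; since $L_{ij}'<0$ and $L_{ij}$ is pinched between the explicit constants $L_{ij}(1)$ and $L_{ij}(0)$, the logarithmic derivative is bounded below by an explicit negative number which $\tfrac{c_{ij}}{2}$ dominates, so $\phi_{ij}'>0$ on $(0,1)$. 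Therefore $\phi_{ij}(b)\le\phi_{ij}(1)$ on $(0,1]$ and the claim reduces to $\phi_{ij}(1)=\tfrac{c_{ij}}{2}+\log L_{ij}(1)<0$, i.e.\ $\tfrac56+\log\tfrac38<0$, $\tfrac34+\log(0.395)<0$, and $\tfrac{7}{12}+\log(0.52)<0$.

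The main obstacle is establishing $\phi_{ij}'>0$ in the off-diagonal cases: this is exactly where the precise placement of the subzone boundaries ($n/3$, $n/2$, $0.7n$) and the resulting values of $c_{ij}$ get used, and one must check that $\tfrac{c_{ij}}{2}$ genuinely beats the negative logarithmic derivative of $L_{ij}$ uniformly on $(0,1]$. A secondary nuisance is that several of the final one-variable inequalities are tight — e.g.\ $\tfrac23+\log\tfrac12\approx -0.026$ — so the numerical bounds on the logarithms must be carried to adequate precision. (An alternative to the monotonicity step: one checks that each $\phi_{ij}$ is convex on $[0,1]$, since $L_{ij}''L_{ij}-(L_{ij}')^2>0$ there, so $\phi_{ij}(b)\le\max\{\phi_{ij}(0),\phi_{ij}(1)\}$ and it is enough to verify the two endpoints, with $\phi_{ij}(0)=\log L_{ij}(0)<0$ automatic; I would use whichever of the two is cleaner in each case.)
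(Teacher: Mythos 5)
Your proposal is correct: the paper omits this proof entirely (calling it ``a simple exercise in calculus''), and your argument — substituting $a=2-b$, passing to $\phi_{ij}(b)=\tfrac{b}{2}K_{ij}(b)$, checking $L_{ij}$ is positive and decreasing so that $\phi_{ij}$ is increasing, and verifying negativity at $b=1$ — is exactly such an exercise, with all the stated facts checking out (e.g.\ $L_{12}(b)=\tfrac12-\tfrac b6+\tfrac{b^2}{24}$, $L_{12}(1)=\tfrac38$, $L_{13}(1)=0.395$, $L_{23}(1)=0.52$, and the tightest endpoint $\tfrac23-\log 2\approx-0.027<0$). Nothing further is needed.
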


The proof of Lemma~\ref{lem:all-negative} is a simple exercise in calculus. Using all of the bounds we have collected, we can bound $\mathcal{E}_{i, j} (t_N)$ for all $i, j \in [3]$.

\begin{Lemma}\label{lem:the-parts-of-Zone-Red-I-are-small}
    For any choice of $i, j \in [3]$, we have the bound 
    \[
        \mathcal{E}_{i, j}(t_N) \leq \exp \left ( K_{ij} n \log n + O(n) \right ).
    \]
    In particular, there is a universal constant $C > 0$ such that $\mathcal{E}_{i, j}(t_N) \leq \exp (-C n \log n)$ for sufficiently large $n$ and all $i, j \in [3]$.
\end{Lemma}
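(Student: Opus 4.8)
The plan is to estimate the three factors in the definition $\mathcal{E}_{i,j}(t) = \max_{\mu \in W_i} (f_\mu)^2 \cdot \max_{\nu \in W_j} (f_\nu)^2 \cdot \Lambda_{ij}^{2t}$ separately, using the bounds already collected in Corollary~\ref{cor:restated-diaconis-inequalities} and Corollary~\ref{cor:bounds-on-Lambda-ij}, and then to match the resulting exponent against the definition of $K_{ij}$. First I would record that $t_N = \frac{1}{2b}N\log N = \frac{n}{b}\log n + O(n)$, since $N = 2n$ and $\log(2n) = \log n + O(1)$.

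For the dimension factors, Corollary~\ref{cor:restated-diaconis-inequalities} gives $\max_{\mu \in W_k}(f_\mu)^2 \le \exp(e_k\, n\log n + O(n))$ with $e_1 = 1$, $e_2 = \frac{2}{3}$, $e_3 = \frac{1}{2}$; multiplying, $\max_{\mu\in W_i}(f_\mu)^2 \cdot \max_{\nu\in W_j}(f_\nu)^2 \le \exp\bigl((e_i+e_j)\, n\log n + O(n)\bigr)$, and $e_i + e_j$ is exactly the rational first summand in the definition of $K_{ij}$ in each of the nine cases. For the eigenvalue factor, let $\beta_{ij}$ be the explicit constant bounding $\Lambda_{ij}$ in Corollary~\ref{cor:bounds-on-Lambda-ij} (in the cases involving $W_3$ this bound also carries an $O(1/n)$ term coming from the bound on $|D_\mu|$). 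Since $0 \le \Lambda_{ij}$ and $x \mapsto x^{2t_N}$ is nondecreasing on $[0,\infty)$, one gets $\Lambda_{ij}^{2t_N} \le (\beta_{ij} + O(1/n))^{2t_N} = \exp\bigl(2t_N(\log\beta_{ij} + O(1/n))\bigr)$; using $2t_N = \frac{2n}{b}\log n + O(n)$, the cross terms are $2t_N\cdot O(1/n) = O(\log n) = O(n)$ and $(\log\beta_{ij})\cdot O(n) = O(n)$, so $\Lambda_{ij}^{2t_N} \le \exp\bigl(\frac{2}{b}(\log\beta_{ij})\, n\log n + O(n)\bigr)$.

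Multiplying the two estimates would give $\mathcal{E}_{i,j}(t_N) \le \exp\bigl((e_i + e_j + \frac{2}{b}\log\beta_{ij})\, n\log n + O(n)\bigr)$, and a direct comparison with the definitions shows $e_i + e_j + \frac{2}{b}\log\beta_{ij} = K_{ij}$ for all $(i,j) \in [3]\times[3]$; this is the first assertion. For the final sentence of the lemma, I would invoke Lemma~\ref{lem:all-negative}, which gives $K_{ij} < 0$ for every $i, j \in [3]$ and $b \in (0,1]$: since there are only finitely many pairs, setting $C = \frac{1}{2}\min_{i,j}(-K_{ij}) > 0$ we have $K_{ij}\, n\log n + O(n) \le -C\, n\log n$ for $n$ large, hence $\mathcal{E}_{i,j}(t_N) \le \exp(-C\, n\log n)$. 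I do not expect a genuine obstacle here — the argument is a finite case check — and the only point requiring care is the bookkeeping of the $O(1/n)$ and $O(n)$ error terms and verifying the nine identities $e_i + e_j + \frac{2}{b}\log\beta_{ij} = K_{ij}$ against the stated definitions of the $K_{ij}$.
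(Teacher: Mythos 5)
Your proposal is correct and follows essentially the same route as the paper: bound the two dimension factors via Corollary~\ref{cor:restated-diaconis-inequalities}, bound $\Lambda_{i,j}$ by its constant from Corollary~\ref{cor:bounds-on-Lambda-ij}, combine to get the exponent $e_i+e_j+\frac{2}{b}\log\beta_{ij}$, identify it with $K_{ij}$, and conclude with Lemma~\ref{lem:all-negative}. Your bookkeeping of the $O(1/n)$ and $O(n)$ errors is, if anything, slightly more explicit than the paper's.
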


\begin{proof}
    Recall that $t_N = \frac{1}{2b} N \log N$. Consider the expression 
    \[
        \mathcal{E}_{i, j}(t_N) = \max_{\mu \in W_i} (f_\mu)^2 \cdot \max_{\nu \in W_j} (f_\nu)^2 \cdot \Lambda_{i, j}^{2t_N}.
    \]
    Let $A_1 \eqdef 1$, $A_2 \eqdef 2/3$, and $A_3 \eqdef 1/2$. For all $i, j \in [3]$, let $B_{i,j}$ be the constant term of the upper bound on $\Lambda_{i, j}$ in Corollary~\ref{cor:bounds-on-Lambda-ij}. From Corollary~\ref{cor:bounds-on-Lambda-ij} and the inequalities on the right hand side of Corollary~\ref{cor:restated-diaconis-inequalities}, we have the following bounds: 
    \begin{align*}
        \max_{\mu \in W_i} (f_\mu)^2 & \leq \exp \left ( A_i n \log n + O(n) \right ), \\
        \max_{\nu \in W_j} (f_\nu)^2 & \leq \exp (A_j n \log n + O(n)), \\
        \Lambda_{i, j} & \leq B_{i, j} + O \left ( \frac{1}{n} \right ). 
    \end{align*}
    Then, we can bound $\mathcal{E}_{i, j}(t_N)$ by 
    \begin{align*}
        \mathcal{E}_{i, j}(t_N) & \leq \exp ((A_i + A_j) n \log n + 2 \log B_{i, j} t_N + O(n))\\
        & \leq  \exp \left \{ \left (A_i + A_j + \frac{2}{b} \log B_{i, j} \right ) n \log n + O(n) \right \}. 
    \end{align*}
    It is routine to check that for all $i, j \in [3]$ we have 
    \[
        K_{i, j} = A_i + A_j + \frac{2}{b} \log B_{i, j}. 
    \]
    This gives the first inequality in the statement of the Lemma. The second statement follows immediately from Lemma~\ref{lem:all-negative}. 
\end{proof}

Lemma~\ref{lem:the-parts-of-Zone-Red-I-are-small} and Lemma~\ref{lem:bound-on-error-Zone-Red-I} immediately give the following corollary. 

\begin{Corollary}\label{cor:red-zone-i-finished-bound}
    There exists $C > 0$ such that for sufficiently large $N$, we have 
    \[
        \sum_{\lambda \in \Zone_R^I} \Omega_\lambda (t_N     ) \leq \exp (-C n \log n)
    \] 
\end{Corollary}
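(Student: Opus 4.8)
The plan is simply to assemble the ingredients already collected for the sub-zone $\Zone_R^I$. First I would invoke Lemma~\ref{lem:bound-on-error-Zone-Red-I}, which bounds $\sum_{\lambda \in \Zone_R^I}\Omega_\lambda(t_N)$ by $\exp(O(n))$ times the double sum $\sum (f_\mu f_\nu)^2 \max_{\lambda \in \LR_{\mu,\nu}}|\Eig^\lambda_{\mu,\nu}|^{2t_N}$, where $\mu,\nu$ range over partitions of $n$ with $\mu_1,\mu_1^*,\nu_1,\nu_1^* \le 0.7n$. Since $\{\mu \vdash n : \mu_1,\mu_1^* \le 0.7n\} = W_1 \sqcup W_2 \sqcup W_3$, this double sum splits into the nine pieces indexed by $(i,j) \in [3]\times[3]$, namely $\sum_{(\mu,\nu)\in W_i\times W_j}(f_\mu f_\nu)^2 \max_{\lambda\in\LR_{\mu,\nu}}|\Eig^\lambda_{\mu,\nu}|^{2t_N}$.

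Next I would bound each of these nine pieces using Lemma~\ref{lem:bound-over-Wi-Wj}, which shows that the $(i,j)$ piece is at most $\mathcal{E}_{i,j}(t_N)\cdot\exp(O(\sqrt n))$, followed by Lemma~\ref{lem:the-parts-of-Zone-Red-I-are-small}, which (using that all $K_{ij}<0$ by Lemma~\ref{lem:all-negative}) produces a universal $C' > 0$ with $\mathcal{E}_{i,j}(t_N) \le \exp(-C' n\log n)$ for all $i,j\in[3]$. Summing over the nine pairs and combining everything yields
\[
    \sum_{\lambda\in\Zone_R^I}\Omega_\lambda(t_N) \;\le\; 9\,\exp\big(O(n)\big)\,\exp\big(O(\sqrt n)\big)\,\exp\big(-C' n\log n\big) \;\le\; \exp\big(-C n\log n\big)
\]
for any $C$ with $0 < C < C'$ and $N$ sufficiently large.

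The only point requiring a moment's care — and it is not really an obstacle — is that the lower-order prefactors $\exp(O(n))$ (coming from Lemma~\ref{lem:bound-on-error-Zone-Red-I}) and $\exp(O(\sqrt n))$ (coming from the Hardy--Ramanujan estimate $p(n)=\exp(O(\sqrt n))$ of Theorem~\ref{thm:partition-function-bound}, used in Lemma~\ref{lem:bound-over-Wi-Wj}) must be dominated by the negative main term of order $n\log n$; they are, and the constant factor $9$ is harmless. So the content of the corollary reduces entirely to the strict negativity of the constants $K_{ij}$, which is Lemma~\ref{lem:all-negative}.
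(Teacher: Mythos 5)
Your proposal is correct and follows the paper's argument exactly: the paper derives this corollary as an immediate consequence of Lemma~\ref{lem:bound-on-error-Zone-Red-I} together with the partition into the nine pieces $W_i \times W_j$, Lemma~\ref{lem:bound-over-Wi-Wj}, Lemma~\ref{lem:the-parts-of-Zone-Red-I-are-small}, and the negativity of the $K_{ij}$ from Lemma~\ref{lem:all-negative}. Your remark that the $\exp(O(n))$ and $\exp(O(\sqrt n))$ prefactors are absorbed by the $-C' n\log n$ main term is the only bookkeeping needed, and you handle it correctly.
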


\subsubsection{The Red Zone II}

In this section, we consider the sum of $\Omega_\lambda (t)$ over $\lambda \in \Zone_R^{II}$. This sub-zone can be split up into two conjugate subsets:
\begin{align*}
    \Zone_{R, +}^{II} & \eqdef \{0.7n \leq \lambda_1 \leq n, \lambda_1^* \leq 0.5n \} \\
    \Zone_{R,-}^{II} & \eqdef \{0.7n \leq \lambda_1^* \leq n, \lambda_1 \leq 0.5n\}. 
\end{align*}

Let us consider the subset $\Zone_{R, +}^{II}$ for now. In Definition~\ref{defn:auxiliary-functions-Zone-Red} we define several functions which will help us partition the zone $\Zone_{R,+}^{II}$ into smaller regions which we can bound more easily. 

\begin{Definition}\label{defn:auxiliary-functions-Zone-Red}
    \phantom{h} 
    \begin{enumerate}
    \item Let $\phi_1 : \RR \to \RR$ be the function defined by 
    \[
        \phi_1(x) \eqdef \frac{(ab - b^2 - abx)^2}{16ab} + \frac{a^2 - ab}{2} x^2 + \frac{2ab-a^2}{2} x + \frac{a^2 - ab}{4}. 
    \]
    \item Let $\phi_2 : (0, \infty) \to \RR$ be the function defined by 
    \[
        \phi_2 (x) \eqdef 2 + \frac{2}{b} \log \phi_1(x). 
    \]
    \item Let $\phi_3 : (\phi_2(0.5), \infty) \to (0.5, \infty)$ be the inverse function to $\phi_2 : (0.5, \infty) \to (\phi_2(0.5), \infty)$. 
    \item Let $\phi_4 : (0.5, \infty) \to (0.5, \infty)$ be the function defined by 
    \[
        \phi_4(x) \eqdef \frac{\phi_3(x) + x}{2}.
    \]
    \end{enumerate}
\end{Definition}

\begin{Remark}
    Some properties and the fact that these functions are well-defined are discussed in the appendix. The orbit of $0.7$ under the map $\phi_4$ will determine how we partition $\Zone_{R, +}^{II}$ into smaller parts.
\end{Remark}

\begin{Proposition}\label{prop:tanuki}
    Let $\lambda \vdash 2n$ satisfy $0.7n \leq \lambda \leq n$ and $\lambda_1^* \leq 0.5n$, i.e. let $\lambda \in \Zone_{R, +}^{II}$. Let $(\mu, \nu) \in \LR^\lambda$. Then 
    \[
        |\Eig^\lambda_{\mu, \nu}| \leq \max \left \{ \frac{1}{2}, \phi_1 \left ( \frac{\lambda_1}{n} \right ) \right \} + O \left ( \frac{1}{n} \right ). 
    \]
\end{Proposition}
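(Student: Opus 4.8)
The plan is to prove two one‑sided bounds — an upper bound $\Eig^\lambda_{\mu,\nu}\le\phi_1(\lambda_1/n)+O(1/n)$ and a lower bound $\Eig^\lambda_{\mu,\nu}\ge-\tfrac12+O(1/n)$ — and then combine them: since $\max\{\tfrac12,\phi_1(\lambda_1/n)\}$ dominates both $\tfrac12$ and $\phi_1(\lambda_1/n)$, the two inequalities together give the claimed estimate on $|\Eig^\lambda_{\mu,\nu}|$. Throughout write $x=\lambda_1/n\in[0.7,1]$ and $y=\lambda_1^*/n\in(0,\tfrac12]$, and note that $c^\lambda_{\mu,\nu}>0$ forces $\mu\subseteq\lambda$ and, via $c^\lambda_{\mu,\nu}=c^\lambda_{\nu,\mu}$, also $\nu\subseteq\lambda$, so that $\mu_1,\nu_1\le\lambda_1$ and $\mu_1^*,\nu_1^*\le\lambda_1^*$.

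For the upper bound I would substitute into the formula for $\Eig^\lambda_{\mu,\nu}$ from Equation~\ref{eqn:eigenvalue-when-AandB-same} (keeping in mind $a^2-ab\ge0$, $b^2-ab\le0$, $ab>0$) three estimates: $\Diag(\mu)\le\tfrac12(\lambda_1^2+(n-\lambda_1)^2)+O(n)$; $\Diag(\nu)\ge-\tfrac n2\lambda_1^*+O(n)$; and $\Diag(\lambda)\le n\lambda_1-\tfrac12\lambda_1\lambda_1^*-\tfrac12(\lambda_1^*)^2+O(n)$. The latter two come straight from Lemma~\ref{lem:oliver}(3),(4) applied to $\nu^*$ and from Lemma~\ref{lem:generic-diagonal-sum-bound}(1). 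For the first: if $\mu_1\ge n/2$ then $(\mu_1,n-\mu_1)$ is a partition of $n$ dominating $\mu$, so Lemma~\ref{lem:oliver}(2) gives $\Diag(\mu)\le\Diag(\mu_1,n-\mu_1)=\tfrac12(\mu_1^2+(n-\mu_1)^2)+O(n)$, which is $\le\tfrac12(\lambda_1^2+(n-\lambda_1)^2)+O(n)$ since $t\mapsto t^2+(n-t)^2$ increases on $[n/2,n]$ and $\mu_1\le\lambda_1$; if $\mu_1<n/2$, Lemma~\ref{lem:oliver}(4) already gives $\Diag(\mu)<n^2/4\le\tfrac12(\lambda_1^2+(n-\lambda_1)^2)$ because $\lambda_1\ge0.7n$. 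After dividing by $n^2$, the bound reads
\[
    \Eig^\lambda_{\mu,\nu}\le\frac{a^2-ab}{2}\cdot\frac{x^2+(1-x)^2}{2}+\frac{b^2-ab}{2}\left(-\frac y2\right)+\frac{ab}{2}\left(x-\frac{xy}{2}-\frac{y^2}{2}\right)+O(1/n).
\]
Collecting the terms involving $y$ into the concave parabola $g(y)=\tfrac y4(ab-b^2-abx)-\tfrac{ab}{4}y^2$ and bounding $g(y)$ above by its vertex value $\tfrac{(ab-b^2-abx)^2}{16ab}$ recovers the first term of $\phi_1(x)$; expanding the remaining $y$‑free part via $x^2+(1-x)^2=2x^2-2x+1$ collapses it to $\tfrac{a^2-ab}{2}x^2+\tfrac{2ab-a^2}{2}x+\tfrac{a^2-ab}{4}$, the remaining terms of $\phi_1(x)$. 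Hence $\Eig^\lambda_{\mu,\nu}\le\phi_1(x)+O(1/n)$.

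For the lower bound I would use the inequality $\Eig^\lambda_{\mu,\nu}\ge\tfrac{a^2}{4}D_\mu+\tfrac{b^2}{4}D_\nu-\tfrac{ab}{2}\tfrac{\langle\mu^*,\nu^*\rangle}{n^2}$ established just before Corollary~\ref{cor:Eig-bound-in-terms-of-RT}. Since $\mu_1^*,\nu_1^*\le\lambda_1^*\le n/2$, Lemma~\ref{lem:oliver}(3),(4) give $D_\mu=\tfrac1n-\tfrac2{n^2}\Diag(\mu^*)\ge-\tfrac{\mu_1^*}{n}\ge-\tfrac12$ and likewise $D_\nu\ge-\tfrac12$, while Lemma~\ref{lem:oliver}(1) applied with $(n)\rhd\nu^*$ gives $\langle\mu^*,\nu^*\rangle\le\langle\mu^*,(n)\rangle=n\mu_1^*\le n^2/2$. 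Therefore
\[
    \Eig^\lambda_{\mu,\nu}\ge-\frac{a^2}{8}-\frac{b^2}{8}-\frac{ab}{4}-O(1/n)=-\frac{(a+b)^2}{8}-O(1/n)=-\frac12-O(1/n),
\]
using $a+b=2$ in the last step. Combining with the upper bound completes the proof.

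I expect the crux to be part (i) of the upper bound: the crude estimate $\Diag(\mu)\le\tfrac n2\mu_1$ is not sharp enough — it gives a bound that is linear in $x$ and strictly exceeds $\phi_1(x)$ on a sub‑interval of $[0.7,1]$ — so one genuinely needs the two‑row dominance bound $\Diag(\mu)\le\tfrac12(\lambda_1^2+(n-\lambda_1)^2)+O(n)$; and then one must recognize that the dependence on $\lambda_1^*$ assembles into a concave quadratic whose maximum over $y$ is exactly the square term of $\phi_1$. The lower bound, by contrast, falls out cleanly once one uses $\mu_1^*,\nu_1^*\le\lambda_1^*\le n/2$ and the identity $a+b=2$, and all $O(1/n)$ errors carry constants depending only on $a,b$.
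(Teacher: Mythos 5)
Your proof is correct, and the upper-bound half is essentially the paper's argument: the same three estimates $\Diag(\mu)\le\Diag(\lambda_1,n-\lambda_1)$ (your two-case argument is an unnecessary detour, since $\mu\lhd(\lambda_1,n-\lambda_1)$ holds directly from $\mu_1\le\lambda_1$ and $|\mu|=n$ once $\lambda_1\ge n/2$), $\Diag(\nu)\ge-\tfrac{n}{2}\lambda_1^*$, and Lemma~\ref{lem:generic-diagonal-sum-bound}(1) for $\Diag(\lambda)$, followed by maximizing the concave quadratic in $y=\lambda_1^*/n$ at its vertex, which is exactly how $\phi_1$ arises in the paper. Where you genuinely diverge is the lower bound $\Eig^\lambda_{\mu,\nu}\ge-\tfrac12+O(1/n)$: the paper gets it by exhibiting an explicit diagram $\mathbf{T}$ (first row $0.7n$, columns $0.5n,0.5n,0.3n$) dominated by every $\lambda$ in this subzone, together with $\Diag(\mu)\ge\Diag((0.5n,0.5n)^*)$ and $\Diag(\nu)\le\Diag((n))$, yielding the numerical bound $-(0.125a^2+0.15ab-0.25b^2)\ge-\tfrac12$; you instead invoke the general inequality $\Eig^\lambda_{\mu,\nu}\ge\tfrac{a^2}{4}D_\mu+\tfrac{b^2}{4}D_\nu-\tfrac{ab}{2}\langle\mu^*,\nu^*\rangle/n^2$ (the proposition preceding Corollary~\ref{cor:Eig-bound-in-terms-of-RT}) and use only $\mu_1^*,\nu_1^*\le\lambda_1^*\le n/2$ via Lemma~\ref{lem:oliver}, so that the bound collapses to $-\tfrac{(a+b)^2}{8}=-\tfrac12$ exactly. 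Your route is arguably cleaner: it avoids the ad hoc diagram construction and the specific constants $0.7$, $0.5$ (only $\lambda_1^*\le n/2$ matters), and it makes transparent why $-\tfrac12$ is the natural threshold (it is $(a+b)^2/8$ with $a+b=2$); the paper's route is more self-contained within the dominance-order toolkit it uses throughout the red zone. One small bookkeeping point: the inclusion $\nu\subseteq\lambda$ you use rests on the symmetry $c^\lambda_{\mu,\nu}=c^\lambda_{\nu,\mu}$, which is standard but not among the identities the paper lists; the paper itself uses $\nu\subseteq\lambda$ in the same way in Red Zone~I, so this is harmless.
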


\begin{proof}
    For any $\lambda$ in $\Zone_{R, +}^{II}$ and any $(\mu, \nu) \in \LR^\lambda$, Lemma~\ref{lem:oliver} and Lemma~\ref{lem:generic-diagonal-sum-bound} gives us the diagonal index bounds
    \begin{align*}
        \Diag (\lambda) & \leq n \lambda_1 - \frac{\lambda_1 \lambda_1^*}{2} - \frac{(\lambda_1^*)^2}{2} + O(n) \\
        \Diag(\mu) & \leq \Diag(\lambda_1, n - \lambda_1) = \frac{n^2}{2} - n \lambda_1 + \lambda_1^2 \\ 
        \Diag(\nu) & \geq - \frac{n \lambda_1^*}{2} + O(n). 
    \end{align*}

    To get diagonal index bounds in the other direction, we construct a Young diagram which is dominated by $\lambda$. Since $0.7n \leq \lambda_1$ and $\lambda_1^* \leq 0.5n$, we can always move down boxes from the Young diagram of $\lambda$ to get the Young diagram $\mathbf{T}$ with first row of length $0.7n$, first column of length $0.5n$, second column of length $0.5n$, and third column of length $0.3n$ (up to constant error). From Lemma~\ref{lem:oliver}, we get the bounds 
    \begin{align*}
        \Diag (\lambda) & \geq \Diag(\mathbf{T}) = -0.05n^2 + O(n), \\ 
        \Diag(\mu) & \geq \Diag ( (0.5n, 0.5n)^*) = -0.25n^2 + O(n), \\ 
        \Diag(\nu) & \leq \Diag((n)) = 0.5n^2 + O(n). 
    \end{align*}
    This gives us a lower bound 
    \[
        \Eig^\lambda_{\mu, \nu} \geq - (0.125a^2 + 0.15ab - 0.25b^2) + O\left (\frac{1}{n} \right ) \geq -\frac{1}{2}
    \]
for sufficiently large $n$. The first set of diagonal index inequalities gives the eigenvalue bound 
\begin{equation}\label{eqn:EEE}
    \Eig^\lambda_{\mu, \nu} \leq S_1 \left ( \frac{\lambda_1}{n}, \frac{\lambda_1^*}{n} \right ) + S_2 \left ( \frac{\lambda_1}{n} \right ) + O \left ( \frac{1}{n} \right )
\end{equation}
where we define 
\begin{align*}
    S_1(x, y) & \eqdef - \frac{ab}{4} y^2 + \left ( \frac{ab - b^2}{4} - \frac{ab}{4}x \right ) y, \\
    S_2(x) & \eqdef \frac{a^2 - ab}{2} x^2 + \frac{2ab - a^2}{2} x + \frac{a^2 - ab}{4}.
\end{align*}

Viewing $S_1(x, y)$ as a quadratic equation in $y$, it is maximized when $y = \frac{ab - b^2 - abx}{2ab}$. Thus, for any $x \in \RR$, we have the bound 
\[
    S_1(x, y) \leq S_1 \left (x, \frac{ab - b^2 - abx}{2ab} \right ). 
\]
Applying this bound to Equation~\ref{eqn:EEE} gives
\[
    \Eig^\lambda_{\mu, \nu} \leq \phi_1 \left ( \frac{\lambda}{n} \right ) + O \left ( \frac{1}{n} \right ). 
\]
This suffices for the proof of the proposition. 
\end{proof}

To prove that the sum over the region $0.7 \leq \lambda_1 \leq n$, $\lambda_1^* \leq 0.5n$ is small, we will split this region into sub-regions $c_1 n \leq \lambda_1 \leq c_2 n$, $\lambda_1^* \leq 0.5n$. The following lemma allows us to use the function $\phi_4$ to determine these sub-regions.

\begin{Lemma}\label{lem:Red-II-plus-each-partition-is-small}
    Let $x$ be a real number satisfying $0.7 \leq x \leq \phi_4(x)$.
    \begin{enumerate}
        \item If $\phi_4(x) \leq 1$, then there is some constant $C > 0$ such that 
        \[
            \sum_{\lambda \in R_x} \Omega_\lambda (t_N) \leq \exp (-C n \log n)
        \]
        where $R_x$ consists of the $\lambda \in \Zone_{R,+}^{II}$ satisfying $xn \leq \lambda_1 \leq \phi_4(x)n$.

        \item If $\phi_4(x) > 1$, then there is some constant $C > 0$ such that 
        \[
            \sum_{\lambda \in R^x} \Omega_\lambda (t_N) \leq \exp ( -C n \log n)
        \]
        where $R^x$ consists of the $\lambda \in \Zone_{R, +}^{II}$ satisfying $xn \leq \lambda_1 \leq n$. 
    \end{enumerate}
\end{Lemma}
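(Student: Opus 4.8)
The plan is to bound each region by the crude estimate of Lemma~\ref{lem:essential-tool}: on a region $Z \subseteq \YY_{2n}$ one has $\sum_{\lambda\in Z}\Omega_\lambda(t_N) \le \max_{\lambda\in Z}(f_\lambda)^2 \cdot \max_{\lambda\in Z,\,(\mu,\nu)\in\LR^\lambda}|\Eig^\lambda_{\mu,\nu}|^{2t_N} \cdot \exp(O(\sqrt n))$. For the $(f_\lambda)^2$ factor I would invoke Lemma~\ref{lem:f-lambda-bound}(1): every $\lambda$ in $R_x$ (or $R^x$) has $\lambda_1 \ge xn$, so taking $d = x$, $d^* = 0$ gives $(f_\lambda)^2 \le \exp((2-x)n\log n + O(n))$. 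For the eigenvalue factor I would use Proposition~\ref{prop:tanuki} together with monotonicity of $\phi_1$ on $(0.5,\infty)$ (established in the appendix): since $\lambda_1/n$ ranges over $[0.7,\phi_4(x)]$ in case (1) and over $[0.7,1]$ in case (2), one gets $|\Eig^\lambda_{\mu,\nu}| \le \max\{\tfrac12,\phi_1(\phi_4(x))\} + O(1/n)$ in case (1) and $|\Eig^\lambda_{\mu,\nu}| \le \max\{\tfrac12,\phi_1(1)\} + O(1/n)$ in case (2). Recalling $t_N = \tfrac1b n\log n + O(n)$, raising to the power $2t_N$ turns the $O(1/n)$ error into a factor $\exp(O(\log n))$, which is absorbed into $\exp(O(n))$.

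Combining these, in case (1) I would obtain
\[
\sum_{\lambda\in R_x}\Omega_\lambda(t_N) \le \exp\!\left(\Bigl[(2-x) + \tfrac2b\log\max\{\tfrac12,\phi_1(\phi_4(x))\}\Bigr]\,n\log n + O(n)\right),
\]
and the task reduces to showing the bracketed exponent is bounded above by a negative constant. When $\phi_1(\phi_4(x)) \ge \tfrac12$ the bracket equals $2 + \tfrac2b\log\phi_1(\phi_4(x)) - x = \phi_2(\phi_4(x)) - x$; here the hypothesis $x \le \phi_4(x) = \tfrac12(\phi_3(x)+x)$ is equivalent to $x \le \phi_3(x)$, which is strict on the range under consideration (appendix), so $\phi_4(x) < \phi_3(x)$, and since $\phi_2$ is increasing with $\phi_2\circ\phi_3 = \mathrm{id}$ we get $\phi_2(\phi_4(x)) < \phi_2(\phi_3(x)) = x$, making the bracket strictly negative. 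When $\phi_1(\phi_4(x)) < \tfrac12$ the bracket equals $2 - x - \tfrac2b\log 2 \le 1.3 - 2\log 2 < 0$, using $x \ge 0.7$ and $0 < b \le 1$. Either way the bracket is at most a negative constant $-C'$, and absorbing the $O(n)$ term gives $\sum_{\lambda\in R_x}\Omega_\lambda(t_N) \le \exp(-Cn\log n)$ for large $N$. Case (2) is the same computation with $\phi_1(1)$ in place of $\phi_1(\phi_4(x))$: one may assume $x \le 1$ (otherwise $R^x$ is empty), and $\phi_4(x) > 1$ forces $\phi_3(x) > 2 - x \ge 1$, so in the subcase $\phi_1(1)\ge\tfrac12$ monotonicity of $\phi_2$ gives $\phi_2(1) < \phi_2(\phi_3(x)) = x$, while the subcase $\phi_1(1)<\tfrac12$ gives $2 - x - \tfrac2b\log 2 < 0$ exactly as before.

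The main obstacle is not any individual estimate but the bookkeeping with the auxiliary functions: the argument above repeatedly uses that $\phi_1$ (equivalently $\phi_2$) is increasing on $(0.5,\infty)$, that $\phi_3$ is a genuine inverse of $\phi_2$ on the relevant interval, and that $\phi_4$ lands inside the domain of $\phi_3$, so that the monotonicity manipulations are legitimate. All of these properties are deferred to the appendix; granting them the rest is routine, the one conceptual point being that using the midpoint $\phi_4(x) = \tfrac12(\phi_3(x)+x)$ rather than $\phi_3(x)$ itself as the right endpoint is precisely what upgrades the borderline exponent $\phi_2(\phi_3(x)) - x = 0$ to a strictly negative one, yielding genuine exponential decay.
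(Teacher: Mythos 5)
Your proposal is correct and follows essentially the same route as the paper's proof: the bound of Lemma~\ref{lem:essential-tool}, the dimension estimate $(f_\lambda)^2 \leq \exp((2-x)n\log n + O(n))$ from Lemma~\ref{lem:f-lambda-bound}, Proposition~\ref{prop:tanuki} with monotonicity of $\phi_1$, and the identity $\phi_2(\phi_3(x))=x$ combined with the strict inequality $\phi_4(x)<\phi_3(x)$ to make the exponent strictly negative. The only differences are cosmetic: the paper extracts an explicit $\varepsilon>0$ with $\phi_1(\phi_4(x))\le \phi_1(\phi_3(x))e^{-\varepsilon}$ where you simply observe that $\phi_2(\phi_4(x))-x$ is a fixed negative constant, and in part (2) you argue directly with $\phi_2(1)<x$ where the paper replaces $\phi_3(x),\phi_4(x)$ by $\min\{\phi_3(x),1\},\min\{\phi_4(x),1\}$.
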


\begin{proof}
    Suppose that we are in the situation of part (1). Let $\lambda \in R_x$ be any partition. From Proposition~\ref{prop:tanuki} and the fact that $\phi_1$ is increasing on $[0.7, \infty)$ (Lemma~\ref{lem:APPENDIX-prop-D}), we have the bound 
    \begin{align*}
        \max_{(\mu, \nu) \in \LR^\lambda} \left |\Eig^\lambda_{\mu, \nu} \right | & \leq \max \left \{ \frac{1}{2}, \phi_1 (\phi_4(x)) \right \} + O \left ( \frac{1}{n} \right ) \\
        & \leq \max \left \{ \frac{1}{2}, \phi_1(\phi_3(x))e^{-\varepsilon} \right \} + O \left ( \frac{1}{n} \right ) \\
        \implies \max_{\substack{\lambda \in R_x \\ (\mu, \nu) \in \LR^\lambda}} \left |\Eig^\lambda_{\mu, \nu} \right | & \leq \max \left \{ \frac{1}{2}, \phi_1(\phi_3(x)) e^{-\varepsilon}\right \} + O \left ( \frac{1}{n} \right )
    \end{align*}
    for some $\varepsilon > 0$ depending only on $x$, $a$, and $b$. We can pick such a $\varepsilon$ since $\phi_3(x) > \phi_4(x)$ in this domain and $\phi_1(\bullet)$ is strictly increasing in this interval. From Lemma~\ref{lem:f-lambda-bound}, we have the bound 
    \[
        \max_{\lambda \in R_x} (f_\lambda)^2 \leq \exp ((2-x) n \log n + O(n)). 
    \]
    From Lemma~\ref{lem:essential-tool}, we have that $\sum_{\lambda \in R_x} \Omega_\lambda (t_N)$ is bounded above by 
    \[
        \exp \left ((2-x)n  \log n + t_N \max \left \{ \log \frac{1}{2}, \log (\phi_1(\phi_3(x))) - \varepsilon \right \} + O(n) \right ). 
    \]
    We define the constants
    \begin{align*}
        U_1 & \eqdef 2 - x + \frac{2}{b} \log \frac{1}{2}, \\
        U_2 & \eqdef  2- x + \frac{2}{b}\log (\phi_1(\phi_3(x))) - \frac{2\varepsilon}{b}.
    \end{align*}
    Then, we have the following upper bound: 
    \begin{equation}\label{eqn:beaver}
        \sum_{\lambda \in R_x} \Omega_\lambda (t_N) \leq \max \{ \exp(U_1 n \log n + O(n)), \exp(U_2 n \log n + O(n)) \}. 
    \end{equation}
    We can check that
    \begin{align*}
        U_1 & = 2 - x + \frac{2}{b} \log \frac{1}{2} \leq 1.3 + \frac{2}{b} \log \frac{1}{2}  < 0, \\
        U_2 & = 2 - x + \frac{2}{b} \log (\phi_1(\phi_3(x))) - \frac{2\varepsilon}{b} = \phi_2(\phi_3(x)) - x - \frac{2\varepsilon}{b} = -\frac{2\varepsilon}{b} < 0.
    \end{align*} 
    Applying this to Equation~\ref{eqn:beaver}, we have 
    \[
        \sum_{\lambda \in R_x} \Omega_\lambda (t_N) \leq \exp (-C n \log n)
    \]
    for some $C > 0$ depending only on $x$, $a$, and $b$. This proves part (1). The proof to part (2) is exactly the same up to replacing instances of $\phi_3(x)$ and $\phi_4(x)$ with $\min \{\phi_3(x), 1\}$ and $\min \{\phi_4(x), 1\}$. 
\end{proof}

\begin{Corollary}\label{cor:full-of-zone-red-II}
    For sufficiently large $N$ the inequality 
    \begin{equation}\label{eqn:zone-red-II}
        \sum_{\lambda \in \Zone^{II}_R} \Omega_\lambda(t_N) \leq \exp (-Cn \log n)
    \end{equation}
    for some universal constant $C > 0$ depending on $a$ and $b$. 
\end{Corollary}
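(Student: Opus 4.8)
The plan is to exploit the fact, anticipated in the remark following Definition~\ref{defn:auxiliary-functions-Zone-Red}, that the forward orbit of $0.7$ under $\phi_4$ chops $\Zone^{II}_{R,+}$ into finitely many strips on which Lemma~\ref{lem:Red-II-plus-each-partition-is-small} already delivers an $\exp(-Cn\log n)$ bound, and then to reduce the conjugate piece $\Zone^{II}_{R,-}$ to $\Zone^{II}_{R,+}$ by passing to conjugates. First I would split $\Zone^{II}_R=\Zone^{II}_{R,+}\sqcup\Zone^{II}_{R,-}$ (the two pieces are disjoint since $0.7n\le\lambda_1$ and $\lambda_1\le 0.5n$ cannot both hold) and concentrate on $\Zone^{II}_{R,+}=\{0.7n\le\lambda_1\le n,\ \lambda_1^*\le 0.5n\}$. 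Setting $x_0\eqdef 0.7$, $x_{k+1}\eqdef\phi_4(x_k)$, and letting $m$ be the least index with $x_m>1$, I would observe that the strips $R_{x_0},\dots,R_{x_{m-2}}$ — each of the form $\{x_kn\le\lambda_1\le\phi_4(x_k)n\}=\{x_kn\le\lambda_1\le x_{k+1}n\}$ — together with the final strip $R^{x_{m-1}}=\{x_{m-1}n\le\lambda_1\le n\}$ tile the $\lambda_1$-interval $[0.7n,n]$, hence cover $\Zone^{II}_{R,+}$; moreover each $x_k$ with $k\le m-1$ satisfies the hypothesis $0.7\le x_k\le\phi_4(x_k)$ of Lemma~\ref{lem:Red-II-plus-each-partition-is-small}, with $\phi_4(x_k)=x_{k+1}\le 1$ exactly for $k\le m-2$ (so part (1) applies) and $\phi_4(x_{m-1})=x_m>1$ (so part (2) applies).

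The crucial point is that $m$ is a \emph{finite} constant, independent of $n$. For this I would use the properties of $\phi_1,\phi_2,\phi_3,\phi_4$ recorded in the appendix: since $\phi_3$ is the inverse of the increasing map $\phi_2$ and $\phi_2(x)<x$ on $[0.5,1]$ (equivalently $2+\tfrac{2}{b}\log\phi_1(x)<x$ there, with $\phi_1$ positive and increasing on $[0.7,\infty)$ by Lemma~\ref{lem:APPENDIX-prop-D}), one gets $\phi_2(0.5)<0.5$, so that $[0.7,1]$ lies in the domain of $\phi_3$ and $\phi_3(x)>x$ on $[0.7,1]$. Consequently $\phi_4(x)-x=\tfrac12(\phi_3(x)-x)$ is continuous and strictly positive on the compact set $[0.7,1]$, hence bounded below by some $\delta>0$; so each step increases $x_k$ by at least $\delta/2$ until it exceeds $1$, giving $m\le\lceil 0.6/\delta\rceil$.

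With the finite cover in hand, I would finish $\Zone^{II}_{R,+}$ by applying Lemma~\ref{lem:Red-II-plus-each-partition-is-small} to the $m$ strips, obtaining constants $C_0,\dots,C_{m-1}>0$ with $\sum_{\lambda\in R}\Omega_\lambda(t_N)\le\exp(-C_kn\log n)$ for each strip $R$; since $m$ is a constant, $\sum_{\lambda\in\Zone^{II}_{R,+}}\Omega_\lambda(t_N)\le m\exp(-(\min_kC_k)n\log n)\le\exp(-Cn\log n)$ for $N$ large, with $C\eqdef\tfrac12\min_kC_k$. For $\Zone^{II}_{R,-}$ I would reindex the sum by $\lambda\mapsto\kappa\eqdef\lambda^*$, so $\kappa$ runs over $\Zone^{II}_{R,+}$; using $f_{\kappa^*}=f_\kappa$, $c^{\kappa^*}_{\mu^*,\nu^*}=c^\kappa_{\mu,\nu}$ (Proposition~\ref{prop:sym-rep-theory-identities}(4)) and the identity $\Eig^{\kappa^*}_{\mu^*,\nu^*}=\tfrac{a^2+b^2}{2n}-\Eig^\kappa_{\mu,\nu}$ valid in the $|A|=|B|$ case (immediate from $\Diag(\sigma^*)=-\Diag(\sigma)$ applied to Equation~\ref{eqn:eigenvalue-when-AandB-same}), the two-sided eigenvalue estimate proved inside Proposition~\ref{prop:tanuki} shows that $|\Eig^{\kappa^*}_{\mu^*,\nu^*}|$ obeys the \emph{same} bound $\max\{\tfrac12,\phi_1(\kappa_1/n)\}+O(1/n)$ that drives the proof of Proposition~\ref{prop:tanuki}; hence the $\Zone^{II}_{R,+}$ argument applies verbatim, and adding the two contributions yields Equation~\eqref{eqn:zone-red-II}.

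The main obstacle is the finite-escape statement for the $\phi_4$-orbit — i.e.\ verifying that $\phi_3>\mathrm{id}$ with a uniform positive gap on $[0.7,1]$, which unwinds to the elementary but slightly delicate inequality $\phi_2(x)<x$ on $[0.5,1]$, together with well-definedness and monotonicity of $\phi_1,\dots,\phi_4$ on the relevant domains. These are exactly the facts deferred to the appendix; once they are available, the covering argument and the conjugation reduction are routine.
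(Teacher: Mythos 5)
Your proposal is correct and follows essentially the same route as the paper: cover $\Zone^{II}_{R,+}$ by the strips determined by the $\phi_4$-orbit of $0.7$, apply Lemma~\ref{lem:Red-II-plus-each-partition-is-small} to each strip (part (1) until the orbit passes $1$, part (2) for the last step), and then transfer the bound to the conjugate half. The only differences are minor: you establish finiteness of the escape time via a uniform positive gap of $\phi_4-\mathrm{id}$ on the compact interval $[0.7,1]$ (the paper's Proposition~\ref{prop:appendix-dynamical-subzoning} argues by a monotone-limit contradiction), and you treat $\Zone^{II}_{R,-}$ by rerunning the eigenvalue bound under conjugation using $\Eig^{\lambda^*}_{\mu^*,\nu^*}=\tfrac{a^2+b^2}{2n}-\Eig^{\lambda}_{\mu,\nu}$, whereas the paper invokes the general Lemma~\ref{lem:OMEGA-is-INVOLUTIVE}; both variants are valid.
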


\begin{proof}
    We first prove that Equation~\ref{eqn:zone-red-II} holds when the sum is over $\Zone_{II, +}^R$. The result for the whole region then follows from Lemma~\ref{lem:OMEGA-is-INVOLUTIVE}. Proposition~\ref{prop:appendix-dynamical-subzoning} gives us a sequence 
    \[
        0.7 = x_0 < x_1 < \ldots < x_{M-1} \leq 1 < x_M
    \]
    such that $x_{i+1} = \phi_4(x_i)$ for all $i$. Then the result follows by applying Lemma~\ref{lem:Red-II-plus-each-partition-is-small} to every adjacent pair of terms in the sequence. 
\end{proof}

\subsubsection{The Red Zone III}

In this section, we prove that the sum of $\Omega_\lambda(t_N)$ over $\Zone_R^{III}$ is small. Recall that this zone contains all $\lambda \vdash 2n$ satisfying 
\[
    0.7n \leq \lambda_1 \leq n, 0.5n \leq \lambda_1^* \leq 0.7n \quad \text{ or } \quad 0.7n \leq \lambda_1^* \leq n, 0.5n \leq \lambda_1 \leq 0.7n. 
\]
Proposition~\ref{prop:BOUND-on-EIG-between-0.5-and-1} gives the following bound on the eigenvalue in terms of $\mathbf{Q}_R$. Recall that $\mathbf{Q}_R$ was defined in Definition~\ref{defn:q-function}. 

\begin{Lemma}\label{lem:france}
    For $\lambda \in \Zone_R^{III}$, we have 
    \[
        \max_{(\mu, \nu) \in \LR^\lambda} |\Eig^\lambda_{\mu, \nu}| \leq \max \left \{ \max_{\substack{x \in [0.7, 1] \\ y \in [0.5,0.7]}} \mathbf{Q}_R(x, y), \max_{\substack{x \in [0.7,1] \\ y \in [0.5, 0.7]}} \mathbf{Q}_R(y, x) \right \} + O \left ( \frac{1}{n} \right )
    \]
    where the $O(1/n)$ error is universal. 
\end{Lemma}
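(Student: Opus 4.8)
The plan is to apply Proposition~\ref{prop:BOUND-on-EIG-between-0.5-and-1} verbatim and then replace the ratios $\lambda_1/n$, $\lambda_1^*/n$ by their worst case over the rectangle defining $\Zone_R^{III}$. First I would check that $\Zone_R^{III}$ is contained in the hypothesis region of Proposition~\ref{prop:BOUND-on-EIG-between-0.5-and-1}, namely $\{0.5n\le\lambda_1,\lambda_1^*\le n\}$: in the first piece of $\Zone_R^{III}$ we have $\lambda_1\in[0.7n,n]\subseteq[0.5n,n]$ and $\lambda_1^*\in[0.5n,0.7n]\subseteq[0.5n,n]$, and the second piece is obtained from the first by interchanging $\lambda_1$ and $\lambda_1^*$. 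Hence Proposition~\ref{prop:BOUND-on-EIG-between-0.5-and-1} applies to every $\lambda\in\Zone_R^{III}$, giving
\[
    \max_{(\mu,\nu)\in\LR^\lambda}|\Eig^\lambda_{\mu,\nu}| \le \max\Bigl\{\mathbf{Q}_R\bigl(\tfrac{\lambda_1}{n},\tfrac{\lambda_1^*}{n}\bigr),\ \mathbf{Q}_R\bigl(\tfrac{\lambda_1^*}{n},\tfrac{\lambda_1}{n}\bigr)\Bigr\} + O\bigl(\tfrac1n\bigr),
\]
with an $O(1/n)$ error that depends only on $a,b,n$ and in particular not on $\lambda$.

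Next I would bound the right-hand side uniformly over the zone. If $\lambda$ lies in the first piece of $\Zone_R^{III}$, then $(\tfrac{\lambda_1}{n},\tfrac{\lambda_1^*}{n})\in[0.7,1]\times[0.5,0.7]$, so $\mathbf{Q}_R(\tfrac{\lambda_1}{n},\tfrac{\lambda_1^*}{n})\le\max_{x\in[0.7,1],\,y\in[0.5,0.7]}\mathbf{Q}_R(x,y)$ while $\mathbf{Q}_R(\tfrac{\lambda_1^*}{n},\tfrac{\lambda_1}{n})\le\max_{x\in[0.7,1],\,y\in[0.5,0.7]}\mathbf{Q}_R(y,x)$; if $\lambda$ lies in the second piece, the two inequalities simply swap, since now $(\tfrac{\lambda_1}{n},\tfrac{\lambda_1^*}{n})\in[0.5,0.7]\times[0.7,1]$. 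In either case both terms inside the inner maximum are at most
\[
    \max\Bigl\{\max_{x\in[0.7,1],\,y\in[0.5,0.7]}\mathbf{Q}_R(x,y),\ \max_{x\in[0.7,1],\,y\in[0.5,0.7]}\mathbf{Q}_R(y,x)\Bigr\},
\]
which is exactly the bound claimed, and the $O(1/n)$ term is unchanged because its implied constant was already independent of $\lambda$.

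There is no real obstacle here: the lemma is a bookkeeping step that records the supremum of the eigenvalue bound of Proposition~\ref{prop:BOUND-on-EIG-between-0.5-and-1} over the rectangle $[0.7,1]\times[0.5,0.7]$ and its transpose. The two points requiring (minor) care are confirming the inclusion $\Zone_R^{III}\subseteq\{0.5n\le\lambda_1,\lambda_1^*\le n\}$, which is immediate from the defining inequalities, and confirming that the $O(1/n)$ error remains universal, which holds because that error was already shown to depend only on $a$, $b$, and $n$. Note one need not evaluate the two maxima of $\mathbf{Q}_R$ explicitly for this lemma; that computation, if needed, is deferred to the subsequent estimate of $\sum_{\lambda\in\Zone_R^{III}}\Omega_\lambda(t_N)$ via Lemma~\ref{lem:essential-tool}.
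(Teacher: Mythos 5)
Your proposal is correct and is exactly the argument the paper intends: Lemma~\ref{lem:france} is stated as an immediate consequence of Proposition~\ref{prop:BOUND-on-EIG-between-0.5-and-1}, obtained by checking that $\Zone_R^{III}\subseteq\{0.5n\le\lambda_1,\lambda_1^*\le n\}$ and then taking the supremum of the two $\mathbf{Q}_R$ terms over the rectangle $[0.7,1]\times[0.5,0.7]$ and its transpose, with the uniform $O(1/n)$ error carried along. Nothing further is needed.
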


From Lemma~\ref{lem:france} and Lemma~\ref{lem:APPENDIX-BOUND-ON-R-FUNCTION-RED-III}, we get the following Corollary. 
\begin{Corollary}\label{cor:RedZone-III-max-eigenvalue-bound}
    \phantom{}
    \[
        \max_{\substack{\lambda \in \Zone_R^{III} \\ (\mu, \nu) \in \LR^\lambda}} |\Eig^\lambda_{\mu, \nu}| \leq \frac{a^2}{4} + \frac{3ab}{16} - \frac{b^2}{8} + O \left ( \frac{1}{n} \right ). 
    \]
\end{Corollary}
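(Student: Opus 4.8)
The plan is to deduce this corollary directly from Lemma~\ref{lem:france}, together with an explicit maximization of the polynomial $\mathbf{Q}_R$ over the rectangle $[0.7,1]\times[0.5,0.7]$ and over its transpose; that maximization is exactly the content of the appendix estimate quoted as Lemma~\ref{lem:APPENDIX-BOUND-ON-R-FUNCTION-RED-III}. The key point is that the bound produced by Lemma~\ref{lem:france} is \emph{independent of the individual partition} $\lambda$: for every $\lambda\in\Zone_R^{III}$ one has $\lambda_1/n\in[0.7,1]$ and $\lambda_1^*/n\in[0.5,0.7]$ (or these roles reversed), so the right-hand side of Lemma~\ref{lem:france} is bounded by $\max\{M_+,M_-\}+O(1/n)$, where
\[
    M_+ \eqdef \max_{\substack{x\in[0.7,1]\\ y\in[0.5,0.7]}}\mathbf{Q}_R(x,y),\qquad
    M_- \eqdef \max_{\substack{x\in[0.7,1]\\ y\in[0.5,0.7]}}\mathbf{Q}_R(y,x).
\]
Taking the maximum over $\lambda\in\Zone_R^{III}$ and over $(\mu,\nu)\in\LR^\lambda$ therefore preserves this bound, and the corollary reduces to the two scalar inequalities $M_+\le \tfrac{a^2}{4}+\tfrac{3ab}{16}-\tfrac{b^2}{8}$ and $M_-\le \tfrac{a^2}{4}+\tfrac{3ab}{16}-\tfrac{b^2}{8}$, which is what Lemma~\ref{lem:APPENDIX-BOUND-ON-R-FUNCTION-RED-III} asserts.

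For the first of these inequalities I would proceed as follows. The coefficient of $x^2$ in $\mathbf{Q}_R(x,y)$ is $\tfrac{a^2-ab}{2}=\tfrac{a(a-b)}{2}\ge 0$ (using $0<b\le a$), so $x\mapsto\mathbf{Q}_R(x,y)$ is convex and its maximum on $[0.7,1]$ is attained at $x=0.7$ or $x=1$. For each of these two values of $x$ one is left with a univariate quadratic in $y$; its leading coefficient is $\tfrac{ab-2b^2}{4}=\tfrac{b(2-3b)}{4}$, whose sign changes at $b=2/3$. A short case split (convex versus concave in $y$, the latter possibly with an interior critical point) then reduces $\max_{y\in[0.5,0.7]}$ to finitely many candidate values: the endpoints $y\in\{0.5,0.7\}$ and, in the concave branch, the vertex of the quadratic when it lies in $[0.5,0.7]$. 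One computes directly that $\mathbf{Q}_R(1,0.5)=\tfrac{4a^2+3ab-2b^2}{16}=\tfrac{a^2}{4}+\tfrac{3ab}{16}-\tfrac{b^2}{8}$, and checks that this value dominates all the remaining candidates for every admissible pair $(a,b)$ with $a+b=2$ and $b\in(0,1]$. The transpose bound $M_-$ is handled by the identical bookkeeping applied to $\mathbf{Q}_R(y,x)$: convexity now gives a maximum at first argument $0.5$ or $0.7$, followed by the same quadratic-in-the-second-argument analysis over $[0.7,1]$.

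The main obstacle is purely this last verification: one must confirm that the handful of endpoint/vertex evaluations are \emph{simultaneously} $\le \tfrac{a^2}{4}+\tfrac{3ab}{16}-\tfrac{b^2}{8}$ for every $b\in(0,1]$ (equivalently $a=2-b$), tracking both the sign change of the $y^2$-coefficient at $b=2/3$ and the sign of $a-2b=2-3b$ appearing in the vertex formula. This is elementary but slightly delicate, which is why the estimate is isolated as Lemma~\ref{lem:APPENDIX-BOUND-ON-R-FUNCTION-RED-III} in the appendix; once it is granted, the corollary follows immediately by combining it with Lemma~\ref{lem:france} as above.
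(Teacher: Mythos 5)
Your proposal is correct and follows essentially the same route as the paper: Corollary~\ref{cor:RedZone-III-max-eigenvalue-bound} is obtained by combining Lemma~\ref{lem:france} (whose bound is already uniform over $\lambda\in\Zone_R^{III}$) with the scalar maximization of $\mathbf{Q}_R$ over the two rectangles, which is exactly Lemma~\ref{lem:APPENDIX-BOUND-ON-R-FUNCTION-RED-III}. The only cosmetic difference is in your sketch of that appendix estimate, where you use convexity in $x$ to reduce to the endpoints $x\in\{0.7,1\}$ while the paper uses monotonicity in $x$ to reduce directly to $x=1$ (resp.\ $x=0.7$); both reduce to the same evaluation $\mathbf{Q}_R(1,0.5)=\frac{a^2}{4}+\frac{3ab}{16}-\frac{b^2}{8}$ and the same case analysis around $b=2/3$.
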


This allows us to prove that the sum of our error term over $\Zone_R^{III}$ is small. 

\begin{Corollary}\label{cor:red-zone-III-full-bound}
    There is some $C > 0$ such that for sufficiently large $N$ we have
    \[
        \sum_{\lambda \in \Zone_R^{III}} \Omega_\lambda(t_N) \leq \exp ( - C n \log n).
    \] 
\end{Corollary}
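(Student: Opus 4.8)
The plan is to invoke Lemma~\ref{lem:essential-tool} with $Z = \Zone_R^{III}$, which immediately reduces the claim to controlling two factors: $\max_{\lambda \in \Zone_R^{III}}(f_\lambda)^2$ and $\max_{\lambda \in \Zone_R^{III},\,(\mu,\nu)\in\LR^\lambda}|\Eig^\lambda_{\mu,\nu}|^{2t_N}$, since the remaining factor is only $\exp(O(\sqrt n))$. First I would bound the dimension factor: every $\lambda \in \Zone_R^{III}$ has (up to conjugation, and the zone and $(f_\lambda)^2$ are both conjugation-invariant) $\lambda_1 \geq 0.7n$ and $\lambda_1^* \geq 0.5n$, so Lemma~\ref{lem:f-lambda-bound}(1) with $d = 0.7$, $d^* = 0.5$ yields $\max_{\lambda \in \Zone_R^{III}}(f_\lambda)^2 \leq \exp(0.8\,n\log n + O(n))$. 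For the eigenvalue factor, Corollary~\ref{cor:RedZone-III-max-eigenvalue-bound} already supplies $\max|\Eig^\lambda_{\mu,\nu}| \leq \frac{a^2}{4}+\frac{3ab}{16}-\frac{b^2}{8}+O(1/n)$; using $a+b=2$ one simplifies $\frac{a^2}{4}+\frac{3ab}{16}-\frac{b^2}{8} = 1 - \frac{5b}{8} - \frac{b^2}{16}$, which lies in $(0,1)$ for $b\in(0,1]$.

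Combining these via Lemma~\ref{lem:essential-tool}, and recalling $t_N = \frac{1}{2b}N\log N = \frac{n\log n}{b} + O(n)$, I obtain
\[
    \sum_{\lambda\in\Zone_R^{III}}\Omega_\lambda(t_N) \leq \exp\!\left( \Bigl( 0.8 + \tfrac{2}{b}\log\bigl(1 - \tfrac{5b}{8} - \tfrac{b^2}{16}\bigr)\Bigr) n\log n + O(n)\right).
\]
So it remains to check that $K(b) \eqdef 0.8 + \tfrac{2}{b}\log\bigl(1 - \tfrac{5b}{8} - \tfrac{b^2}{16}\bigr) < 0$ for all $b \in (0,1]$. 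Using $\log x < x-1$ with $x = 1 - \tfrac{5b}{8} - \tfrac{b^2}{16}$, it suffices that $-\tfrac{5b}{8} - \tfrac{b^2}{16} \leq -0.4b$, equivalently $\tfrac{5}{8} + \tfrac{b}{16} \geq \tfrac{2}{5}$, which is immediate since $\tfrac{5}{8} > \tfrac{2}{5}$. Taking $C = -\max_{b\in(0,1]}K(b) > 0$ (a fixed positive constant depending only on $a,b$) then gives $\sum_{\lambda\in\Zone_R^{III}}\Omega_\lambda(t_N) \leq \exp(-Cn\log n)$ for sufficiently large $N$.

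The only genuine content is the final negativity check, and as observed it reduces to the trivial estimate $\log x < x - 1$, entirely parallel in spirit to Lemma~\ref{lem:all-negative}; everything else is a mechanical assembly of Lemma~\ref{lem:essential-tool}, Lemma~\ref{lem:f-lambda-bound}(1), and Corollary~\ref{cor:RedZone-III-max-eigenvalue-bound}. The one point to be careful about is matching the constants $d=0.7$, $d^* = 0.5$ to the defining inequalities of $\Zone_R^{III}$ (and handling the conjugate half of the zone by symmetry), so that the exponent of $(f_\lambda)^2$ is exactly $0.8\,n\log n$ rather than something larger; this is what makes the total exponent $K(b)n\log n$ strictly negative.
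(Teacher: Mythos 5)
Your proposal is correct and follows essentially the same route as the paper: apply Lemma~\ref{lem:essential-tool} over $\Zone_R^{III}$, bound $(f_\lambda)^2$ by $\exp(0.8\,n\log n + O(n))$ via Lemma~\ref{lem:f-lambda-bound}, use Corollary~\ref{cor:RedZone-III-max-eigenvalue-bound} for the eigenvalue, and conclude from the negativity of $0.8 + \tfrac{2}{b}\log\bigl(\tfrac{a^2}{4}+\tfrac{3ab}{16}-\tfrac{b^2}{8}\bigr)$. Your only addition is an explicit verification (via $a=2-b$ and $\log x \le x-1$) of the negativity that the paper asserts without detail, which checks out.
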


\begin{proof}
    Lemma~\ref{lem:f-lambda-bound} gives us the bound 
    \[
        \max_{\lambda \in \Zone_R^{III}} (f_\lambda)^2 \leq \exp (0.8n \log n + O(n)). 
    \]
    From Corollary~\ref{cor:RedZone-III-max-eigenvalue-bound} and Lemma~\ref{lem:essential-tool} we have the bound 
    \[
        \sum_{\lambda \in \Zone_R^{III}} \Omega_\lambda (t_N) \leq \exp (C_3 n \log n + O(n))
    \]
    where 
    \[
        C_3 \eqdef 0.8 + \frac{2}{b} \log \left ( \frac{a^2}{4} + \frac{3ab}{16} - \frac{b^2}{8} \right ) < 0. 
    \]
    This suffices for the proof. 
\end{proof}
\subsubsection{The Red Zone IV}

To complete the red zone, we will prove that the sum of $\Omega_\lambda (t_N)$ over $\Zone_R^{IV}$ is small. Recall that this zone consists of the partitions $\lambda \vdash 2n$ satisfying 
\[
    0.7n \leq \lambda_1, \lambda_1^* \leq n. 
\]
From Proposition~\ref{prop:BOUND-on-EIG-between-0.5-and-1}, we have the following bound on the eigenvalues in this regime. 

\begin{Lemma}
    For $\lambda \in \Zone_R^{IV}$, we have 
    \[
        \max_{(\mu, \nu) \in \LR^\lambda} |\Eig^\lambda_{\mu, \nu}| \leq \max_{x, y \in [0.7, 1]} \mathbf{Q}_R(x, y) + O \left ( \frac{1}{n} \right ). 
    \]
\end{Lemma}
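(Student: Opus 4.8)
The plan is to deduce this directly from Proposition~\ref{prop:BOUND-on-EIG-between-0.5-and-1}, which already does all the real work of bounding $\Eig^\lambda_{\mu,\nu}$ on the square $\{0.5n \leq \lambda_1,\lambda_1^* \leq n\}$ in terms of the auxiliary function $\mathbf{Q}_R$. The only thing to check is that $\Zone_R^{IV}$ sits inside the region where that proposition applies, and that the resulting pointwise bound can be replaced by a uniform one over the whole subzone.

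First I would observe that for $\lambda \in \Zone_R^{IV}$ we have $0.7n \leq \lambda_1,\lambda_1^* \leq n$, so in particular $0.5n \leq \lambda_1,\lambda_1^* \leq n$ and Proposition~\ref{prop:BOUND-on-EIG-between-0.5-and-1} gives
\[
    \max_{(\mu,\nu)\in\LR^\lambda}|\Eig^\lambda_{\mu,\nu}| \leq \max\left\{ \mathbf{Q}_R\left(\tfrac{\lambda_1}{n},\tfrac{\lambda_1^*}{n}\right),\ \mathbf{Q}_R\left(\tfrac{\lambda_1^*}{n},\tfrac{\lambda_1}{n}\right)\right\} + O\left(\tfrac{1}{n}\right),
\]
where the error depends only on $a,b,n$. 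Next, since $\lambda_1/n,\lambda_1^*/n \in [0.7,1]$, both of the points $(\lambda_1/n,\lambda_1^*/n)$ and $(\lambda_1^*/n,\lambda_1/n)$ lie in the square $[0.7,1]^2$; hence each of the two values of $\mathbf{Q}_R$ appearing above is at most $\max_{x,y\in[0.7,1]}\mathbf{Q}_R(x,y)$. Combining, and noting the $O(1/n)$ error is the same for every $\lambda$ in the subzone, yields the claimed bound after taking the maximum over $\lambda \in \Zone_R^{IV}$.

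There is essentially no obstacle here: the content is entirely in Proposition~\ref{prop:BOUND-on-EIG-between-0.5-and-1}, and this lemma is just the bookkeeping step that replaces the partition-dependent bound by a uniform constant (up to $O(1/n)$) tailored to the subzone $\Zone_R^{IV}$. The only mild point of care is to make sure the quantifier over $(\mu,\nu)\in\LR^\lambda$ and the swap of arguments in $\mathbf{Q}_R$ are handled consistently — but symmetry of the region $[0.7,1]^2$ makes this immediate. This lemma then feeds into the same kind of estimate as in Corollary~\ref{cor:red-zone-III-full-bound}: one pairs it with the bound $\max_{\lambda\in\Zone_R^{IV}}(f_\lambda)^2 \leq \exp(0.6\,n\log n + O(n))$ from Lemma~\ref{lem:f-lambda-bound}(1) and Lemma~\ref{lem:essential-tool} to conclude that $\sum_{\lambda\in\Zone_R^{IV}}\Omega_\lambda(t_N)$ is exponentially small, once one checks (a calculus exercise, presumably relegated to the appendix) that $0.6 + \frac{2}{b}\log\big(\max_{x,y\in[0.7,1]}\mathbf{Q}_R(x,y)\big) < 0$ for $b \in (0,1]$.
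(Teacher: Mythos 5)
Your proposal is correct and matches the paper's treatment: the lemma is stated as an immediate consequence of Proposition~\ref{prop:BOUND-on-EIG-between-0.5-and-1}, using exactly the observation that $\Zone_R^{IV} \subset \{0.5n \leq \lambda_1, \lambda_1^* \leq n\}$ and that both $(\lambda_1/n, \lambda_1^*/n)$ and its swap lie in the symmetric square $[0.7,1]^2$. The subsequent application via Lemma~\ref{lem:essential-tool} and the appendix calculus check is also exactly how the paper proceeds in Corollary~\ref{cor:full-red-zone-4-bound}.
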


\begin{Corollary}\label{cor:full-red-zone-4-bound}
    There is some $C > 0$ such that for sufficiently large $N$ we have 
    \[
        \sum_{\lambda \in \Zone_R^{IV}} \Omega_\lambda (t_N) \leq \exp (-C n \log n). 
    \]
\end{Corollary}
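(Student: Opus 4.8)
The plan is to follow the template of the proof of Corollary~\ref{cor:red-zone-III-full-bound} essentially verbatim, now with the parameters $d=d^*=0.7$. Every $\lambda\in\Zone_R^{IV}$ has $\lambda_1,\lambda_1^*\geq 0.7n$, so Lemma~\ref{lem:f-lambda-bound}(1) gives
\[
    \max_{\lambda\in\Zone_R^{IV}}(f_\lambda)^2 \leq \exp\bigl(0.6\,n\log n + O(n)\bigr).
\]
This is the only place the shape of the zone enters the combinatorial side.

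Next I would turn the eigenvalue bound of the preceding Lemma (which states $\max_{(\mu,\nu)\in\LR^\lambda}|\Eig^\lambda_{\mu,\nu}|\leq \max_{x,y\in[0.7,1]}\mathbf{Q}_R(x,y)+O(1/n)$ for $\lambda\in\Zone_R^{IV}$) into an explicit constant by maximizing $\mathbf{Q}_R$ over the square $[0.7,1]^2$. Since the coefficient of $x^2$ in $\mathbf{Q}_R(x,y)$ is $\tfrac{a(a-b)}{2}\geq 0$, the function is convex in $x$, so its maximum over $x\in[0.7,1]$ is attained at $x=1$; and a one–line computation shows $\tfrac{d}{dy}\mathbf{Q}_R(1,y)$ is an affine function of $y$ that is negative at both endpoints of $[0.7,1]$, hence negative throughout, so the maximum over the square is $M \eqdef \mathbf{Q}_R(1,0.7) = \tfrac{a^2}{4}+0.0975\,ab-0.145\,b^2$. (This explicit maximization would be isolated as an appendix lemma, in complete analogy with Lemma~\ref{lem:APPENDIX-BOUND-ON-R-FUNCTION-RED-III}; note $M$ is even smaller than the constant $\tfrac{a^2}{4}+\tfrac{3ab}{16}-\tfrac{b^2}{8}$ appearing in Corollary~\ref{cor:RedZone-III-max-eigenvalue-bound}.) Feeding $\max_\lambda(f_\lambda)^2$ and $M$ into Lemma~\ref{lem:essential-tool}, using $t_N=\tfrac{1}{2b}N\log N=\tfrac{1}{b}n\log n+O(n)$ and $p(2n)=\exp(O(\sqrt n))$ from Theorem~\ref{thm:partition-function-bound}, yields
\[
    \sum_{\lambda\in\Zone_R^{IV}}\Omega_\lambda(t_N) \leq \exp\bigl(C_4\, n\log n + O(n)\bigr), \qquad C_4 \eqdef 0.6 + \frac{2}{b}\log M .
\]

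The only remaining point, and the one that requires any real work, is to verify $C_4<0$ for every $b\in(0,1]$, equivalently $M<e^{-0.3b}$. Substituting $a=2-b$ gives $M = 1 - 0.805\,b + 0.0075\,b^2$, while $e^{-0.3b}\geq 1-0.3b$; subtracting, $M-e^{-0.3b}\leq b(-0.505+0.0075\,b)<0$ on $(0,1]$. Hence $C_4<0$, and for sufficiently large $N$ we obtain $\sum_{\lambda\in\Zone_R^{IV}}\Omega_\lambda(t_N)\leq \exp(-Cn\log n)$ with $C\eqdef -C_4/2>0$, which is the claim. Thus the main obstacle is entirely the (routine) calculus inequality of this last paragraph; everything else is a mechanical assembly of Lemma~\ref{lem:f-lambda-bound}, Lemma~\ref{lem:essential-tool}, and the preceding eigenvalue estimate. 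In contrast to $\Zone_R^{II}$, no separate conjugate sub-zone needs to be handled here, since $\Zone_R^{IV}$ is symmetric under conjugation and $\Omega_\lambda$ is conjugation-invariant.
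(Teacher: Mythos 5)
Your proposal is correct and follows essentially the same route as the paper: the dimension bound $(f_\lambda)^2 \le \exp(0.6\, n\log n + O(n))$ from Lemma~\ref{lem:f-lambda-bound}, the maximization of $\mathbf{Q}_R$ over $[0.7,1]^2$ giving the same constant $0.25a^2+0.0975ab-0.145b^2$ (this is exactly the paper's Lemma~\ref{lem:APPENDIX-ZONE-RED-IV-BOUND-ON-Q}), then Lemma~\ref{lem:essential-tool} with $t_N$, and the verification $C_4<0$, which the paper leaves as implicit calculus and you carry out explicitly. Two cosmetic nits: convexity in $x$ alone only places the maximum at an endpoint, so (as in the paper) you should invoke monotonicity of $x^2-x$ and $x\left(1-\tfrac{y}{2}\right)$ on $[0.7,1]$ to conclude the maximum is at $x=1$; and $\Omega_\lambda$ is conjugation-invariant only up to a factor $\exp(O(n))$ (Lemma~\ref{lem:OMEGA-is-INVOLUTIVE}), though your substantive point — that no separate conjugate sub-zone is needed because the eigenvalue bound already covers the symmetric square — is correct.
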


\begin{proof}
    From Lemma~\ref{lem:f-lambda-bound}, we have 
    \[
        \max_{\lambda \in \Zone_R^{IV}} (f_\lambda)^2 \leq \exp (0.6 n \log n + O(n)). 
    \]
    From Lemma~\ref{lem:generic-diagonal-sum-bound} and Lemma~\ref{lem:APPENDIX-ZONE-RED-IV-BOUND-ON-Q}, we have the bound 
    \[
        \sum_{\lambda \in \Zone_R^{IV}} \Omega_\lambda(t_N) \leq \exp (C_4 n \log n + O(n))
    \]
    where
    \[
        C_4 \eqdef 0.6 + \frac{2}{b} \log (0.25a^2 + 0.0975ab-0.145b^2) < 0.
    \]
    This completes the proof of the corollary. 
\end{proof}

\subsection{The (Modified) Blue Zone}\label{sec:blue-zone}

In this section, we will prove that the sum of $\Omega_\lambda(t)$ over a \emph{modified} version of $\Zone_B$ is small. We will want to sum over a slightly bigger zone to make our analysis in $\Zone_C$ easier. Recall that $a^* = 2-a^{-1}$. 

\begin{Definition}
    For $\varepsilon > 0$, we define the \textbf{$\varepsilon$-modified Blue Zone} to be the set 
    \[
        \Zone_{B, \varepsilon} \eqdef \{ n \leq \lambda_1 \leq (a^* + \varepsilon) n, \lambda_1^* \leq n\} \cup \{ n \leq \lambda_1^* \leq (a^* + \varepsilon)n, \lambda_1 \leq n \}. 
    \]
    We define the sub-region $\Zone_{B, \varepsilon}^{I}$ as the union of the sets 
    \begin{align*}
       \Zone_{B, \varepsilon}^{I, +} & \eqdef \{n \leq \lambda_1 \leq (a^* + \varepsilon) n, \quad 0.5n \leq \lambda_1^* \leq n\} \\
       \Zone_{B, \varepsilon}^{I, -} & \eqdef \{n \leq \lambda_1^* \leq (a^* + \varepsilon) n, \quad 0.5n \leq \lambda_1 \leq n\}. 
    \end{align*}
    We define the sub-region $\Zone_{B, \varepsilon}^{II}$ as the union of sets
    \begin{align*}
        \Zone_{B, \varepsilon}^{II, +} & \eqdef \{n \leq \lambda_1 \leq (a^* + \varepsilon) n, \quad \lambda_1^* \leq 0.5n\} \\
        \Zone_{B, \varepsilon}^{II, -} & \eqdef \{n \leq \lambda_1^* \leq (a^* + \varepsilon) n, \quad \lambda_1 \leq 0.5n\}.
    \end{align*}
\end{Definition}

See Figure~\ref{fig:subzone-zone-blue} for a picture of these sub-regions. The "$+$" regions are below the $\lambda_1 = \lambda_1^*$ line and the "$-$" regions are above the line. In the next section, we begin showing that for sufficiently small $\varepsilon$, the sum over the sub-regions of the modified blue zone is small. 

\begin{figure}
    \begin{tikzpicture}[scale = 1.2]
        \definecolor{lightblue}{rgb}{0.68, 0.85, 0.90}

        \draw[->, line width=0.25mm, dotted] (0,0) -- (3.8,0) node[right] {$\lambda_1$};
        \draw[->, line width=0.25mm, dotted] (0,0) -- (0,3.8) node[right] {$\lambda_1^*$};
        \fill[yellow, opacity=0.5] (2.8,0)--(3.1,0)--(3.1,0.9)--(2.8,1.2)--cycle;
        \fill[yellow, opacity=0.5] (0,2.8)--(0,3.1)--(0.9,3.1)--(1.2,2.8)--cycle;
        \fill[lightblue] (0, 2) -- (2, 2) -- (1.2, 2.8) -- (0, 2.8) -- cycle;

        \fill[lightblue] (2, 0) -- (2, 2) -- (2.8,1.2) -- (2.8,0) -- cycle;
        \fill[lightblue] (2, 0) -- (2, 2) -- (2.8,1.2) -- (2.8,0) -- cycle;

        \draw (3, 3) node[right] {$a^* \eqdef 2-a^{-1}$};

        \draw[line width=0.4mm] (2, 0) -- (2.8, 0);
        \draw[line width=0.4mm] (2,0) -- (2,2);
        \draw[line width=0.4mm] (0,2) -- (2,2);
        \draw[line width=0.4mm] (0,2)--(0,2.8);
        \draw[line width=0.4mm] (2,0) -- (3.1,0) -- (3.1,0.9) -- (0.9,3.1) -- (0,3.1) -- (0, 2);
        
        \draw[line width=0.4mm] (1,2)--(1,3);
        \draw[line width=0.4mm] (2,1)--(3,1);

        \draw (2.53, 0.3) node[above] {\Large $II$};
        \draw (0.25, 2.53) node[right] {\Large $II$};
        \draw (2.32,1.1) node[above] {$I$};
        \draw (1.1,2.32) node[right] {$I$};

        \draw (3.1, 0) node[below] {$(a^* + \varepsilon) n$};
        \draw (0, 3.1) node[left] {$(a^* + \varepsilon) n$};
        \draw (2, 0) node[below] {$n$};
        \draw (0,2) node[left] {$n$};
        \draw (2,1) node[left] {$0.5n$};
        \draw (1,2) node[below] {$0.5n$};
    \end{tikzpicture}
    \caption{This is $\Zone_{B, \varepsilon}$, a modified version of $\Zone_B$ where we extend the region $\varepsilon n$ to the right. We have split this into subzones $\Zone_{B, \varepsilon}^{I}$ and $\Zone_{B, \varepsilon}^{II}$.}\label{fig:subzone-zone-blue}
\end{figure}
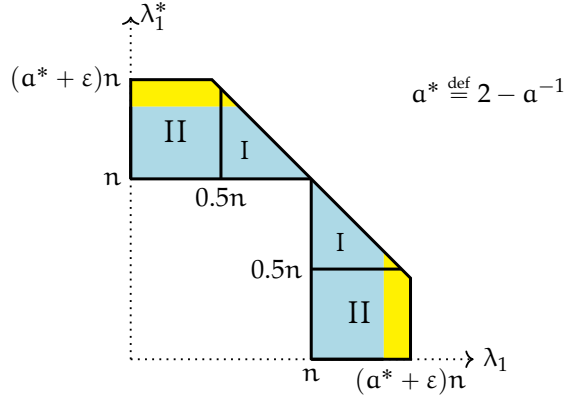

\subsubsection{Modified Blue Zone Sub-region I}

We prove that the sum of $\Omega_\lambda(t)$ over $\Zone_{B, \varepsilon}^{I, +}$ is small for sufficiently small $\varepsilon$. Recall that in this region we have 
\[
    n \leq \lambda_1 \leq (a^* + \varepsilon) n, \quad 0.5n \leq \lambda_1^* \leq n.
\]
We first define the auxiliary function $\mathbf{Q}_B$ that will be helpful for bounding our eigenvalues in this region. This function is similar to the function $\mathbf{Q}_R$ which was used in our analysis of the red zone. 

\begin{Definition}
    Define the function $\mathbf{Q}_B : \RR^2 \to \RR$ by 
    \[
       \mathbf{Q}_B(x, y) \eqdef \frac{a^2 - b^2}{4} + \frac{ab}{2} \left( x - \frac{xy}{2} - \frac{y^2}{2} \right) + \frac{ab - b^2}{2} (y^2 - y)
    \]
\end{Definition}

\begin{Proposition}\label{prop:two-sided-bounds-Zone-Blue-I}
    For $\lambda \in \Zone_{B, \varepsilon}^{I, +}$ and $(\mu, \nu) \in \LR^\lambda$, we have 
    \[
        - \frac{a^2 - b^2}{4} + O\left( \frac{1}{n} \right) \leq \Eig^\lambda_{\mu, \nu} \leq \mathbf{Q}_B \left( \frac{\lambda_1}{n}, \frac{\lambda_1^*}{n} \right) + O\left( \frac{1}{n} \right). 
    \]
\end{Proposition}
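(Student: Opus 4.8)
The plan is to substitute the specialized eigenvalue formula~\eqref{eqn:eigenvalue-when-AandB-same} and estimate the three diagonal-index terms $\Diag(\mu),\Diag(\nu),\Diag(\lambda)$ separately, tracking the signs of their coefficients: since $a\geq b$ and $a+b=2$, we have $a^2-ab=a(a-b)\geq 0$, $b^2-ab=b(b-a)\leq 0$, and $ab\geq 0$. Recall that $(\mu,\nu)\in\LR^\lambda$ forces $\mu,\nu\vdash n$ and $c^\lambda_{\mu,\nu}>0$; since $c^\lambda_{\mu,\nu}=c^\lambda_{\nu,\mu}>0$ we get the Young-diagram containment $\nu\subseteq\lambda$, hence $\nu_1^*\leq\lambda_1^*$. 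Together with Lemma~\ref{lem:oliver} and Lemma~\ref{lem:generic-diagonal-sum-bound}, these are the only inputs needed.

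For the upper bound I would bound $\Diag(\mu)\leq\Diag((n))=\binom{n}{2}=\tfrac{1}{2}n^2+O(n)$ and $\Diag(\lambda)\leq n\lambda_1-\tfrac{1}{2}\lambda_1\lambda_1^*-\tfrac{1}{2}(\lambda_1^*)^2+Cn$ by Lemma~\ref{lem:oliver}(2) and Lemma~\ref{lem:generic-diagonal-sum-bound}(1), and---crucially---use a \emph{sharp} lower bound for $\Diag(\nu)$: since $\lambda_1^*\geq 0.5n$ (zone hypothesis), $(\lambda_1^*,n-\lambda_1^*)$ is a genuine partition of $n$, and since $\nu_1^*\leq\lambda_1^*$ it dominates $\nu^*$, so Lemma~\ref{lem:oliver}(2) and (3) give $\Diag(\nu)\geq-\Diag\big((\lambda_1^*,n-\lambda_1^*)\big)=-\big((\lambda_1^*)^2-n\lambda_1^*+\tfrac{1}{2}n^2\big)+O(n)$. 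Plugging these into~\eqref{eqn:eigenvalue-when-AandB-same} and multiplying by $n^2$, the $\tfrac{1}{2}n^2$ from $\Diag(\mu)$ (coefficient $\tfrac{a^2-ab}{2}$) and the $-\tfrac{1}{2}n^2$ from $\Diag(\nu)$ (coefficient $\tfrac{b^2-ab}{2}$) combine to $\tfrac{a^2-b^2}{4}n^2$, while the remaining terms in $\lambda_1$ and $\lambda_1^*$ assemble exactly into $n^2\,\mathbf{Q}_B(\lambda_1/n,\lambda_1^*/n)$; dividing by $n^2$ yields the upper bound with error $O(1/n)$.

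For the lower bound the crude estimates $\Diag(\mu)\geq\Diag((1^n))=-\binom{n}{2}$ and $\Diag(\nu)\leq\Diag((n))=\binom{n}{2}$ (Lemma~\ref{lem:oliver}(2)) suffice: their combined contribution to $\Eig^\lambda_{\mu,\nu}$ is $-\tfrac{a^2-ab}{4}+\tfrac{b^2-ab}{4}+O(1/n)=-\tfrac{a^2-b^2}{4}+O(1/n)$. It then remains only to check $\tfrac{ab}{2n^2}\Diag(\lambda)=O(1/n)$, and this is where the remaining zone hypotheses $\lambda_1\geq n\geq\lambda_1^*$ enter: by Lemma~\ref{lem:generic-diagonal-sum-bound}(2), $\Diag(\lambda)\geq-n\lambda_1^*+\tfrac{1}{2}\lambda_1\lambda_1^*+\tfrac{1}{2}\lambda_1^2-Cn$, and writing $\lambda_1=xn$, $\lambda_1^*=yn$ with $x\geq 1$ and $0\leq y\leq 1$, a short check (treating $x\geq 2$ and $1\leq x<2$ separately) gives $\tfrac{1}{2}x^2+y(\tfrac{x}{2}-1)\geq\tfrac{1}{2}(x-1)(x+2)\geq 0$, whence $\Diag(\lambda)\geq-Cn$. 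Combining the two halves gives the claimed two-sided bound.

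The work is essentially all bookkeeping, and the one delicate point is that the sharp lower bound on $\Diag(\nu)$ really is needed on the upper side: the cruder bound $\Diag(\nu)\geq-\binom{n}{2}$ keeps the constant $\tfrac{ab-b^2}{4}n^2$ but drops the $\tfrac{ab-b^2}{2}\big((\lambda_1^*)^2-n\lambda_1^*\big)$ piece of $n^2\mathbf{Q}_B$, which is $\leq 0$ in this zone (since $ab-b^2\geq 0$ and $\lambda_1^*\leq n$), so it would give a strictly weaker upper bound than claimed. Apart from that, the only zone-specific inputs are $\lambda_1^*\geq 0.5n$ (to make $(\lambda_1^*,n-\lambda_1^*)$ a partition) for the upper bound and $\lambda_1\geq n\geq\lambda_1^*$ for the lower bound; the constraint $\lambda_1\leq(a^*+\varepsilon)n$ is not used here.
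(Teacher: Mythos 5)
Your proof is correct and follows essentially the same route as the paper: the upper bound uses the identical three estimates $\Diag(\mu)\le\Diag((n))$, $\Diag(\nu)\ge-\Diag\bigl((\lambda_1^*,n-\lambda_1^*)\bigr)$ (justified exactly as you say, via $\nu\subseteq\lambda$ and $\lambda_1^*\ge 0.5n$), and Lemma~\ref{lem:generic-diagonal-sum-bound}(1) for $\Diag(\lambda)$, while the lower bound uses the same crude $\Diag(\mu)\ge-\tfrac{n^2}{2}$ and $\Diag(\nu)\le\tfrac{n^2}{2}$. The only cosmetic difference is how the $\Diag(\lambda)$ term is discarded in the lower bound: the paper uses the dominance $\lambda\rhd(n,1^n)$ to get $\Diag(\lambda)\ge -n$, whereas you use Lemma~\ref{lem:generic-diagonal-sum-bound}(2) together with an elementary nonnegativity check (and in this zone $\lambda_1/n<2$, so only your second case is ever needed); both yield $\Diag(\lambda)\ge -Cn$, so the arguments coincide in substance.
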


\begin{proof}
    Let $\lambda, \mu, \nu$ be as in the hypothesis. From Lemma~\ref{lem:oliver} and Lemma~\ref{lem:essential-tool}, we have diagonal sum bounds 
    \begin{align*}
        \Diag(\lambda) & \leq n \lambda_1 - \frac{\lambda_1 \lambda_1^*}{2} - \frac{(\lambda_1^*)^2}{2} + O(n) \\ 
        \Diag(\mu) & \leq \Diag((n)) \leq \frac{n^2}{2} \\
        \Diag(\nu) & \geq - \Diag(\lambda_1^*, n-\lambda_1^*) = - \left( \frac{n^2}{2} - n \lambda_1^* + (\lambda_1^*)^2 \right) + O(n). 
    \end{align*}
    Applying these bounds directly to $\Eig^\lambda_{\mu, \nu}$ gets us the upper bound. For the lower bound, we have from Lemma~\ref{lem:oliver}
    \begin{align*}
        \Diag(\lambda) & \geq \Diag((n, 1^n)) = O(n) \\
        \Diag(\mu) & \geq \Diag(1^n) \geq - \frac{n^2}{2} \\
        \Diag(\nu) & \leq \Diag((n)) = \frac{n^2}{2}. 
    \end{align*}
    Applying these bounds to $\Eig^\lambda_{\mu, \nu}$ gets us the lower bound.
\end{proof}

\begin{Proposition}\label{prop:Blue-Zone-I-Full-Bound}
    For sufficiently small $\varepsilon > 0$ there is some $C > 0$ such that for sufficiently large $N$
    \[
        \sum_{ \lambda \in \Zone_{B, \varepsilon}^{I}} \Omega_\lambda (t_N) \leq \exp (-C n \log n).
    \]
\end{Proposition}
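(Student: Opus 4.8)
\emph{The plan.} The plan is to reduce to the half-region $\Zone_{B,\varepsilon}^{I,+}$ and then run the usual estimate of ``$\max (f_\lambda)^2$ times $\max |\Eig|^{2t}$''. By the conjugation symmetry of the error terms recorded in Lemma~\ref{lem:OMEGA-is-INVOLUTIVE}, it suffices to bound $\sum_{\lambda\in\Zone_{B,\varepsilon}^{I,+}}\Omega_\lambda(t_N)$, just as in the proof of Corollary~\ref{cor:full-of-zone-red-II}. Every $\lambda\in\Zone_{B,\varepsilon}^{I,+}$ has $\lambda_1\ge n$ and $\lambda_1^*\ge\tfrac12 n$, so Lemma~\ref{lem:f-lambda-bound}(1) (taking $d=1$ and $d^*=\tfrac12$) gives $(f_\lambda)^2\le\exp(\tfrac12 n\log n+O(n))$ uniformly on the region. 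For the eigenvalues, Proposition~\ref{prop:two-sided-bounds-Zone-Blue-I} together with $\tfrac{a^2-b^2}{4}\ge0$ (since $a\ge b$) yields $|\Eig^\lambda_{\mu,\nu}|\le M_\varepsilon+O(1/n)$ for all such $\lambda$ and all $(\mu,\nu)\in\LR^\lambda$, where
\[
    M_\varepsilon\eqdef\max\!\left\{\tfrac{a^2-b^2}{4},\ \max_{x\in[1,\,a^*+\varepsilon],\,y\in[1/2,\,1]}\mathbf{Q}_B(x,y)\right\}.
\]
Plugging both bounds into Lemma~\ref{lem:essential-tool} and using $t_N=\tfrac1{2b}N\log N=\tfrac1b n\log n+O(n)$ would give
\[
    \sum_{\lambda\in\Zone_{B,\varepsilon}^{I,+}}\Omega_\lambda(t_N)\ \le\ \exp\!\left(\Bigl(\tfrac12+\tfrac2b\log M_\varepsilon\Bigr)n\log n+O(n)\right),
\]
so the whole problem reduces to checking that the exponent is negative for small $\varepsilon$, i.e. $M_\varepsilon<e^{-b/4}$.

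\emph{Optimizing $\mathbf{Q}_B$.} Since $\mathbf{Q}_B$ is continuous and the rectangles $[1,a^*+\varepsilon]\times[\tfrac12,1]$ shrink to $[1,a^*]\times[\tfrac12,1]$ as $\varepsilon\downarrow0$, it is enough to establish the \emph{strict} inequality $M_0<e^{-b/4}$ for every $b\in(0,1]$; a compactness argument then handles all sufficiently small $\varepsilon$. Using $a+b=2$ one has $\tfrac{a^2-b^2}{4}=1-b\le1-\tfrac b4$. For the $\mathbf{Q}_B$-term I would treat $\mathbf{Q}_B(x,y)$ as a quadratic in $y$ with leading coefficient $\tfrac{b(2-3b)}{4}$ and strictly negative linear coefficient: either it opens upward, or its vertex lies at a negative value of $y$, so in either case the maximum over $y\in[\tfrac12,1]$ is attained at an endpoint $y\in\{\tfrac12,1\}$; and since the coefficient of $x$ equals $\tfrac{ab}{2}(1-\tfrac y2)>0$ for $y\le1$, the maximum over the rectangle is attained at $x=a^*$. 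A short computation (using $a^*\cdot ab=2ab-b$ and $a=2-b$) then gives
\[
    \mathbf{Q}_B(a^*,1)=\tfrac{4-3b-b^2}{4}=1-\tfrac{3b}{4}-\tfrac{b^2}{4},\qquad \mathbf{Q}_B\!\left(a^*,\tfrac12\right)=1-\tfrac b4-\tfrac{7b^2}{16},
\]
both of which are $\le1-\tfrac b4$. Hence $M_0\le1-\tfrac b4<e^{-b/4}$ for all $b\in(0,1]$, because $e^{-x}>1-x$ for $x\ne0$.

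\emph{Conclusion and the hard part.} Combining the last two displays, for $\varepsilon$ small there is $C=C(a,b)>0$ with $\sum_{\lambda\in\Zone_{B,\varepsilon}^{I,+}}\Omega_\lambda(t_N)\le\exp(-Cn\log n)$, and the sum over $\Zone_{B,\varepsilon}^{I,-}$ (reduced to the ``$+$'' half in the first step via Lemma~\ref{lem:OMEGA-is-INVOLUTIVE}) obeys the same bound, which finishes the proof. The one genuinely delicate step is the verification $M_0<e^{-b/4}$: although each individual estimate above is elementary, the gap $e^{-b/4}-(1-\tfrac b4)$ shrinks quadratically as $b\downarrow0$, so the corner estimates for $\mathbf{Q}_B$ must be retained with enough quadratic-in-$b$ slack to survive that limit and must be checked uniformly over $b\in(0,1]$; I expect to isolate this as a short calculus lemma in the appendix, in the spirit of Lemma~\ref{lem:all-negative} and the appendix lemmas used for the red and yellow zones.
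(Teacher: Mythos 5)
Your proof is correct and follows essentially the same route as the paper: reduce to the ``$+$'' half via Lemma~\ref{lem:OMEGA-is-INVOLUTIVE}, bound $(f_\lambda)^2$ by $\exp(\tfrac12 n\log n+O(n))$, bound the eigenvalues via Proposition~\ref{prop:two-sided-bounds-Zone-Blue-I} together with a corner analysis of $\mathbf{Q}_B$ (the paper's Lemma~\ref{lem:APPENDIX-ZONE-BLUE-I-BOUNDS} pins the maximum at $(a^*,\tfrac12)$, giving $1-\tfrac b4-\tfrac{7}{16}b^2$), and conclude from $1-\tfrac b4\le e^{-b/4}$. The only differences are bookkeeping: the paper retains the $\tfrac{7}{16}b^2$ slack to absorb the $\varepsilon$-enlargement (requiring $\varepsilon_0<\tfrac{7}{16}b^2$) while you use the strict gap $e^{-b/4}-(1-\tfrac b4)>0$, and your closing concern about uniformity in $b$ is unnecessary since $a$ and $b$ are fixed constants of the shuffle, so all constants may depend on them.
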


\begin{proof}
    In this regime, Lemma~\ref{lem:f-lambda-bound} gives us 
    \[
        \max_{\lambda \in \Zone_{B, \varepsilon}^I} (f_\lambda)^2 \leq \exp (0.5 n \log n + O(n)) 
    \]
    Proposition~\ref{prop:two-sided-bounds-Zone-Blue-I} and Lemma~\ref{lem:APPENDIX-ZONE-BLUE-I-BOUNDS} gives the bound 
    \[
        \max_{\lambda \in \Zone_{B, \varepsilon}^{I, +}} (f_\lambda)^2 \max_{\substack{\lambda \in \Zone_{B, \varepsilon}^{I, +}}} |\Eig^\lambda_{\mu, \nu}|^{2t_N}\leq \exp \left\{ K_\text{blue} n \log n + O(n) \right\}
    \]
    where
    \begin{align*}
        K_\text{blue} & \eqdef 0.5 + \frac{2}{b} \log \left  ( \max \left\{ \frac{a^2 - b^2}{4}, 1 - \frac{b}{4} - \frac{7}{16} b^2 + \varepsilon_0 \right\} \right ) \\
        & = 0.5 + \frac{2}{b} \log \left( 1 - \frac{b}{4} - \frac{7}{16} b^2 + \varepsilon_0 \right)
    \end{align*}
    where we can make $\varepsilon_0$ as small as we like by shrinking $\varepsilon$. Take $\varepsilon$ sufficiently small so that $\varepsilon_0 < \frac{7}{16}b^2$. Then 
    \begin{align*}
        K_\text{blue} & < 0.5 + \frac{2}{b} \log \left( 1 - \frac{b}{4} \right) \leq 0.5 + \frac{2}{b} \log e^{-\frac{b}{4}} = 0. 
    \end{align*}
    Lemma~\ref{lem:essential-tool} implies that our desired sum over the sub-region $\Zone_{B, \varepsilon}^{I, +}$ is at most $\exp(-C n \log n)$. Lemma~\ref{lem:OMEGA-is-INVOLUTIVE} implies that the sum over $\Zone_{B, \varepsilon}^{I, -}$ is also at most $\exp(-Cn \log n)$ (possibly with different $C$). This suffices for the proof of the proposition. 
\end{proof}

\subsubsection{Modified Blue Zone Sub-region II}

In this section, we bound the sum of $\Omega_\lambda(t_N)$ over the remaining blue region $\Zone_{B, \varepsilon}^{II}$. We will focus on $\Zone_{B, \varepsilon}^{II, +}$ and the negative region will follow from Lemma~\ref{lem:OMEGA-is-INVOLUTIVE}. Our proof in this zone will mimic that of $\Zone_R^{II}$: we define a function whose dynamics will determine the way we partition of zone into smaller sub-zones.

\begin{Definition}
    Let $L_B \eqdef 1 + \frac{2}{b} \log \frac{a}{2}$. Define the function $\mathbf{T}_B : (L_B, \infty) \to \RR$ by the equation 
    \[
        \mathbf{T}_B \eqdef \sqrt{\frac{2e^{\frac{b}{2}(x-1)}-a}{ab}}.
    \]
\end{Definition}

\begin{Definition}
    We define $\mathscr{T}_B : (L_B, \infty) \to \RR$ by 
    \begin{align*}
        \mathscr{T}_B (x) & \eqdef \frac{\mathbf{T}_B(x) + x}{2} 
    \end{align*}
\end{Definition}

\begin{Definition}
    Let $\mathbf{P}_B : \RR^2 \to \RR$ be the function given by 
    \[
        \mathbf{P}_B (x, y) \eqdef \frac{a^2+3ab}{4} + \frac{ab}{2} (x^2 - 2(x+y) + xy) + \frac{ab-b^2}{4} y. 
    \]
\end{Definition}

\begin{Remark}
    For properties of these functions, see Section~\ref{sec:technical-Blue-Zone-II}. 
\end{Remark}

The next result Lemma~\ref{lem:australia} plays a similar role in the blue zone as Lemma~\ref{lem:Red-II-plus-each-partition-is-small} plays in the red zone. It proves that we can use the function $\cT_B$ to create sub-regions of $\Zone_{B, \varepsilon}^{II, +}$ of the form $c_1 n \leq \lambda_1 \leq c_2 n$ and $\lambda_1^* \leq 0.5n$ whose contribution to the sum can be proved to be small. 

\begin{Lemma}\label{lem:australia}
    Let $\alpha \in [0, a^*-1]$, and suppose that $0 \leq \alpha < \mathscr{T}_B(\alpha)$. Let $\varepsilon > 0$ be a sufficiently small number which works in Lemma~\ref{lem:APPENDIX-zoneB-subzone-2-sequence-exists} and Lemma~\ref{lem:APPENDIX-boldP-B-max-bound}. There exists a constant $C > 0$ depending only on $\alpha$, $a$, $b$, such that for sufficiently large $N$ we have 
    \[
        \sum_{\lambda \in Z_\alpha} \Omega_\lambda (t_N) \leq \exp (-C n \log n)
    \]
    where 
    \[
        Z_\alpha \eqdef \{\lambda \vdash 2n : (1+\alpha)n \leq \lambda_1 \leq (\min\{1+\cT_B(\alpha), a^*+\varepsilon \}) n, 1 \leq \lambda_1^* \leq 0.5n \}. 
    \]
\end{Lemma}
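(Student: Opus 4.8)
The plan is to transcribe the proof of Lemma~\ref{lem:Red-II-plus-each-partition-is-small}, with the triple $(\mathbf{P}_B, \mathbf{T}_B, \mathscr{T}_B)$ playing the roles that $(\phi_1, \phi_3, \phi_4)$ played in the red zone: $\mathbf{P}_B$ bounds the eigenvalues on $\Zone_{B, \varepsilon}^{II, +}$, $1 + \mathbf{T}_B(\alpha)$ is the threshold value of $\lambda_1/n$ at which the $(f_\lambda)^2$--mass and the factor $|\Eig^\lambda_{\mu,\nu}|^{2t_N}$ exactly balance, and $1 + \mathscr{T}_B(\alpha)$, the right end of the slab $Z_\alpha$, lies \emph{strictly} to the left of that threshold whenever $\alpha < \mathscr{T}_B(\alpha)$, leaving a positive gap to exploit. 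Concretely, the first step is the eigenvalue estimate: for $\lambda \in \Zone_{B, \varepsilon}^{II, +}$ and $(\mu, \nu) \in \LR^\lambda$, $|\Eig^\lambda_{\mu, \nu}| \leq \mathbf{P}_B(\lambda_1/n, \lambda_1^*/n) + O(1/n)$. The upper side follows from Lemma~\ref{lem:oliver} and Lemma~\ref{lem:generic-diagonal-sum-bound}: since $\lambda_1 \geq n$, the dominating diagram $\mathbf{W}_\lambda$ has a single complete row of length $\lambda_1$, which gives $\Diag(\lambda) \leq \tfrac12\lambda_1^2 + \tfrac12(2n - \lambda_1 - \lambda_1^*)^2 - \tfrac12(\lambda_1^*)^2 + O(n)$; combining with $\Diag(\mu) \leq \Diag((n))$ and $\Diag(\nu) \geq -\tfrac12 n\lambda_1^* + O(n)$ (both via Lemma~\ref{lem:oliver}, using $\nu \subseteq \lambda$), substituting into Equation~\ref{eqn:eigenvalue-when-AandB-same}, and simplifying with $a + b = 2$ yields exactly $\mathbf{P}_B(\lambda_1/n, \lambda_1^*/n)$. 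The matching lower bound $\Eig^\lambda_{\mu, \nu} \geq -\tfrac14(a^2 - b^2) + O(1/n)$ is obtained as in Proposition~\ref{prop:two-sided-bounds-Zone-Blue-I}, and since $\mathbf{P}_B \geq \tfrac14(a^2 - b^2)$ on $\Zone_{B, \varepsilon}^{II, +}$, the one-sided bound already controls $|\Eig^\lambda_{\mu, \nu}|$.

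The second step rests on two identities that both hinge on $a + b = 2$: first $\mathbf{P}_B(x, 0) = \tfrac a2 + \tfrac{ab}{2}(x - 1)^2$, which is strictly increasing on $[1, \infty)$; second, the coefficient of $y$ in $\mathbf{P}_B(x, y)$ is $\tfrac b4(2ax - 3a - b)$, which is negative for every $x \in [1, a^* + \varepsilon]$ once $\varepsilon$ is small (this is the admissibility condition extracted in Lemma~\ref{lem:APPENDIX-boldP-B-max-bound}). Hence $\mathbf{P}_B(\lambda_1/n, \lambda_1^*/n) \leq \mathbf{P}_B(\lambda_1/n, 0) + O(1/n)$ on $Z_\alpha$. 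On $Z_\alpha$ one also has $\lambda_1/n \leq \min\{1 + \mathscr{T}_B(\alpha),\, a^* + \varepsilon\}$; since $\alpha < \mathscr{T}_B(\alpha)$ forces $\mathscr{T}_B(\alpha) < \mathbf{T}_B(\alpha)$, this upper bound on $\lambda_1/n$ is strictly below $1 + \mathbf{T}_B(\alpha)$, so by strict monotonicity of $\mathbf{P}_B(\cdot, 0)$ there is $\delta = \delta(\alpha, a, b, \varepsilon) > 0$ with
\[
    \max_{\substack{\lambda \in Z_\alpha \\ (\mu, \nu) \in \LR^\lambda}} |\Eig^\lambda_{\mu, \nu}| \ \leq\ \mathbf{P}_B\big(1 + \mathbf{T}_B(\alpha),\, 0\big)\, e^{-\delta} + O\!\left(\tfrac1n\right) \ =\ e^{\frac b2(\alpha - 1)}\, e^{-\delta} + O\!\left(\tfrac1n\right),
\]
the last equality being exactly the definition of $\mathbf{T}_B$: $\mathbf{P}_B(1 + \mathbf{T}_B(\alpha), 0) = \tfrac a2\big(1 + b\,\mathbf{T}_B(\alpha)^2\big) = e^{\frac b2(\alpha - 1)}$.

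For the last step, Lemma~\ref{lem:f-lambda-bound}(1) with $d = 1 + \alpha$ (legitimate since $\alpha \leq a^* - 1 < 1$) and $d^* = 0$ gives $\max_{\lambda \in Z_\alpha}(f_\lambda)^2 \leq \exp((1 - \alpha)n\log n + O(n))$. Feeding this together with the eigenvalue bound into Lemma~\ref{lem:essential-tool}, and using $t_N = \tfrac1b n\log n + O(n)$, the exponent of the resulting upper bound on $\sum_{\lambda \in Z_\alpha}\Omega_\lambda(t_N)$ is
\[
    \Big( (1 - \alpha) + \tfrac2b\big( \tfrac b2(\alpha - 1) - \delta \big) \Big)\, n\log n + O(n) \ =\ -\tfrac{2\delta}{b}\, n\log n + O(n),
\]
which is $\leq -C n\log n$ for large $N$ with, say, $C = \delta / b$; should the maximum eigenvalue on $Z_\alpha$ instead be governed by the constant $\tfrac14(a^2 - b^2) = 1 - b$, the exponent becomes $(1 - \alpha) + \tfrac2b\log(1 - b)$, still strictly negative because $\tfrac2b\log(1 - b) < -1$ for $b \in (0, 1]$. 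I expect the real work to sit in the first step: verifying that the diagonal-index estimates on $\Zone_{B, \varepsilon}^{II, +}$ collapse, after repeated use of $a + b = 2$, to exactly the polynomial $\mathbf{P}_B$, and that $\mathbf{T}_B$ has been calibrated so that the two $n\log n$--coefficients cancel precisely to $-\tfrac{2\delta}{b}$; everything downstream is a transcription of the red-zone argument, with Lemma~\ref{lem:APPENDIX-boldP-B-max-bound} supplying the admissible $\varepsilon$ and Lemma~\ref{lem:APPENDIX-zoneB-subzone-2-sequence-exists} kept in reserve for the later assembly of the entire zone $\Zone_{B, \varepsilon}^{II}$.
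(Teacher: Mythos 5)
Your proposal is correct and follows essentially the same route as the paper's proof: the dimension bound $\exp((1-\alpha)n\log n + O(n))$, the diagonal-index estimates that collapse (using $a+b=2$) to the upper bound $\mathbf{P}_B(\lambda_1/n,\lambda_1^*/n)+O(1/n)$, monotonicity in $y$ and then in $x$ to evaluate at the right endpoint with $y=0$, and the calibration $\mathbf{P}_B(1+\mathbf{T}_B(\alpha),0)=e^{\frac{b}{2}(\alpha-1)}$ combined with $\mathscr{T}_B(\alpha)<\mathbf{T}_B(\alpha)$ to make the $n\log n$ coefficient strictly negative via Lemma~\ref{lem:essential-tool}. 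The only cosmetic deviation is the handling of $|\Eig^\lambda_{\mu,\nu}|$: the paper proves the sharper lower bound $-\frac{a^2+ab-2b^2}{8}>-\frac12$ and carries a $\max\{\cdot,\tfrac12\}$ term (checking $1-\alpha+\frac{2}{b}\log\frac12<0$ separately), whereas you use the cruder bound $-(1-b)$ and absorb it by noting $\mathbf{P}_B\geq 1-b$ on the region, with a correct fallback exponent $(1-\alpha)+\frac{2}{b}\log(1-b)<0$.
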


\begin{proof}
    For any $\lambda \in Z_\alpha$ and $(\mu, \nu) \in \LR^\lambda$, we provide a uniform upper bound on $|\Eig^\lambda_{\mu, \nu}|$. From Lemma~\ref{lem:f-lambda-bound}, we have the dimension bound 
    \[
        \max_{\lambda \in Z_\alpha} (f_\lambda)^2 \leq \exp ((1-\alpha)n \log n + O(n)). 
    \]
    From Lemma~\ref{lem:oliver}, we have the diagonal sum bounds 
    \begin{align*}
        \Diag(\lambda) & \leq \Diag(\lambda_1, 2n-\lambda_1-\lambda_1^*+2, 1^{\lambda_1^*-2}) \\
        & = \lambda_1^2 + 2n^2 - 2n(\lambda_1 + \lambda_1^*) + \lambda_1 \lambda_1^* + O(n) \\
        \Diag(\mu) & \leq \frac{n^2}{2} + O(n) \\
        \Diag(\nu) & \geq - \frac{n\lambda_1^*}{2}. 
    \end{align*}
    This gives an upper bound of 
    \[
        \Eig^\lambda_{\mu, \nu} \leq \mathbf{P}_B \left( \frac{\lambda_1}{n}, \frac{\lambda_1^*}{n} \right) + O\left( \frac{1}{n} \right). 
    \]
    From Lemma~\ref{lem:oliver}, we have the diagonal sum bounds in the opposite direction 
    \begin{align*}
        \Diag(\lambda) & \geq \Diag(n, 1^n) = O(n) \\
        \Diag(\mu) & \geq -\Diag(0.5n, 0.5n) = - \frac{n^2}{4} + O(n) \\
        \Diag(\nu) & \leq \frac{n^2}{2}. 
    \end{align*}
    For sufficiently large $N$, this gives a lower bound of 
    \[
        \Eig^\lambda_{\mu, \nu} \geq - \frac{a^2 +ab -2b^2}{8} + O\left( \frac{1}{n} \right) > - \frac{1}{2}.
    \]
    Let $\beta = \min \{\mathscr{T}_B(\alpha), \alpha^* + \varepsilon - 1\}$. From Lemma~\ref{lem:APPENDIX-boldP-B-max-bound}, we have
    \begin{align*}
        \max_{\substack{\lambda \in Z_\alpha \\ (\mu, \nu) \in \LR^\lambda}} |\Eig^\lambda_{\mu, \nu}| & \leq \max \left\{ \max_{\substack{x \in [1+\alpha, 1+\beta] \\ y \in [0, 0.5]}}\mathbf{P}_B (x, y) , \frac{1}{2} \right\} + O\left (\frac{1}{n} \right ) \\
        & \leq \max\left\{ \frac{ab}{2} \beta^2 + \frac{a}{2}, \frac{1}{2}  \right\} + O\left( \frac{1}{n} \right) \\ 
        & \leq \max\left\{ \frac{ab}{2} \mathscr{T}_B(\alpha)^2 + \frac{a}{2}, \frac{1}{2}  \right\} + O\left( \frac{1}{n} \right)
    \end{align*}
    where we used the lemma on the penultimate line. From Lemma~\ref{lem:essential-tool}, we get 
    \[
        \sum_{\lambda \in Z_\alpha} \Omega_\lambda(t_N) \leq \exp \left ( J_B n \log n + O(n) \right ) 
    \]
    where
    \[
        J_B \eqdef \max \left \{ 1 - \alpha + \frac{2}{b} \log \left( \frac{ab}{2} \mathscr{T}_B(\alpha)^2 + \frac{a}{2} \right), 1 - \alpha + \frac{2}{b} \log \frac{1}{2} \right \}.
    \]
    The second term in the maximum is always negative. We can also check that  
    \begin{align*}
        1 - \alpha + \frac{2}{b} \log \left( \frac{ab}{2} \mathscr{T}_B(\alpha)^2 + \frac{a}{2} \right) & < 1 - \alpha + \frac{2}{b} \log \left( \frac{ab}{2} \mathbf{T}_B(\alpha)^2 + \frac{a}{2} \right) = 0.
    \end{align*}
    Thus $J_B < 0$ which completes the proof of the lemma. 
\end{proof}

\begin{Proposition}\label{prop:Blue-Zone-II-Full-Bound}
   For sufficiently small $\varepsilon > 0$, there is a universal constant $C > 0$ such that for sufficiently large $N$ we have 
    \[
        \sum_{\lambda \in \Zone_{B, \varepsilon}^{II}} \Omega_\lambda(t_N) \leq \exp (-C n \log n). 
    \]
\end{Proposition}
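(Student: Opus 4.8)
The plan is to follow the same strategy as in the proof of Corollary~\ref{cor:full-of-zone-red-II} for the Red Zone~II, replacing the dynamical map $\phi_4$ by $\mathscr{T}_B$ and the eigenvalue bound of Proposition~\ref{prop:tanuki} by the $\mathbf{P}_B$-bound established inside Lemma~\ref{lem:australia}. First, by Lemma~\ref{lem:OMEGA-is-INVOLUTIVE} the sum of $\Omega_\lambda(t_N)$ over $\Zone_{B,\varepsilon}^{II,-}$ equals the sum over $\Zone_{B,\varepsilon}^{II,+}$ (conjugation swaps $\lambda_1$ with $\lambda_1^*$ and preserves $\Omega_\lambda$), so it suffices to prove the stated bound for $\Zone_{B,\varepsilon}^{II,+}$, i.e.\ for partitions with $n \le \lambda_1 \le (a^*+\varepsilon)n$ and $\lambda_1^* \le 0.5n$.

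Next, I would parametrise the first row by writing $\lambda_1 = (1+\alpha)n$, so that $\alpha$ ranges over $[0, a^*+\varepsilon-1]$, and fix $\varepsilon>0$ small enough to be admissible in both Lemma~\ref{lem:APPENDIX-zoneB-subzone-2-sequence-exists} and Lemma~\ref{lem:APPENDIX-boldP-B-max-bound}. Lemma~\ref{lem:APPENDIX-zoneB-subzone-2-sequence-exists} should supply a finite increasing sequence
\[
    0 = \alpha_0 < \alpha_1 < \cdots < \alpha_{M-1} \le a^* - 1 < \alpha_M
\]
with $\alpha_{i+1} = \mathscr{T}_B(\alpha_i)$ and $0 \le \alpha_i < \mathscr{T}_B(\alpha_i)$ for every $i$: the key point is that $\mathscr{T}_B$ strictly increases its argument with a gap bounded away from $0$ on the compact interval $[0, a^*-1]$, so the orbit of $0$ overshoots $a^*-1$ after finitely many steps. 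The sets $Z_{\alpha_0}, \dots, Z_{\alpha_{M-1}}$ from Lemma~\ref{lem:australia} then cover $\Zone_{B,\varepsilon}^{II,+}$, since the consecutive $\lambda_1$-intervals $[(1+\alpha_i)n, (1+\min\{\mathscr{T}_B(\alpha_i),a^*+\varepsilon-1\})n]$ overlap and sweep past $(a^*+\varepsilon)n$, while each $Z_{\alpha_i}$ already allows all $1 \le \lambda_1^* \le 0.5n$ (and $\lambda \ne (2n)$ automatically, as $\lambda_1 \le (a^*+\varepsilon)n < 2n$).

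Then I would apply Lemma~\ref{lem:australia} to each $\alpha_i$ for $i = 0, \dots, M-1$, obtaining constants $C_i > 0$ with $\sum_{\lambda \in Z_{\alpha_i}} \Omega_\lambda(t_N) \le \exp(-C_i n \log n)$. Since $M$ is a fixed finite number independent of $n$ once $\varepsilon$ is fixed, putting $C' = \min_i C_i$ gives
\[
    \sum_{\lambda \in \Zone_{B,\varepsilon}^{II,+}} \Omega_\lambda(t_N) \le \sum_{i=0}^{M-1} \exp(-C_i n \log n) \le M \exp(-C' n \log n) \le \exp(-C n \log n)
\]
for a slightly smaller universal $C > 0$ and all large $n$. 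Combining this with the $\Zone_{B,\varepsilon}^{II,-}$ bound obtained via Lemma~\ref{lem:OMEGA-is-INVOLUTIVE} completes the proof.

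The main obstacle is not in this proposition itself but in verifying that the discrete dynamical system $\alpha \mapsto \mathscr{T}_B(\alpha)$ really does drive $0$ past $a^* - 1$ in finitely many steps, each step satisfying the hypothesis $\alpha < \mathscr{T}_B(\alpha)$ of Lemma~\ref{lem:australia}; this is exactly the content of the appendix Lemma~\ref{lem:APPENDIX-zoneB-subzone-2-sequence-exists}, so modulo that lemma the present argument is bookkeeping. One should also double-check that the constant term of $\mathbf{P}_B$ on each strip $[(1+\alpha_i)n,(1+\mathscr{T}_B(\alpha_i))n] \times [0,0.5n]$ is indeed controlled by $\mathscr{T}_B(\alpha_i)^2$ exactly as used in the chain of inequalities inside the proof of Lemma~\ref{lem:australia}, and that the resulting exponent $J_B$ is negative throughout — which follows from the defining identity $\frac{ab}{2}\mathbf{T}_B(\alpha)^2 + \frac{a}{2} = e^{\frac{b}{2}(\alpha-1)}$ for $\mathbf{T}_B$ and the strict inequality $\mathscr{T}_B(\alpha) < \mathbf{T}_B(\alpha)$.
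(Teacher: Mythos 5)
Your proposal is correct and is essentially the paper's own proof: the paper also takes the orbit of $0$ under $\mathscr{T}_B$ from Lemma~\ref{lem:APPENDIX-zoneB-subzone-2-sequence-exists}, applies Lemma~\ref{lem:australia} to each resulting strip to cover $\Zone_{B,\varepsilon}^{II,+}$, and then invokes Lemma~\ref{lem:OMEGA-is-INVOLUTIVE} for the conjugate region, with your extra detail being routine bookkeeping. One small correction: conjugation does not preserve $\Omega_\lambda$ exactly --- Lemma~\ref{lem:OMEGA-is-INVOLUTIVE} only gives $\Omega_{\lambda^*}(t_N) \le \Omega_\lambda(t_N)\exp(C_1 n) + \exp(-C_2 n(\log n)^2)$ --- but since the bound on the ``$+$'' region is of order $\exp(-Cn\log n)$, this loss is negligible and your reduction still goes through.
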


\begin{proof}
    Lemma~\ref{lem:APPENDIX-zoneB-subzone-2-sequence-exists} gives us an increasing sequence which eventually gets bigger than $a^* - 1$. Applying Lemma~\ref{lem:australia} to every pair of adjacent terms in the sequence, we get the result over $\Zone_{B, \varepsilon}^{II, +}$. Lemma~\ref{lem:OMEGA-is-INVOLUTIVE} extends the bound to all of $\Zone_{B, \varepsilon}^{II}$. 
\end{proof}

\subsection{The (Modified) Yellow Zone}\label{sec:yellow-zone}

In this section, we will bound the sum of $\Omega_\lambda (t_N+cN)$ over a \emph{modified} version of $\Zone_Y$. The largest terms are in this zone, so our \emph{window} will finally play a role. The modified version of $\Zone_Y$ will be denoted by $\Zone_Y^{\varepsilon}$ and we think of it as retracting $\Zone_Y$ by $\varepsilon n$. Explicitly, these regions are
\begin{align*}
    \Zone_{Y, \varepsilon}^{+} & = \{ (a^* + \varepsilon)n \leq \lambda_1 \leq 2n\} \\
    \Zone_{Y, \varepsilon}^{-} & = \{ (a^* + \varepsilon)n \leq \lambda_1^* \leq 2n \}. 
\end{align*}
See Figure~\ref{fig:yellow-zone} for an illustration of these regions. 

\begin{figure}
    \begin{tikzpicture}[scale = 1.2]
        \definecolor{lightblue}{rgb}{0.68, 0.85, 0.90}

        \draw[->, dotted] (0,0) -- (4.4,0) node[right] {$\lambda_1$};
        \draw[->, dotted] (0,0) -- (0,4.4) node[right] {$\lambda_1^*$};

        \fill[yellow, opacity=0.5] (2.8, 0) -- (2.8, 1.2) -- (4, 0) -- cycle;

        \fill[yellow, opacity=0.5] (0, 2.8) -- (1.2, 2.8) -- (0, 4) -- cycle;

        \draw (2, 3) node[right] {$a^* \eqdef 2-a^{-1}$};

        \draw[line width=0.4mm] (3.1,0)--(3.1,0.9)--(4,0)--cycle;
        \draw[line width=0.4mm] (0,3.1)--(0.9,3.1)--(0,4)--cycle;
        \draw (3.4, 0.1) node[above] {$Y^\varepsilon$};
        \draw (0,3.4) node[right] {$Y^\varepsilon$};

        \draw (3.1,0) node[below] {\tiny $(a^* + \varepsilon)n$};
        \draw (4,0) node[below] {\tiny $2n$};

        \draw (0,3.1) node[left] {\tiny $(a^* + \varepsilon)n$};
        \draw (0,4) node[left] {\tiny $2n$};
    \end{tikzpicture}
    \caption{This is $\Zone_{Y, \varepsilon}$, a modified version of $\Zone_Y$ where we shrink the region by $\varepsilon n$. The positive part $\Zone_{Y, \varepsilon}^+$ is at the bottom right and the negative part $\Zone_{Y, \varepsilon}^-$ is at the top left. }\label{fig:yellow-zone}
\end{figure}
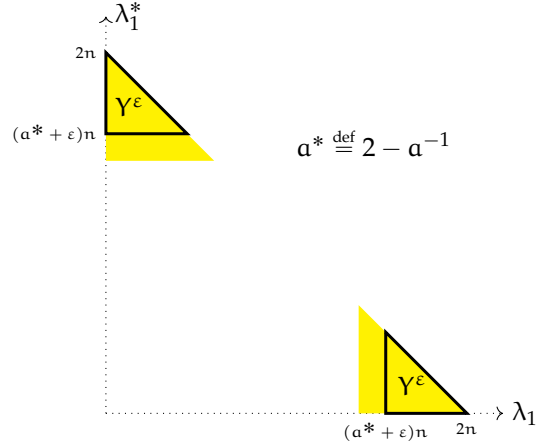

\begin{Lemma}\label{lem:bounds-on-eigenvalues-for-main-terms}
    Let $\lambda \vdash 2n$ and let $1 \leq j < a^{-1}n$. 
    \begin{enumerate}[label = (\alph*)]
        \item Suppose that $\lambda_1 = N-j$. The eigenvalue corresponding to any $(\lambda, \mu, \nu) \in \LR$ satisfies 
        \begin{equation}\label{eqn:turtle}
            1 - \frac{2aj}{N} \leq \Eig^\lambda_{\mu, \nu} \leq 1 - \frac{2bj}{N} + \frac{2ab}{N^2} j(j-1).
        \end{equation}

        \item Suppose that $\lambda_1^* = N-j$. The eigenvalue corresponding to any $(\lambda, \mu, \nu) \in \LR$ satisfies 
        \begin{equation}\label{eqn:turtle2}
            -1 + \frac{2bj}{N} - \frac{2ab}{N^2} j(j-1) + \frac{a^2 + b^2}{N} \leq \Eig^\lambda_{\mu, \nu} \leq -1 + \frac{2aj}{N} + \frac{a^2 + b^2}{N}. 
        \end{equation}
    \end{enumerate} 
\end{Lemma}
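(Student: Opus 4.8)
The plan is to extract the estimates directly from the eigenvalue formula \eqref{eqn:eigenvalue-when-AandB-same}, controlling $\Diag(\lambda)$, $\Diag(\mu)$, $\Diag(\nu)$ by elementary bounds that use the hypothesis $\lambda_1 = N-j$ (resp.\ $\lambda_1^*=N-j$) together with the Littlewood--Richardson constraint. I will prove part (a) first and deduce part (b) from it by conjugation.

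For part (a), the key elementary input is the following: if $\rho\vdash m$ satisfies $\rho_1=m-k$, write $\rho=(m-k,\sigma)$ with $\sigma\vdash k$; the content formula $\Diag(\rho)=\sum_{(i,j)\in\rho}(j-i)$ gives $\Diag(\rho)=\binom{m-k}{2}+\Diag(\sigma)-k$, and since $-\binom{k}{2}\le\Diag(\sigma)\le\binom{k}{2}$ (superadditivity of $\binom{\cdot}{2}$ on the parts of $\sigma$ and of $\sigma^*$), expanding $\binom{m-k}{2}=\binom{m}{2}-km+\binom{k+1}{2}$ yields
$$ \binom{m}{2}-km \ \le\ \Diag(\rho)\ \le\ \binom{m}{2}-km+k(k-1). $$
Now set $i_1=n-\mu_1\ge 0$ and $i_2=n-\nu_1\ge 0$; since $c^\lambda_{\mu,\nu}>0$, Proposition~\ref{prop:LR-lattice-dominance-global-maximum} gives $\lambda_1\le\mu_1+\nu_1$, i.e.\ $i_1+i_2\le j$. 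Applying the displayed bound to $\mu,\nu$ (with $m=n$, $k=i_1,i_2$) and to $\lambda$ (with $m=2n$, $k=j$), and substituting into \eqref{eqn:eigenvalue-when-AandB-same} — using the upper bound for $\Diag(\mu)$ (coefficient $a^2-ab\ge 0$) and for $\Diag(\lambda)$ (coefficient $ab>0$), and the lower bound for $\Diag(\nu)$ (coefficient $b^2-ab\le 0$) — the $\binom{n}{2}$ and $\binom{2n}{2}$ contributions together with the constant $\frac{a^2+b^2}{4n}$ add up to $1$, and what remains is
$$ \Eig^\lambda_{\mu,\nu}\ \le\ 1-\frac{(a-b)(ai_1-bi_2)+2abj}{2n}+\frac{a(a-b)i_1(i_1-1)+abj(j-1)}{2n^2}. $$
Comparing with the target right-hand side $1-\frac{2bj}{N}+\frac{2ab}{N^2}j(j-1)$ and clearing denominators, the desired inequality reduces (after using $a+b=2$, so $a-b=2(a-1)$ and $1-b=a-1$) to
$$ a(a-b)\,i_1\!\left(\frac{i_1-1}{n}-1\right)+2b(a-1)(i_2-j)\ \le\ 0, $$
which holds since $i_1\le j<a^{-1}n\le n$ makes the first term $\le 0$ and $i_2\le i_1+i_2\le j$ makes the second $\le 0$. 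The lower bound in \eqref{eqn:turtle} is obtained identically, now using the lower bounds for $\Diag(\mu),\Diag(\lambda)$ and the upper bound for $\Diag(\nu)$; the inequality reduces to $2a(a-1)(i_1-j)+b(a-b)\,i_2\!\left(\frac{i_2-1}{n}-1\right)\le 0$, which holds for the same reasons.

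For part (b), I would use that $\lambda_1^*=N-j$ means $(\lambda^*)_1=N-j$, and that $c^{\lambda^*}_{\mu^*,\nu^*}=c^\lambda_{\mu,\nu}>0$ by Proposition~\ref{prop:sym-rep-theory-identities}(4), so $(\lambda^*,\mu^*,\nu^*)$ is a Littlewood--Richardson triple with $\mu^*,\nu^*\vdash n$. Since $\Diag(\rho^*)=-\Diag(\rho)$ by Lemma~\ref{lem:oliver}(3), formula \eqref{eqn:eigenvalue-when-AandB-same} gives $\Eig^\lambda_{\mu,\nu}+\Eig^{\lambda^*}_{\mu^*,\nu^*}=\frac{a^2+b^2}{2n}=\frac{a^2+b^2}{N}$, and substituting the bounds of part (a) for $\Eig^{\lambda^*}_{\mu^*,\nu^*}$ yields \eqref{eqn:turtle2}.

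I expect the main obstacle to be that the obvious bound $\Diag(\lambda)\le\Diag(\mu+\nu)$ from Proposition~\ref{prop:LR-lattice-dominance-global-maximum} is far too lossy in this regime: it discards a term of order $nj$, i.e.\ of order $j/n$ in the eigenvalue, which is exactly the scale of the gap we must detect. One is therefore forced to bound $\Diag(\lambda)$ directly from $\lambda_1=N-j$ and to control $\Diag(\mu),\Diag(\nu)$ only through $i_1+i_2\le j$; the delicate part is the sign bookkeeping that makes the leading discrepancies cancel, which works precisely because $a+b=2$, while keeping the $O(j^2/n^2)$ corrections dominated is where the hypothesis $j<a^{-1}n$ enters.
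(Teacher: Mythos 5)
Your proposal is correct and follows essentially the same route as the paper: LR positivity gives $\lambda \lhd \mu+\nu$, hence $i_1+i_2\le j$, the diagonal index is bounded two-sidedly in terms of the first-row deficiency (your elementary bound $\binom{m}{2}-km\le\Diag(\rho)\le\binom{m}{2}-km+k(k-1)$ is exactly the dominance-extremal comparison with $(m-k,1^k)$ and $(m-k,k)$ that the paper invokes via Lemma~\ref{lem:oliver}), and part (b) is deduced from the conjugation identity of Lemma~\ref{lem:new-zealand}. The only difference is bookkeeping: the paper simply replaces $\Diag(\mu)$ by $\Diag((n))$ and $\Diag(\nu)$ by $\Diag((n-j,1^j))$, whereas you retain the $i_1,i_2$ terms and verify their signs, which yields the same bounds.
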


\begin{proof}
    Let $(\lambda,  \mu, \nu) \in \text{LR}$ be a Littlewood-Richardson triple. From Proposition~\ref{prop:LR-lattice-dominance-global-maximum} we have $\lambda \lhd \mu + \nu$. This implies that 
    \[
        n + \nu_1 \geq \mu_1 + \nu_1 \geq \lambda_1 \implies \nu_1 \geq n-j. 
    \]
    Thus, $\nu$ dominates $(n-j, 1^j)$. From Lemma~\ref{lem:oliver}, we get the bounds 
    \begin{align*}
        \Diag(\lambda) & \leq \Diag(2n-j, j) = 2n^2 - n(2j+1) + j(j-1) \\
        \Diag(\mu) & \leq \Diag(n) = \frac{n^2}{2} - \frac{n}{2} \\
        \Diag(\nu) & \geq \Diag(n-j, 1^j) = \frac{n^2}{2} - \frac{n}{2} (2j+1).
    \end{align*}
    Combining these diagonal sum bounds together, we get a bound on the eigenvalue 
    \[
        \mathsf{Eig}^\lambda_{\mu \nu} \leq 1 - \frac{bj}{n} + \frac{ab}{2n^2} j(j-1). 
    \]
    For the lower bound, we use Proposition~\ref{prop:LR-lattice-dominance-global-maximum} to get the inequalities 
    \[
        n + \mu_1 \geq \mu_1 + \nu_1 \geq \lambda_1 \implies \mu_1 \geq n-j. 
    \]
    This allows us to bound the diagonal statistics by 
    \begin{align*}
        \Diag(\lambda) & \geq \Diag(2n-j, 1^j) = 2n^2 - n(2j+1)\\
        \Diag(\mu) & \geq \Diag(n-j, 1^j) = \frac{n^2}{2} - \frac{n}{2}(2j+1) \\
        \Diag(\nu) & \leq \Diag(n) = \frac{n^2}{2} - \frac{n}{2}. 
    \end{align*}
    This gives us the lower bound of 
    \[
        \Eig^\lambda_{\mu, \nu} \geq 1 - \frac{aj}{n}
    \]
    This immediately yields Equation~\ref{eqn:turtle}. Equation~\ref{eqn:turtle2} follows from Lemma~\ref{lem:new-zealand}. 
\end{proof}

The following result follows immediately from Lemma~\ref{lem:bounds-on-eigenvalues-for-main-terms}. 

\begin{Lemma}\label{lem:main-term-bound-master-version}
    For sufficiently large $N$, we have the bound
    \begin{equation}
        \max |\Eig^\lambda_{\mu, \nu}| \leq 1 - \frac{bj}{n} + \frac{ab}{2n^2}j(j-1)
    \end{equation}
    where the maximum is taken over all $\lambda \vdash 2n$ satisfying $\lambda_1 = 2n-j$ or $\lambda_1^* = 2n-j$ and $(\mu, \nu) \in \LR^\lambda$. 
\end{Lemma}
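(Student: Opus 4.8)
The plan is to deduce this immediately from Lemma~\ref{lem:bounds-on-eigenvalues-for-main-terms} by a short case analysis on whether $\lambda_1 = 2n-j$ or $\lambda_1^* = 2n-j$, substituting $N = 2n$ everywhere and tracking signs carefully so that the absolute value can be removed.

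First I would treat the case $\lambda_1 = 2n-j$. Substituting $N = 2n$ into Equation~\ref{eqn:turtle} gives
\[
    1 - \frac{aj}{n} \le \Eig^\lambda_{\mu,\nu} \le 1 - \frac{bj}{n} + \frac{ab}{2n^2}j(j-1).
\]
Since $j < a^{-1}n$ we have $aj < n$, so the lower bound $1 - aj/n$ is strictly positive; hence $\Eig^\lambda_{\mu,\nu} > 0$ and $\lvert \Eig^\lambda_{\mu,\nu}\rvert = \Eig^\lambda_{\mu,\nu}$ is already at most the claimed quantity. Next, for $\lambda_1^* = 2n-j$, substituting $N = 2n$ into Equation~\ref{eqn:turtle2} gives
\[
    -1 + \frac{bj}{n} - \frac{ab}{2n^2}j(j-1) + \frac{a^2+b^2}{2n} \le \Eig^\lambda_{\mu,\nu} \le -1 + \frac{aj}{n} + \frac{a^2+b^2}{2n}.
\]
Again $aj < n$ and the correction $\frac{a^2+b^2}{2n}\to 0$, so for $N$ large the right-hand side is negative; thus $\Eig^\lambda_{\mu,\nu} < 0$ and $\lvert\Eig^\lambda_{\mu,\nu}\rvert = -\Eig^\lambda_{\mu,\nu} \le 1 - \frac{bj}{n} + \frac{ab}{2n^2}j(j-1) - \frac{a^2+b^2}{2n}$, which is strictly smaller than the target bound because $\frac{a^2+b^2}{2n}>0$. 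Combining the two cases yields the lemma.

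The only point needing slight care — and the only candidate for an obstacle — is confirming the sign of $\Eig^\lambda_{\mu,\nu}$ in the second case for the finitely many $j$ lying within $O(1)$ of $a^{-1}n$, where the $O(1/n)$ term $\frac{a^2+b^2}{2n}$ competes with $1 - aj/n$. For such $j$ one argues directly that the upper bound $-1 + \frac{aj}{n} + \frac{a^2+b^2}{2n}$, even if nonnegative, is still at most $1 - \frac{bj}{n} + \frac{ab}{2n^2}j(j-1)$: using $a+b=2$ this reduces to $\frac{2j}{n} + \frac{a^2+b^2}{2n} \le 2 + \frac{ab}{2n^2}j(j-1)$, which holds for large $N$ since $j < a^{-1}n \le n$ forces $\frac{2j}{n} < 2 - \frac{a^2+b^2}{2n}$ with room to spare. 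So the whole proof is essentially sign bookkeeping on top of Lemma~\ref{lem:bounds-on-eigenvalues-for-main-terms}.
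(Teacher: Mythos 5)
Your proposal is correct and follows essentially the same route as the paper: the paper also deduces this directly from Lemma~\ref{lem:bounds-on-eigenvalues-for-main-terms}, treating the case $\lambda_1^* = 2n-j$ via Equation~\ref{eqn:turtle2} and noting that for sufficiently large $N$ the possible positive values of $\Eig^\lambda_{\mu,\nu}$ there are dominated by the target quantity. Your extra care about the $j$ near $a^{-1}n$, reducing to $\frac{2j}{n} + \frac{a^2+b^2}{2n} \le 2 + \frac{ab}{2n^2}j(j-1)$ via $a+b=2$, is a valid (and slightly more explicit) way of handling the same "sufficiently large $N$" comparison the paper makes.
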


\begin{Proposition}\label{prop:Zone-Yellow-Full-Bound}
    Let $c > 0$. For any $\varepsilon > 0$, there is a universal constant $C > 0$ such that for sufficiently large $N$, we have 
    \begin{equation}\label{eqn:rhode-island}
        \sum_{\lambda \in \Zone_{Y, \varepsilon}} \Omega_\lambda(t_N + cN) \leq C e^{-4bc}
    \end{equation}
\end{Proposition}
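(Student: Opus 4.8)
The plan is to organize the sum by the statistic $j \eqdef 2n - \max\{\lambda_1,\lambda_1^*\}$, which ranges over $0\le j\le (a^{-1}-\varepsilon)n$ on $\Zone_{Y,\varepsilon}$. The value $j=0$ occurs only for $\lambda=(1^{2n})$, since $(2n)$ is excluded from the $\ell_2$ sum of Lemma~\ref{lem:l2-bound}; I would dispose of it directly, using Equation~\ref{eqn:eigenvalue-when-AandB-same} and Lemma~\ref{lem:oliver}(3) to see that $|\Eig^{(1^{2n})}_{(1^n),(1^n)}| = |1-\tfrac{a^2+b^2}{2n}|$, which is $\le 1-\tfrac1n$ because $a^2+b^2\ge\tfrac12(a+b)^2=2$, hence $\Omega_{(1^{2n})}(t_N+cN)\le(1-\tfrac1n)^{2(t_N+cN)}\le e^{-2(t_N+cN)/n}\le(2n)^{-2/b}\to 0$. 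For $j\ge 1$ every $\lambda$ with $\max\{\lambda_1,\lambda_1^*\}=2n-j$ satisfies $\Omega_\lambda(t)\le(f_\lambda)^2\max_{(\mu,\nu)\in\LR^\lambda}|\Eig^\lambda_{\mu,\nu}|^{2t}$, collapsing the sum over $(\mu,\nu)$ by Proposition~\ref{prop:sym-rep-theory-identities}(2) with $m=n$; I then feed in $\max_{(\mu,\nu)\in\LR^\lambda}|\Eig^\lambda_{\mu,\nu}|\le 1-\tfrac{bj}{n}+\tfrac{ab}{2n^2}j(j-1)$ from Lemma~\ref{lem:main-term-bound-master-version} (valid since $j<a^{-1}n$ here) and $(f_\lambda)^2\le\tfrac{(2n)^{2j}}{j!}$ from Lemma~\ref{lem:f-lambda-bound}(2) applied to $\lambda$ or $\lambda^*$. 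Since there are at most $p(j)$ partitions of $2n$ with a given first row and at most $p(j)$ with a given first column, Theorem~\ref{thm:partition-function-bound} gives
\[
\sum_{\substack{\lambda\in\Zone_{Y,\varepsilon}\\ j\ge 1}}\Omega_\lambda(t_N+cN)\ \le\ \sum_{j=1}^{\lfloor(a^{-1}-\varepsilon)n\rfloor} 2p(j)\,\frac{(2n)^{2j}}{j!}\Bigl(1-\tfrac{bj}{n}+\tfrac{ab}{2n^2}j(j-1)\Bigr)^{2(t_N+cN)}.
\]

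The second step is to simplify the $j$-th term. On $\Zone_{Y,\varepsilon}$ one has $\tfrac{a(j-1)}{2n}<\tfrac12(1-a\varepsilon)<\tfrac12$, so the base $1-\tfrac{bj}{n}\bigl(1-\tfrac{a(j-1)}{2n}\bigr)$ lies in $(0,1)$ and $\log(\,\cdot\,)\le -\tfrac{bj}{n}+\tfrac{ab}{2n^2}j(j-1)$. Using $t_N=\tfrac nb\log(2n)$ and $N=2n$, the two powers of $2n$ cancel and one is left with
\[
\frac{(2n)^{2j}}{j!}\Bigl(1-\tfrac{bj}{n}+\tfrac{ab}{2n^2}j(j-1)\Bigr)^{2(t_N+cN)}\ \le\ \frac{e^{-4bcj}}{j!}\,e^{Kj(j-1)},\qquad K\eqdef\frac{a\bigl(\log(2n)+2bc\bigr)}{n},
\]
the crucial feature being $K\to 0$ as $N\to\infty$ for fixed $c$. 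Already $j=1$ gives the bound $2e^{-4bc}$ (from $\lambda=(2n-1,1)$ and its conjugate), which is the source of the exponent $4bc$; every larger $j$ must be shown to contribute less.

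The remaining work is to estimate $S\eqdef\sum_{j=1}^{\lfloor(a^{-1}-\varepsilon)n\rfloor}2p(j)\tfrac{e^{-4bcj}}{j!}e^{Kj(j-1)}$ in two regimes. For $1\le j\le n^{1/3}$ we have $Kj(j-1)\le Kn^{2/3}\to 0$, so this part of $S$ is at most $(1+o(1))\sum_{j\ge1}\tfrac{2p(j)e^{-4bcj}}{j!}\le 2e^{-4bc}\sum_{j\ge1}\tfrac{p(j)}{j!}$, using $e^{-4bc(j-1)}\le 1$ and $p(j)=e^{O(\sqrt j)}$, so the series converges to an absolute constant. For $n^{1/3}<j\le(a^{-1}-\varepsilon)n$ I would use $\tfrac1{j!}\le(e/j)^j$ and $p(j)=e^{O(\sqrt j)}$ to write the summand as $\exp\bigl(j\,\varphi(j)\bigr)$ with $\varphi(j)=O_c(1)-\log j+Kj$; this $\varphi$ is convex in $j$ with minimum at $j\sim n/(a\log n)$, which lies inside the range for $N$ large, so $\varphi$ attains its maximum at an endpoint, where $\varphi(n^{1/3})\le O_c(1)-\tfrac13\log n$ and $\varphi\bigl((a^{-1}-\varepsilon)n\bigr)\le O_c(1)-a\varepsilon\log n$. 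Hence for $N$ large $\varphi(j)\le-\delta\log n$ uniformly on the range, with $\delta\eqdef\tfrac12\min\{\tfrac13,a\varepsilon\}>0$, so each such summand is $\le n^{-\delta j}\le n^{-\delta n^{1/3}}$ and summing over at most $2n$ values of $j$ gives a bound tending to $0$, hence $\le e^{-4bc}$ for $N$ large. Adding the three contributions ($j=0$, small $j$, large $j$) yields the Proposition with a universal $C$.

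The delicate point is the intermediate range of $j$: the elementary inequality $\log(1-x)\le-x$, combined with the only available bound $(2n)^{2j}/j!$ on $\sum_{\lambda_1=2n-j}(f_\lambda)^2$, is just barely sufficient, so one must keep the factorial — its $-j\log j$ growth is exactly what absorbs both the residual $Kj^2$ term and the partition count $p(j)$ once $j\gtrsim n^{1/3}$. It is here that the modification of $\Zone_Y$ matters: restricting to $j\le(a^{-1}-\varepsilon)n$ keeps the eigenvalue base strictly below $e^{-bj/2n}$, which is what forces the right endpoint value of $\varphi$ to be genuinely negative (of size $\sim -a\varepsilon\log n$).
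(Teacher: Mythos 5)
Your proposal is correct and follows essentially the same route as the paper: the same reduction via Lemma~\ref{lem:essential-tool}, Lemma~\ref{lem:f-lambda-bound}(2) and Lemma~\ref{lem:main-term-bound-master-version} to the $j$-indexed sum $\sum_j p(j)\,(2n)^{2j}/j!\,(1-\tfrac{bj}{n}+\tfrac{ab}{2n^2}j(j-1))^{2(t_N+cN)}$ with the $(1^{2n})$ term handled separately, followed by $1-x\le e^{-x}$, extraction of $e^{-4bc}$, and a range-splitting argument to show the remaining series is bounded. The only difference is bookkeeping at the end (you split at $n^{1/3}$ and use convexity of $\varphi$, the paper splits at $\sqrt{N/\log N}$ and $N/\log N$), which is immaterial.
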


\begin{proof}
    From Lemma~\ref{lem:f-lambda-bound}(2), Lemma~\ref{lem:essential-tool}, and Lemma~\ref{lem:main-term-bound-master-version}, the left hand side of Equation~\ref{eqn:rhode-island} is \emph{bounded above} by 
    \begin{equation}\label{eqn:new-york-city}
        \Omega_{(1^{2n})}(t_N + cN) + 2 \sum_{j = 1}^{(a^{-1}-\varepsilon)n} \frac{p(j)}{j!} e^{2j \log (2n)} \left( 1 - \frac{bj}{n} + \frac{ab}{2n^2}j(j-1) \right)^{2(t_N + cN)}. 
    \end{equation}
    The left term in Equation~\ref{eqn:new-york-city} is equal to 
    \begin{align*}
        \Omega_{(1^{2n})}(t_N + cN) & = \left |\Eig^{1^{2n}}_{1^n, 1^n} \right |^{2(t_N + cN)} = \left ( 1 - \frac{a^2 + b^2}{2n} \right )^{t_N + cN}.
    \end{align*}
    Using the standard bound $1-x \leq e^{-x}$, we get 
    \begin{align*}
        \Omega_{(1^{2n})}(t_N + cN) & \leq \exp \left ( - (t_N + cN) \frac{a^2 + b^2}{2n}\right ) = O \left ( \frac{1}{N^{2b}} \right ).
    \end{align*}
    We now bound the sum 
    \[
        \sum_{j = 1}^{ \left (a^{-1}-\varepsilon \right ) n} \frac{p(j)}{j!} e^{2j \log (2n)} \left( 1 - \frac{bj}{n} + \frac{ab}{2n^2}j(j-1) \right)^{2(t_N + cN)}.
    \]
    We again apply the standard bound $1-x \leq e^{-x}$ to get an upper bound of 
    \begin{equation}\label{eqn:paris}
        \sum_{j = 1}^{\left( a^{-1}-\varepsilon \right)n} \frac{p(j)}{j!} \exp \left\{ 2j \log(2n) - 2(t_N + cN) \left( \frac{bj}{n} - \frac{ab}{2n^2}j(j-1) \right) \right\}.
    \end{equation}
    The expression in the curly braces in Equation~\ref{eqn:paris} can be simplified to
    \begin{equation}\label{eqn:canberra}
        \frac{a}{n} j(j-1) \log N - 4c \left \{ bj - \frac{ab}{2n} j(j-1) \right \}.
    \end{equation}
    The expression in the curly braces of Equation~\ref{eqn:canberra} is a quadratic equation in $j$ with negative leading term and vertex at $a^{-1}n + 0.5$. This implies that among all $1 \leq j \leq (a^{-1} - \varepsilon)n$, it is minimized when $j = 1$. This gives the bound 
    \begin{equation}\label{eqn:capybara}
        \leq e^{-4b\cdot c} \left \{ \sum_{j = 1}^{(a^{-1} - \varepsilon)n} \frac{p(j)}{j!} \exp \left\{ \frac{a}{n} j(j-1) \log N \right\} \right \}. 
    \end{equation}
    We now show that the sum in the curly braces of Equation~\ref{eqn:capybara} is \emph{bounded}. To do this, we split the sum into three different parts: 
    \begin{enumerate}
        \item[($\mathbf{P}_1$)] $1 \leq j \leq \sqrt{N/ \log N}$, 
        \item[($\mathbf{P}_2$)] $\sqrt{N / \log N} \leq j \leq N/\log N$, 
        \item[($\mathbf{P}_3$)] $N/\log N \leq j \leq (a^{-1}-\varepsilon)n$. 
    \end{enumerate}
    When summing over the first part $\mathbf{P}_1$, we have 
    \[
        \sum_{j = 1}^{\sqrt{N/\log N}} \frac{p(j)}{j!} \exp \left\{ \frac{a}{n} j(j-1) \log N \right\} \leq C \sum_{j = 1}^\infty \frac{p(j)}{j!} < \infty. 
    \] 
    When summing over parts $\mathbf{P}_2$ and $\mathbf{P}_3$, we will use the following estimate from \cite{Diaconis1981}*{Equation 3.14}: 
    \[
        \frac{p(j)}{j!} \leq \exp(-j \log j + O(j)). 
    \]
    This provides an upper bound for our summand 
    \begin{equation}\label{eqn:charlottesville}
        \exp \left \{ -j \log j + a j^2 \frac{\log N}{n} + O(j) \right \} \leq \exp \left \{ j \log N \left( \frac{aj}{n} - \frac{\log j}{\log N} + O \left( \frac{1}{\log N} \right) \right)\right \}. 
    \end{equation}
    When summing over $\mathbf{P}_2$, we can further bound the summand by 
    \begin{equation}
        \exp \left \{  j \log N \left( -\frac{1}{2} + O\left( \frac{\log \log N}{\log N} \right) \right) \right \} \leq \exp \left \{ - \frac{j}{3} \log N \right \}.
    \end{equation}
    Thus, for sufficiently large $N$, the sum over $\mathbf{P}_2$ is bounded above by a geometric series of rate less than $1$, which is finite. 
    
    When summing over $\mathbf{P}_3$, we have the bound 
    \[
        \frac{aj}{n} - \frac{\log j}{\log N} + O \left( \frac{1}{\log N} \right) \leq -a \varepsilon + O \left( \frac{\log \log N}{\log N} \right) \leq -\frac{a}{2} \varepsilon
    \]
    for sufficiently large $N$. Thus, we can bound the right hand side of Equation~\ref{eqn:charlottesville} by 
    \[
        \exp \left \{ j \log N \left( \frac{aj}{n} - \frac{\log j}{\log N} + O \left( \frac{1}{\log N} \right) \right)\right \} \leq \exp \left\{ -\frac{a \varepsilon}{2} j \log N  \right\}.
    \]
    The sum is then also bounded above by a geometric series of rate less than $1$. Collecting all of our bounds, there is some $C > 0$ such that 
    \[
        \sum_{j = 1}^{(a^{-1}-\varepsilon)n} \frac{p(j)}{j!} e^{2j \log (2n)} \left( 1 - \frac{bj}{n} + \frac{ab}{2n^2}j(j-1) \right)^{2(t_N + cN)} \leq C e^{-4b \cdot c}. 
    \]
\end{proof}

\subsubsection{}

We can now complete the proof of Theorem~\ref{thm:main-theorem-1}. 
\begin{proof}[Proof of Theorem~\ref{thm:main-theorem-1}]
    The theorem follows by combining the bounds of Corollary~\ref{cor:red-zone-i-finished-bound}, Corollary~\ref{cor:full-of-zone-red-II}, Corollary~\ref{cor:red-zone-III-full-bound}, Corollary~\ref{cor:full-red-zone-4-bound}, Proposition~\ref{prop:Blue-Zone-I-Full-Bound}, Proposition~\ref{prop:Blue-Zone-II-Full-Bound}, and Proposition~\ref{prop:Zone-Yellow-Full-Bound}. 
\end{proof}

\section{Lower Bound}\label{sec:lower-bound}

In this section, we prove the lower bound of Theorem~\ref{thm:main-theorem-1}. In Section~\ref{sec:asymptotics-of-fixed-points}, we will prove that the number of fixed points of our deck after $\frac{1}{2b}N (\log N - c)$ shuffles is Poisson in the limit. This provides a lower bound for the total variation distance in terms of the total variation distance between two Poisson random variables. In Section~\ref{sec:proof-of-lower-bound-2}, we complete the proof of the lower bound of Theorem~\ref{thm:main-theorem-1} by bounding the total variation distance by the Hellinger distance. Recall that this bound was discussed in Section~\ref{sec:distances-p-measures}.

\subsection{Asymptotics of fixed points}\label{sec:asymptotics-of-fixed-points}

For the rest of this section, let $V$ be the \textbf{permutation representation} of $\fS_N$. This is the $N$-dimensional representation with basis $e_1, \ldots, e_N$ with $\CC[\fS_N]$-module structure given by $\pi. e_i = e_{\pi(i)}$ and extended linearly. In other words, as elements of $\End(\CC^n)$, each group element $\pi$ is given by the corresponding permutation matrix. For any permutation $\pi \in \fS_N$, let $\Fix(\pi)$ be the \textbf{number of fixed points} of $\pi$. That is, it is the number of $i \in [N]$ satisfying $\pi(i) = i$. The following lemma gives a representation-theoretic interpretation of the moments of $\Fix(\pi)$ for a random permutation $\pi$. 

\begin{Lemma}\label{lem:fixed-points}
    Let $\theta : \fS_N \to [0, 1]$ be a probability distribution on $\fS_N$ and let $\vartheta \in \CC[\fS_N]$ the corresponding element in the group algebra. 
    \begin{enumerate}
        \item The expected number of fixed points of a permutation picked according to $\theta$ is the trace of $\vartheta$ on $V$.
        \item The $p\textsuperscript{th}$ moment of the number of fixed points of a permutation picked according to $\theta$ is the trace of $\vartheta$ on $V^{\otimes p}$. 
    \end{enumerate}
\end{Lemma}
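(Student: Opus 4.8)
The plan is to reduce both statements to the elementary identity $\operatorname{tr}_V(\pi) = \Fix(\pi)$ for a single permutation $\pi \in \fS_N$, together with linearity of the trace. By the definition of the permutation representation $V$, the element $\pi$ acts on the basis $e_1, \dots, e_N$ by $\pi . e_i = e_{\pi(i)}$, so the matrix of $\pi$ in this basis is the permutation matrix of $\pi$: its $(i,i)$ entry is $1$ if $\pi(i) = i$ and $0$ otherwise. Hence $\operatorname{tr}_V(\pi) = \#\{\, i \in [N] : \pi(i) = i \,\} = \Fix(\pi)$. Writing $\vartheta = \sum_{\pi \in \fS_N} \theta(\pi)\, \pi$ and using linearity,
$$
    \operatorname{tr}_V(\vartheta) = \sum_{\pi \in \fS_N} \theta(\pi)\, \operatorname{tr}_V(\pi) = \sum_{\pi \in \fS_N} \theta(\pi)\, \Fix(\pi) = \mathbf{E}_\theta[\Fix],
$$
which proves part (1).

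For part (2), I would first observe that on the tensor power $V^{\otimes p}$ the group element $\pi$ acts by the diagonal action (as in the definition of the tensor product of $\CC[\fS_N]$-modules in Section~\ref{sec:prelim}), i.e.\ as the operator $\pi \otimes \cdots \otimes \pi$ ($p$ factors). Applying $\operatorname{tr}(A \otimes B) = \operatorname{tr}(A)\operatorname{tr}(B)$ inductively gives $\operatorname{tr}_{V^{\otimes p}}(\pi) = \bigl(\operatorname{tr}_V(\pi)\bigr)^p = \Fix(\pi)^p$. One can also see this directly: $\pi$ permutes the basis $\{\, e_{i_1} \otimes \cdots \otimes e_{i_p} \,\}$ of $V^{\otimes p}$, and such a basis vector is fixed precisely when $\pi(i_j) = i_j$ for every $j$, so the number of fixed basis vectors is $\Fix(\pi)^p$. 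Linearity of the trace then yields
$$
    \operatorname{tr}_{V^{\otimes p}}(\vartheta) = \sum_{\pi \in \fS_N} \theta(\pi)\, \operatorname{tr}_{V^{\otimes p}}(\pi) = \sum_{\pi \in \fS_N} \theta(\pi)\, \Fix(\pi)^p = \mathbf{E}_\theta[\Fix^p],
$$
which is part (2); part (1) is the special case $p = 1$.

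There is no genuine difficulty here; the only point requiring care is the bookkeeping of conventions. One must use that the $\CC[\fS_N]$-module structure on $V^{\otimes p}$ referred to throughout the paper is the diagonal action — not the external (box) tensor product $V \boxtimes \cdots \boxtimes V$ — so that a single $\pi$ genuinely acts as $\pi^{\otimes p}$ and the identity $\operatorname{tr}_{V^{\otimes p}}(\pi) = \Fix(\pi)^p$ holds. Everything else is linearity of the trace.
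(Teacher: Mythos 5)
Your proof is correct and follows essentially the same route as the paper: establish $\operatorname{tr}_V(\pi)=\Fix(\pi)$, then use multiplicativity of the trace under the diagonal tensor-power action (the paper phrases this as $\chi_{V^{\otimes p}}=\chi_V^p$ from standard character theory, which is exactly your $\operatorname{tr}(A\otimes B)=\operatorname{tr}(A)\operatorname{tr}(B)$ observation) and conclude by linearity. No gaps; your remark about the diagonal versus external tensor product is the same convention the paper uses.
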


\begin{proof}
    The group algebra element $\vartheta$ is given by $\sum \theta(\pi) \pi \in \CC[\fS_N]$ where the sum ranges over all permutations. Viewing $\pi$ as an element of $\End(V)$, the trace $\tr(\pi)$ is equal to the number of fixed points of $\pi$. Thus, we have 
    \[
        \tr (\vartheta) = \sum_{\pi \in \fS_N} \theta(\pi) \tr (\pi) = \sum_{\pi \in \fS_N} \theta(\pi) \Fix(\pi) = \EE_\theta [\Fix(\pi)]. 
    \]
    This proves (1). To prove (2), we also need the additional observation that $\Fix(\pi) = \tr(\pi) = \chi_V(\pi)$ where $\chi_V$ is the character of $V$. From standard character theory, the character of the tensor product $V^{\otimes p}$ is equal to $\chi_V^p$. Thus, viewing each $\pi$ as an element of $\End(V^{\otimes p})$, we have  
    \[
        \tr (\vartheta) = \sum_{\pi \in \fS_N} \theta(\pi) \tr (\pi) = \sum_{\pi \in \fS_N} \theta(\pi) \Fix(\pi)^p = \EE_\theta [\Fix(\pi)^p]. 
    \]
    This completes the proof of the Lemma. 
\end{proof}

In the next result, let $S(n, k)$ be the \textbf{Stirling number of the second kind}. This is the number of ways to partition $n$ labeled objects into $k$ unlabeled sets. The following lemma follows from~\cite{Benkart2016}*{Equation 5.5}. 

\begin{Lemma}\label{lem:permutation-module-decomposition}
    As $\fS_N$-representations, we have the decomposition
    \[
        V^{\otimes p} = \bigoplus_{\lambda} m^\lambda_p S^\lambda, \quad \text{where} \quad m^\lambda_p \eqdef \sum_{t = 0}^N S(p, t) K_{\lambda, (N-t, 1^t)}.
    \]
\end{Lemma}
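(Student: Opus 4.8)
The plan is to realise $V^{\otimes p}$ as an explicit permutation module for $\fS_N$ and then decompose it into $\fS_N$-orbits. A basis of $V^{\otimes p}$ is indexed by functions $f\colon [p]\to [N]$, via $e_f\eqdef e_{f(1)}\otimes\cdots\otimes e_{f(p)}$, and the action is $\pi\cdot e_f=e_{\pi\circ f}$. Hence $V^{\otimes p}\cong \CC[\mathrm{Fun}([p],[N])]$ as $\fS_N$-modules, where $\fS_N$ acts on the set $\mathrm{Fun}([p],[N])$ of all functions $[p]\to[N]$ by post-composition. Everything reduces to understanding this $\fS_N$-set.

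First I would analyse the orbits. Post-composing $f$ with a permutation does not change which inputs have a common image, so $f$ and $g$ lie in the same $\fS_N$-orbit only if they induce the same set partition of $[p]$ into fibers (their \emph{kernel}); conversely, if $f$ and $g$ have the same kernel with, say, $t$ blocks, then they take $t$ distinct values and any bijection of $[N]$ carrying the values of $f$ to those of $g$ realises $g=\pi\circ f$. Thus orbits are in bijection with set partitions of $[p]$, and there are exactly $S(p,t)$ of them with $t$ blocks. For a fixed kernel with $t$ blocks, the functions realising it are in $\fS_N$-equivariant bijection with the injections from the $t$-element set of blocks into $[N]$; this $\fS_N$-set is transitive with point stabiliser a copy of $\fS_{N-t}$ (permuting the complement of the image), so the associated permutation representation is $\Ind_{\fS_{N-t}}^{\fS_N}\mathbf 1$, where $\mathbf 1$ is the trivial representation. (When $t>N$ no such functions exist, matching the truncation of the $t$-sum at $N$; when $t>p$ one has $S(p,t)=0$.) Summing over orbits gives
\[
    V^{\otimes p} \;\cong\; \bigoplus_{t=0}^{N}\bigl(\Ind_{\fS_{N-t}}^{\fS_N}\mathbf 1\bigr)^{\oplus S(p,t)}.
\]

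Next I would invoke Young's rule, the classical decomposition $\Ind_{\fS_{N-t}}^{\fS_N}\mathbf 1\cong\bigoplus_{\lambda\vdash N}\bigl(S^\lambda\bigr)^{\oplus K_{\lambda,(N-t,1^t)}}$ (see, e.g., \cite{Sagan2001}), noting that the content of the Young subgroup $\fS_{N-t}\times\fS_1\times\cdots\times\fS_1$ is exactly $(N-t,1^t)$ and the Kostka number is the one defined in Section~\ref{sec:statistics}. Substituting this into the display above and interchanging the two direct sums yields
\[
    V^{\otimes p}\;\cong\;\bigoplus_{\lambda\vdash N}\bigl(S^\lambda\bigr)^{\oplus m_p^\lambda},\qquad m_p^\lambda=\sum_{t=0}^{N}S(p,t)\,K_{\lambda,(N-t,1^t)},
\]
which is the claim.

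The only step requiring genuine care is the orbit analysis — verifying that kernels index orbits and that each orbit is the coset space $\fS_N/\fS_{N-t}$ — but this is elementary combinatorics; the single external input is Young's rule. Alternatively, as the statement already records, the identity is Equation~5.5 of~\cite{Benkart2016}, so one may simply cite it; the argument above is the combinatorial content behind that reference (and behind Schur--Weyl duality between $\fS_N$ and the partition algebra on $V^{\otimes p}$).
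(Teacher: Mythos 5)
Your proof is correct, and it is genuinely different in character from what the paper does: the paper does not prove the lemma at all, it simply cites \cite{Benkart2016}*{Equation 5.5}, whereas you supply the underlying combinatorial argument. Your two steps check out: (i) identifying $V^{\otimes p}$ with the permutation module $\CC[\mathrm{Fun}([p],[N])]$ and showing that orbits are indexed by kernels (set partitions of $[p]$ into at most $N$ blocks), with the orbit of a kernel having $t$ blocks being $\fS_N$-isomorphic to the coset space $\fS_N/\fS_{N-t}$, hence contributing $\Ind_{\fS_{N-t}}^{\fS_N}\mathbf{1}$, and there being $S(p,t)$ such orbits; and (ii) Young's rule, $\Ind_{\fS_{N-t}}^{\fS_N}\mathbf{1}\cong\bigoplus_{\lambda\vdash N}(S^\lambda)^{\oplus K_{\lambda,(N-t,1^t)}}$, where the content $(N-t,1^t)$ is exactly that of the Young subgroup $\fS_{N-t}\times\fS_1\times\cdots\times\fS_1$ (and Kostka numbers are insensitive to reordering the content, which handles the boundary case $t=N$). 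The truncation of the sum at $t=N$ is accounted for by the nonexistence of injections of more than $N$ blocks, and $S(p,t)=0$ for $t>p$ handles the other end. What your route buys is a self-contained, elementary proof requiring only Young's rule, making the paper independent of the partition-algebra machinery of \cite{Benkart2016}; what the citation buys is brevity and a pointer to the more general Schur--Weyl-type context in which the multiplicities $m^\lambda_p$ naturally live.
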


We also recall the following lemma from~\cite{Teyssier2019}*{Proposition 3.2}.  
\begin{Lemma}
    Let $j \geq 1$ be a positive integer. Let $\lambda = (N-j, T)$ where $T \vdash j$ is a fixed partition. Then for sufficiently large $N$ 
    \[
        f_\lambda = \frac{N^j}{j!} f_T \left (1 + O \left ( \frac{1}{N} \right ) \right ). 
    \]
\end{Lemma}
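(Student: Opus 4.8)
The plan is to compute $f_\lambda$ directly from the hook length formula and expand asymptotically. Recall that the hook length formula gives $f_\lambda = N!/\prod_{u \in \lambda} h(u)$, where $h(u)$ is the hook length of the cell $u$ in the Young diagram of $\lambda = (N-j, T)$; write $T = (T_1, T_2, \ldots)$ with largest part $T_1$, so that $T_1, j$ are bounded while $N \to \infty$, and assume $N$ large enough that $N - j > T_1$.

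First I would split the hook-length product over the cells in rows $\geq 2$ and the cells in the first row. For a cell $(i, c)$ with $i \geq 2$, both its arm and its leg within $\lambda$ coincide with those of the corresponding cell $(i-1, c)$ of the diagram of $T$: the long first row lies strictly above and is never counted in the arm of $(i,c)$, and a row $i' > i$ of $\lambda$ contains column $c$ iff the row $i'-1$ of $T$ does. Hence $\prod_{u \text{ in row} \geq 2} h(u) = \prod_{v \in T} h_T(v) = j!/f_T$, again by the hook length formula applied to $T$.

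Next I would handle the first row. The cell $(1,c)$ has hook length $(N-j-c) + T^*_c + 1$, where $T^*$ is the conjugate of $T$. For $c > T_1$ we have $T^*_c = 0$, so these hook lengths run through $N-j-T_1, N-j-T_1-1, \ldots, 1$, with product $(N-j-T_1)!$. For $1 \leq c \leq T_1$ each factor equals $N + O(1)$ (since $c, j, T^*_c$ are bounded), so the product of these $T_1$ factors is $N^{T_1}\bigl(1 + O(1/N)\bigr)$. Combining the two pieces,
\[
    \prod_{u \in \lambda} h(u) = (N-j-T_1)! \cdot N^{T_1} \cdot \frac{j!}{f_T} \cdot \bigl(1 + O(1/N)\bigr),
\]
so that
\[
    f_\lambda = \frac{N!}{(N-j-T_1)!} \cdot \frac{f_T}{j!\, N^{T_1}} \cdot \bigl(1 + O(1/N)\bigr).
\]
Finally, $N!/(N-j-T_1)!$ is a product of $j + T_1$ consecutive integers in the range $[N-j-T_1+1,\, N]$, hence equals $N^{j+T_1}\bigl(1 + O(1/N)\bigr)$; substituting and cancelling $N^{T_1}$ yields $f_\lambda = \tfrac{N^j}{j!} f_T \bigl(1 + O(1/N)\bigr)$.

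There is no genuine obstacle here; it is a bookkeeping exercise. The only points needing care are (i) correctly reading off the first-row hook lengths via the conjugate $T^*$, in particular isolating the factor $(N-j-T_1)!$ that cancels against part of $N!$, and (ii) checking that every $O(1/N)$ error — including the one produced by the falling factorial $N!/(N-j-T_1)!$ — really is of that order for the fixed data $j$ and $T$. An alternative would be a branching-rule or Kostka-number count of standard tableaux with a dominant first row, but the hook length computation is the most self-contained route.
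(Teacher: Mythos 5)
Your proof is correct: the hook lengths in rows $\geq 2$ of $(N-j,T)$ do coincide with those of $T$ (the long first row affects neither arms nor legs there), the first-row hook lengths are $(N-j-c)+T^*_c+1$, and the bookkeeping with $(N-j-T_1)!$, the $T_1$ factors of size $N+O(1)$, and the falling factorial $N!/(N-j-T_1)! = N^{j+T_1}(1+O(1/N))$ all goes through, with every implied constant depending only on the fixed data $j$ and $T$. The comparison with the paper is simply that the paper does not prove this lemma at all: it is imported verbatim as Proposition 3.2 of Teyssier's work on the limit profile for random transpositions, so your hook-length computation makes the statement self-contained where the paper relies on a citation. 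It also yields slightly more than is needed, since the same expansion identifies the next-order coefficient if one tracks the $O(1)$ terms; the alternative you mention (counting standard Young tableaux by the placement of the $j$ entries outside the first row, in the spirit of Lemma~\ref{lem:kostka number example computation}) would likewise work and matches the paper's combinatorial toolkit, but the hook-length route is the cleanest way to get the multiplicative $1+O(1/N)$ error.
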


In the next theorem, we let $\Fix_c$ be the random variable counting the number of fixed points after $K \eqdef  \frac{N}{2b}(\log N - c)$ shuffles in the biased random transposition shuffle. Our next result states that the limiting distribution of this random variable is Poisson. 

\begin{Theorem}\label{thm:main-theorem-4}
    Suppose that $b < 1$. Then, 
    \[
        \Fix_c \xrightarrow{dist} \Poiss \left (1 + \frac{1}{2}e^c \right ).
    \]
\end{Theorem}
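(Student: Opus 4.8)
The plan is to prove convergence of all moments and conclude by the method of moments, since the Poisson law is determined by its moments. Fix an integer $p\ge 1$ and let $Y\sim\Poiss(1+\tfrac12 e^c)$. By Lemma~\ref{lem:fixed-points}(2), $\EE[\Fix_c^{\,p}]$ is the trace of $\mathscr{A}^{K}$ acting on $V^{\otimes p}$, where $\mathscr{A}^{K}\in\CC[\fS_N]$ is the group-algebra element representing the distribution $P^{*K}(\id,\bullet)$ and $K=\lfloor\tfrac{N}{2b}(\log N-c)\rfloor$ (passing to the integer part is harmless). Decomposing $V^{\otimes p}=\bigoplus_\lambda m^\lambda_p S^\lambda$ by Lemma~\ref{lem:permutation-module-decomposition} and recalling from the proof of Theorem~\ref{thm:main-theorem-2} that $\mathscr{A}$ acts on $S^\lambda$ with eigenvalues $\Eig^\lambda_{\mu,\nu}$ of multiplicity $f_\mu f_\nu c^\lambda_{\mu,\nu}$, one gets
\[
    \EE[\Fix_c^{\,p}]=\sum_{\lambda\vdash N} m^\lambda_p \sum_{(\mu,\nu)\in\LR^\lambda} f_\mu f_\nu\, c^\lambda_{\mu,\nu}\,\bigl(\Eig^\lambda_{\mu,\nu}\bigr)^{K}.
\]

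First I would reduce to finitely many terms. Since any SSYT of shape $\lambda$ and content $(N-t,1^t)$ must place all $N-t$ ones in the first row, $K_{\lambda,(N-t,1^t)}=0$ unless $\lambda_1\ge N-t$; as $S(p,t)=0$ for $t>p$, this forces $\lambda_1\ge N-p$ whenever $m^\lambda_p\ne0$. Write such a $\lambda$ as $\lambda=(N-j,S)$ with $0\le j\le p$ and $S\vdash j$. For fixed $j$ and $N$ large, Lemma~\ref{lem:asymptotic-bound-on-LR-coefficient} says every $(\mu,\nu)\in\LR^\lambda$ has the form $\mu=(n-i_1,T_1)$, $\nu=(n-i_2,T_2)$ with $T_1\vdash i_1$, $T_2\vdash i_2$, $i_1+i_2\le j$, and $c^\lambda_{\mu,\nu}=O_p(1)$. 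So, for fixed $p$, the number of nonzero summands is bounded independently of $N$, indexed by the finite set of tuples $(j,S,i_1,T_1,i_2,T_2)$; it then suffices to find the limit of each summand.

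Next I would estimate a single summand. Lemma~\ref{lem:diagonal-index-asymptotics} gives $\Diag(\lambda)=2n^2-(2j+1)n+O_j(1)$, $\Diag(\mu)=\tfrac12 n^2-\tfrac{2i_1+1}{2}n+O(1)$ and $\Diag(\nu)=\tfrac12 n^2-\tfrac{2i_2+1}{2}n+O(1)$; substituting into the formula for $\Eig^\lambda_{\mu,\nu}$ in Equation~\ref{eqn:eigenvalue-when-AandB-same} — where, using $a+b=2$, the leading term is $1$ and the remaining $n^{-1}$ contributions collapse — yields
\[
    \Eig^\lambda_{\mu,\nu}=1-\frac{1}{n}\Bigl(abj+\tfrac{a(a-b)}{2}i_1-\tfrac{b(a-b)}{2}i_2\Bigr)+O(n^{-2})=:1-\frac{\alpha}{n}+O(n^{-2}).
\]
Because $K/n=\tfrac1b(\log N-c)$ and $K\cdot O(n^{-2})=o(1)$, we obtain $\bigl(\Eig^\lambda_{\mu,\nu}\bigr)^{K}=N^{-\alpha/b}e^{\alpha c/b}(1+o(1))$. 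Combining this with $f_\mu=\tfrac{n^{i_1}}{i_1!}f_{T_1}(1+O(n^{-1}))$ and $f_\nu=\tfrac{n^{i_2}}{i_2!}f_{T_2}(1+O(n^{-1}))$ (the asymptotic of \cite{Teyssier2019}*{Proposition 3.2}), and with $c^\lambda_{\mu,\nu},m^\lambda_p=O_p(1)$, the summand equals $n^{P}$ times a factor bounded in $N$, where $P=i_1+i_2-\alpha/b$. A short computation with $a+b=2$ rewrites this, after bounding $i_2\le j-i_1$, as $P\le-\tfrac{2(a-1)}{b}\,i_1+a(i_2-j)\le0$. This is the one place the hypothesis $b<1$ is used: $b<1$ means $a-1>0$, so $P=0$ forces $i_1=0$ and then $i_2=j$, while every other admissible tuple has $P<0$ and its summand is $o(1)$. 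When $i_1=0$ and $i_2=j$ one has $\mu=(n)$ and $\alpha=bj$, and a direct count of semistandard tableaux of shape $\lambda/(n)$ with content $\nu$ (equivalently Pieri's rule, the $\mu=(n)$ case of Theorem~\ref{thm:LR-rule}) shows that $c^\lambda_{(n),\nu}\ne0$ forces $\nu=(n-j,S)$, and then $c^\lambda_{(n),\nu}=1$.

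Finally I would assemble the surviving contributions. For $\lambda=(N-j,S)$, Lemma~\ref{lem:kostka number example computation} gives $m^\lambda_p=f_S\sum_{t=j}^{p}S(p,t)\binom{t}{j}$, so (using $f_{(n)}=1$ and $N^{-j}=(2n)^{-j}$) the surviving summand attached to $\lambda$ satisfies $m^\lambda_p\cdot f_{(n)}f_\nu\cdot 1\cdot\bigl(\Eig^\lambda_{(n),\nu}\bigr)^{K}\to \tfrac{f_S^{\,2}}{j!\,2^{\,j}}e^{jc}\sum_{t=j}^p S(p,t)\binom{t}{j}$. Summing over $S\vdash j$ with $\sum_{S\vdash j}f_S^{\,2}=j!$ (Proposition~\ref{prop:sym-rep-theory-identities}(1)), then over $j$, and interchanging the order of summation,
\[
    \EE[\Fix_c^{\,p}]\longrightarrow\sum_{j=0}^{p}\Bigl(\tfrac{e^c}{2}\Bigr)^{j}\sum_{t=j}^{p}S(p,t)\binom{t}{j}=\sum_{t=0}^{p}S(p,t)\Bigl(1+\tfrac{e^c}{2}\Bigr)^{t}=\EE[Y^p],
\]
the last equality being the identity $\EE[Y^p]=\sum_t S(p,t)\lambda^t$ for $Y\sim\Poiss(\lambda)$. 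Since the Poisson law is moment-determined, the method of moments gives $\Fix_c\xrightarrow{dist}\Poiss(1+\tfrac12 e^c)$. The hardest step is the power count in the third paragraph: showing that, when $b<1$, the only surviving eigenvalue contributions come from $\mu=(n)$ and $\nu_1=n-j$ — precisely the asymmetry between $A$ and $B$ responsible for the factor $\tfrac12$, in contrast with the random-transpositions case $b=1$, where all $(\mu,\nu)$ with $i_1+i_2=j$ survive and one instead recovers Teyssier's $\Poiss(1+e^c)$.
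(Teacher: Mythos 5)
Your proposal is correct and follows essentially the same route as the paper's proof: method of moments, identifying $\EE[\Fix_c^p]$ with the trace of $\mathscr{A}^K$ on $V^{\otimes p}$, reduction to $\lambda_1\ge N-p$ via the Kostka vanishing, the eigenvalue asymptotics and power count showing that only $(i_1,i_2)=(0,j)$ survives when $b<1$ (the paper's Lemma~\ref{lem:computing-exponent}), Pieri's rule to pin down $\mu=(n)$, $\nu=(n-j,S)$ with $c^\lambda_{\mu,\nu}=1$, and the same Stirling-number resummation to the Poisson moments. One bookkeeping slip in the power count: the displayed bound $P\le -\tfrac{2(a-1)}{b}i_1+a(i_2-j)$ is not an upper bound for $P$ when $i_1>0$ (that expression equals $P-ai_1$); the correct identity is $P=-\tfrac{2(a-1)}{b}i_1+a(i_1+i_2-j)$, which, since $a>1$ and $i_1+i_2\le j$, still gives $P\le 0$ with equality exactly at $(i_1,i_2)=(0,j)$, so your conclusion is unaffected.
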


\begin{proof}
    Since the Poisson distribution is determined by its moments \cite{DasGupta2008}*{Example 6.3}, it is enough to show that for any $p \geq 1$, the $p$th moment of $\Fix_c$ converges to the $p$th moment of a $\Poiss(1 + e^c/2)$ random variable. From Lemma~\ref{lem:fixed-points}, the $p$th moment of $\Fix_c$ is exactly the trace of $\mathscr{A}^K$ on $V^{\otimes p}$. 
    
    To compute the trace of $\cA^K$ on $V^{\otimes p}$, recall that we can decompose $V^{\otimes p}$ using Lemma~\ref{lem:permutation-module-decomposition} and Theorem~\ref{thm:LR-rule} to get
    \begin{equation}\label{eqn:decomp-decomp-decomp}
        V^{\otimes p} = \bigoplus_{\lambda \vdash N} m^\lambda_p S^\lambda = \bigoplus_{\lambda \vdash N} \bigoplus_{\mu, \nu \vdash n} m^\lambda_p c^\lambda_{\mu, \nu} \cdot \left ( S^\mu \boxtimes S^\nu \right )
    \end{equation}
    where the first equality is as $\fS_N$-modules and the second equality is as $\fS_A \times \fS_B$-modules. From the proof of Theorem~\ref{thm:main-theorem-3}, we know that $\cA$ acts by scalar multiplication by $\Eig^\lambda_{\mu, \nu}$ on each copy of $S^\mu \boxtimes S^\nu$ in Equation~\ref{eqn:decomp-decomp-decomp}. Thus, the trace of $\mathscr{A}^K$ on $V^{\otimes p}$ is exactly 
    \begin{equation}\label{eqn:value of the trace}
        \tr \left ( \mathscr{A}^K \right ) = \sum_{\lambda \vdash N} \sum_{\mu, \nu \vdash n} m^\lambda_p c^\lambda_{\mu, \nu} f^\mu f^\nu (\Eig^\lambda_{\mu, \nu})^K.
    \end{equation}
    It remains to figure out the asymptotics of Equation~\ref{eqn:value of the trace} as $N \to \infty$. 
    
    For sufficiently large $N$, Lemma~\ref{lem:kostka number example computation} implies that $m^\lambda_p = 0$ for all $\lambda$ with $\lambda_1 < N-p$. This allows us to simplify Equation~\ref{eqn:value of the trace} to 
    \begin{equation}\label{eqn:multiplicity-formula}
        \tr \left ( \cA^K \right ) = \sum_{j = 0}^p \sum_{\substack{\lambda \vdash N \\ \lambda_1 = N-j}} \left [ m^\lambda_p \left \{ \sum_{\mu, \nu \vdash n} c^\lambda_{\mu, \nu} f^\mu f^\nu (\Eig^\lambda_{\mu, \nu})^K \right \} \right ] .
    \end{equation}
    
    Since we are fixing $p$ and letting $N$ tend to infinity, there are only finitely many partitions (depending on $p$) which appear in the decomposition. These partitions are of the form $\lambda = (N-j, T)$ where $0 \leq j \leq p$, and $T$ is a partition of $j$. 
    
    Take any partition $\lambda = (N-j, T)$ of this form. We first compute the asymptotics of  
    \begin{equation}\label{eqn:focus-asymptotics-here}
        \sum_{\mu, \nu \vdash n} c^\lambda_{\mu, \nu} f^\mu f^\nu (\Eig^\lambda_{\mu, \nu})^K. 
    \end{equation}
    Suppose that $\mu, \nu$ satisfy $c^\lambda_{\mu, \nu} > 0$. Then, Lemma~\ref{lem:asymptotic-bound-on-LR-coefficient} implies that they must be of the form 
    \[
        \mu = (n-i_1, T_1), \quad \nu = (n-i_2, T_2), \quad i_1 + i_2 \leq j, \quad T_1 \vdash i_1, T_2 \vdash i_2.
    \]
    In particular, the number of non-zero summands in Equation~\ref{eqn:focus-asymptotics-here} is bounded above by a constant depending only on $p$. From Lemma~\ref{lem:diagonal-index-asymptotics}, we have the asymptotics
    \[
        \Eig^\lambda_{\mu, \nu} = 1 - \frac{B_{i_1, i_2, j}}{N} + O \left ( \frac{1}{N^2} \right )
    \]
    where  $B_{i_1, i_2, j} \eqdef i_1 a^2 + i_2 b^2 + (2j - i_1 - i_2)ab$. Using the approximation $1 - x = e^{-x} + O(x^2)$, the contribution of all copies of $S^\mu \boxtimes S^\nu$ in $S^\lambda$ to the trace is 
    \begin{align}
        c^\lambda_{\mu, \nu} f_\mu f_\nu (\Eig^\lambda_{\mu, \nu})^K & = \frac{N^{i_1 + i_2}}{2^{i_1 + i_2} (i_1)! (i_2)!} f_{T_1} f_{T_2} c^\lambda_{\mu, \nu} \cdot \frac{e^{\frac{B_{i_1, i_2, j}}{2b} c}}{N^{\frac{B_{i_1, i_2, j}}{2b}}} + o(1)\label{eqn:contribution} \\
        & = C \cdot N^{\frac{2b (i_1 + i_2) - B_{i_1, i_2, j}}{2b}} + o(1)\label{eqn:order-of-growth} \\
        & = C \cdot N^{E_{i_1, i_2, j}} + o(1),
    \end{align}
    where we define $E_{i_1, i_2, j} \eqdef \frac{2b(i_1 + i_2) - B_{i_1, i_2, j}}{2b}$. In Equation~\ref{eqn:order-of-growth} we consolidated all constants which are of constant order with respect to $N$ in the constant $C$. Note that (emma~\ref{lem:asymptotic-bound-on-LR-coefficient} tells us that $c^\lambda_{\mu, \nu}$ is of constant order depending only on $p$. 
    
    We will study the maximum value of $E_{i_1, i_2, j}$ depending on the values of $i_1, i_2, j$. It turns out that $E_{i_1, i_2, j}$ is will less than or equal to $0$, and it will be equal to zero for a single choice of the pair $(i_1, i_2)$.
    
    \begin{Lemma}\label{lem:computing-exponent}\phantom{h}
        \begin{enumerate}
            \item We have $E_{i_1, i_2, j} \leq 0$. If $E_{i_1, i_2, j} = 0$, then $i_1 = 0$, $i_2 = j$, and $B_{i_1, i_2, j} = 2jb$. 
            \item Let $i_1 = 0$ and $i_2 = j$. Then $c^\lambda_{\mu, \nu} > 0$ if and only if $\mu = (n)$ and $\nu = (n-j, T)$. If this happens, then $c^{\lambda}_{\mu, \nu} = 1$.
        \end{enumerate}
    \end{Lemma}

    \begin{proof}
        From $j \geq i_1 + i_2$, we have 
        \begin{align*}
            i_1 a^2 + i_2b^2 + (2j - i_1 - i_2) ab & \geq 2i_1 a + 2 i_2 b \geq 2b(i_1 + i_2).
        \end{align*}
        This implies that $E_{i_1, i_2, j} \leq 0$. For equality to hold, we need $j = i_1 + i_2$ and $i_1 a = b i_1$. Since $a > b$, we must have $i_1 = 0$ and $i_2 = j$. 

        When $i_1 = 0$ and $i_2 = j$, we have $\mu = (n)$ and $\nu = (n-j, T_2)$ for some $T_2 \vdash j$. From the Littlewood-Richardson rule~\cite{Macdonald2015}*{\S I.9} and Pieri's formula~\cite{Macdonald2015}*{Equation 5.16}, $c^\lambda_{\mu, \nu} > 0$ if and only if $T_2 = T_1$. Moreover, when $c^\lambda_{\mu, \nu} > 0$ it is equal to $1$. This proves the lemma. 
    \end{proof}
    From Lemma~\ref{lem:computing-exponent}, the asymptotics of Equation~\ref{eqn:focus-asymptotics-here} for any partition $\lambda = (N-j, T)$ are given by  
    \[
        \sum_{\mu, \nu \vdash n} c^\lambda_{\mu, \nu} f_\mu f_\nu \left (\Eig^\lambda_{\mu, \nu} \right )^K = \frac{f_T}{j!} \left ( \frac{e^c}{2} \right )^j + o(1).
    \]
    Substituting this formula into Equation~\ref{eqn:multiplicity-formula} and substituting the formula for $m^\lambda_p$ from Lemma~\ref{lem:permutation-module-decomposition}, we find that the trace is equal to
    \begin{align}
        \tr(\cA^K) & = o(1) + \sum_{j = 0}^p \sum_{T \vdash j} \frac{f_T}{j!} \left ( \frac{e^c}{2} \right )^j \sum_{t = j}^p S(p, t) \binom{t}{j} f_T\label{eqn:hi} \\
        & = o(1) + \sum_{t = 0}^p S(p, t) \sum_{j = 0}^t \binom{t}{j} \left ( \frac{e^c}{2} \right )^j \left [ \frac{\sum_{T \vdash j} f_T^2}{j!} \right ] \\
        & = o(1) + \sum_{t = 0}^p S(p, t) \left (1  + \frac{e^c}{2} \right )^t.\label{eqn:finally!}
    \end{align}
    In the equality in Equation~\ref{eqn:finally!}, we used Proposition~\ref{prop:sym-rep-theory-identities}. From~\cite{Haight1967}*{Equation 1.3-14}, the right hand side of Equation~\ref{eqn:finally!} is exactly the $p\textsuperscript{th}$ moment of a Poisson random variable of rate $1 + \frac{e^c}{2}$. This suffices for the proof. 
\end{proof} 

\begin{Remark}
In Theorem~\ref{thm:main-theorem-4}, it is important that $b < a$. Indeed, the condition that $b < a$ plays an important role in limiting the sources which provide the main contribution to the trace. In the case $a = b = 1$, which corresponds to random transpositions, it is shown in~\cite{Matthews1988}*{Equation 1.6} that
\begin{equation}\label{eqn:matthews-fix-limit}
    \Fix_c \xrightarrow{dist} \Poiss \left (1 + e^c \right ).
\end{equation}
This illustrates the clear difference between the $a < b$ case and the $a = b = 1$ case. In~\cite{Teyssier2019}*{Theorem 1.1}, L. Teyssier explicitly computes the limit profile for random transpositions. Explicitly, they prove that
\begin{equation}\label{eqn:limit-profile-eqn}
    d_{\TV} \left ( P^{\frac{1}{2}N (\log N - c)}, U \right ) \to d_{\TV}(\Poiss(1 + e^c), \Poiss(1))\quad \text{as } N \to \infty.  
\end{equation}
Note that the distributions in the right hand side of Equation~\ref{eqn:limit-profile-eqn} are exactly the distributions of the number of fixed points of the distributions in the left hand side of Equation~\ref{eqn:limit-profile-eqn}. From Equation~\ref{eqn:matthews-fix-limit}, the number of fixed points of $P^{\frac{1}{2}N (\log N - c)}$ is $\Poiss(1 + e^c)$ in the limit. From the classical \emph{problème des rencontres}, the number of fixed points of $\Unif_{\fS_N}$ is $\Poiss(1)$ in the limit.

Since the limiting distributions of the number of fixed points when $a = b$ versus when $a > b$ is different, we expect a different limit profile for the \emph{strictly} biased random transposition shuffle.  Explicitly, we make the following conjecture based on Theorem~\ref{thm:main-theorem-4}. 
\end{Remark}
\begin{Conjecture}[Limit profile for biased random transpositions] \label{conjecture:limit-profile2}
    Let $c \in \RR$. Then we have 
    \[
        d_{\TV} \left ( P^{\frac{1}{2b}N(\log N - c)}, U \right ) \to d_{\TV} \left ( \Poiss \left (1 + \frac{e^c}{2} \right ), \Poiss(1) \right ), \quad \text{as } N \to \infty.
    \]
\end{Conjecture}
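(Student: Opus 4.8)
The plan is to adapt L.\ Teyssier's limit-profile method for random transpositions~\cite{Teyssier2019} to the biased model, using the explicit diagonalization of Theorem~\ref{thm:main-theorem-2}. One inequality is already in hand: by Theorem~\ref{thm:main-theorem-4} the map $\sigma\mapsto\Fix(\sigma)$ pushes $P^{\frac{1}{2b}N(\log N-c)}(\id,\bullet)$ forward to a law converging to $\Poiss(1+\tfrac12 e^c)$, while the classical problème des rencontres pushes $U$ forward to a law converging to $\Poiss(1)$; since total variation does not increase under a pushforward,
\[
    d_{\TV}\!\left(P^{\frac{1}{2b}N(\log N-c)},U\right)\;\geq\; d_{\TV}\!\left(\Poiss\!\left(1+\tfrac12 e^c\right),\Poiss(1)\right)+o(1),
\]
so all the work is in the matching upper bound.

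For the upper bound I would set $t=\frac{1}{2b}N(\log N-c)$ and study the density $W_t\eqdef N!\cdot P^t(\id,\bullet)$, for which $d_{\TV}(P^t,U)=\tfrac12\,\EE_U|W_t-1|$. Fourier inversion on $\CC[\fS_N]$, together with the fact (established in Section~\ref{sec:proof-of-theorem-2}) that $\mathscr{A}$ acts on $S^\lambda$ as $\sum_{(\mu,\nu)\in\LR^\lambda}\Eig^\lambda_{\mu,\nu}\,\Pi^\lambda_{\mu,\nu}$, where $\Pi^\lambda_{\mu,\nu}$ is the $\fS_A\times\fS_B$-equivariant projection of $S^\lambda$ onto its $S^\mu\boxtimes S^\nu$ isotypic piece, yields the exact expansion
\[
    W_t(\sigma)=\sum_{\lambda\vdash N}f_\lambda\sum_{(\mu,\nu)\in\LR^\lambda}(\Eig^\lambda_{\mu,\nu})^t\,\tr\!\left(S^\lambda(\sigma^{-1})\,\Pi^\lambda_{\mu,\nu}\right).
\]
The next step is to truncate. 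The zone estimates of Section~\ref{sec:proof-of-upper-bound} show that everything outside the yellow zone contributes $\exp(-\Omega(n\log n))$, and inside the yellow zone the exponent bookkeeping from the proof of Theorem~\ref{thm:main-theorem-4} (there $E_{i_1,i_2,j}$ is strictly negative unless $(i_1,i_2)=(0,j)$) isolates the main part: the terms with $\lambda=(N-j,T)$, $j$ bounded, $\mu=(n)$ and $\nu=(n-j,T)$, for which $c^\lambda_{\mu,\nu}=1$, $f_\lambda\sim\tfrac{N^j}{j!}f_T$ and $(\Eig^\lambda_{\mu,\nu})^t\sim\tfrac{e^{jc}}{N^j}$. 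Cutting the sum at a finite level $j\leq J$, the tail $j>J$ can be controlled uniformly in $N$ by the $\ell_2$ bound of Lemma~\ref{lem:l2-bound} (which also furnishes uniform integrability of $W_t$), so one sends $N\to\infty$ and then $J\to\infty$.

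The hard part will be to evaluate the limit of the truncated main part as a function of $\sigma$. In the unbiased case $\mathscr{A}$ acts on $S^\lambda$ by a scalar, so $\tr(S^\lambda(\sigma^{-1})\Pi^\lambda_{\mu,\nu})$ collapses to the character $\chi^\lambda(\sigma)$, a shifted-symmetric polynomial in $\Fix(\sigma),\Fix(\sigma^2),\dots$, and Teyssier shows that after the rescaling only the $\Fix(\sigma)$-dependence survives. Here $\mathscr{A}$ does \emph{not} act by a scalar on $S^\lambda$, so the genuinely new ingredient is control of the relative characters $\sigma\mapsto\tr\!\big(S^{(N-j,T)}(\sigma^{-1})\,\Pi^{(N-j,T)}_{(n),(n-j,T)}\big)$: one must show that, after forming $\sum_{T\vdash j}f_T\,\tr(\,\cdot\,)$ and rescaling, these concentrate as $N\to\infty$, $\sigma\sim U$, onto a polynomial in a fixed-point statistic, the lower cycle statistics washing out — the biased analogue of the shifted-symmetric computation, now carried out along the branching $\fS_A\times\fS_B\subset\fS_N$. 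Because the dominant eigenvectors are trivial on the $\fS_A$-factor ($\mu=(n)$) while carrying all the nontrivial structure on the $\fS_B$-factor ($\nu=(n-j,T)$), identifying the relevant statistic — and hence the exact constant in the profile — is delicate, and the analysis seems to point toward the number $\Fix_B(\sigma)$ of fixed $B$-cards rather than $\Fix(\sigma)$ (in which case the profile would instead be $d_{\TV}(\Poiss(\tfrac12+\tfrac12 e^c),\Poiss(\tfrac12))$); this point deserves a careful recheck. In any case the argument closes with the elementary identity $\tfrac12\,\EE_{k\sim p}\big|\tfrac{q(k)}{p(k)}-1\big|=d_{\TV}(q,p)$, applied with $p$ the limiting law of the relevant statistic under $U$ and $q$ its limiting law under $P^t$ (both Poisson, by the moment computation in Theorem~\ref{thm:main-theorem-4}), which converts the $L^1$-convergence of $W_t$ into the claimed total-variation limit.
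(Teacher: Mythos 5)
The statement you are proving is stated in the paper as a \emph{conjecture}: the paper itself does not prove it. What the paper does establish is exactly one half of it, the lower bound (Theorem~\ref{thm:main-theorem-5}, via Theorem~\ref{thm:main-theorem-4} and the probl\`eme des rencontres), and your first paragraph reproduces that argument faithfully: pushing forward by $\sigma\mapsto\Fix(\sigma)$ and using monotonicity of total variation under projections is precisely the paper's proof of
\[
    d_{\TV}\left(P^{\frac{1}{2b}N(\log N-c)},U\right)\;\geq\; d_{\TV}\left(\Poiss\left(1+\tfrac12 e^c\right),\Poiss(1)\right)+o(1).
\]
So that part is fine, but it is the part that was already known.

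The matching upper bound, which is the whole content of the conjecture, is not established by your proposal, and you say so yourself: the concentration of the relative characters $\sigma\mapsto\tr\bigl(S^{(N-j,T)}(\sigma^{-1})\,\Pi^{(N-j,T)}_{(n),(n-j,T)}\bigr)$ onto a function of a single fixed-point-type statistic is asserted as the ``genuinely new ingredient'' but never proved, and without it the truncation-plus-$L^1$ scheme does not close. The obstruction is real and structural: unlike in Teyssier's setting, $P^t(\id,\bullet)$ is not conjugation-invariant, so $W_t$ is not a class function and these traces do not collapse to characters $\chi^\lambda(\sigma)$; they depend on how the cycles of $\sigma$ meet $A$ and $B$, which is exactly why the shifted-symmetric machinery for conjugacy-invariant walks does not transfer. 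Your own analysis of the dominant eigenvectors (trivial on the $\fS_A$-factor, all structure on the $\fS_B$-factor) leads you to hedge between $\Fix(\sigma)$ and the number of fixed $B$-cards, with two different candidate limits, $d_{\TV}(\Poiss(1+\tfrac12 e^c),\Poiss(1))$ versus $d_{\TV}(\Poiss(\tfrac12+\tfrac12 e^c),\Poiss(\tfrac12))$; a proof cannot leave the identity of the limiting statistic, and hence the constant in the profile, undetermined. (The uniform-integrability input you invoke is at least plausible: at $t=\frac{1}{2b}N(\log N-c)$ the eigenvalue sum in Lemma~\ref{lem:l2-bound} is bounded, of order $\sum_j p(j)e^{2jc}/j!$, so $\|W_t\|_{L^2(U)}$ stays bounded; but boundedness in $L^2$ only licenses the truncation, it does not identify the limit.) In short: your lower bound coincides with the paper's, and your upper bound is a research program, not a proof — which is consistent with the statement being left as a conjecture in the paper.
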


Using Theorem~\ref{thm:main-theorem-4}, we prove the lower bound of Conjecture~\ref{conjecture:limit-profile}. 

\begin{Theorem}\label{thm:main-theorem-5}
    Let $t = \frac{N}{2b} (\log N - c)$ for any $c \in \RR$. Then 
    \[
        d_{\TV} \left ( U, P^t(\id, \bullet) \right ) \geq d_{\TV} \left (\Poiss \left (1 + \frac{1}{2} e^c \right ), \Poiss(1) \right ) + o(1)
    \]
\end{Theorem}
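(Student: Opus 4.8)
The plan is to use the number of fixed points as a statistic distinguishing $P^t(\id,\bullet)$ from the uniform measure, combined with the fact that total variation distance cannot increase under pushforward. Note first that $t = \frac{N}{2b}(\log N - c)$ is precisely the quantity called $K$ in Theorem~\ref{thm:main-theorem-4}, so $\Fix_c$ is the law of the number of fixed points of a permutation distributed as $P^t(\id,\bullet)$. Let $F \colon \fS_N \to \ZZ_{\geq 0}$, $F(\pi) = \Fix(\pi)$, be the fixed-point count. Since pushing forward along the deterministic map $F$ is an application of a Markov kernel, the data-processing inequality for total variation gives
\[
    d_{\TV}\bigl(U, P^t(\id,\bullet)\bigr) \;\geq\; d_{\TV}\bigl(F_*U,\; F_*P^t(\id,\bullet)\bigr) \;=\; d_{\TV}\bigl(\Fix,\; \Fix_c\bigr),
\]
where on the right $\Fix$ denotes the law of the number of fixed points of a uniformly random permutation in $\fS_N$.

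Next I would pass to the $N \to \infty$ limit on the right-hand side. By the classical \emph{probl\`eme des rencontres}, $\Fix \xrightarrow{dist} \Poiss(1)$; since these random variables are all supported on $\ZZ_{\geq 0}$, convergence in distribution is equivalent to pointwise convergence of probability mass functions and hence (Scheff\'e's lemma) to convergence in total variation, so $d_{\TV}(\Fix, \Poiss(1)) \to 0$. By Theorem~\ref{thm:main-theorem-4}, $\Fix_c \xrightarrow{dist} \Poiss\bigl(1 + \frac{1}{2} e^c\bigr)$, and the same reasoning gives $d_{\TV}\bigl(\Fix_c, \Poiss(1 + \frac{1}{2} e^c)\bigr) \to 0$; this is exactly Equation~\ref{eqn:tv-converges-yes}. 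Two applications of the triangle inequality for $d_{\TV}$ then yield
\[
    d_{\TV}\bigl(\Fix,\; \Fix_c\bigr) \;=\; d_{\TV}\bigl(\Poiss(1),\; \Poiss(1 + \tfrac{1}{2} e^c)\bigr) + o(1),
\]
and substituting this into the previous display finishes the proof.

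The real content of this theorem lies entirely upstream, in Theorem~\ref{thm:main-theorem-4}, whose proof uses the explicit diagonalization of $\mathscr{A}$, the interpretation of the $p$th moment of $\Fix_c$ as a trace on $V^{\otimes p}$, and the delicate analysis of Lemma~\ref{lem:computing-exponent} isolating the single surviving term $i_1 = 0$, $i_2 = j$. Given that result, the argument above is soft; the only steps requiring (standard, routine) justification are the data-processing inequality for total variation and the upgrade from convergence in distribution to convergence in total variation for $\ZZ_{\geq 0}$-valued random variables.
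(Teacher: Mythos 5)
Your proposal is correct and takes essentially the same route as the paper: the paper's step of lower bounding $2\,d_{\TV}(U,P^t(\id,\bullet))$ by summing over the level sets $\fS_N^{(k)}$ of the fixed-point count is exactly your data-processing inequality for the pushforward along $F$, and both arguments then conclude from Theorem~\ref{thm:main-theorem-4} together with the probl\`eme des rencontres. Your explicit Scheff\'e-lemma justification for upgrading convergence in distribution of $\ZZ_{\geq 0}$-valued variables to total variation convergence is a careful spelling-out of a step the paper treats as immediate.
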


\begin{proof}
   For $0 \leq k \leq N$, let $\fS_N^{(k)}$ be the subset of permutations with exactly $k$ fixed points. Then we have 
    \begin{equation}\label{eqn:lower-bound-finish}
        2 \cdot d_{\TV}(U, P^t(\id, \bullet)) = \sum_{k = 0}^N \sum_{\pi \in \fS_N^{(k)}} |U(\pi) - P^t(\id, \pi)| \geq \sum_{k = 0}^N \left |U(\fS_N^{(k)}) - P^t(\id, \fS_N^{(k)}) \right |.
    \end{equation}
    The right hand side of Equation~\ref{eqn:lower-bound-finish} is (two times) the total variation distance between $\Fix_c$ and $\Fix$, where $\Fix$ is the number of fixed points of a uniformly chosen permutation. From Theorem~\ref{thm:main-theorem-4}, we have that $\Fix_c$ converges in distribution to $\Poiss(1 + e^c/2)$. The classic \emph{problème des rencontres} tells us that $\Fix$ converges in distribution to $\Poiss(1)$. Thus, we have 
    \[
        \text{RHS of Equation~\ref{eqn:lower-bound-finish}} = 2 \cdot d_{\TV} \left (\Poiss(1), \Poiss \left ( 1 + \frac{e^c}{2} \right ) \right ) + o(1).
    \]
    This suffices for the proof of the Theorem. 
\end{proof}

\subsection{Proof of the lower bound}\label{sec:proof-of-lower-bound-2}

We can finally complete the proof of the lower bound in Theorem~\ref{thm:main-theorem-1}. Indeed, the lower bound of Theorem~\ref{thm:main-theorem-5} is bounded below by Equation~\ref{eqn:poissTV>=LOWERBOUND} with $x = e^c$. This completes the proof of Theorem~\ref{thm:main-theorem-1}.

\section{Appendix}

\subsection{Calculations for the Red Zone}

\subsubsection{Technical calculations for \texorpdfstring{$\Zone_R^{II}$}{Red Zone II}}

In this section, we prove some properties about the auxiliary functions $\phi_1, \phi_2, \phi_3$, and $\phi_4$. 
\begin{Definition}
    Let $\phi_1 : \RR \to \RR$ be the function defined by 
    \begin{equation}\label{eqn:APPENDIX-D-DEFINITION}
        \phi_1(x) \eqdef \frac{(ab - b^2 - abx)^2}{16ab} + \frac{a^2 - ab}{2} x^2 + \frac{2ab-a^2}{2} x + \frac{a^2 - ab}{4}. 
    \end{equation}
\end{Definition}

\begin{Lemma}[Properties of $\phi_1$]\label{lem:APPENDIX-prop-D}
    The function $D$ satisfies the following properties:
    \begin{enumerate}
        \item For all $x$, we have 
        \[
            \phi_1'(x) = \left ( a^2 - \frac{7ab}{8} \right ) x + \frac{7ab - 4a^2 + b^2}{8}.
        \]
        \item There is $\varepsilon > 0$ depending only on $a$ and $b$ such that $\phi_1'(x) > 0$ for all $x \in (0.5-\varepsilon, \infty)$. 
        \item $\phi_1(x) > 0$ for all $x \in (0, \infty)$.
    \end{enumerate}
\end{Lemma}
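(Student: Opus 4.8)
The plan is to treat the three parts in order, with essentially all of the genuine work concentrated in part (3). For part (1), I would differentiate the defining expression in Equation~\ref{eqn:APPENDIX-D-DEFINITION} term by term: writing the first summand as $\frac{(abx-(ab-b^2))^2}{16ab}$, the chain rule gives derivative $\frac{2ab(abx-(ab-b^2))}{16ab}=\frac{abx+b^2-ab}{8}$, while the remaining terms contribute $(a^2-ab)x+\frac{2ab-a^2}{2}$. Collecting the coefficient of $x$ gives $\frac{ab}{8}+a^2-ab=a^2-\frac{7ab}{8}$ and collecting the constant term gives $\frac{b^2-ab}{8}+\frac{2ab-a^2}{2}=\frac{7ab-4a^2+b^2}{8}$, which is the claimed formula.

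For part (2), I would use part (1) to observe that $\phi_1'$ is affine in $x$ with slope $a^2-\frac{7ab}{8}=a\left(a-\frac{7b}{8}\right)$; since $a\ge b$ we have $a-\frac{7b}{8}\ge\frac{b}{8}>0$, so $\phi_1'$ is strictly increasing. A one-line computation gives $\phi_1'\!\left(\tfrac12\right)=\tfrac12\left(a^2-\tfrac{7ab}{8}\right)+\tfrac{7ab-4a^2+b^2}{8}=\frac{b(7a+2b)}{16}>0$. Because $\phi_1'$ is affine and increasing, the set where it is positive is a half-line $(x_1,\infty)$ with $x_1<\tfrac12$, so taking $\varepsilon\eqdef\tfrac12-x_1=\frac{b(7a+2b)}{2a(8a-7b)}>0$ (which depends only on $a,b$) yields $\phi_1'(x)>0$ for all $x\in\left(\tfrac12-\varepsilon,\infty\right)$.

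For part (3), I would note first that $\phi_1$ is a quadratic with leading coefficient $\frac{ab}{16}+\frac{a^2-ab}{2}=\frac{a(8a-7b)}{16}>0$ and with $\phi_1(0)=\frac{(ab-b^2)^2}{16ab}+\frac{a^2-ab}{4}\ge0$ (a sum of two nonnegative terms, using $a\ge b$), and then split according to the sign of $\phi_1'(0)=\frac{7ab-4a^2+b^2}{8}$. If $\phi_1'(0)\ge0$, then $\phi_1'(x)=2\left(a^2-\tfrac{7ab}{8}\right)x+\phi_1'(0)>0$ for $x>0$, so $\phi_1$ is strictly increasing on $[0,\infty)$ and hence $\phi_1(x)>\phi_1(0)\ge0$ for all $x>0$. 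If $\phi_1'(0)<0$, it suffices (since the leading coefficient is positive) to show $\phi_1$ has negative discriminant. To do this I would substitute $a=1+t$, $b=1-t$ with $t\in[0,1)$ (legitimate since $0<b\le a$ and $a+b=2$); a direct computation shows the discriminant of $\phi_1$ equals $-\tfrac18\left(-5t^4+13t^3+15t^2+11t-2\right)$, while the hypothesis $\phi_1'(0)<0$ becomes $5t^2+5t-2>0$, i.e. $t>\frac{\sqrt{65}-5}{10}>\frac{3}{10}$ (using $\sqrt{65}>8$). For such $t$ we have $11t-2>\frac{13}{10}>0$, $15t^2>0$, and $t^3(13-5t)>0$, so $-5t^4+13t^3+15t^2+11t-2>0$ and the discriminant is negative; therefore $\phi_1(x)>0$ for every real $x$, in particular on $(0,\infty)$.

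I expect the discriminant computation in part (3) to be the only real obstacle: the $(1+t,1-t)$ parametrization turns it into a degree-four polynomial inequality, and the crude term-by-term sign analysis succeeds precisely because the case $\phi_1'(0)<0$ confines $t$ to the range $t>\frac{3}{10}$. Parts (1) and (2) are routine, and the facts $\phi_1(0)\ge0$ and positivity of the leading coefficient used in part (3) are immediate from $a\ge b>0$.
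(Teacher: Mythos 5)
Your proposal is correct, and parts (1) and (2) are essentially the paper's argument: differentiate, note the slope $a^2-\tfrac{7ab}{8}=a\bigl(a-\tfrac{7b}{8}\bigr)>0$, and use $\phi_1'\bigl(\tfrac12\bigr)=\tfrac{b(7a+2b)}{16}>0$ (your explicit $\varepsilon=\tfrac{b(7a+2b)}{2a(8a-7b)}$ checks out). Part (3), however, takes a genuinely different route. The paper discards the nonnegative square term and bounds $\phi_1(x)\ge \tfrac{a^2-ab}{2}x^2+\tfrac{2ab-a^2}{2}x+\tfrac{a^2-ab}{4}\ge \tfrac{a^2-ab}{2}\bigl(x^2-x+\tfrac12\bigr)$, which is a two-line argument; you instead split on the sign of $\phi_1'(0)$ and, in the nontrivial case, compute the discriminant under the parametrization $a=1+t$, $b=1-t$. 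I verified your computation: the discriminant is $\tfrac{5t^4-13t^3-15t^2-11t+2}{8}$, the condition $\phi_1'(0)<0$ forces $t>\tfrac{\sqrt{65}-5}{10}>\tfrac{3}{10}$, and your term-by-term sign analysis then makes the discriminant negative, so the argument goes through. Your route is heavier computationally, but it buys something the paper's chain of inequalities does not: it is uniform down to $a=b$, where the paper's final lower bound degenerates to $\tfrac{a^2-ab}{2}\bigl(x^2-x+\tfrac12\bigr)=0$ and only yields $\phi_1\ge 0$, whereas in your scheme $a=b$ falls into the increasing case (there $\phi_1'(0)=\tfrac12>0$) and strict positivity on $(0,\infty)$ follows cleanly. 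One harmless slip: in that first case you write $\phi_1'(x)=2\bigl(a^2-\tfrac{7ab}{8}\bigr)x+\phi_1'(0)$, but by your own part (1) the correct expression is $\bigl(a^2-\tfrac{7ab}{8}\bigr)x+\phi_1'(0)$ (the factor $2$ belongs to the leading coefficient $\tfrac{a(8a-7b)}{16}$, and $2\cdot\tfrac{a(8a-7b)}{16}=a^2-\tfrac{7ab}{8}$ already); since all you use is positivity of the slope, the conclusion is unaffected.
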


\begin{proof}
    Part (1) follows after differentiating Equation~\ref{eqn:APPENDIX-D-DEFINITION}. For part (2), for $x > 0.5-\varepsilon$ we have 
    \[
        \phi_1'(x) \geq \frac{7ab + 2b^2}{16} - \left ( a^2 - \frac{7ab}{8} \right ) \varepsilon > 0
    \]
    for sufficiently small $\varepsilon$ depending only on $a$ and $b$. For part (3), for $x > 0$ we can bound 
    \begin{align*}
        \phi_1(x) & \geq \frac{a^2 - ab}{2} x^2 + \frac{2ab - a^2}{2} x + \frac{a^2 - ab}{4} \\
        & \geq \frac{a^2 - ab}{2} x^2 + \frac{ab - a^2}{2}x + \frac{a^2 - ab}{4} \\
        & = \frac{a^2 - ab}{2} \left ( x^2 - x + \frac{1}{2} \right ) > 0. 
    \end{align*}
\end{proof}

\begin{Definition}
    We define the function $\phi_2 : (0, \infty) \to \RR$ given by 
    \[
        \phi_2(x) \eqdef 2 + \frac{2}{b} \log \phi_1(x). 
    \]
    From Lemma~\ref{lem:APPENDIX-prop-D}, the function $\phi_2$ is well-defined. 
\end{Definition} 

\begin{Lemma}[Properties of $\phi_2$]\label{lem:appendix-frakD}
    The function $\phi_2$ satisfies the following properties.
    \begin{enumerate}
        \item $\phi_2'(x) > 0$ for $x \in (0.5-\varepsilon, \infty)$ for some $\varepsilon > 0$ depending only on $a$ and $b$. 
        \item $\phi_2(x) < x$ for all $x \in [0.7, 1]$
        \item $\phi_2(0.5) < 0$
    \end{enumerate}
\end{Lemma}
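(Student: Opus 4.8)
The plan is to settle part (1) directly from Lemma~\ref{lem:APPENDIX-prop-D}, and to convert parts (2) and (3) into elementary one-variable inequalities by exponentiating. For part (1), differentiating gives $\phi_2'(x)=\frac{2}{b}\cdot\frac{\phi_1'(x)}{\phi_1(x)}$. By Lemma~\ref{lem:APPENDIX-prop-D}(3) the denominator is positive for $x>0$, and by Lemma~\ref{lem:APPENDIX-prop-D}(2) the numerator $\phi_1'(x)$ is positive on $(0.5-\varepsilon,\infty)$ for some $\varepsilon>0$ depending only on $a,b$; after shrinking $\varepsilon$ so that $0.5-\varepsilon>0$ (which also keeps this interval inside the domain of $\phi_2$), both factors are positive there, hence so is $\phi_2'$.

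For part (2), the claim $\phi_2(x)<x$ on $[0.7,1]$ is equivalent, after exponentiating, to $\phi_1(x)<e^{b(x-2)/2}$ on that interval. I would fix $b$ and study $\Phi(x):=e^{b(x-2)/2}-\phi_1(x)$, whose derivative is $\Phi'(x)=\tfrac{b}{2}e^{b(x-2)/2}-\phi_1'(x)$. Since $2b\le a+b=2$ forces $b\le 1$, and $x-2<0$, the first term is at most $\tfrac12$; meanwhile the explicit formula of Lemma~\ref{lem:APPENDIX-prop-D}(1) shows $\phi_1'$ is linear with positive slope $a(a-\tfrac{7b}{8})>0$, so on $[0.7,1]$ it is minimized at $x=0.7$, where substituting $a=2-b$ gives $\phi_1'(0.7)=\frac{64-22b+5b^2}{80}\ge\frac{47}{80}>\frac12$. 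Hence $\Phi'<0$ on $[0.7,1]$, and it suffices to verify $\Phi(1)>0$, i.e. $\phi_1(1)<e^{-b/2}$. A short computation gives $\phi_1(1)=\frac{b^3}{16a}+\frac a2=1-\frac b2+\frac{b^3}{16(2-b)}$, while the Taylor lower bound $e^{-b/2}>1-\frac b2+\frac{b^2}{8}-\frac{b^3}{48}$ (valid for $b\le 1$) leaves a gap of at least $\frac{b^2(3-2b)}{24}>0$ for $b\in(0,1]$, using $2-b\ge 1$ to bound $\frac{b^3}{16(2-b)}\le\frac{b^3}{16}$.

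For part (3), exponentiating reduces $\phi_2(0.5)<0$ to $\phi_1(0.5)<e^{-b}$. Evaluating $\phi_1$ at $0.5$ and using $a+b=2$ gives $\phi_1(0.5)=\frac{b(a-2b)^2}{64a}+\frac a4=\frac{b(2-3b)^2}{64(2-b)}+\frac{2-b}{4}$; since $b\le 1$ the first summand is at most $\frac{1}{16}$, so $\phi_1(0.5)\le\frac{9-4b}{16}$, and the function $b\mapsto e^{-b}-\frac{9-4b}{16}$ has derivative $\frac14-e^{-b}<0$ on $[0,1]$, hence is minimized at $b=1$ with value $e^{-1}-\frac{5}{16}>0$. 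I expect the only genuinely delicate point to be the endpoint comparison $\phi_1(1)<e^{-b/2}$ in part (2): as $b\to 0$ this degenerates to an equality, so no crude estimate on $e^{-b/2}$ will do, and the second-order Taylor bound above is essential; everything else is bookkeeping plus the monotonicity already recorded in Lemma~\ref{lem:APPENDIX-prop-D}.
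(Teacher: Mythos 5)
Your proposal is correct and follows essentially the same route as the paper: part (1) via $\phi_2'=\tfrac{2}{b}\phi_1'/\phi_1$ with Lemma~\ref{lem:APPENDIX-prop-D}, part (2) by exponentiating and showing the difference $e^{\frac b2(x-2)}-\phi_1(x)$ is decreasing on $[0.7,1]$ with positive value at $x=1$, and part (3) by the same reduction to $\phi_1(0.5)<e^{-b}$. The only difference is that you explicitly carry out the single-variable estimates (the bound $\phi_1'(0.7)\ge \tfrac{47}{80}$, the third-order Taylor bound for $e^{-b/2}$, and the $\tfrac{9-4b}{16}$ comparison) that the paper leaves to the reader, and these computations check out.
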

\begin{proof}
    From Lemma~\ref{lem:APPENDIX-prop-D}(2), there is a real number $\varepsilon > 0$ depending only on $a$ and $b$ for which $\phi_1'(x) > 0$ for all $x \in (0.5-\varepsilon, \infty)$. Then for all $x \in (0.5-\varepsilon, \infty)$, we have 
    \[
        \phi_2'(x) = \frac{2}{b} \frac{\phi_1'(x)}{\phi_1(x)} > 0. 
    \]
    Part (2) is equivalent to the inequality
    \[
        \phi_1(x) < e^{\frac{b}{2}(x-2)} \text{ for all } x \in [0.7, 1].
    \]
    To prove this inequality, we study the difference $\Delta(t) \eqdef e^{\frac{b}{2}(x-2)} - \phi_1(x)$. It is enough to prove that $\Delta'(x) < 0$ in $[0.7, 1]$ and $\Delta(1) > 0$. For $x \in [0.7, 1]$, we can bound the derivative by 
    \begin{align*}
        \Delta'(x) & = \frac{b}{2} e^{\frac{b}{2}(x-2)} - \left ( a^2 - \frac{7ab}{8} \right ) x - \frac{7ab - 4a^2 + b^2}{8} \\
        & \leq \frac{b}{2} - \left (a^2 - \frac{7ab}{8} \right ) x - \frac{7ab - 4a^2 + b^2}{8} \\
        & = - \left ( a^2 - \frac{7ab}{8} \right ) x + \frac{4b - 7ab + 4a^2 - b^2}{8}.
    \end{align*}

    It is readily verified that the right hand side is strictly negative when $x = 0.7$ and $x = 1$. Since it is linear, it is strictly negative on $[0.7, 1]$. This proves that $\Delta'(x) < 0$ for $x \in [0.7, 1]$. Now, we want to verify that 
    \[
        \Delta(1) = e^{-\frac{b}{2}} - \left ( \frac{b^3}{16a} + \frac{a}{2} \right ) > 0 \text{ for all } b \in [0, 1]. 
    \]
    After substituting $a = 2-b$, this becomes a simple exercise in single-variable calculus. We leave the details to the reader. Similarly, for part (3), it is enough to prove that 
    \[
        \frac{b(a-2b)^2}{64a} + \frac{a}{4} < e^{-b}, \quad \text{ for } b \in [0, 1]. 
    \]
    This is also a simple exercise in single variable calculus which we leave to the reader.
\end{proof}

We now want to define an inverse function for $\phi_2$. To construct this function, we will need the following easy exercise from real analysis (see ~\cite{Rudin1976}*{Exercise 5.2} and~\cite{Bredon}*{Corollary 19.9}).  

\begin{Lemma}\label{lem:rudin}
    Let $L \in \RR$ and $\varepsilon > 0$ be real numbers. Let $f : (L-\varepsilon, \infty) \to \RR$ be a differentiable function satisfying $f'(x) > 0$ for all $x \in (L-\varepsilon, \infty)$ and $\lim_{x \to \infty} f(x) = \infty$.  Then, restricting $f$ to $(L, \infty)$ induces a differentiable bijection $f|_{(L, \infty)} : (L, \infty) \to (f(L), \infty)$ with a differentiable strictly increasing inverse function $g : (f(L), \infty) \to (L, \infty)$. 
\end{Lemma}

Lemma~\ref{lem:appendix-frakD}(1) and Lemma~\ref{lem:rudin} implies that there exists some differentiable inverse function of $\phi_2 : (0.5, \infty) \to (\phi_2(0.5), \infty)$. 

\begin{Definition}
    Let $\phi_3: (\phi_2(0.5), \infty) \to (0.5, \infty)$ be the inverse function of $\phi_2 : (0.5, \infty) \to (\phi_2(0.5), \infty)$. 
\end{Definition}

\begin{Lemma}[Properties of $\phi_3$]\label{lem:appendix-frakC}
    The function $\phi_3 : (\phi_2(0.5), \infty) \to (0.5, \infty)$ satisfies the following properties. 
    \begin{enumerate}
        \item For $x \in [0.7, 1]$, we have $\phi_3(x) > x$. 
        \item $\phi_3'(x) > 0$ for all $x \in (\phi_2(0.5), \infty)$.
    \end{enumerate}
\end{Lemma}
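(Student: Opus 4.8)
The plan is to derive both parts directly from the properties of $\phi_2$ already established in Lemma~\ref{lem:appendix-frakD}, together with the abstract inverse-function statement Lemma~\ref{lem:rudin}.

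Part (2) is essentially immediate. By Lemma~\ref{lem:appendix-frakD}(1) we have $\phi_2'(x) > 0$ on $(0.5 - \varepsilon, \infty)$, hence on $(0.5, \infty)$; combined with $\lim_{x\to\infty}\phi_2(x) = \infty$ (which follows from $\phi_1(x) \to \infty$), Lemma~\ref{lem:rudin} applies with $f = \phi_2$ and $L = 0.5$, and yields that the inverse $\phi_3$ is differentiable and \emph{strictly increasing}, i.e. $\phi_3'(x) > 0$ for all $x \in (\phi_2(0.5), \infty)$. Equivalently one may simply note $\phi_3'(x) = 1/\phi_2'(\phi_3(x)) > 0$.

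For part (1), I would first record the two membership facts that make the argument legitimate. Since $\phi_2(0.5) < 0$ by Lemma~\ref{lem:appendix-frakD}(3), every $x \in [0.7,1]$ lies in the domain $(\phi_2(0.5),\infty)$ of $\phi_3$; and since $\phi_2$ is increasing on $(0.5,\infty)$ and $x > 0.5$, we also have $\phi_2(x) > \phi_2(0.5)$, so $\phi_2(x)$ lies in the domain of $\phi_3$ as well. Now apply $\phi_3$, which is strictly increasing by part (2), to the inequality $\phi_2(x) < x$ furnished by Lemma~\ref{lem:appendix-frakD}(2): this gives $\phi_3(\phi_2(x)) < \phi_3(x)$. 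Since $x \in (0.5,\infty)$ we have $\phi_3(\phi_2(x)) = x$, so $x < \phi_3(x)$, which is exactly the claim.

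There is no genuine obstacle here: the content lies entirely in Lemma~\ref{lem:appendix-frakD} plus the elementary fact that an increasing bijection has an increasing inverse. The only point requiring a moment's care is verifying that both $x$ and $\phi_2(x)$ lie in the domain $(\phi_2(0.5),\infty)$ of $\phi_3$, and for this the inequality $\phi_2(0.5) < 0$ does all the work.
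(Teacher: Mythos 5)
Your proof is correct and takes essentially the same approach as the paper: part (2) is Lemma~\ref{lem:rudin} in both cases, and your part (1) is just the direct (contrapositive-free) version of the paper's contradiction argument, both resting on $\phi_2(x) < x$ from Lemma~\ref{lem:appendix-frakD}(2) together with monotonicity. The domain checks you include are a welcome bit of extra care but do not change the substance.
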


\begin{proof}
    For part (1), suppose for the sake of contradiction that for some $x \in [0.7, 1]$ we have $\phi_3(x) \leq x$. From the monotonicity of $\phi_2$ on $(0.5, \infty)$, we know that 
    \[
        x = \phi_2(\phi_3(x)) \leq \phi_2(x).
    \]
    But this contradicts Lemma~\ref{lem:appendix-frakD}(2). Part (2) follows from Lemma~\ref{lem:rudin}. 
\end{proof}

\begin{Definition}
    We define $\phi_4: (0.5, \infty) \to (0.5, \infty)$ to be the function defined by 
    \[
        \phi_4(x) \eqdef \frac{\phi_3(x) + x}{2}. 
    \]
    From Lemma~\ref{lem:appendix-frakD}(2), this is well-defined. 
\end{Definition}

\begin{Lemma}[Properties of $\phi_4$]\label{lem:appendix-scrC}
    The function $\phi_4: [0.7, \infty) \to (0.5, \infty)$ satisfies the following properties:
    \begin{enumerate}
        \item For $x \in [0.7,1]$, we have $\phi_4(x) > x$. 
        \item For $x \in [0.7,1]$, we have $x < \phi_4(x) < \phi_3(x)$. 
        \item $\phi_4'(x) > 0$ for all $x \in (0.7, \infty)$. 
    \end{enumerate}
\end{Lemma}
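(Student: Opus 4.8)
Everything follows immediately from the corresponding properties of $\phi_3$ established in Lemma~\ref{lem:appendix-frakC}, together with the elementary observation that the arithmetic mean of two distinct real numbers lies strictly between them.

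For parts (1) and (2): Lemma~\ref{lem:appendix-frakC}(1) gives $\phi_3(x) > x$ for all $x \in [0.7, 1]$. Since $\phi_4(x) = \tfrac{\phi_3(x) + x}{2}$ is the average of $\phi_3(x)$ and $x$, we get the chain of strict inequalities
\[
    x < \frac{\phi_3(x) + x}{2} < \phi_3(x),
\]
which is exactly part (2); part (1) is the left half of this inequality. (One should also note that $\phi_4$ is defined on $[0.7,\infty)$ here since, by Lemma~\ref{lem:appendix-frakD}(3), $\phi_2(0.5) < 0 < 0.7$, so $[0.7,\infty)$ lies in the domain $(\phi_2(0.5),\infty)$ of $\phi_3$.)

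For part (3): differentiating $\phi_4(x) = \tfrac{\phi_3(x)+x}{2}$ gives $\phi_4'(x) = \tfrac{\phi_3'(x) + 1}{2}$. By Lemma~\ref{lem:appendix-frakC}(2), $\phi_3'(x) > 0$ on $(\phi_2(0.5),\infty) \supseteq (0.7,\infty)$, so $\phi_4'(x) > \tfrac12 > 0$ for all $x \in (0.7,\infty)$, as claimed.

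\textbf{Main obstacle.} There is essentially no obstacle: the lemma is a direct corollary of the already-proved facts about $\phi_3$, and the only thing to be careful about is that the relevant points ($0.7$ and the interval $[0.7,1]$) lie in the domain of $\phi_3$, which is guaranteed by $\phi_2(0.5)<0$ from Lemma~\ref{lem:appendix-frakD}(3).
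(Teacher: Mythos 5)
Your proof is correct and follows essentially the same route as the paper: parts (1) and (2) come from $\phi_3(x)>x$ (Lemma~\ref{lem:appendix-frakC}(1)) and the fact that the average of $x$ and $\phi_3(x)$ lies strictly between them, and part (3) from $\phi_3'>0$ (Lemma~\ref{lem:appendix-frakC}(2)). The extra remark verifying that $[0.7,\infty)$ sits inside the domain of $\phi_3$ via $\phi_2(0.5)<0$ is a harmless (and slightly more careful) addition.
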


\begin{proof}
    From Lemma~\ref{lem:appendix-frakC}, we have 
    \[
        \phi_4(x) = \frac{\phi_3(x) + x}{2} > \frac{x + x}{2} = x \text{ for all } x \in [0.7, 1].
    \]
    Part (2) follows from Part (1). Part (3) follows from Lemma~\ref{lem:appendix-frakC}(2). 
\end{proof}

The main result of this section is the following dynamical result. It dictates the partitions we pick in the analysis of $\Zone_R^{II}$. 
\begin{Proposition}\label{prop:appendix-dynamical-subzoning}
    Consider the sequence defined by $x_0 = 0.7$ and $x_{t+1} = \phi_4(x_t)$ for all $t \geq 0$. Then there is some finite integer $M$ depending on only $a$ and $b$ for which 
    \[
        0.7 = x_0 < x_1 < \ldots < x_{M-1} \leq 1 < x_M. 
    \]
\end{Proposition}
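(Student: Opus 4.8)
The plan is to treat $(x_t)$ as the orbit of an iterated map and to extract finiteness from a monotone-convergence/fixed-point contradiction. First I would check that the sequence is well-defined: $x_0 = 0.7 > 0.5$ lies in the domain of $\phi_4$, and since $\phi_4 \colon (0.5,\infty) \to (0.5,\infty)$, every $x_t$ stays in $(0.5,\infty)$. Then the key structural observation is that as long as $x_t \in [0.7,1]$ we have $x_{t+1} = \phi_4(x_t) > x_t$ by Lemma~\ref{lem:appendix-scrC}(1), and also $x_{t+1} \geq 0.7$. So if we let $M$ be the smallest index (a priori possibly $+\infty$) for which $x_M > 1$, then $x_0, \dots, x_{M-1}$ all lie in $[0.7,1]$ and we automatically get the strict chain $0.7 = x_0 < x_1 < \cdots < x_{M-1} \leq 1$.

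The substance is showing $M < \infty$. I would argue by contradiction: if $M = \infty$, then $(x_t)_{t\ge 0}$ is a strictly increasing sequence contained in the compact interval $[0.7,1]$, hence converges to some limit $L \in [0.7,1]$. To pass to the limit in $x_{t+1} = \phi_4(x_t)$ I need $\phi_4$ continuous on $[0.7,1]$; this is where I would invoke Lemma~\ref{lem:appendix-frakD}(3), which gives $\phi_2(0.5) < 0$, so that $[0.7,1] \subseteq (\phi_2(0.5),\infty)$, the interval on which $\phi_3$ — and therefore $\phi_4 = \tfrac12(\phi_3 + \mathrm{id})$ — is differentiable, in particular continuous. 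Continuity then forces $L = \phi_4(L)$ with $L \in [0.7,1]$, which contradicts $\phi_4(L) > L$ (Lemma~\ref{lem:appendix-scrC}(1)). Hence $M$ is finite and we have $0.7 = x_0 < x_1 < \cdots < x_{M-1} \leq 1 < x_M$, as claimed.

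For the assertion that $M$ depends only on $a$ and $b$: I would note that $\phi_1,\phi_2,\phi_3,\phi_4$ are defined purely in terms of the constants $a,b$ (no dependence on $n$ or $N$), so the entire orbit $(x_t)$ and thus its first exit time $M$ above $1$ depend only on $a$ and $b$. I do not expect any serious obstacle here — the one point that genuinely requires care is ensuring $[0.7,1]$ sits inside the domain of $\phi_3$ so that the limiting step is legitimate, and that is exactly supplied by $\phi_2(0.5)<0$; everything else is the standard monotone-sequence argument plus the strict inequality $\phi_4(x)>x$ already proved in the appendix.
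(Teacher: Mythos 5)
Your proposal is correct and follows essentially the same route as the paper: assume no finite $M$ exists, note the orbit is then strictly increasing and trapped in $[0.7,1]$, pass to the limit $L=\phi_4(L)$, and contradict $\phi_4(x)>x$ from Lemma~\ref{lem:appendix-scrC}(1). Your explicit justification of the continuity of $\phi_4$ on $[0.7,1]$ (via $\phi_2(0.5)<0$ and the differentiability of $\phi_3$) is a point the paper's proof uses only implicitly, but it does not change the argument.
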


\begin{proof}
    For the sake of contradiction, suppose that no such finite integer $M$ exists. From Lemma~\ref{lem:appendix-scrC}(1) it must be the case that our sequence is infinitely strictly increasing in the interval $[0.7, 1]$. But an increasing sequence bounded sequence always converges to some point $L$. Since $[0.7, 1]$ is closed we must have $L \in [0.7, 1]$. Thus, we have 
    \[
        L = \lim_{t \to \infty} x_{t+1} = \lim_{t \to \infty} \phi_4(x_t) = \phi_4(L). 
    \]
    But this contradicts Lemma~\ref{lem:appendix-scrC}(1). 
\end{proof}

\subsubsection{Technical calculations for \texorpdfstring{$\Zone_R^{III}$}{Red Zone III}}

In this section, we prove bounds on $\mathbf{Q}_R(x, y)$ relevant to the analysis of the subzone $\Zone_R^{III}$. Recall that
\[
    \mathbf{Q}_R (x, y) = \frac{a^2 - b^2}{4} + \frac{a^2-ab}{2}(x^2-x) + \frac{ab-b^2}{2} (y^2-y) + \frac{ab}{2} \left ( x- \frac{xy}{2} - \frac{y^2}{2} \right ). 
\]

\begin{Lemma}\label{lem:APPENDIX-BOUND-ON-R-FUNCTION-RED-III}
    The function $\mathbf{Q}_R(x, y)$ satisfies the following inequality. 
    \begin{align*}
        \max \left\{  \max_{\substack{x \in [0.7, 1] \\ y \in [0.5, 0.7]}} \mathbf{Q}_R(x, y), \max_{\substack{x \in [0.5, 0.7] \\ y \in [0.7, 1]}} \mathbf{Q}_R(x, y) \right\} \leq \frac{a^2}{4} + \frac{3ab}{16} - \frac{b^2}{8}. 
    \end{align*}
\end{Lemma}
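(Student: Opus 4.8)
The idea is to locate the maximum of $\mathbf{Q}_R$ over each of the two rectangles by a one-variable monotonicity argument in each coordinate, reducing the problem to evaluating $\mathbf{Q}_R$ at a single corner of each rectangle and then comparing with the right-hand side. Throughout we use $a+b=2$ and $0<b\le a$.

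First, fix $y$ and differentiate in $x$:
\[
    \partial_x \mathbf{Q}_R(x,y) = \frac{a^2-ab}{2}(2x-1) + \frac{ab}{2}\Bigl(1 - \frac{y}{2}\Bigr).
\]
On both rectangles we have $x\ge 0.5$ and $y\le 1$, and since $a^2-ab = a(a-b)\ge 0$ and $ab>0$, both summands are non-negative; hence $\mathbf{Q}_R$ is non-decreasing in $x$. Therefore the maximum over the first rectangle is attained on the slice $x=1$, and the maximum over the second rectangle is attained on the slice $x=0.7$.

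Next I restrict to each of these two slices, on which $\mathbf{Q}_R$ is a quadratic polynomial in $y$, so $\partial_y\mathbf{Q}_R$ is an affine function of $y$. For the slice $x=1$ one computes
\[
    \partial_y\mathbf{Q}_R(1,y) = \frac{ab-b^2}{2}(2y-1) - \frac{ab}{2}\Bigl(\frac{1}{2}+y\Bigr),
\]
and for the slice $x=0.7$,
\[
    \partial_y\mathbf{Q}_R(0.7,y) = \frac{ab-b^2}{2}(2y-1) - \frac{ab}{2}\bigl(0.35+y\bigr).
\]
A short arithmetic check shows $\partial_y\mathbf{Q}_R(1,y)<0$ at both $y=0.5$ and $y=0.7$, and $\partial_y\mathbf{Q}_R(0.7,y)<0$ at both $y=0.7$ and $y=1$. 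Since an affine function that is negative at both endpoints of an interval is negative on the whole interval (this is the point that must not be skipped — the slice may be \emph{concave} in $y$ when $b>2/3$, so one cannot simply say a quadratic is maximized at an endpoint, but the endpoint-derivative check is valid regardless of the sign of the leading coefficient), $\mathbf{Q}_R$ is strictly decreasing in $y$ on each slice. Hence the maximum over the first rectangle occurs at $(x,y)=(1,0.5)$ and over the second at $(x,y)=(0.7,0.7)$.

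Finally, a direct evaluation using $a+b=2$ gives
\[
    \mathbf{Q}_R(1,0.5) = \frac{a^2}{4} + \frac{3ab}{16} - \frac{b^2}{8},
    \qquad
    \mathbf{Q}_R(0.7,0.7) = 0.145\,a^2 + 0.105\,ab - 0.145\,b^2 .
\]
The first value meets the claimed bound with equality. For the second,
\[
    \Bigl(\tfrac{a^2}{4} + \tfrac{3ab}{16} - \tfrac{b^2}{8}\Bigr) - \mathbf{Q}_R(0.7,0.7) = 0.105\,a^2 + 0.0825\,ab + 0.02\,b^2 > 0,
\]
so $\mathbf{Q}_R(0.7,0.7)$ is strictly below the bound. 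Combining the two cases yields the inequality in the statement. The only genuinely delicate step is the sign analysis of $\partial_y\mathbf{Q}_R$ on the two slices; everything else is elementary arithmetic with $a+b=2$.
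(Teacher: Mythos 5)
Your proof is correct and follows essentially the same route as the paper: reduce by monotonicity in $x$ to the boundary slices $x=1$ and $x=0.7$, then show the resulting quadratic in $y$ is maximized at the smallest admissible $y$ and evaluate there. Your endpoint-sign check on the affine derivative $\partial_y \mathbf{Q}_R$ is a slightly tidier way to handle the $y$-direction than the paper's case split on $b$ versus $2/3$ (and its cruder bound on the $x=0.7$ slice via dropping the $y^2-y$ term), but the substance and all the numerical conclusions agree.
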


\begin{proof}
    For any $y \in [0.5, 0.7]$, the functions $x - \frac{xy}{2}$ and $x^2 - x$ are monotonically increasing when $x \in [0.7, 1]$. Thus, we can bound 
    \[
        \max_{\substack{x \in [0.7, 1] \\ y \in [0.5, 0.7]}} \mathbf{Q}_R(x, y) \leq \max_{y \in [0.5, 0.7]} \mathbf{Q}_R(1, y). 
    \]
    The result of substituting $x = 1 $ in $R_3$ is 
    \begin{equation}\label{eqn:spain}
    \mathbf{Q}_R(1, y) = \left ( \frac{ab - 2b^2}{4} \right ) y^2 - \left ( \frac{3ab - 2b^2}{4} \right ) y + \left ( \frac{a^2 + 2ab - b^2}{4} \right ). 
    \end{equation}
    We want to maximize Equation~\ref{eqn:spain} over $y \in [0.5, 0.7]$. When $b = \frac{2}{3}$, the equation is a linear equation in $y$ with negative slope. In this case, the maximum value is achieved when $y = 0.5$. When $b < \frac{2}{3}$, Equation~\ref{eqn:spain} is a quadratic equation in $y$ with positive leading coefficient. The $y$ value over $\RR$ which achieves the absolute minimum is 
    \[
        y_{\min} = \frac{3a - 2b}{2(a - 2b)} > 1 \quad \text{ for } b < \frac{2}{3}. 
    \]
    In this case $y = 0.5$ still achieves the maximum when we restrict $y \in [0.5, 0.7]$. When $b > 2/3$ the quadratic has negative leading coefficient. The value of $y$ which achieves the maximum is 
    \[
        y_{\max} = \frac{3a - 2b}{2(a - 2b)} < 0 \quad \text{ for } b > \frac{2}{3}. 
    \]
    Thus the maximum value is still achieved when $y = 0.5$.
    
    In all cases, $\mathbf{Q}_R(1, y)$ is maximized by $y =0.5$ over $[0.5, 0.7]$. This gives us the bound 
    \[
        \max_{\substack{x \in [0.7, 1] \\ y \in [0.5, 0.7]}} \mathbf{Q}_R(x, y)\leq \mathbf{Q}_R(1, 0.5) = \frac{a^2}{4} + \frac{3ab}{16} - \frac{b^2}{8}.
    \]
    Now we want to maximize $\mathbf{Q}_R(x, y)$ in the domain $x \in [0.5, 0.7]$ and $y \in [0.7, 1]$. For any $y \in [0.7, 1]$, the functions $x - \frac{xy}{2}$ and $x^2 - x$ are monotonically increasing functions in $x \in [0.5, 0.7]$. Thus, we can bound 
    \[
        \max_{\substack{x \in [0.5, 0.7] \\ y \in [0.7, 1]}} \mathbf{Q}_R (x, y) \leq \max_{y \in [0.7, 1]} \mathbf{Q}_R(0.7, y).
    \]
    Using $y^2-y \leq 0$ for $y \in [0.7, 1]$ we can further bound $\max_{y \in [0.7, 1]} \mathbf{Q}_R(0.7, y)$ by
    \begin{align*}
        & \leq (0.145a^2 + 0.105ab - 0.25b^2) + \frac{ab}{2} \max_{y \in [0.7, 1]} \left \{ 0.7 - 0.35 y - 0.5 y^2 \right \} \\
        & < \frac{a^2}{4} + \frac{3ab}{16}- \frac{b^2}{8}. 
    \end{align*}
    This suffices for the proof of the lemma. 
\end{proof}

\subsubsection{Technical calculations for \texorpdfstring{$\Zone_R^{IV}$}{Red Zone IV}}

In this section, we prove bounds on $\mathbf{Q}_R(x, y)$ relevant to the analysis of the subzone $\Zone_R^{IV}$. 

\begin{Lemma}\label{lem:APPENDIX-ZONE-RED-IV-BOUND-ON-Q}
    The function $\mathbf{Q}_R(x, y)$ satisfies the following inequality. 
    \[
        \max_{\substack{x \in [0.7, 1] \\ y \in [0.7, 1]}} \mathbf{Q}_R(x, y) \leq 0.25a^2 + 0.0975ab-0.145b^2. 
    \]
\end{Lemma}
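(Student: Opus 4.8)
The plan is to reduce the two–variable maximum to a one–variable one by monotonicity in $x$, then to a single boundary point by monotonicity in $y$, and finally to evaluate.

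First I would show that, for each fixed $y\in[0.7,1]$, the map $x\mapsto\mathbf{Q}_R(x,y)$ is increasing on $[0.7,1]$. Differentiating the defining formula gives
\[
    \partial_x\mathbf{Q}_R(x,y)=\frac{a^2-ab}{2}\,(2x-1)+\frac{ab}{2}\left(1-\frac{y}{2}\right).
\]
Since $a+b=2$ with $0<b\le a$ we have $a^2-ab=a(a-b)\ge 0$ and $ab>0$, while $2x-1\ge 0.4>0$ for $x\ge 0.7$ and $1-y/2\ge 1/2>0$ for $y\le 1$. Hence $\partial_x\mathbf{Q}_R(x,y)>0$ throughout $[0.7,1]^2$, so the maximum is attained along $x=1$, i.e.
\[
    \max_{x,y\in[0.7,1]}\mathbf{Q}_R(x,y)=\max_{y\in[0.7,1]}\mathbf{Q}_R(1,y).
\]

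Next I would use that $\mathbf{Q}_R(1,y)$ is exactly the quadratic from Equation~\ref{eqn:spain}, namely
\[
    \mathbf{Q}_R(1,y)=\frac{ab-2b^2}{4}\,y^2-\frac{3ab-2b^2}{4}\,y+\frac{a^2+2ab-b^2}{4},
\]
and show it is decreasing on $[0.7,1]$. Its derivative $\frac{ab-2b^2}{2}y-\frac{3ab-2b^2}{4}$ is affine in $y$, so it suffices to check the two endpoints: at $y=1$ it equals $-\frac{b(a+2b)}{4}<0$, and at $y=0.7$ it equals $-\frac{b(2a+b)}{5}<0$. Being affine and negative at both endpoints, the derivative is negative on all of $[0.7,1]$, so the maximum of $\mathbf{Q}_R(1,\cdot)$ there is at $y=0.7$. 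Finally, plugging in, $\mathbf{Q}_R(1,0.7)=\tfrac14\bigl(a^2+0.39\,ab-0.58\,b^2\bigr)=0.25a^2+0.0975ab-0.145b^2$, which is the asserted bound — in fact an equality, so the constant is sharp.

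There is no real obstacle here: the argument is routine calculus. The only point requiring a little care is keeping straight the signs of the coefficients $a^2-ab$, $ab-b^2$, and especially $ab-2b^2$ under the constraints $0<b\le a$ and $a+b=2$; the last of these vanishes at $b=2/3$, which is why I verify the decrease of $\mathbf{Q}_R(1,\cdot)$ by evaluating its derivative at the endpoints rather than by casework on the location of the parabola's vertex.
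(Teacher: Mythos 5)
Your proof is correct and follows essentially the same route as the paper: reduce to $x=1$ by monotonicity of $\mathbf{Q}_R(\cdot,y)$ in $x$, then maximize the quadratic $\mathbf{Q}_R(1,y)$ over $y\in[0.7,1]$ at $y=0.7$, yielding exactly $0.25a^2+0.0975ab-0.145b^2$. The only (harmless) difference is that you verify the decrease of $\mathbf{Q}_R(1,\cdot)$ by checking the sign of its affine derivative at the two endpoints, whereas the paper repeats the vertex/casework analysis on the sign of $ab-2b^2$ from the Zone III lemma.
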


\begin{proof}
    For any $y \in [0.7, 1]$, the functions $x - \frac{xy}{2}$ and $x^2 - x$ are monotonically increasing when $x \in [0.7, 1]$. Thus the maximum will be achieved when $x = 1$. We now want to maximize 
    \[
        \mathbf{Q}_R(1, y) = \frac{ab - 2b^2}{4} y^2 - \frac{3ab - 2b^2}{4} y + \frac{a^2 + 2ab - b^2}{4}
    \]
    over $y \in [0.7, 1]$. From the same analysis as in the proof of Lemma~\ref{lem:APPENDIX-BOUND-ON-R-FUNCTION-RED-III}, the maximum is achieved when $y = 0.7$. This completes the proof of the lemma. 
\end{proof}

\subsection{Calculations for the Blue Zone}
\subsubsection{Technical calculations for \texorpdfstring{$\Zone_B^I$}{Blue Zone I}}

In this section, we prove bounds on $\mathbf{Q}_B$ relevant to the analysis of $\Zone_B^I$. Recall that $a^* = 2-a^{-1}$ and 
\[
    \mathbf{Q}_B(x, y) = \frac{a^2 - b^2}{4} + \frac{ab}{2} \left( x - \frac{xy}{2} - \frac{y^2}{2} \right) + \frac{ab - b^2}{2} (y^2 - y).
\]
\begin{Lemma}\label{lem:APPENDIX-ZONE-BLUE-I-BOUNDS}
    For any $\varepsilon > 0$, there is a $\delta > 0$ depending on $\varepsilon, a, b$ such that 
    \[
        \max_{\substack{x \in [1, (a^* + \delta)] \\ y \in [0.5, 1]}} \mathbf{Q}_B(x, y) \leq 1 - \frac{b}{4} - \frac{7}{16}b^2 + \varepsilon. 
    \]
\end{Lemma}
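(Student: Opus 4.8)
The plan is to reduce the maximization to the corner $x = a^*$ by monotonicity in $x$, and then to check that the quadratic $y \mapsto \mathbf{Q}_B(a^*, y)$ attains its maximum on $[0.5, 1]$ at the left endpoint $y = 0.5$, where its value turns out to be exactly $1 - \tfrac b4 - \tfrac{7}{16}b^2$. The parameter $\varepsilon$ (equivalently the small extension $\delta$) is then pure slack coming from moving $x$ slightly past $a^*$.

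First I would use monotonicity in $x$. Since $\partial_x \mathbf{Q}_B(x, y) = \tfrac{ab}{2}\bigl(1 - \tfrac y2\bigr) > 0$ for $y \le 1$, the function is increasing in $x$ for each fixed $y$, so the maximum over $x \in [1, a^*+\delta]$ is attained at $x = a^* + \delta$, and
\[
    \mathbf{Q}_B(a^* + \delta, y) = \mathbf{Q}_B(a^*, y) + \tfrac{ab}{2}\,\delta\bigl(1 - \tfrac y2\bigr) \le \mathbf{Q}_B(a^*, y) + \tfrac{3ab}{8}\,\delta .
\]
Hence it suffices to prove $\max_{y \in [0.5, 1]} \mathbf{Q}_B(a^*, y) \le 1 - \tfrac b4 - \tfrac{7}{16}b^2$; choosing $\delta < \tfrac{8\varepsilon}{3ab}$ then yields the stated inequality.

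For the reduced claim I would treat $y \mapsto \mathbf{Q}_B(a^*, y)$ as a quadratic in $y$, using $a\cdot a^* = 2a - 1$ to simplify its coefficients; its leading coefficient equals $\tfrac{b(a - 2b)}{4}$. Substituting $a = 2 - b$ throughout, a direct computation gives $\mathbf{Q}_B(a^*, \tfrac12) = 1 - \tfrac b4 - \tfrac{7}{16}b^2$ and $\mathbf{Q}_B(a^*, 1) = 1 - \tfrac{3b}{4} - \tfrac{b^2}{4}$, so
\[
    \mathbf{Q}_B(a^*, \tfrac12) - \mathbf{Q}_B(a^*, 1) = \tfrac{b}{16}(8 - 3b) > 0 \quad \text{for } b \in (0, 1].
\]
If $b \le 2/3$ the leading coefficient is nonnegative, so the maximum over $[0.5, 1]$ is attained at an endpoint, and by the displayed inequality that endpoint is $y = 0.5$. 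If $b > 2/3$ the parabola is concave, with vertex at $y^* = \tfrac{7 - 6b}{2(2 - 3b)}$, which is negative on $(2/3, 1]$ (numerator positive, denominator negative), hence lies to the left of $[0.5, 1]$, so $\mathbf{Q}_B(a^*, \cdot)$ is decreasing there and the maximum is again at $y = 0.5$ (the boundary case $b = 2/3$ is linear and decreasing). Thus $\max_{y \in [0.5, 1]} \mathbf{Q}_B(a^*, y) = 1 - \tfrac b4 - \tfrac{7}{16}b^2$, which combined with the reduction step completes the proof.

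The only delicate point is bookkeeping: one must carry the substitution $a = 2 - b$ through the quadratic coefficients carefully so that the closed form at $y = 0.5$ comes out to be exactly the claimed right-hand side — the estimate is tight at $(x, y) = (a^*, 0.5)$, which is precisely why the statement needs the slack $\varepsilon$ (equivalently the extension $\delta$) — and one must verify the sign of the vertex $y^*$ in the concave regime $b > 2/3$. There is no conceptual obstacle beyond these routine calculations.
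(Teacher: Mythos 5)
Your proposal is correct and follows essentially the same route as the paper's proof: reduce to $x=a^*$ by monotonicity in $x$ (absorbing the extension to $a^*+\delta$ into the $\varepsilon$ slack), then analyze the quadratic $y\mapsto \mathbf{Q}_B(a^*,y)$ by cases on the sign of its leading coefficient $\tfrac{b(a-2b)}{4}$ and conclude the maximum over $[0.5,1]$ is at $y=0.5$ with value $1-\tfrac b4-\tfrac{7}{16}b^2$. Your endpoint evaluations and the vertex location $\tfrac{7-6b}{2(2-3b)}$ check out, so the argument is sound.
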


\begin{proof}
    Let us pick $\delta = \varepsilon / ab$. For $y \in [0.5, 1]$, we have that $x  - \frac{xy}{2}$ is monotonically increasing as a function in $x$. In particular, it is maximized when $x = a^* + \delta$. Thus, the maximum is bounded above by 
    \begin{align*}
        \max_{\substack{x \in [1, (a^* + \delta)] \\ y \in [0.5, 1]}} \mathbf{Q}_B(x, y) & \leq ab \delta + \max_{y \in [0.5, 1]} \mathbf{Q}_B(a^*, y) = \varepsilon + \max_{y \in [0.5, 1]} \mathbf{Q}_B(a^*, y).
    \end{align*}
    Since $a^* = 2 - a^{-1}$, we have $aa^* = 2a - 1$. This gives us 
    \[
        \mathbf{Q}_B(a^*, y) = \left( \frac{ab - 2b^2}{4} \right) y^2 - \left( \frac{4ab - 2b^2 - b}{4} \right) y + \left( \frac{a^2 - b^2 + 4ab - 2b}{4} \right).
    \]
    When $b = 2/3$, this is a linear equation in $y$ with negative slope. Thus it is maximized when $y = 0.5$. When $b < 2/3$, we have a quadratic equation in $y$ with positive leading coefficient which is minimized at 
    \[
        y = \frac{4a-2b-1}{2(a-2b)} > 1, \quad \text{ for } b < \frac{2}{3}. 
    \]
    In this case the function is maximized at $y = 0.5$. When $b > 2/3$, we have a quadratic equation in $y$ with negative leading coefficient which is maximized at 
    \[
        y = \frac{4a-2b-1}{2(a-2b)} < 0, \quad \text{ for } b > \frac{2}{3}. 
    \]
    Thus, the function is still maximized at $y = 0.5$. This gives the bound 
    \[
        \max_{y \in [0.5, 1]} \mathbf{Q}_B(a^*, y) \leq \mathbf{Q}_B(a^*, 0.5) = 1 - \frac{b}{4} - \frac{7}{16} b^2. 
    \]
    This suffices for the proof. 
\end{proof}

\subsubsection{Technical calculations for \texorpdfstring{$\Zone_B^{II}$}{Blue Zone II}}\label{sec:technical-Blue-Zone-II}

In this section, we explore some properties of the function $\mathbf{T}_B$ and $\mathbf{P}_B$ for the analysis of $\Zone_B^{II}$. Recall the definition of $\mathbf{T}_B$.
\begin{Definition}
    Let $L_B \eqdef 1 + \frac{2}{b} \log \frac{a}{2}$. The function $\mathbf{T}_B : (L_B, \infty) \to \RR$ is defined by the equation 
    \[
        \mathbf{T}_B(x) \eqdef \sqrt{\frac{2e^{\frac{b}{2}(x-1)}-a}{ab}}.
    \]
\end{Definition}

\begin{Definition}
    We define $\mathscr{T}_B : (L_B, \infty) \to \RR$ by 
    \[
        \mathscr{T}_B (x) \eqdef \frac{\mathbf{T}_B(x) + x}{2}. 
    \]
    Note that $L_B \leq 0$ for $b \in (0, 1]$. 
\end{Definition}
\begin{Lemma}\label{lem:APPENDIX-T_B-properties}
    $\mathbf{T}_B$ satisfies the following properties:
    \begin{enumerate}
        \item For all $x \in (L_B, \infty)$, $\mathbf{T}_B'(x) > 0$ and $\mathscr{T}_B'(x) > 0$. 
        \item $\mathbf{T}_B(x) > x$ and $\mathscr{T}_B(x) > x$ for $x \in [0, a^*-1]$.
    \end{enumerate}
\end{Lemma}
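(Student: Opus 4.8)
The plan is to dispatch the two parts separately; part (1) is a one-line differentiation, and part (2) is a concavity argument together with the elementary inequality $e^y > 1+y$. Throughout I will use $a+b=2$ and $0 < b \le a$, so $a \in [1,2)$ and $a^*-1 = 1-a^{-1} \in [0,\frac12)$.

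For part (1), I would write $\mathbf{T}_B(x) = \sqrt{u(x)}$ where $u(x) \eqdef \frac{2e^{\frac{b}{2}(x-1)}-a}{ab}$, which is strictly positive exactly on $(L_B,\infty)$ by the definition of $L_B$. Since $u'(x) = \frac{1}{a}e^{\frac{b}{2}(x-1)} > 0$, the chain rule gives $\mathbf{T}_B'(x) = \frac{u'(x)}{2\sqrt{u(x)}} > 0$ on $(L_B,\infty)$, and then $\mathscr{T}_B'(x) = \frac{\mathbf{T}_B'(x)+1}{2} > 0$ at once.

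For part (2), I would first note that $L_B < 0$ — equivalently $a < 2e^{-b/2}$, i.e.\ $1-\frac{b}{2} < e^{-b/2}$, which is the instance $y = -\frac{b}{2}$ of $e^y > 1+y$ — so that $[0,a^*-1] \subseteq (L_B,\infty)$ and $\mathbf{T}_B$ is defined and nonnegative there. Since $x \ge 0$ and $\mathbf{T}_B(x) \ge 0$, the claim $\mathbf{T}_B(x) > x$ on $[0,a^*-1]$ is equivalent to positivity of
\[
    g(x) \eqdef 2e^{\frac{b}{2}(x-1)} - a - abx^2
\]
on that interval, because $g(x) = ab\bigl(\mathbf{T}_B(x)^2 - x^2\bigr)$. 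I would then show $g$ is concave there: $g''(x) = \frac{b^2}{2}e^{\frac{b}{2}(x-1)} - 2ab$, and on $[0,a^*-1]$ one has $x < 1$, hence $e^{\frac{b}{2}(x-1)} < 1$, so $g''(x) < \frac{b^2}{2} - 2ab < 0$ since $4a \ge 4 > 1 \ge b$. A concave function on a closed interval is minimized at an endpoint, so it suffices to check $g(0) > 0$ and $g(a^*-1) > 0$.

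The endpoint $x=0$ gives $g(0) = 2e^{-b/2} - a = 2\bigl(e^{-b/2} - 1 + \frac{b}{2}\bigr) > 0$, again by $e^y > 1+y$. At $x = a^*-1$ one has $\frac{b}{2}(a^*-1-1) = -\frac{b}{2a}$, so $a\cdot g(a^*-1) = 2ae^{-b/(2a)} - a^2 - b(a-1)^2$; bounding $e^{-b/(2a)} > 1 - \frac{b}{2a}$ yields $a\cdot g(a^*-1) > (2a-b) - a^2 - b(a-1)^2$, and substituting $a = 2-b$ collapses the right-hand side to $b^2(1-b) \ge 0$ for $b \in (0,1]$. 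Hence $g > 0$ on $[0,a^*-1]$, so $\mathbf{T}_B(x) > x$, and finally $\mathscr{T}_B(x) = \frac{\mathbf{T}_B(x)+x}{2} > x$. The only step requiring care is this last endpoint evaluation: the substitution $a = 2-b$ is needed to see that the candidate bound simplifies to the manifestly nonnegative quantity $b^2(1-b)$, and everything else is routine differentiation plus the inequality $e^y > 1+y$.
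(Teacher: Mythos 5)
Your proof is correct. Part (1) is the same computation as the paper's (differentiate the square/square root and use positivity). For part (2) both arguments reduce to the same inequality $2e^{\frac{b}{2}(x-1)} > abx^2 + a$ on $[0,a^*-1]$, but you prove it by a different mechanism: you show the difference $g(x) = 2e^{\frac{b}{2}(x-1)} - a - abx^2$ is concave there (via $g''<0$, using $x<1$ and $b < 4a$) and then check positivity at the two endpoints, with the endpoint $x=a^*-1$ requiring the tangent-line bound $e^{-b/(2a)} > 1-\tfrac{b}{2a}$ and the substitution $a=2-b$ to collapse the bound to $b^2(1-b)\ge 0$. The paper instead argues pointwise: applying $e^y > 1+y$ directly gives $2e^{\frac{b}{2}(x-1)} > a + bx$, and then $a+bx \ge a+abx^2$ holds because $ax \le a(1-a^{-1}) = a-1 \le 1$ on the interval; no concavity or endpoint algebra is needed. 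Both routes hinge on the same elementary inequality $e^y>1+y$; the paper's chain is shorter and avoids the delicate endpoint evaluation, while your concavity-plus-endpoints scheme is a more generic template (it would survive perturbations of $\mathbf{T}_B$ for which no clean pointwise linear comparison is available), at the cost of the extra second-derivative estimate and the algebra at $x=a^*-1$. You also verify $L_B<0$ explicitly (so that $[0,a^*-1]\subseteq (L_B,\infty)$), which the paper only records as a remark after the definition of $\mathscr{T}_B$; that is a reasonable bit of added care and is consistent with the paper.
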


\begin{proof}
    For part (1), we take the derivative of $\mathbf{T}_B^2$. This gives  
    \[
        2 \mathbf{T}_B \mathbf{T}_B' = \frac{d}{dx} \mathbf{T}_B^2 = \frac{1}{a} e^{\frac{b}{2}(x-1)} > 0. 
    \]
    Since $\mathbf{T}_B > 0$, we must have $\mathbf{T}'_B > 0$. This also implies that $\mathscr{T}_B' > 0$. 

    For part (2), the inequality for $\mathscr{T}_B$ follows from the inequality for $\mathbf{T}_B$. The inequality $\mathbf{T}_B$ is equivalent to
    \[
        2e^{\frac{b}{2}(x-1)} > ab x^2 + a, \quad \text{ for $x \in [0, a^*-1]$}.
    \]
    But this follows from 
    \[
        2e^{\frac{b}{2}(x-1)} > 2 \left( 1 + \frac{b}{2}(x-1) \right) = a + bx \geq ab x^2 + a
    \]
    whenever $x \in [0, a^*-1]$. 
\end{proof}

\begin{Lemma}\label{lem:APPENDIX-zoneB-subzone-2-sequence-exists}
    Let $\{x_t\}_{t \geq 0}$ be the sequence defined by $x_0 = 0$ and $x_t = \mathscr{T}_B(x_{t-1})$ for $t \geq 1$. Then there is some positive integer $M$ depending only on $a, b$ and a sufficiently small $\varepsilon > 0$ depending only on $a, b$ such that 
    \[
        0 = x_0 < x_1 < \ldots < x_{M-1} \leq a^*-1 < a^*-1 + \varepsilon \leq x_M. 
    \]
\end{Lemma}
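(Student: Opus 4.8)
The plan is to copy the structure of the proof of Proposition~\ref{prop:appendix-dynamical-subzoning}: combine the monotonicity of $\mathscr{T}_B$ with the monotone convergence theorem to rule out an infinite orbit trapped below $a^*-1$.

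First I would invoke Lemma~\ref{lem:APPENDIX-T_B-properties}: part (1) gives that $\mathscr{T}_B$ is differentiable, hence continuous, and strictly increasing on $(L_B,\infty)$, which contains $[0,a^*-1]$ since $L_B\le 0$; part (2) gives $\mathscr{T}_B(x)>x$ for every $x\in[0,a^*-1]$. Consequently, as long as $x_{t-1}\in[0,a^*-1]$ one has $x_t=\mathscr{T}_B(x_{t-1})>x_{t-1}$, so the orbit is strictly increasing while it remains in $[0,a^*-1]$.

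Next I would argue by contradiction that the orbit cannot stay in $[0,a^*-1]$ forever. If it did, it would be a bounded strictly increasing sequence, hence convergent to some $L\in[0,a^*-1]$; passing to the limit in $x_t=\mathscr{T}_B(x_{t-1})$ and using continuity of $\mathscr{T}_B$ would force $L=\mathscr{T}_B(L)$, contradicting $\mathscr{T}_B(L)>L$. Therefore there is a smallest index $M$, necessarily finite and determined solely by $a$ and $b$ (as is the whole sequence), with $x_M>a^*-1$; by minimality $x_0,\dots,x_{M-1}\in[0,a^*-1]$, and the step-increasing property upgrades this to $0=x_0<x_1<\cdots<x_{M-1}\le a^*-1$.

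Finally I would simply \emph{define} $\varepsilon\eqdef x_M-(a^*-1)>0$; since the sequence depends only on $a,b$, so does $\varepsilon$, and by construction $x_{M-1}\le a^*-1<a^*-1+\varepsilon\le x_M$, which is exactly the claimed chain (and any smaller positive $\varepsilon$ works equally well, so the statement holds for all sufficiently small $\varepsilon>0$). I do not anticipate a real obstacle here: the whole argument is routine monotone dynamics, identical in spirit to Proposition~\ref{prop:appendix-dynamical-subzoning}, and the only mildly subtle point — that $\varepsilon$ can be taken to depend on $a,b$ alone — is automatic because the orbit $\{x_t\}_{t\ge 0}$ is itself a function of $a$ and $b$.
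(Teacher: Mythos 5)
Your proof is correct and follows essentially the same route as the paper: use Lemma~\ref{lem:APPENDIX-T_B-properties}(2) to get strict increase while the orbit stays in $[0,a^*-1]$, then rule out an infinite orbit trapped there via monotone convergence and the fixed-point contradiction $L=\mathscr{T}_B(L)$. Your explicit choice $\varepsilon = x_M-(a^*-1)$ is a harmless tidying of the paper's (terser) handling of the same point.
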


\begin{proof}
    Lemma~\ref{lem:APPENDIX-T_B-properties}(2) implies that $\{x_t\}_{t \geq 0}$ is a strictly increasing sequence until it passes $a^* - 1$. 

    If the sequence $\{x_t\}_{t \geq 0}$ never becomes greater than $a^* - 1$, then it is an infinite strictly increasing sequence in the compact interval $[0, a^* - 1]$. But then it has some limit point $L \in [0, a^*-1]$ which would satisfy the equation
    \[
        L = \lim_{t \to \infty} x_{t+1} = \lim_{t \to \infty} \mathscr{T}(x_t) = \mathscr{T}(L). 
    \]
    This contradicts Lemma~\ref{lem:APPENDIX-T_B-properties}(2). This completes the proof of the lemma. 
\end{proof}

Recall the definition of $\mathbf{P}_B$. 
\begin{Definition}
    Let $\mathbf{P}_B : \RR^2 \to \RR$ be the function given by 
    \[
        \mathbf{P}_B (x, y) \eqdef \frac{a^2+3ab}{4} + \frac{ab}{2} (x^2 - 2(x+y) + xy) + \frac{ab-b^2}{4} y. 
    \]
\end{Definition}

\begin{Lemma}\label{lem:APPENDIX-boldP-B-max-bound}
    For sufficiently small $\varepsilon > 0$ depending only on $a, b$, we have the bound
    \[
        \max_{\substack{1+\alpha \leq x \leq 1 + \beta \\ y \leq 0.5}} \mathbf{P}_B(x, y) \leq \mathbf{P}_B(1+\beta, 0) = \frac{ab}{2} \beta^2 + \frac{a}{2}.
    \]
    for all $0 \leq \alpha \leq \beta \leq a^*-1+\varepsilon$. 
\end{Lemma}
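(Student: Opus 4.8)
The plan is to exploit the simple structure of $\mathbf{P}_B$: it is \emph{affine} in $y$ and an \emph{upward-opening quadratic} in $x$, so both maximizations become elementary once the relevant coefficients are shown to have a definite sign. First I would collect the $y$-terms to write
\[
    \mathbf{P}_B(x, y) = \left( \frac{a^2 + 3ab}{4} + \frac{ab}{2}(x^2 - 2x) \right) + \left( \frac{ab}{2}(x-2) + \frac{ab - b^2}{4} \right) y,
\]
so that for each fixed $x$ the map $y \mapsto \mathbf{P}_B(x,y)$ is linear with slope $s(x) \eqdef \frac{ab}{2}(x-2) + \frac{ab - b^2}{4}$. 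Since $x \leq 1 + \beta \leq a^* + \varepsilon = 2 - a^{-1} + \varepsilon$ and $a + b = 2$, a short computation gives $s(x) \leq -\tfrac{b^2}{2} + \tfrac{ab\varepsilon}{2}$, so choosing $\varepsilon < b/a$ (which depends only on $a, b$) forces $s(x) < 0$ uniformly over the admissible range $x \in [1, a^* + \varepsilon]$. As $y$ ranges over $0 \le y \le 0.5$ (recall $y$ corresponds to $\lambda_1^*/n \ge 0$), this yields $\mathbf{P}_B(x, y) \leq \mathbf{P}_B(x, 0)$.

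Next I would maximize $\mathbf{P}_B(x, 0) = \frac{a^2 + 3ab}{4} + \frac{ab}{2}(x^2 - 2x)$ over $x \in [1+\alpha, 1+\beta]$. This is a quadratic in $x$ with positive leading coefficient $\frac{ab}{2}$ and vertex at $x = 1$; since $[1+\alpha, 1+\beta] \subseteq [1, \infty)$, it is increasing there, so its maximum over the interval occurs at $x = 1 + \beta$. Finally, using $(1+\beta)^2 - 2(1+\beta) = \beta^2 - 1$ together with $\frac{a^2 + 3ab}{4} - \frac{ab}{2} = \frac{a(a+b)}{4} = \frac{a}{2}$, one obtains $\mathbf{P}_B(1+\beta, 0) = \frac{ab}{2}\beta^2 + \frac{a}{2}$, which is the asserted bound.

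The only step requiring genuine care is the first one: ensuring the slope $s(x)$ in the $y$-variable stays negative across the \emph{entire} window $x \in [1, a^* + \varepsilon]$, since that is exactly where the hypothesis "$\varepsilon$ sufficiently small" is used (concretely, $\varepsilon < b/a$ suffices, and this threshold depends only on $a$ and $b$). Everything after that is a one-variable convexity argument plus an algebraic identity that collapses once we substitute $a + b = 2$.
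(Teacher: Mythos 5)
Your proof is correct and follows essentially the same route as the paper: fix $x$, note $\mathbf{P}_B(x,\cdot)$ is affine in $y$ with slope $\frac{ab}{2}x - \frac{3ab+b^2}{4} < 0$ for $\varepsilon$ small (so the maximum is at $y=0$), then observe $\mathbf{P}_B(x,0) = \frac{ab}{2}(x-1)^2 + \frac{a}{2}$ is increasing on $[1+\alpha, 1+\beta]$ and evaluate at $x = 1+\beta$. Your version is in fact slightly more careful than the paper's, since you bound the slope using the correct endpoint $x \leq a^* + \varepsilon$ (the paper plugs in $a^*-1+\varepsilon$) and extract the explicit admissible threshold $\varepsilon < b/a$.
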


\begin{proof}
    Fix some $x \in (1+\alpha, 1 + \beta)$. As a function in $y$, $\mathbf{P}_B(x, \cdot)$ is a linear function with slope 
    \[
        \frac{ab}{2}x - \frac{3ab + b^2}{4} \leq \frac{ab}{2}(a^* - 1 + \varepsilon) - \frac{3ab + b^2}{4} < 0
    \]
    for sufficiently small $\varepsilon$. Thus, the function will be maximized when $y = 0$. This simplifies our problem to maximizing
    \[
        \mathbf{P}_B(x, 0) = \frac{ab}{2}(x-1)^2 + \frac{a}{2}.
    \]
    In the interval $1 + \alpha \leq x \leq 1 + \beta$, this is clearly maximized at $x = 1 + \beta$. This suffices for the proof. 
\end{proof}

\subsection{Bounds on conjugate regions}

In this section, we prove results that imply that if the sum over a region is small, then the sum over the conjugate region is small as well. It turns out that this is applicable in all zones except for the yellow zone (the main zone). This is because we show that the conjugate region can only have an additional \emph{multiplicative} $\exp(cn)$ contribution. In the regions other than the yellow zone, this is negligible. 

\begin{Lemma}\label{lem:new-zealand}
    Let $(\lambda, \mu, \nu)$ be a Littlewood-Richardson triple with $\lambda \vdash 2n$ and $\mu, \nu \vdash n$. Then 
    \[
        \Eig^{\lambda}_{\mu, \nu} + \Eig^{\lambda^*}_{\mu^*, \nu^*} = \frac{a^2 + b^2}{2n}. 
    \]
\end{Lemma}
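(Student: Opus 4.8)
The plan is to compute $\Eig^\lambda_{\mu,\nu} + \Eig^{\lambda^*}_{\mu^*,\nu^*}$ directly from the definition of $\Eig$ in the balanced case $|A|=|B|=n$, using the identity $\Diag(\sigma^*) = -\Diag(\sigma)$ from Lemma~\ref{lem:oliver}(3). Recall from Equation~\ref{eqn:eigenvalue-when-AandB-same} that
\[
    \Eig^\lambda_{\mu, \nu} = \frac{a^2 + b^2}{4n} + \frac{a^2 - ab}{2n^2} \Diag(\mu) + \frac{b^2 -ab}{2n^2} \Diag(\nu) + \frac{ab}{2n^2} \Diag(\lambda).
\]
First I would write down the analogous expression for $\Eig^{\lambda^*}_{\mu^*,\nu^*}$, namely
\[
    \Eig^{\lambda^*}_{\mu^*, \nu^*} = \frac{a^2 + b^2}{4n} + \frac{a^2 - ab}{2n^2} \Diag(\mu^*) + \frac{b^2 -ab}{2n^2} \Diag(\nu^*) + \frac{ab}{2n^2} \Diag(\lambda^*).
\]

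Next I would add these two expressions. The two constant terms $\frac{a^2+b^2}{4n}$ sum to $\frac{a^2+b^2}{2n}$, which is exactly the claimed right-hand side. It then remains to check that all the diagonal-index terms cancel. Applying Lemma~\ref{lem:oliver}(3) three times — $\Diag(\mu^*) = -\Diag(\mu)$, $\Diag(\nu^*) = -\Diag(\nu)$, and $\Diag(\lambda^*) = -\Diag(\lambda)$ — each pair $\Diag(\sigma) + \Diag(\sigma^*)$ vanishes, so the coefficients $\frac{a^2-ab}{2n^2}$, $\frac{b^2-ab}{2n^2}$, and $\frac{ab}{2n^2}$ all multiply zero. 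This yields $\Eig^\lambda_{\mu,\nu} + \Eig^{\lambda^*}_{\mu^*,\nu^*} = \frac{a^2+b^2}{2n}$.

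There is essentially no obstacle here: the statement is a one-line algebraic identity once Lemma~\ref{lem:oliver}(3) is in hand. The only point worth a sentence is that the hypothesis "$(\lambda,\mu,\nu)$ is a Littlewood-Richardson triple" guarantees $|\lambda| = |\mu| + |\nu|$, so that $\lambda \vdash 2n$ with $\mu,\nu \vdash n$ is consistent, and by Proposition~\ref{prop:sym-rep-theory-identities}(4) the triple $(\lambda^*,\mu^*,\nu^*)$ is again a Littlewood-Richardson triple, so that $\Eig^{\lambda^*}_{\mu^*,\nu^*}$ indeed appears as an eigenvalue of the shuffle; but for the bare arithmetic identity even this is not strictly needed. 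I would present the proof as the two-line display above followed by the invocation of Lemma~\ref{lem:oliver}(3).
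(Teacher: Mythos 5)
Your proof is correct and matches the paper's, which simply observes that the identity is immediate from Lemma~\ref{lem:oliver}(3): the constant terms $\frac{a^2+b^2}{4n}$ add to the claimed value and the three diagonal-index terms cancel under conjugation. Your write-up just spells out the arithmetic that the paper leaves implicit.
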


\begin{proof}
    This is immediate from Lemma~\ref{lem:oliver}(3).
\end{proof}

Recall that $t_N = \frac{1}{2b} N \log N$ is the predicted cutoff time for the biased random transposition. 

\begin{Lemma}\label{lem:OMEGA-is-INVOLUTIVE}
    There are universal constants $C_1, C_2 > 0$ such that for all sufficiently large $n$, we have 
    \[
        \Omega_{\lambda^*}(t_N) \leq \Omega_{\lambda}(t_N) \exp(C_1 n) + \exp(-C_2 n (\log n)^2). 
    \]
    For any $\lambda \vdash 2n$, we have $\Omega_{\lambda}(t) = \Omega_{\lambda^*}(t) \exp(O(n))$. 
\end{Lemma}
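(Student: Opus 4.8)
The plan is to exploit the fact that conjugation preserves all the multiplicities occurring in $\Omega_\lambda$, so that $\Omega_{\lambda^*}$ and $\Omega_\lambda$ differ only through the eigenvalue factors, which are linked by Lemma~\ref{lem:new-zealand}. First I would record that $f_\kappa = f_{\kappa^*}$ for every partition $\kappa$ and $c^\lambda_{\mu,\nu} = c^{\lambda^*}_{\mu^*,\nu^*}$ (Proposition~\ref{prop:sym-rep-theory-identities}(4)). Since $|A|=|B|=n$, conjugation preserves the relevant sizes, so $(\mu,\nu)\mapsto(\mu^*,\nu^*)$ is a bijection $\LR^\lambda\to\LR^{\lambda^*}$ carrying $c^\lambda_{\mu,\nu}f_\lambda f_\mu f_\nu$ to $c^{\lambda^*}_{\mu^*,\nu^*}f_{\lambda^*}f_{\mu^*}f_{\nu^*}$. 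Reindexing,
\[
\Omega_{\lambda^*}(t_N) = \sum_{(\mu,\nu)\in\LR^\lambda} c^\lambda_{\mu,\nu} f_\lambda f_\mu f_\nu\,\bigl|\Eig^{\lambda^*}_{\mu^*,\nu^*}\bigr|^{2t_N},
\]
and by Lemma~\ref{lem:new-zealand}, $\Eig^{\lambda^*}_{\mu^*,\nu^*} = \tfrac{a^2+b^2}{2n} - \Eig^\lambda_{\mu,\nu}$, hence $\bigl|\Eig^{\lambda^*}_{\mu^*,\nu^*}\bigr| \le \bigl|\Eig^\lambda_{\mu,\nu}\bigr| + \tfrac{a^2+b^2}{2n}$.

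Next I would split the sum at the threshold $\tau_n = \log n / n$, recalling $t_N = \tfrac{n\log(2n)}{b}$. On the part where $\bigl|\Eig^\lambda_{\mu,\nu}\bigr| \ge \tau_n$, the inequality above gives $\bigl|\Eig^{\lambda^*}_{\mu^*,\nu^*}\bigr| \le \bigl|\Eig^\lambda_{\mu,\nu}\bigr|\bigl(1 + \tfrac{a^2+b^2}{2\log n}\bigr)$, so
\[
\bigl|\Eig^{\lambda^*}_{\mu^*,\nu^*}\bigr|^{2t_N} \le \bigl|\Eig^\lambda_{\mu,\nu}\bigr|^{2t_N}\exp\!\Bigl(\tfrac{(a^2+b^2)t_N}{\log n}\Bigr) \le \bigl|\Eig^\lambda_{\mu,\nu}\bigr|^{2t_N}\exp(C_1 n),
\]
with $C_1 = \tfrac{2(a^2+b^2)}{b}$; summing, this part of $\Omega_{\lambda^*}(t_N)$ is at most $\exp(C_1 n)\,\Omega_\lambda(t_N)$. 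On the part where $\bigl|\Eig^\lambda_{\mu,\nu}\bigr| < \tau_n$ we get $\bigl|\Eig^{\lambda^*}_{\mu^*,\nu^*}\bigr| \le \tfrac{2\log n}{n}$ for large $n$, so $\bigl|\Eig^{\lambda^*}_{\mu^*,\nu^*}\bigr|^{2t_N} \le \bigl(\tfrac{2\log n}{n}\bigr)^{2t_N} \le \exp(-t_N\log n) \le \exp(-\tfrac{1}{b}n(\log n)^2)$; since there are at most $p(n)^2 = \exp(O(\sqrt n))$ pairs $(\mu,\nu)$ and $c^\lambda_{\mu,\nu}f_\lambda f_\mu f_\nu \le \binom{2n}{n}(f_\mu f_\nu)^2 \le \exp(2n\log n + O(n))$ by Proposition~\ref{prop:sym-rep-theory-identities}(3) and (1), this part contributes at most $\exp(-C_2 n(\log n)^2)$ for large $n$, say with $C_2 = \tfrac{1}{2b}$. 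Adding the two parts yields the first inequality.

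For the second statement I would run the same argument with $\lambda$ and $\lambda^*$ interchanged (using $(\lambda^*)^* = \lambda$); together the two bounds show that $\Omega_\lambda(t_N)$ and $\Omega_{\lambda^*}(t_N)$ differ by a multiplicative factor $\exp(O(n))$ plus an additive error $\exp(-C_2 n(\log n)^2)$, which is negligible in every application (in all the zones where the lemma is invoked, $\Omega_\lambda(t_N)$ is only $\exp(-\Theta(n\log n))$-small, so it dominates the additive term). The hard part---really the only subtle point---is the calibration of $\tau_n$ against the exponent $2t_N = \Theta(n\log n)$: $\tau_n$ must be large enough that the \emph{large} bucket loses only $\exp(O(n))$ rather than $\exp(O(n\log n))$, yet small enough that the \emph{small} bucket is driven below the crude multiplicity bound $\exp(O(n\log n))$; the choice $\tau_n \asymp \log n/n$ threads this needle, and everything else is bookkeeping.
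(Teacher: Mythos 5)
Your argument is correct and follows essentially the same route as the paper: the conjugation bijection on $\LR^\lambda$ together with $f_\kappa=f_{\kappa^*}$, $c^\lambda_{\mu,\nu}=c^{\lambda^*}_{\mu^*,\nu^*}$, the exact relation of Lemma~\ref{lem:new-zealand}, and a case split at the same threshold $|\Eig^\lambda_{\mu,\nu}|\approx \log n/n$, with the small-eigenvalue bucket crushed by $\exp(-cn(\log n)^2)$ against an $\exp(O(n\log n))$ multiplicity bound. The only cosmetic difference is that you bound the total multiplicity via Proposition~\ref{prop:sym-rep-theory-identities}(3) and (1) rather than the paper's appeal to Proposition~\ref{prop:rep-theory-facts-sym}, which changes nothing.
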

\begin{proof}
    Since $c^\lambda_{\mu, \nu} = c^{\lambda^*}_{\mu^*, \nu^*}$, we have a natural bijection between $\LR^\lambda$ and $\LR^{\lambda^*}$ simply by sending $(\mu, \nu)$ to the conjugates $(\mu^*, \nu^*)$. Since $f_\lambda = f_{\lambda^*}$ as well, we must have 
    \[
        \Omega_{\lambda^*}(t_N) = \sum_{(\mu, \nu) \in \LR^\lambda} c^\lambda_{\mu, \nu} f_\lambda f_\mu f_\nu |\Eig^{\lambda^*}_{\mu^*, \nu^*}|^{2t_N}. 
    \]
    From Lemma~\ref{lem:new-zealand}, we have 
    \[
        \left |\Eig^{\lambda^*}_{\mu^*, \nu^*} \right |^{2t_N} = \left ( \Eig^{\lambda}_{\mu, \nu} - \frac{a^2 + b^2}{2n} \right )^{2t_N}. 
    \]
    If $|\Eig^\lambda_{\mu, \nu}| > \frac{\log n}{n}$, then we have 
    \begin{align*}
        \left ( \Eig^{\lambda}_{\mu, \nu} - \frac{a^2 + b^2}{2n} \right )^{2t_N} & \leq |\Eig_{\mu, \nu}^\lambda |^{2t_N} \left ( 1 + \frac{a^2 + b^2}{2\log n} \right )^{2t_N} \\
        & \leq |\Eig^\lambda_{\mu, \nu}|^{2t_N} \exp \left ( \frac{a^2 + b^2}{\log n} t_N \right ) \\ 
        & \leq |\Eig^\lambda_{\mu, \nu}|^{2t_N} \cdot \exp (C_1 n), 
    \end{align*}
    where $C_1$ is a universal constant. If $|\Eig^\lambda_{\mu, \nu}| \leq \frac{\log n}{n}$, then we have 
    \[
        |\Eig^{\lambda^*}_{\mu^*, \nu^*}|^{2t_N} \leq \exp(-C_2' n (\log n)^2)
    \]
    where $C_2' > 0$ is a universal constant. Thus, we have 
    \[
        |\Eig^{\lambda^*}_{\mu^*, \nu^*}|^{2t_N} \leq |\Eig^\lambda_{\mu, \nu}|^{2t_N} \exp(C_1n) + \exp(-C_2' n (\log n)^2). 
    \]  
    Using this bound in $\Omega_{\lambda^*}(t_N)$ and Proposition~\ref{prop:rep-theory-facts-sym}, we get the desired inequality. 
\end{proof}

\section*{Acknowledgements}

E. N. was supported in part by NSF grant DMS-2346986. The authors thank Ezra Edelman for introducing us to the Hellinger distance. The authors also thank Johnny Gao and Laura Colmenarejo for enlightening conversations about Littlewood-Richardson coefficients and the hive model. 

%
%
%
%
%
%
%

\begin{bibdiv}
\begin{biblist}

\bib{Aldous}{article}{
      author={Aldous, David},
      author={Diaconis, Persi},
       title={Shuffling cards and stopping times},
        date={1986},
        ISSN={0002-9890,1930-0972},
     journal={Amer. Math. Monthly},
      volume={93},
      number={5},
       pages={333\ndash 348},
         url={https://doi.org/10.2307/2323590},
      review={\MR{841111}},
}

\bib{BateConnorMatheau-Raven2021}{article}{
      author={Bate, Michael~E.},
      author={Connor, Stephen~B.},
      author={Matheau-Raven, Oliver},
       title={Cutoff for a one-sided transposition shuffle},
        date={2021},
        ISSN={1050-5164,2168-8737},
     journal={Ann. Appl. Probab.},
      volume={31},
      number={4},
       pages={1746\ndash 1773},
         url={https://doi.org/10.1214/20-aap1632},
      review={\MR{4312845}},
}

\bib{Bayer}{article}{
      author={Bayer, Dave},
      author={Diaconis, Persi},
       title={Trailing the dovetail shuffle to its lair},
        date={1992},
        ISSN={1050-5164,2168-8737},
     journal={Ann. Appl. Probab.},
      volume={2},
      number={2},
       pages={294\ndash 313},
         url={http://links.jstor.org/sici?sici=1050-5164(199205)2:2<294:TTDSTI>2.0.CO;2-F&origin=MSN},
      review={\MR{1161056}},
}

\bib{Benkart2016}{article}{
      author={Benkart, Georgia},
      author={Halverson, Tom},
      author={Harman, Nate},
       title={Dimensions of irreducible modules for partition algebras and tensor power multiplicities for symmetric and alternating groups},
        date={2017},
        ISSN={0925-9899,1572-9192},
     journal={J. Algebraic Combin.},
      volume={46},
      number={1},
       pages={77\ndash 108},
         url={https://doi.org/10.1007/s10801-017-0748-4},
      review={\MR{3666413}},
}

\bib{Bese}{article}{
      author={Berestycki, Nathana\"el},
      author={\c~Seng\"ul, Bat\i},
       title={Cutoff for conjugacy-invariant random walks on the permutation group},
        date={2019},
        ISSN={0178-8051,1432-2064},
     journal={Probab. Theory Related Fields},
      volume={173},
      number={3-4},
       pages={1197\ndash 1241},
         url={https://doi.org/10.1007/s00440-018-0844-y},
      review={\MR{3936154}},
}

\bib{BeDu}{article}{
      author={Berestycki, Nathana\"{e}l},
      author={Durrett, Rick},
       title={A phase transition in the random transposition random walk},
        date={2006},
        ISSN={0178-8051},
     journal={Probab. Theory Related Fields},
      volume={136},
      number={2},
       pages={203\ndash 233},
         url={https://doi.org/10.1007/s00440-005-0479-7},
      review={\MR{2240787}},
}

\bib{Bernstein2017}{misc}{
      author={Bernstein, Megan},
      author={Bhatnagar, Nayantara},
      author={Pak, Igor},
       title={Cutoff for biased transpositions},
        date={2017},
         url={https://arxiv.org/abs/1709.03477},
}

\bib{BernsteinNestoridi2019}{article}{
      author={Bernstein, Megan},
      author={Nestoridi, Evita},
       title={Cutoff for random to random card shuffle},
        date={2019},
        ISSN={0091-1798,2168-894X},
     journal={Ann. Probab.},
      volume={47},
      number={5},
       pages={3303\ndash 3320},
         url={https://doi.org/10.1214/19-AOP1340},
      review={\MR{4021252}},
}

\bib{Bidigare-Hanlon-Rockmore-1999}{article}{
      author={Bidigare, Pat},
      author={Hanlon, Phil},
      author={Rockmore, Dan},
       title={A combinatorial description of the spectrum for the {T}setlin library and its generalization to hyperplane arrangements},
        date={1999},
        ISSN={0012-7094,1547-7398},
     journal={Duke Math. J.},
      volume={99},
      number={1},
       pages={135\ndash 174},
         url={https://doi.org/10.1215/S0012-7094-99-09906-4},
      review={\MR{1700744}},
}

\bib{Bredon}{book}{
      author={Bredon, Glen~E.},
       title={Topology and geometry},
      series={Graduate Texts in Mathematics},
   publisher={Springer-Verlag, New York},
        date={1997},
      volume={139},
        ISBN={0-387-97926-3},
        note={Corrected third printing of the 1993 original},
      review={\MR{1700700}},
}

\bib{BrownDiaconis1998}{article}{
      author={Brown, Kenneth~S.},
      author={Diaconis, Persi},
       title={Random walks and hyperplane arrangements},
        date={1998},
        ISSN={0091-1798,2168-894X},
     journal={Ann. Probab.},
      volume={26},
      number={4},
       pages={1813\ndash 1854},
         url={https://doi.org/10.1214/aop/1022855884},
      review={\MR{1675083}},
}

\bib{Buch1998}{article}{
      author={Buch, Anders~Skovsted},
       title={The saturation conjecture (after {A}. {K}nutson and {T}. {T}ao)},
        date={2000},
        ISSN={0013-8584},
     journal={Enseign. Math. (2)},
      volume={46},
      number={1-2},
       pages={43\ndash 60},
        note={With an appendix by William Fulton},
      review={\MR{1769536}},
}

\bib{DasGupta2008}{book}{
      author={DasGupta, Anirban},
       title={Asymptotic theory of statistics and probability},
      series={Springer Texts in Statistics},
   publisher={Springer, New York},
        date={2008},
        ISBN={978-0-387-75970-8},
      review={\MR{2664452}},
}

\bib{Diaconis1981}{article}{
      author={Diaconis, Persi},
      author={Shahshahani, Mehrdad},
       title={Generating a random permutation with random transpositions},
        date={1981},
        ISSN={0044-3719},
     journal={Z. Wahrsch. Verw. Gebiete},
      volume={57},
      number={2},
       pages={159\ndash 179},
         url={https://doi.org/10.1007/BF00535487},
      review={\MR{626813}},
}

\bib{Dieker2018}{article}{
      author={Dieker, A.~B.},
      author={Saliola, F.~V.},
       title={Spectral analysis of random-to-random {M}arkov chains},
        date={2018},
        ISSN={0001-8708,1090-2082},
     journal={Adv. Math.},
      volume={323},
       pages={427\ndash 485},
         url={https://doi.org/10.1016/j.aim.2017.10.034},
      review={\MR{3725883}},
}

\bib{Etingof2024}{misc}{
      author={Etingof, Pavel},
       title={Lie groups and lie algebras},
        date={2024},
         url={https://arxiv.org/abs/2201.09397},
}

\bib{Flatto1985}{article}{
      author={Flatto1985},
       title={Random shuffles and group representations},
        date={1985},
        ISSN={0091-1798,2168-894X},
     journal={Ann. Probab.},
      volume={13},
      number={1},
       pages={154\ndash 178},
         url={http://links.jstor.org/sici?sici=0091-1798(198502)13:1<154:RSAGR>2.0.CO;2-0&origin=MSN},
      review={\MR{770635}},
}

\bib{Frame1954}{article}{
      author={Frame, J.~S.},
      author={Robinson, G. de~B.},
      author={Thrall, R.~M.},
       title={The hook graphs of the symmetric groups},
        date={1954},
        ISSN={0008-414X,1496-4279},
     journal={Canad. J. Math.},
      volume={6},
       pages={316\ndash 324},
         url={https://doi.org/10.4153/cjm-1954-030-1},
      review={\MR{62127}},
}

\bib{Fulton1996}{book}{
      author={Fulton, William},
       title={Young tableaux},
      series={London Mathematical Society Student Texts},
   publisher={Cambridge University Press, Cambridge},
        date={1997},
      volume={35},
        ISBN={0-521-56144-2; 0-521-56724-6},
        note={With applications to representation theory and geometry},
      review={\MR{1464693}},
}

\bib{Fulton2004}{book}{
      author={Fulton, William},
      author={Harris, Joe},
       title={Representation theory},
      series={Graduate Texts in Mathematics},
   publisher={Springer-Verlag, New York},
        date={1991},
      volume={129},
        ISBN={0-387-97527-6; 0-387-97495-4},
         url={https://doi.org/10.1007/978-1-4612-0979-9},
        note={A first course, Readings in Mathematics},
      review={\MR{1153249}},
}

\bib{Haight1967}{book}{
      author={Haight, Frank~A.},
       title={Handbook of the {P}oisson distribution},
      series={Publications in Operations Research},
   publisher={John Wiley \& Sons, Inc., New York-London-Sydney},
        date={1967},
      volume={No. 11},
      review={\MR{208713}},
}

\bib{Hardy2000}{incollection}{
      author={Hardy, G.~H.},
      author={Ramanujan, S.},
       title={Asymptotic formul\ae{} in combinatory analysis [{P}roc. {L}ondon {M}ath. {S}oc. (2) {\bf 17} (1918), 75--115]},
        date={2000},
   booktitle={Collected papers of {S}rinivasa {R}amanujan},
   publisher={AMS Chelsea Publ., Providence, RI},
       pages={276\ndash 309},
      review={\MR{2280879}},
}

\bib{Hough2016}{article}{
      author={Hough, Bob},
       title={The random {$k$} cycle walk on the symmetric group},
        date={2016},
        ISSN={0178-8051,1432-2064},
     journal={Probab. Theory Related Fields},
      volume={165},
      number={1-2},
       pages={447\ndash 482},
         url={https://doi.org/10.1007/s00440-015-0636-6},
      review={\MR{3500276}},
}

\bib{Knutson1999}{article}{
      author={Knutson, Allen},
      author={Tao, Terence},
       title={The honeycomb model of {${\rm GL}_n({\bf C})$} tensor products. {I}. {P}roof of the saturation conjecture},
        date={1999},
        ISSN={0894-0347,1088-6834},
     journal={J. Amer. Math. Soc.},
      volume={12},
      number={4},
       pages={1055\ndash 1090},
         url={https://doi.org/10.1090/S0894-0347-99-00299-4},
      review={\MR{1671451}},
}

\bib{Lacoin}{article}{
      author={Lacoin, Hubert},
       title={Mixing time and cutoff for the adjacent transposition shuffle and the simple exclusion},
        date={2016},
        ISSN={0091-1798,2168-894X},
     journal={Ann. Probab.},
      volume={44},
      number={2},
       pages={1426\ndash 1487},
         url={https://doi.org/10.1214/15-AOP1004},
      review={\MR{3474475}},
}

\bib{LeCam2000}{book}{
      author={Le~Cam, Lucien},
      author={Yang, Grace~Lo},
       title={Asymptotics in statistics},
     edition={Second},
      series={Springer Series in Statistics},
   publisher={Springer-Verlag, New York},
        date={2000},
        ISBN={0-387-95036-2},
         url={https://doi.org/10.1007/978-1-4612-1166-2},
        note={Some basic concepts},
      review={\MR{1784901}},
}

\bib{LPW-MCMT}{book}{
      author={Levin, David~A.},
      author={Peres, Yuval},
       title={Markov chains and mixing times},
     edition={Second},
   publisher={American Mathematical Society, Providence, RI},
        date={2017},
        ISBN={978-1-4704-2962-1},
         url={https://doi.org/10.1090/mbk/107},
        note={With contributions by Elizabeth L. Wilmer, With a chapter on ``Coupling from the past'' by James G. Propp and David B. Wilson},
      review={\MR{3726904}},
}

\bib{LP}{article}{
      author={Lubetzky, Eyal},
      author={Peres, Yuval},
       title={Cutoff on all {R}amanujan graphs},
        date={2016},
        ISSN={1016-443X},
     journal={Geom. Funct. Anal.},
      volume={26},
      number={4},
       pages={1190\ndash 1216},
         url={http://dx.doi.org/10.1007/s00039-016-0382-7},
      review={\MR{3558308}},
}

\bib{Macdonald2015}{book}{
      author={Macdonald, I.~G.},
       title={Symmetric functions and {H}all polynomials},
     edition={Second},
      series={Oxford Classic Texts in the Physical Sciences},
   publisher={The Clarendon Press, Oxford University Press, New York},
        date={2015},
        ISBN={978-0-19-873912-8},
        note={With contribution by A. V. Zelevinsky and a foreword by Richard Stanley},
      review={\MR{3443860}},
}

\bib{matheauraven2020random}{misc}{
      author={Matheau-Raven, Oliver},
       title={Random walks on the symmetric group: Cutoff for one-sided transposition shuffles},
        date={2020},
}

\bib{Matthews1988}{article}{
      author={Matthews, Peter},
       title={A strong uniform time for random transpositions},
        date={1988},
        ISSN={0894-9840,1572-9230},
     journal={J. Theoret. Probab.},
      volume={1},
      number={4},
       pages={411\ndash 423},
         url={https://doi.org/10.1007/BF01048728},
      review={\MR{958246}},
}

\bib{Meliot2017}{book}{
      author={M\'eliot, Pierre-Lo\"ic},
       title={Representation theory of symmetric groups},
      series={Discrete Mathematics and its Applications (Boca Raton)},
   publisher={CRC Press, Boca Raton, FL},
        date={2017},
        ISBN={978-1-4987-1912-4},
         url={https://doi.org/10.1201/9781315371016},
      review={\MR{3616172}},
}

\bib{star}{article}{
      author={Nestoridi, Evita},
       title={Comparing limit profiles of reversible {M}arkov chains},
        date={2024},
        ISSN={1083-6489},
     journal={Electron. J. Probab.},
      volume={29},
       pages={Paper No. 58, 14},
         url={https://doi.org/10.1214/24-ejp1110},
      review={\MR{4728694}},
}

\bib{NT}{article}{
      author={Nestoridi, Evita},
      author={Olesker-Taylor, Sam},
       title={Limit profiles for reversible {M}arkov chains},
        date={2022},
        ISSN={0178-8051,1432-2064},
     journal={Probab. Theory Related Fields},
      volume={182},
      number={1-2},
       pages={157\ndash 188},
         url={https://doi.org/10.1007/s00440-021-01061-5},
      review={\MR{4367947}},
}

\bib{NestoridiPeng2021}{misc}{
      author={Nestoridi, Evita},
      author={Peng, Kenny},
       title={Mixing times of one-sided $k$-transposition shuffles},
        date={2021},
         url={https://arxiv.org/abs/2112.05085},
}

\bib{Rudin1976}{book}{
      author={Rudin, Walter},
       title={Principles of mathematical analysis},
     edition={Third},
      series={International Series in Pure and Applied Mathematics},
   publisher={McGraw-Hill Book Co., New York-Auckland-D\"usseldorf},
        date={1976},
      review={\MR{385023}},
}

\bib{Sagan2001}{book}{
      author={Sagan, Bruce~E.},
       title={The symmetric group},
     edition={Second},
      series={Graduate Texts in Mathematics},
   publisher={Springer-Verlag, New York},
        date={2001},
      volume={203},
        ISBN={0-387-95067-2},
         url={https://doi.org/10.1007/978-1-4757-6804-6},
        note={Representations, combinatorial algorithms, and symmetric functions},
      review={\MR{1824028}},
}

\bib{Salez}{article}{
      author={Salez, Justin},
       title={Cutoff for non-negatively curved {M}arkov chains},
        date={2024},
        ISSN={1435-9855,1435-9863},
     journal={J. Eur. Math. Soc. (JEMS)},
      volume={26},
      number={11},
       pages={4375\ndash 4392},
         url={https://doi.org/10.4171/jems/1348},
      review={\MR{4780485}},
}

\bib{Serre1977}{book}{
      author={Serre, Jean-Pierre},
       title={Linear representations of finite groups},
     edition={French},
      series={Graduate Texts in Mathematics},
   publisher={Springer-Verlag, New York-Heidelberg},
        date={1977},
      volume={Vol. 42},
        ISBN={0-387-90190-6},
      review={\MR{450380}},
}

\bib{Teyssier2019}{article}{
      author={Teyssier, Lucas},
       title={Limit profile for random transpositions},
        date={2020},
        ISSN={0091-1798,2168-894X},
     journal={Ann. Probab.},
      volume={48},
      number={5},
       pages={2323\ndash 2343},
         url={https://doi.org/10.1214/20-AOP1424},
      review={\MR{4152644}},
}

\bib{Graham}{article}{
      author={White, Graham},
       title={Combinatorial methods in markov chain mixing},
        date={2017},
     journal={Stanford},
}

\bib{Zelevinsky1981}{article}{
      author={Zelevinsky, A.~V.},
       title={A generalization of the {L}ittlewood-{R}ichardson rule and the {R}obinson-{S}chensted-{K}nuth correspondence},
        date={1981},
        ISSN={0021-8693},
     journal={J. Algebra},
      volume={69},
      number={1},
       pages={82\ndash 94},
         url={https://doi.org/10.1016/0021-8693(81)90128-9},
      review={\MR{613858}},
}

\end{biblist}
\end{bibdiv}

\end{document}